\tikzset{cross/.style={cross out, draw=black, minimum size=2*(#1-\pgflinewidth), inner sep=0pt, outer sep=0pt},
cross/.default={1pt}}
\tikzset{->-/.style={decoration={
  markings,
  mark=at position #1 with {\arrow[scale=1.5]{>}}},postaction={decorate}}}
\tikzset{-<-/.style={decoration={
  markings,
  mark=at position #1 with {\arrow[scale=1.5]{<}}},postaction={decorate}}}
\tikzset{midstealth/.style={decoration={
  markings,
  mark=at position #1 with {\arrow{stealth}}},postaction={decorate}}}
\newtheoremstyle{break}
  {}%
  {}%
  {\itshape}
  {}%
  {\bfseries}
  {.}%
  {\newline}%
  {}%
\theoremstyle{plain}
\newtheorem{thm}{Theorem}[section]
\newtheorem{lem}[thm]{Lemma}
\newtheorem{prop}[thm]{Proposition}
\newtheorem{cor}[thm]{Corollary}
\newtheorem{exa}[thm]{Example}
\newtheorem{defn}[thm]{Definition}
\newtheorem{rem}[thm]{Remark}
\theoremstyle{break}
\DeclareMathOperator{\rank}{rank}
\DeclareMathOperator{\vol}{vol}
\DeclareMathOperator{\im}{Im}
\DeclareMathOperator{\Ker}{Ker}
\DeclareMathOperator{\Hom}{Hom}
\DeclareMathOperator{\End}{End}
\DeclareMathOperator{\Homo}{H}
\DeclareMathOperator{\diag}{diag}
\DeclareMathOperator{\DR}{DR}
\DeclareMathOperator{\FL}{FL}
\DeclareMathOperator{\id}{id}
\DeclareMathOperator{\cone}{cone}
\newcommand{\ve}{\varepsilon}
\newcommand{\barsigma}{\overline{\sigma}}
\newcommand{\PP}{\mathbb{P}}
\newcommand{\R}{\mathbb{R}}
\newcommand{\D}{\mathbb{D}}
\newcommand{\C}{\mathbb{C}}
\newcommand{\DD}{\mathcal{D}}
\newcommand{\LL}{\mathbb{L}}
\newcommand{\A}{\mathbb{A}}
\newcommand{\Q}{\mathbb{Q}}
\newcommand{\Z}{\mathbb{Z}}
\newcommand{\lef }{\left\{ }
\newcommand{\righ }{\right\} }
\newcommand{\Gm}{\mathbb{G}_m}
\newcommand{\ii}{\sqrt{-1}}
\newcommand{\s}{\sigma}
\newcommand{\bs}{\barsigma}
\DeclareMathOperator{\HD}{\mathbb{D}}
\DeclareMathOperator{\spanning}{span}
\newcommand*{\transp}[2][-3mu]{\ensuremath{\mskip1mu\prescript{\smash{\mathrm t\mkern#1}}{}{\mathstrut#2}}}
\crefname{thm}{Theorem}{Theorems}
\crefname{bthm}{Theorem}{Theorems}
\crefname{bprop}{Proposition}{Propositions}
\crefname{blem}{Lemma}{Lemmata}
\crefname{prop}{Proposition}{Propositions}
\crefname{lem}{Lemma}{Lemmata}
\crefname{bcor}{Corollary}{Corollary}
\crefname{cor}{Corollary}{Corollary}
\crefname{defn}{Definition}{Definitions}
\crefname{rem}{Remark}{Remarks}
\crefname{exa}{Example}{Exmples}
\title{Euler and Laplace integral representations of GKZ hypergeometric functions}
\author{Saiei-Jaeyeong Matsubara-Heo\footnote{ Graduate School of Science, Kobe  University, 1-1 Rokkodai, Nada-ku, Kobe 657-8501, Japan.\newline\indent e-mail: \texttt{saiei@math.kobe-u.ac.jp}}}
\begin{document}

\date{}
\maketitle

\begin{abstract}
\noindent
We introduce an interpolation between Euler integral and Laplace integral: Euler-Laplace integral. We establish a combinatorial method of constructing a basis of the rapid decay homology group associated to Euler-Laplace integral  with a nice intersection property. This construction yields a remarkable expansion formula of cohomology intersection numbers in terms of GKZ hypergeometric series. As an application, we obtain closed formulas of the quadratic relations of Aomoto-Gelfand hypergeometric functions and their confluent analogue in terms of bipartite graphs.
\newline {\it Keywords---GKZ hypergeometric systems, Integral representations, Twisted Gau\ss-Manin connections, Rapid decay homology groups, (Co)homology intersection numbers, Aomoto-Gelfand hypergeometric systems
}
\end{abstract}

\section{Introduction}
In this paper, we focus on the integral of the following form:
\begin{equation}\label{MixedIntegral}
f_{\Gamma}(z)=\int_\Gamma e^{h_{0}(x)}h_{1}(x)^{-\gamma_1}\cdots h_{k}(x)^{-\gamma_k}x^{c}\omega,
\end{equation}
where $h_l(x;z)=h_{l,z^{(l)}}(x)=\sum_{j=1}^{N_l}z_j^{(l)}x^{{\bf a}^{(l)}(j)}$ ($l=0,\dots,k$) are Laurent polynomials in torus variables $x=(x_1,\dots,x_n)$, $\gamma_l\in\C$ and $c={}^t(c_1,\dots,c_n)\in\C^{n\times 1}$ are parameters, $x^c=x_1^{c_1}\dots x_n^{c_n}$, $\Gamma$ is a suitable integration cycle, and $\omega$ is an algebraic differential $n$-form in variables $x$ which has poles along $D=\{ x\in\C^n\mid x_1\dots x_n h_1(x)\dots h_k(x)=0\}$. As a function of the independent variable $z=(z_j^{(l)})_{j,l}$, the integral (\ref{MixedIntegral}) defines a hypergeometric function. We call the integral (\ref{MixedIntegral}) the Euler-Laplace integral representation. 

We can naturally define the twisted de Rham cohomology group associated to the Euler-Laplace integral (\ref{MixedIntegral}). We set $N=N_0+\dots+N_k,$ $\Gm^n={\rm Specm}\;\C[x_1^\pm,\dots,x_n^\pm]$, and $\A^N={\rm Specm}\;\C[z_j^{(l)}]$. For any $z\in\A^N$, we can define an integrable connection $\nabla_z:\mathcal{O}_{\Gm^n}(*D)\rightarrow\Omega^1_{\Gm^n}(*D)$ by $\nabla_z=d_x+d_xh_0\wedge-\sum_{l=1}^k\gamma_l\frac{d_xh_l}{h_l}\wedge+\sum_{i=1}^nc_i\frac{dx_i}{x_i}\wedge$. Setting $U=\Gm^n\setminus D$, the algebraic de Rham cohomology group $\Homo_{\rm dR}^*\left( U;(\mathcal{O}_U,\nabla_z)\right)$ is defined as the hypercohomology group $\mathbb{H}^*\left( \Gm^n;(\cdots\overset{\nabla_z}{\rightarrow}\Omega_{\Gm^n}^\bullet(*D)\overset{\nabla_z}{\rightarrow}\cdots)\right)$. Under a genericity assumption on the parameters $\gamma_l$ and $c$, we have the vanishing result $\Homo_{\rm dR}^m\left( U;(\mathcal{O}_U,\nabla_z)\right)=0$ $(m\neq n)$. Moreover, we can define a perfect pairing $\langle\bullet,\bullet\rangle_{ch}:\Homo_{\rm dR}^n\left( U;(\mathcal{O}_U,\nabla_z)\right)\times \Homo_{\rm dR}^{n}\left( U;(\mathcal{O}_U^\vee,\nabla_z^\vee)\right)\rightarrow\C$ which is called the cohomology intersection form (see (\ref{RDc.i.n.}) and the proof of \cref{thm:QuadraticRelation}). The main result of this paper (\cref{thm:QuadraticRelation}) is on the explicit formula of the cohomology intersection number.

In order to extract information of the cohomology intersection number from (\ref{MixedIntegral}), it is important to observe that (\ref{MixedIntegral}) satisfies a special holonomic system called GKZ system (or $A$-hypergeometric system) introduced by I.M.Gelfand, M.I.Graev, M.M.Kapranov, and A.V.Zelevinsky (\cite{GelfandGraevZelevinsky}, \cite{GKZToral}). Let us recall the definition of GKZ system. The system is determined by two inputs: an $d\times n$ ($d<n$) integer matrix $A=({\bf a}(1)|\cdots|{\bf a}(n))$ and a parameter vector $\delta\in\C^{d}$. GKZ system $M_A(\delta)$ is defined by
\begin{subnumcases}{M_A(\delta):}
E_i\cdot f(z)=0 &($i=1,\dots, d$)\label{EulerEq}\\
\Box_u\cdot f(z)\hspace{-0.8mm}=0& $\left(u={}^t(u_1,\dots,u_{n})\in L_A=\Ker(A\times:\Z^{n\times 1}\rightarrow\Z^{d\times 1})\right)$,\label{ultrahyperbolic}
\end{subnumcases}
where $E_i$ and $\Box_u$ are differential operators defined by 

\begin{equation}\label{HGOperators}
E_i=\sum_{j=1}^{n}a_{ij}z_j\frac{\partial}{\partial z_j}+\delta_i,\;\;\;
\Box_u=\prod_{u_j>0}\left(\frac{\partial}{\partial z_j}\right)^{u_j}-\prod_{u_j<0}\left(\frac{\partial}{\partial z_j}\right)^{-u_j}.
\end{equation}

\noindent
Throughout this paper, we assume an additional condition that the column vectors of $A$ generate the ambient lattice $\Z^{d\times 1}$. Writing $\DD_{\A^n}$ for the Weyl algebra on $\A^n$ and by $H_A(\delta)$ the left ideal of $\DD_{\A^n}$ generated by operators (\ref{HGOperators}), we also call the left $\DD_{\A^n}$-module $M_A(\delta)=\DD_{\A^n}/H_A(\delta)$ GKZ system. The fundamental property of GKZ system $M_A(\delta)$ is that it is always holonomic (\cite{Adolphson}), which implies that the stalk of the sheaf of holomorphic solutions at a generic point is finite dimensional. 

We set $\delta=(\gamma_1,\dots,\gamma_k,c)$, and consider an $(n+k)\times N$ matrix
\begin{equation}\label{IntroCayley}
A
=
\left(
\begin{array}{ccc|ccc|ccc|c|ccc}
0&\cdots&0&1&\cdots&1&0&\cdots&0&\cdots&0&\cdots&0\\
\hline
0&\cdots&0&0&\cdots&0&1&\cdots&1&\cdots&0&\cdots&0\\
\hline
&\vdots& & &\vdots& & &\vdots& &\ddots& &\vdots& \\
\hline
0&\cdots&0&0&\cdots&0&0&\cdots&0&\cdots&1&\cdots&1\\
\hline
&A_0& & &A_1& & &A_2& &\cdots & &A_k& 
\end{array}
\right),
\end{equation}

\noindent
where $A_l=\left({\bf a}^{(l)}(1)|\dots|{\bf a}^{(l)}(N_l)\right)$. The matrix (\ref{IntroCayley}) is a variant of the Cayley configuration (\cite{GKZEuler}). It is proved in \cref{thm:mainDresult} that (\ref{MixedIntegral}) is a solution of $M_A(\delta)$. 

An important combinatorial structure of GKZ system is the secondary fan (\cite[Chapter 7]{GKZbook}, \cite{DeLoeraRambauSantos}). If we denote by $L_A^\vee$ the dual lattice of $L_A$, the secondary fan ${\rm Fan}(A)$ is a special (possibly incomplete) fan in $L_A^\vee\otimes_{\Z} \R$. Moreover, each cone of the secondary fan has a combinatorial interpretation as a convex polyhedral subdivision of $\R_{>0}A$, the positive cone spanned by the column vectors of the matrix $A$. Any  triangulation $T$ of $\R_{>0}A$ arising in this way is called a regular triangulation. It was an important discovery of Gelfand-Kapranov-Zelevinsky (\cite{GKZToral}) which was later sophisticated by M.-C. Fern\'andez-Fern\'andez (\cite{FernandezFernandez}) that $T$ can be interpreted as a set of independent solutions of the GKZ system $M_A(\delta)$. Namely, for each simplex $\s\in T$, we can associate a finite Abelian group $G_{\s}$ and hypergeoemtric series $\{ \varphi_{\s,g}(z;\delta)\}_{g\in G_{\s}}$ which are solutions of $M_A(\delta)$. Though these series $\varphi_{\s,g}(z;\delta)$ may diverge, there is at least one regular triangulation $T$ such that $\varphi_{\s,g}(z;\delta)$ is convergent for any simplex $\s\in T$ and $g\in G_{\s}$. We say  such a regular triangulation $T$ is convergent in this paper. Then, it is known that the set $\Phi_T=\bigcup_{\s\in T}\{\varphi_{\s,g}(z;\delta)\}_{g\in G_{\s}}$ is a basis of solutions of $M_A(\delta)$ when the parameter $\delta$ is generic. Geometrically speaking, the secondary fan ${\rm Fan}(A)$ defines a toric variety $X({\rm Fan}(A))$ which contains the torus ${\rm Specm}\C [L_A]$. Any series $\varphi_{\s,g}(z;\delta)$ can be interpreted as a local solution near the torus fixed point $z_T^\infty$ of the toric variety $X({\rm Fan}(A))$ corresponding to $T$.


In the main statement of this paper, we focus on a particular class of the integral (\ref{MixedIntegral}) that the corresponding GKZ system $M_A(\delta)$ admits a unimodular triangulation. A regular triangulation $T$ is said to be unimodular if for any simplex $\s\in T$, the corresponding Abelian group $G_{\s}$ is trivial. The study of the matrix $A$ admitting a (special) unimodular triangulation is an active area of research in combinatorics (\cite{SantosZiegler}, \cite{HibiOhsugiRootConfiguration}, \cite{HibiOhsugiCentrallySymmetric}). In relation to special functions, the most famous example is Aomoto-Gelfand system (\cite{AomotoKita}, \cite{GelfandGeneralTheory}). In our notation, this corresponds to the case that $h_0\equiv 0$ and other Laurent polynomials $h_l$ are all linear polynomials. Another important class is the configuration matrix $A$ associated to a reflexive polytope (\cite{Batyrev}, \cite{StienstraMirror}, \cite{NarumiyaShiga}). This class of GKZ system has been studied in the context of toric mirror symmetry. These classes define regular holonomic GKZ systems (\cite{Hotta}), and have Euler integral representations, i.e., we can take $h_0(x)\equiv 0$. However, they have other integral representations with non-zero exponential factor $h_0(x)$. Moreover, there are various examples of (\ref{MixedIntegral}) admitting a unimodular triangulation and satisfying an irregular GKZ system. An important class is a ``confluence'' of Aomoto-Gelfand system (\cite{KimuraHaraokaTakano}, \cite{MarkovTarasovVarchenko}) which is discussed in \S\ref{E(21...1)}. The main result of this paper is an explicit formula of the cohomology intersection number in terms
of series solutions associated to a convergent unimodular regular triangulation.



\begin{thm}\label{thm:IntroTheorem}
Suppose that four vectors ${\bf a},{\bf a}^\prime\in\Z^{n\times 1},{\bf b},{\bf b}^\prime\in\Z^{k\times 1}$ and a convergent unimodular regular triangulation T are given. Under the assumption on the parameters $\gamma_l$ and $c$ of \cref{thm:QuadraticRelation}, one has an identity
\begin{align}
&(-1)^{|{\bf b}|+|{\bf b}^\prime|}\gamma_1\cdots\gamma_k(\gamma-{\bf b})_{\bf b}(-\gamma-{\bf b}^\prime)_{{\bf b}^\prime}
\sum_{\s\in T}\frac{\pi^{n+k}}{\sin\pi A_\s^{-1}\delta}\varphi_{\s,0}\left(z;
\begin{pmatrix}
\gamma-{\bf b}\\
c+{\bf a}
\end{pmatrix}
\right)\varphi_{\s,0}^\vee\left(z;
\begin{pmatrix}
\gamma+{\bf b}^\prime\\
c-{\bf a}^\prime
\end{pmatrix}
\right)\nonumber\\
=&\frac{\langle x^{\bf a}h^{\bf b}\frac{dx}{x},x^{{\bf a}^\prime}h^{{\bf b}^\prime}\frac{dx}{x}\rangle_{ch}}{(2\pi\ii)^n}\label{IntroFormula}
\end{align}
when $z$ is in the non-empty open set $U_T$.
\end{thm}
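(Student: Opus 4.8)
The plan is to compute the cohomology intersection number $\langle x^{\bf a}h^{\bf b}\frac{dx}{x},x^{{\bf a}'}h^{{\bf b}'}\frac{dx}{x}\rangle_{ch}$ by realizing both sides as analytic continuations of the same object and matching them on the open set $U_T$. The starting point is the Euler--Laplace integral (\ref{MixedIntegral}): by \cref{thm:mainDresult}, $f_\Gamma(z)$ solves $M_A(\delta)$ with $A$ as in (\ref{IntroCayley}) and $\delta=(\gamma,c)$. The cocycles $x^{\bf a}h^{\bf b}\frac{dx}{x}$ and $x^{{\bf a}'}h^{{\bf b}'}\frac{dx}{x}$ represent classes in $\Homo^n_{\mathrm dR}(U;(\mathcal O_U,\nabla_z))$ and in the dual cohomology, and the shift of $({\bf a},{\bf b})$ corresponds precisely to the parameter shift $\delta\mapsto(\gamma-{\bf b},c+{\bf a})$ (resp.\ $(\gamma+{\bf b}',c-{\bf a}')$) in the associated GKZ system. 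So the first step is to record, via the contiguity/shift operators for GKZ systems, that changing the cocycle in this way is the same as changing $\delta$, and that the normalizing factor $\gamma_1\cdots\gamma_k(\gamma-{\bf b})_{\bf b}(-\gamma-{\bf b}')_{{\bf b}'}$ together with the sign $(-1)^{|{\bf b}|+|{\bf b}'|}$ is exactly the proportionality constant produced by these shift operators acting on the standard cocycle $\frac{dx}{x}$.

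The heart of the argument is a formula expressing the cohomology intersection number as a bilinear pairing of solution bases. I would use the general principle --- underlying the quadratic relations of Cho--Matsumoto and its twisted Gau\ss--Manin generalizations --- that the cohomology intersection pairing, transported through the period pairing between (rapid decay) homology and de Rham cohomology, equals $(2\pi\ii)^n$ times a sum over a basis of cycles of the product of the corresponding period integrals against their duals. Concretely, one builds from the convergent unimodular regular triangulation $T$ a basis $\{\Gamma_\s\}_{\s\in T}$ of the rapid decay homology group (the combinatorial construction promised in the abstract), such that $\int_{\Gamma_\s}e^{h_0}h^{-\gamma}x^c\frac{dx}{x}$ is, up to an explicit $\Gamma$-factor, the convergent hypergeometric series $\varphi_{\s,0}(z;\delta)$, and such that the intersection matrix of $\{\Gamma_\s\}$ against the dual cycles is diagonal. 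The diagonal entries are where $\frac{\pi^{n+k}}{\sin\pi A_\s^{-1}\delta}$ comes from: each $\varphi_{\s,0}$ is normalized by a product of Gamma functions indexed by the rows of $A_\s^{-1}$, and the reflection formula $\Gamma(x)\Gamma(1-x)=\pi/\sin\pi x$ converts the product of the normalizing Gamma factors of the two dual series into $\pi^{n+k}/\sin\pi A_\s^{-1}\delta$ (this is why the sum is over $\s\in T$ with exactly this kernel). Plugging the shifted parameters $(\gamma-{\bf b},c+{\bf a})$ and $(\gamma+{\bf b}',c-{\bf a}')$ into the two factors, and collecting the shift-operator constant from the first step, yields precisely the left-hand side of (\ref{IntroFormula}).

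In more detail, the steps are: (i) set up the de Rham cohomology, verify the vanishing $\Homo^m_{\mathrm dR}=0$ for $m\ne n$ under the genericity hypotheses of \cref{thm:QuadraticRelation}, and define $\langle\bullet,\bullet\rangle_{ch}$ via the pairing with rapid decay homology and the perfectness of the period pairing; (ii) construct the cycles $\Gamma_\s$ adapted to $T$ --- near the torus fixed point $z_T^\infty$ of $X(\mathrm{Fan}(A))$ these are products of (twisted) segments and Hankel-type contours, chosen so the period is the convergent series $\varphi_{\s,0}$ and so that for $\s\ne\s'$ the cycles $\Gamma_\s$ and the dual $\Gamma^\vee_{\s'}$ have empty intersection; (iii) compute the self-intersection numbers $\langle\Gamma_\s,\Gamma^\vee_\s\rangle$ as a product of the elementary one-dimensional twisted intersection numbers, each of the form $\pi/\sin(\pi\cdot)$, assembling to $\pi^{n+k}/\sin\pi A_\s^{-1}\delta$; (iv) apply the period--intersection compatibility (the quadratic relation) $\langle\eta,\eta'\rangle_{ch}=(2\pi\ii)^n\sum_\s \frac{1}{\langle\Gamma_\s,\Gamma_\s^\vee\rangle}(\int_{\Gamma_\s}\eta)(\int_{\Gamma_\s^\vee}\eta')$, wait, more precisely using the inverse intersection matrix, to get the series expansion; (v) translate the cocycle shifts $\eta=x^{\bf a}h^{\bf b}\frac{dx}{x}$ into parameter shifts and absorb the shift-operator proportionality constants, landing on (\ref{IntroFormula}) valid on $U_T$, the common domain of convergence of all $\varphi_{\s,0}$ with the shifted parameters.

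The main obstacle I expect is step (ii)--(iii): constructing the rapid decay cycles $\Gamma_\s$ so that simultaneously (a) their periods are exactly the normalized convergent series $\varphi_{\s,0}$ attached to the simplices of $T$, and (b) the intersection matrix against the dual basis is \emph{diagonal} with the clean value $\pi^{n+k}/\sin\pi A_\s^{-1}\delta$. Getting convergence and rapid decay to coexist (this is where the exponential factor $e^{h_0}$ forces one to leave ordinary homology for rapid decay homology and to handle the Laplace directions differently from the Euler directions) and then verifying the vanishing of off-diagonal intersections --- which should follow from the combinatorics of $T$ being a triangulation, so that distinct top simplices meet only in lower-dimensional faces, but needs a careful local model near each torus fixed point of $X(\mathrm{Fan}(A))$ --- is the technical core. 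Unimodularity of $T$ is what makes each $G_\s$ trivial, so the local model at $z_T^\infty$ is as simple as possible and the one-dimensional intersection computations in (iii) apply without the extra bookkeeping of a nontrivial finite group.
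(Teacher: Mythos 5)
Your proposal matches the paper's proof in all essentials: the twisted period relation expressing $\langle\cdot,\cdot\rangle_{ch}$ via the inverse intersection matrix (the paper's equation (\ref{Formula73}) and \cref{thm:QuadraticRelation}), the cycles $\Gamma_{\s,0},\check\Gamma_{\s,0}$ built from the convergent unimodular triangulation with periods given by normalized $\Gamma$-series (\cref{thm:fundamentalthm3}, \cref{thm:dualfundamentalthm3}), the diagonal intersection matrix with entries assembled from one-dimensional Pochhammer and Hankel intersection numbers (\cref{thm:IntersectionNumber}), and the reinterpretation of the cocycle twist $x^{\bf a}h^{\bf b}\frac{dx}{x}$ as a shift of $\delta$. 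The only place where the paper's argument is noticeably cleaner than what you anticipate is the block-diagonality: instead of a geometric/local analysis near the torus fixed points, the paper (Proposition 6.1) gets $\langle\Gamma_{\s_1,\cdot},\check\Gamma_{\s_2,\cdot}\rangle_h=0$ for $\s_1\neq\s_2$ immediately from monodromy invariance of the pairing, since the cycles attached to distinct simplices carry distinct monodromy eigenvalues under the torus loops $z_j\mapsto e^{2\pi\ii\theta}z_j$.
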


\noindent
In the theorem above, we put $(\gamma-{\bf b})_{\bf b}=\prod_{l=1}^k(\gamma_l-{b}_l)_{ b_l}=\prod_{l=1}^k\frac{\Gamma(\gamma_l)}{\Gamma(\gamma_l-b_l)}$ and $x^{\bf a}h^{\bf b}=x_1^{a_1}\dots x_n^{a_n}h_1^{b_1}\dots h_k^{b_k}$ if ${\bf a}={}^t(a_1,\dots,a_n)$ and ${\bf b}={}^t(b_1,\dots,b_k)$. $A_\sigma$ is a square matrix obtained by aligning column vectors of $A$ labeled by the elements of $\s$. The symbol $\sin\pi A_\s^{-1}\delta$ denotes the product of values of sine functions at each entry of the vector $\pi A_\s^{-1}\delta$. $U_T$ is an open neighbourhood of the point $z_T^\infty$. The formula (\ref{IntroFormula}) gives a convergent Laurent series expansion of the cohomology intersection number $\langle x^{\bf a}h^{\bf b}\frac{dx}{x},x^{{\bf a}^\prime}h^{{\bf b}^\prime}\frac{dx}{x}\rangle_{ch}$. Note that cohomology classes of the form $[x^{\bf a}h^{\bf b}\frac{dx}{x}]$ generate the algebraic de Rham cohomology groups $\Homo_{\rm dR}^*\left( U;(\mathcal{O}_U,\nabla_z)\right)$ and its dual $\Homo_{\rm dR}^*\left( U;(\mathcal{O}_U^\vee,\nabla_z^\vee)\right)$ (\cite{HibiTakayamaNishiyama}). 


The derivation of the formula (\ref{IntroFormula}) is based on the twisted analogue of Riemann-Hodge bilinear relation, commonly referred to as the twisted period relation. The twisted period relation expresses the cohomology intersection number in terms of the homology intersection number and period integrals. The essential part of this paper \S\ref{SectionEuler} and \S\ref{IntersectionNumbers} provides a concrete method of constructing a good basis of the rapid decay homology group associated to the Euler-Laplace  integral (\ref{MixedIntegral}). In particular, we have closed formulas of the homology intersection matrix and the period integrals. Behind the construction, we fully make use of the combinatorial structure of GKZ system. In the rest of the introduction, we explain the background of our study.

\subsection{The role of twisted (co)homology intersection numbers}

It seems that the interest on computing (co)homology intersection numbers is confined in a small community of specialists. Therefore, it is reasonable to recall the importance of computing the exact formula of (co)homology intersection numbers.

It was discovered in \cite{ChoMatsumoto} that a family of functional identities of hypergeometric functions called quadratic relations can be derived in a systematic way from the twisted version of Riemann-Hodge bilinear relation. This relation is a compatibility among cohomology intersection form, homology intersection form, and twisted period pairings. K.Cho and K.Matsumoto developed a method of evaluating cohomology intersection numbers for $1$-dimensional integrals, which was generalized to generic hyperplane arrangement case by Matsumoto in \cite{MatsumotoIntersection}. Another application of cohomology intersection number is the derivation of Pfaffian system that Euler-Laplace integral (\ref{MixedIntegral}) satisfies. In \cite{GotoKanekoMatsumoto}, \cite{GotoMatsumotoContiguity}, \cite{FrellesvigMastroliaMizera}, \cite{MastroliaMizera}, \cite{MizeraIntersection}, the authors deal with various Pfaffian systems from this point of view.


On the other hand, the nature of homology intersection number is more topological. Let us recall that the Euler-Laplace integral (\ref{MixedIntegral}) can be regarded as a result of the period pairing between the algebraic de Rham cohomology group $\Homo_{\rm dR}^n\left( U;(\mathcal{O}_U,\nabla_z)\right)$ and the rapid decay homology group $\Homo_n^{\rm r.d.}\left( U;(\mathcal{O}_{U^{an}}^\vee,\nabla_z^{{\rm an}\vee})\right)$ of M.Hien (\cite{HienRDHomology}). Therefore, the homology intersection pairing $\langle\bullet,\bullet\rangle_h$ can naturally be defined as a perfect pairing between two rapid decay homology groups $\Homo_n^{\rm r.d.}\left( U;(\mathcal{O}_{U^{an}}^\vee,\nabla_z^{{\rm an}\vee})\right)$ and $\Homo_n^{\rm r.d.}\left( U;(\mathcal{O}_{U^{an}},\nabla_z^{\rm an})\right)$, which is compatible with the cohomology intersection pairing $\langle\bullet,\bullet\rangle_{ch}$ through period pairings.  For the precise definition of $\langle\bullet,\bullet\rangle_{ch}$, see \S\ref{RDIntersection}. It is customary in the theory of hypergeometric integrals to call the rapid decay homology groups $\Homo_n^{\rm r.d.}\left( U;(\mathcal{O}_{U^{an}}^\vee,\nabla_z^{{\rm an}\vee})\right)$ and $\Homo_n^{\rm r.d.}\left( U;(\mathcal{O}_{U^{an}},\nabla_z^{\rm an})\right)$ the twisted homology groups.

The most typical application of homology intersection number is probably the monodromy invariant hermitian matrix in the study of period maps (\cite{DeligneMostow}, \cite{MSY}, \cite{YoshidaMyLove}). It plays an essential role when one determines the image of the period map. Therefore, it is crucial that the homology intersection numbers can be evaluated exactly for a given basis of the twisted homology group. In these studies, the integral (\ref{MixedIntegral}) is reduced to Aomoto-Gelfand hypergeoemtric integral (\cite{AomotoKita}). An explicit formula of the homology intersection numbers  can be found in \cite{KitaYoshida}, \cite{KitaYoshida2}.

Another application of the homology intersection number is the global study of the hypergeometric function that the integral (\ref{MixedIntegral}) represents. This is based on the standard fact that the rapid decay homology group $\Homo_n^{\rm r.d.}\left( U;(\mathcal{O}_{U^{an}}^\vee,\nabla_z^{{\rm an}\vee})\right)$ is isomorphic to the solution space of the Gau\ss-Manin connection associated to the integral (\ref{MixedIntegral}). Once this isomorphism is established, one wants to understand the parallel transport of a cycle $[\Gamma(z)]\in\Homo_n^{\rm r.d.}\left( U;(\mathcal{O}_{U^{an}}^\vee,\nabla_z^{{\rm an}\vee})\right)$ from a singular point $z=z_1$ to another one $z=z_2$. This can be achieved once we find a good basis $\{ [\check{\Gamma}_i(z)]\}_{i}$ of $\Homo_n^{\rm r.d.}\left( U;(\mathcal{O}_{U^{an}},\nabla_z^{{\rm an}})\right)$ near $z=z_2$ with which one can compute the homology intersection numbers $\langle [\Gamma(z)], [\check{\Gamma}_i(z)]\rangle_h$. The interested readers may refer to \cite{GotoPhDThesis}, \cite{MatsumotoFD},  \cite{MatsumotoYoshida} and references therein.

In these studies, the most important part is the choice of a good basis of twisted homology groups with which one can compute the exact value of homology intersection numbers. Let us review the method of constructing a basis of the twisted homology group.

\subsection{Constructing a good basis of cycles I: The method of stationary phase}

Let us recall the construction of a good basis of the twisted homology group by means of the method of stationary phase. For the detail, see \cite{AomotoLesEquations}, \cite{AomotoKita}, \cite{ArnoldStationaryPhase}, \cite{Fedoryuk}, or \cite{PhamMethodeDeCol}. Introducing a real parameter $\tau$ and positive rational numbers $\eta_1,\dots,\eta_{k+n}\in\Q_{ >0}$, we consider an integral 
\begin{equation}\label{OscillatoryIntegral}
f_{\Gamma}^{\pm}(z)=\int_{\Gamma}e^{\tau\varphi}h_{1,z^{(1)}}(x)^{\mp\gamma_1}\cdots h_{k,z^{(k)}}(x)^{\mp\gamma_k}x^{\pm c}\omega
\end{equation}
where $\varphi=\varphi(x;z,\eta)=h_{0}(x;z)+\sum_{l=1}^k\eta_{n+l}\log h_{l}(x;z)+\sum_{i=1}^n\eta_i\log x_i$. Note that the integral (\ref{OscillatoryIntegral}) is essentially same as the integral (\ref{MixedIntegral}). The recipe of the method of stationary phase is as follows: We first detect all the critical points of $\varphi(x;z,\eta)$ where $z$ and $\eta$ are regarded as fixed constants. The set of critical points is denoted by ${\rm Crit}$. Assume that any critical point $p$ is Morse. For each critical point $p\in{\rm Crit}$, we associate the contracting (resp. expanding) manifold $L_p^+$ (resp. $L_p^-$) as a set of all trajectories that have the point $p$ as the limit point at $t=+\infty$ (resp. $t=-\infty$) of the gradient vector field of $\Re\varphi$. Assuming the completeness of the gradient flow, we obtain $n$-dimensional cycle $L_p^+$ (resp. $L_p^-$) which is called a positive (resp. negative) Lefschetz thimble (to be more precise, one must replace the Euclidian metric by a complete K\"ahler metric as in \cite[Chapter 4]{AomotoKita}). Under the generic condition that $L_p^\pm$ does not flow into another critical point $q\in {\rm Crit}$, the method of stationary phase tells us that the asymptotic expansion of the integral $f_{L_p^\pm}^{\pm}(z)$ as $\tau\rightarrow \pm\infty$ has the form $e^{\tau\varphi(p;z,\eta)}a_p^\pm\tau^{-\frac{n}{2}}(1+o(\tau))$ where $a_p^\pm$ is a non-zero constant which can be computed from the Hessian of $\varphi(x;z,\eta)$ at p. Moreover, by construction, we have the natural orthogonality relation $\langle L_p^+,L_q^-\rangle_h=0$ if $p\neq q$ and $\langle L_p^+,L_p^-\rangle=1$ (Smale's transversality). In this fashion, it is expected that we obtain a good basis $\{ L_p^+\}_{p\in{\rm Crit}}\subset\Homo_n^{\rm r.d.}\left( U;(\mathcal{O}_{U^{an}}^\vee,\nabla_z^{{\rm an}\vee})\right)$ and $\{ L_p^-\}_{p\in{\rm Crit}}\subset\Homo_n^{\rm r.d.}\left( U;(\mathcal{O}_{U^{an}},\nabla_z^{\rm an})\right)$ of the twisted homology groups. Though this method is simple and widely used, it is not easy in general to compute the exact asymptotic expansion of the integral $f_{L_p^\pm}^\pm$  as $\tau\rightarrow\pm\infty$. Moreover, though it is expected that the global monodromy representation can be computed by means of Picard-Lefschetz formula, it is not clear how one can expand the integral $f^\pm_{L_p^\pm}$ as a convergent series in $z$ variables.

\subsection{Constructing a good basis of cycles I\hspace{-.1em}I: Regularization of chambers}

When the integral (\ref{MixedIntegral}) is associated to a hyperplane arrangement, positive Lefschetz thimbles $\{ L_p^+\}_{p\in{\rm Crit}}$ can be understood in a combinatorial way. For simplicity, let us consider the case when $h_l$ are polynomials of degree $1$, $h_0\equiv 0$, the variable $z$ is real and parameters $\gamma_l$ and $c$ are generic. When $z$ is taken to be real and generic, the real part $U^{\rm an}\cap\R^N$ is decomposed into finitely many connected components which are called chambers. In particular, a relatively compact chamber is called a bounded chamber. Then, each positive Lefschetz thimble is represented by exactly one bounded chamber $\Delta$ with the property $p\in\Delta$. Let us describe it more precisely. We write $\mathcal{L}^+$ for the local system on $U^{\rm an}$ of flat sections of $\nabla_z^{{\rm an}\vee}$. The dual local system of $\mathcal{L}^+$ is denoted by $\mathcal{L}^-$. For each bounded chamber $\Delta$, we write $\Delta^+$ (resp. $\Delta^-$) for an element of the locally finite homology group $\Homo^{\rm lf}_n(U,\mathcal{L}^+)$ (resp. $\Homo^{\rm lf}_n(U^{\rm an},\mathcal{L}^-)$) represented by $\Delta$. Then, the set $\{ \Delta^\pm\}_{\Delta:\text{bounded chambres}}$ is a basis of $\Homo^{\rm lf}_n(U^{\rm an},\mathcal{L}^\pm)$ (\cite{KohnoHomology}, \cite[Proposition 6.4.1]{OrlikTerao}) and $\{ \Delta^+\}_{\Delta:\text{bounded chambres}}$ coincides with the basis $\{ L_p^+\}_{p\in{\rm Crit}}$.

It is important that the intersection numbers with respect to these bases $\{ \Delta^\pm\}_{\Delta:\text{bounded chambres}}$ can be computed explicitly. Let ${\rm reg}:\Homo^{\rm lf}_n(U^{\rm an},\mathcal{L}^\pm)\tilde{\rightarrow}\Homo_n(U^{\rm an},\mathcal{L}^\pm)$ be the inverse of the natural isomorphism $\Homo_n(U^{\rm an},\mathcal{L}^\pm)\tilde{\rightarrow}\Homo_n^{\rm lf}(U^{\rm an},\mathcal{L}^\pm)$. The isomorphism ${\rm reg}$ is called the regularization map. For a bounded chamber $\Delta$, there is an explicit description of the cycle ${\rm reg}(\Delta^+)$. For simplicity, we suppose that the hyperplane arrangement $D$ is generic, i.e., $D$ is a normal crossing divisor. For a given element $[\Delta^+]\in\Homo_n^{\rm lf}(U^{\rm an},\mathcal{L}^+)$ represented by a bounded chamber $\Delta$, we cut off a small neighbourhoods of the faces of $\Delta$ and consider a small chamber $\s$. We can regard $\s$ as a finite chain whose orientation is induced from that of $\Delta^+$. At this stage, $\s$ is not a cycle. Then, to each face of $\s$, we put a ``pipe'' encircling a divisor with a suitable coefficient in the local system $\mathcal{L}^+$. Repeating this process, we obtain a cycle ${\rm reg }(\Delta^+)$ whose homology class in $\Homo_n^{\rm lf}(U^{\rm an},\mathcal{L}^+)$ is $[\Delta^+]$ (\cite{KitaWronskian}). From this construction, we see that the regularized cycle ${\rm reg}(\Delta^+)$ is, up to a constant multiplication, equal to the multidimensional Pochhammer cycle (see e.g. \cite{Beukers}). As for the description of the regularization map for a particular non-generic arrangement, see  \cite[Chapter 5]{TsuchiyaKanie}. With this construction, we have an explicit formula of the homology intersection number $\langle {\rm reg}(\Delta^+_1),\Delta_2^-\rangle_h$, which turns out to be a periodic function of the parameters (\cite{KitaYoshida} and \cite{KitaYoshida2}).





\begin{figure}[h]
\begin{minipage}{0.5\hsize}
\begin{center}
\begin{tikzpicture}
\draw[-] (-1,0)--(4,0);
\draw[-] (0,-1)--(0,3);
\draw[-] (-1,3)--(3,-1);
\draw[-] (-2,-1)--(4,0.1);
\filldraw[fill=lightgray] (0,0) -- (2,0) -- (0,2);
\node at (0.6,0.6) {$\Delta^+$};
\end{tikzpicture}
\end{center}
\end{minipage}
\begin{minipage}{0.5\hsize}
\begin{center}
\begin{tikzpicture}
\draw[-] (-1,0)--(4,0);
\draw[-] (0,-1)--(0,3);
\draw[-] (-1,3)--(3,-1);
\draw[-] (-2,-1)--(4,0.1);
\filldraw[fill=lightgray] (0.2,0.2)--(1.5,0.2)--(0.2,1.5)--cycle;
\draw[-] (0.5,1.8) -- (1.8,0.5);
\draw[-] (0.2,-0.2) -- (1.5,-0.2);
\draw[-] (-0.2,0.2) -- (-0.2,1.5);
\draw[-] (0.2,1.5) to [out=100,in=130] (0.5,1.8);
\draw[-] (1.5,0.2) to [out=100,in=130] (1.8,0.5);
\draw[-] (0.2,1.5) to [out=20,in=-80] (0.5,1.8);
\draw[dashed] (1.5,0.2) to [out=0,in=-80] (1.8,0.5);
\draw[-] (1.5,0.2) to [out=-50,in=30] (1.5,-0.2);
\draw[dashed] (1.5,0.2) to [out=-100,in=100] (1.5,-0.2);
\draw[-] (0.2,0.2) to [out=-50,in=30] (0.2,-0.2);
\draw[-] (0.2,0.2) to [out=-120,in=120] (0.2,-0.2);
\draw[-] (0.2,0.2) to [out=110,in=70] (-0.2,0.2);
\draw[-] (0.2,0.2) to [out=-110,in=-70] (-0.2,0.2);
\draw[-] (0.2,1.5) to [out=110,in=70] (-0.2,1.5);
\draw[dashed] (0.2,1.5) to [out=-110,in=-70] (-0.2,1.5);
\node at (2,2){reg$(\Delta^+)$};
\node at (0.6,0.6) {$\sigma$};
\end{tikzpicture}
\end{center}
\end{minipage}
\caption{A bounded chamber and its regularization}
\end{figure}
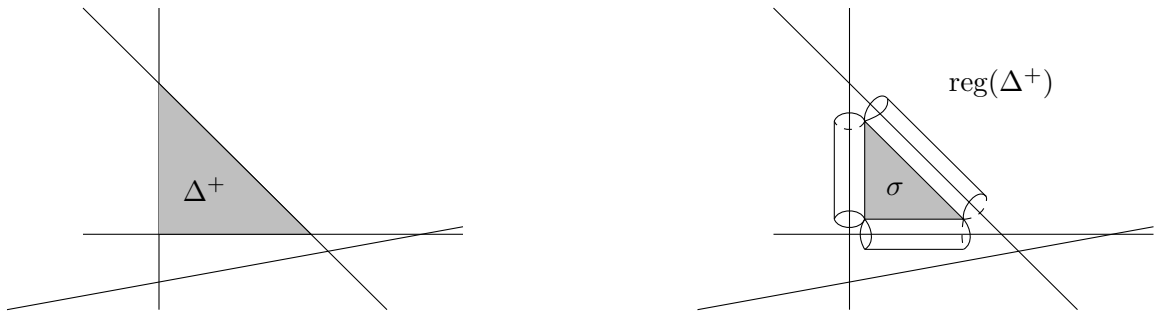

\subsection{Our method: The viewpoint of GKZ system}

In the previous subsection, we explained that there is a combinatorial method of constructing a basis of the homology group whose homology intersection matrix has an explicit formula when the integrand of (\ref{MixedIntegral}) defines a hyperplane arrangement. Now it is natural to ask to what extent the construction can be generalized. There are many papers discussing constructions of bases of the (rapid decay) homology groups in various contexts  (e.g., \cite{AomotoConfigurations}, \cite{AomotoHypersphereSchlafli}, \cite{EsterovTakeuchi}, \cite{GotoPhDThesis},  \cite{MarkovTarasovVarchenko}). To our knowledge, however, there is no systematic method of constructing a basis of the rapid decay homology group whose intersection matrix as well as its period integrals have closed formulas. 


We propose a systematic method of constructing a basis of the rapid decay homology group without any restriction on the Laurent polynomials $h_l(x)$. We no longer expect that the ``visible cycles'' such as chambers or their regularizations are sufficient to construct a basis. Instead, we make use of a convergent regular triangulation $T$ of the cone $\R_{>0}A$. Let us illustrate our construction by a simple example.

Suppose that $n=1$ and the integrand of (\ref{MixedIntegral}) is $e^{z_1x+z_2x^{-1}}(z_3+z_4x)^{-\gamma}x^c$. For simplicity, we assume that $z_1$ and $z_2$ are positive. We consider the real oriented blow-up (\cite[\S 8.2]{SabbahIntroductionToStokes}) $(\C\setminus\{ 0,\zeta=-\frac{z_3}{z_4}\})\sqcup S^1\infty\sqcup S^10$, where $S^1\infty$ (resp. $S^10$) is the circle at $\infty$ (resp. at the origin). Under the standard identification $S^1=\R/2\pi\Z$, the twisted homology group $\Homo_1^{\rm r.d.}\left( U;(\mathcal{O}_{U^{an}}^\vee,\nabla_z^{{\rm an}\vee})\right)$ is simply given by a relative homology group $\Homo_1\left( (\C\setminus\{ 0,\zeta\})\sqcup (\pi,\frac{3\pi}{2})\infty\sqcup (\pi,\frac{3\pi}{2}) 0,(\pi,\frac{3\pi}{2})\infty\sqcup (\pi,\frac{3\pi}{2}) 0;\mathcal{L}\right)$ where $\mathcal{L}$ is the local system of flat sections of the connection $\nabla^{{\rm an}\vee}_z$. As a convergent regular triangulation, we take $T=\{ 14,34,23\}$. For the simplex $14\in T$, we associate a limit $z_2,z_3\rightarrow 0$ with $z_1,z_4$ fixed. At the limit, we observe that the exponential factor $e^{z_1x+z_2x^{-1}}$ becomes $e^{z_1x}$ while the point $\zeta$ approaches the origin. The key observation is that, at the limit, we may pretend as if the rapid decay homology group of $e^{z_1x}x^{c-\gamma}$ is associated to the integrand $e^{z_1x}x^{c-\gamma}$. Since the rank of this rapid decay homology group is $1$, we can take a generator $[\Gamma_{14}(z)]$. Though this cycle is defined when $z_2$ and $z_3$ are small, it can be defined for any generic $z$ after parallel transportation (see the argument in \S\ref{SectionEuler}). Applying this process to other simplices $34,23$, we obtain a basis $\left\{ [\Gamma_{14}(z)],[\Gamma_{34}(z)],[\Gamma_{23}(z)]\right\}$ of the twisted homology group.

After a suitable modification, we can generalize the construction above to the general case: We take a convergent regular triangulation $T$ of $\R_{>0}A$. To each simplex $\s\in T$, we associate a limit $z_j\rightarrow 0$ ($j\notin\s$). At the limit, we can pretend as if the rank of the rapid decay homology group is the cardinality $|G_{\s}|$ of the finite Abelian group $G_{\s}$. We can take out a set of independent cycles $\{\Gamma_{\s,h}\}_{h\in\widehat{G_{\s}}}$ labeled by the dual group $\widehat{G_{\s}}$. After performing the parallel transportation, the union $\Gamma_T=\bigcup_{\s\in T}\{\Gamma_{\s,h}\}_{h\in\widehat{G_{\s}}}$ is a basis of the rapid decay homology group $\Homo_n^{\rm r.d.}\left( U;(\mathcal{O}_{U^{an}}^\vee,\nabla_z^{{\rm an}\vee})\right)$. The same construction works for the dual group $\Homo_n^{\rm r.d.}\left( U;(\mathcal{O}_{U^{an}},\nabla_z^{{\rm an}})\right)$ and this basis is denoted by $\check{\Gamma}_T=\bigcup_{\s\in T}\{\check{\Gamma}_{\s,h}\}_{h\in\widehat{G_{\s}}}$.

Let $z_\s^{\infty}$ be the point near $z_T^\infty$ of which the $j$-th coordinate is $0$ unless $j\in \s$. In a sense, $z_\s^\infty$ plays the role of a critical point in the method of stationary phase. Namely, the cycles $\Gamma_{\s,h}$ are characterized by the behaviour of the integrand of (\ref{MixedIntegral}) near the special point $z_\s^{\infty}$. The general construction above can be carried out in quite an explicit manner. Indeed, after a sequence of coordinate transformations, each cycle $\Gamma_{\s,h}$ can be constructed as a product of Pohhammer cycles and Hankel contour. Thanks to this explicit construction, we can also expand the integral (\ref{MixedIntegral}) along the cycle $\Gamma_{\s,h}$ into a hypergeometric series converging near the point $z_T^\infty$. In \S\ref{SectionEuler}, we give an explicit transformation matrix between two basis of solutions $\Phi_T$ and $\Gamma_T$ in terms of the character matrices of the finite Abelian groups $G_{\s}$ (\cref{thm:fundamentalthm3}). The homology intersection matrix $\left(\langle \Gamma_{\s_1,h_1},\check{\Gamma}_{\s_2,h_2}\rangle_h\right)$ is naturally block-diagonalized in the sense that $\langle \Gamma_{\s_1,h_1},\check{\Gamma}_{\s_2,h_2}\rangle_h=0$ unless $\s_1=\s_2$. Finally, we can evaluate the diagonal term $\langle \Gamma_{\s,{\bf 0}},\check{\Gamma}_{\s,{\bf 0}}\rangle_h$  as a periodic function in parameters when the group $G_{\s}$ is the trivial Abelian group ${\bf 0}$. A table comparing our construction with the method of stationary phase is the following.

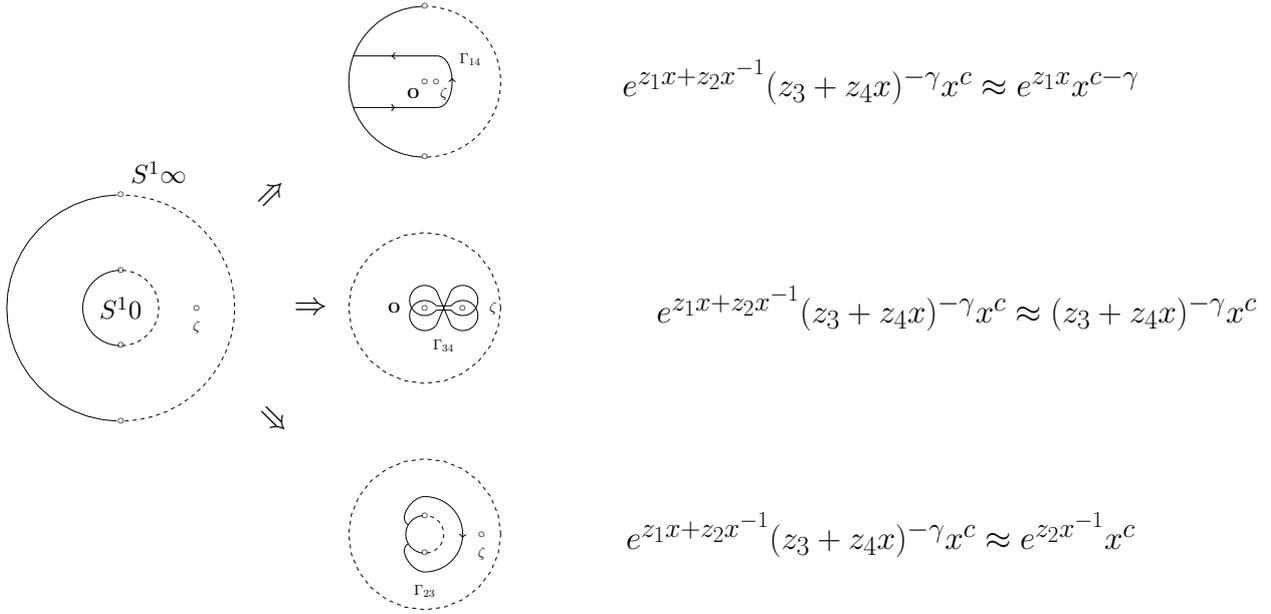
\begin{figure}[t]
\begin{center}
\scalebox{0.5}[0.5]{
\begin{tikzpicture}
\draw[-,domain=92:268] plot ({3*cos(\x)},{3*sin(\x)});
\draw[-,domain=92:268] plot ({cos(\x)},{sin(\x)});
\draw[dashed,domain=-88:88] plot ({3*cos(\x)},{3*sin(\x)});
\draw[dashed,domain=-88:88] plot ({cos(\x)},{sin(\x)});
\node at (0,3){$\circ$};
\node at (0,-3){$\circ$};
\node at (0,1){$\circ$};
\node at (0,-1){$\circ$};
\node at (2,0){$\circ$};
\node at (2,-0.5){$\zeta$};
\node at (0,0){\scalebox{1.8}{$S^10$}};
\node at (1,3.6){\scalebox{1.8}{$S^1\infty$}};
\draw[-,domain=92:268] plot ({8+2*cos(\x)},{6+2*sin(\x)});
\draw[dashed,domain=-88:88] plot ({8+2*cos(\x)},{6+2*sin(\x)});
\node at (8,8){$\circ$};
\node at (8,4){$\circ$};
\node at (8,6){$\circ$};
\node at (8.3,6){$\circ$};
\node at (7.7,5.7){\bf O};
\node at (8.5,5.7){$\zeta$};
\node at (9.2,6.6){$\Gamma_{14}$};
\draw[->-=.5,domain=0:2.2] plot ({8+2*cos(200)+(\x)},{6+2*sin(200)});
\draw[-<-=.5,domain=0:2.2] plot ({8+2*cos(160)+(\x)},{6+2*sin(160)});
\coordinate (A) at ({10.2+2*cos(160)},{6+2*sin(160)});
\coordinate (B) at ({10.2+2*cos(200)},{6+2*sin(200)});
\draw[-<-=.5] (A) to [out=0,in=0] (B);
\draw[dashed,domain=0:360] plot ({8+2*cos(\x)},{2*sin(\x)});
\node at (8,0){$\circ$};
\node at (9,0){$\circ$};
\node at (7.2,0){\bf O};
\node at (9.8,0){$\zeta$};
\node at (8.5,-1){$\Gamma_{34}$};
\draw[-,domain=-140:140] plot ({9+0.4*cos(\x)}, {0.2+0.4*sin(\x)});
\draw[-,domain=40:320] plot ({8+0.4*cos(\x)}, {0.2+0.4*sin(\x)});
\draw[-,domain=-140:140] plot ({9+0.4*cos(\x)}, {-0.2+0.4*sin(\x)});
\draw[-,domain=40:320] plot ({8+0.4*cos(\x)}, {-0.2+0.4*sin(\x)});
    \coordinate (A1) at ({8+0.4*cos(40)}, {0.2+0.4*sin(40)});
    \coordinate (A2) at ({9+0.4*cos(-140)}, {-0.2+0.4*sin(-140)});
    \coordinate (B1) at ({9+0.4*cos(140)}, {-0.2+0.4*sin(140)});
    \coordinate (B2) at ({8+0.4*cos(40)}, {-0.2+0.4*sin(40)});
    \coordinate (C1) at ({8+0.4*cos(320)}, {-0.2+0.4*sin(320)});
    \coordinate (C2) at ({9+0.4*cos(140)}, {0.2+0.4*sin(140)});
    \coordinate (D1) at ({9+0.4*cos(-140)}, {0.2+0.4*sin(-140)});
    \coordinate (D2) at ({8+0.4*cos(320)}, {0.2+0.4*sin(320)});
\draw[-] (A1) -- (A2);
\draw[-] (B1) -- (B2);
\draw[-] (C1) -- (C2);
\draw[-] (D1) -- (D2);
\draw[dashed,domain=0:360] plot ({8+2*cos(\x)},{-6+2*sin(\x)});
\draw[-,domain=96:264] plot ({8+0.5*cos(\x)},{-6+0.5*sin(\x)});
\draw[dashed,domain=-84:84] plot ({8+0.5*cos(\x)},{-6+0.5*sin(\x)});
\node at (8,-5.5){$\circ$};
\node at (8,-6.5){$\circ$};
\node at (9.5,-6){$\circ$};
\node at (9.5,-6.5){$\zeta$};
\node at (8,-7.5){$\Gamma_{23}$};
\draw[-<-=.5,domain=-90:90] plot ({8+cos(\x)},{-6+sin(\x)});
\coordinate (AA) at ({8+cos(90)},{-6+sin(90)});
\coordinate (BA) at ({8+cos(-90)},{-6+sin(-90)});
\coordinate (AB) at ({8+0.5*cos(150)},{-6+0.5*sin(150)});
\coordinate (BB) at ({8+0.5*cos(210)},{-6+0.5*sin(210)});
\draw[-] (AB) to [out=150,in=180] (AA);
\draw[-] (BB) to [out=210,in=180] (BA);
\node at (4,3){$\rotatebox{45}{\Huge $\Rightarrow$}$};
\node at (5,0){\Huge $\Rightarrow$};
\node at (4,-3){$\rotatebox{-45}{\Huge $\Rightarrow$}$};
\node at (20,6){\Huge $e^{z_1x+z_2x^{-1}}(z_3+z_4x)^{-\gamma}x^c\approx e^{z_1x}x^{c-\gamma}$};
\node at (22,0){\Huge $e^{z_1x+z_2x^{-1}}(z_3+z_4x)^{-\gamma}x^c\approx (z_3+z_4x)^{-\gamma}x^c$};
\node at (20,-6){\Huge $e^{z_1x+z_2x^{-1}}(z_3+z_4x)^{-\gamma}x^c\approx  e^{z_2x^{-1}}x^c$};
\end{tikzpicture}
}\label{TheFirstFigure}
\caption{Our construction of cycles}
\end{center}
\end{figure}

\begin{table}[ht]
\begin{tabular}{|c|c|c|}
\hline
 &Method of stationary phase & Our method \\
\hline
Special point &Critical point &$z_\s^\infty$ ($\s\in T$) \\
\hline
Integration cycle &Lefschetz thimble& (Pochhammer cycle)$\times$(Hankel contour)\\
\hline
Intersection matrix & Identity matrix& Periodic function in parameters\\
\hline
Expansion of the integral &Asymptotic series in $\tau\approx\infty$& Hypergeometric series in $z\approx z_T^\infty$ \\
\hline
\end{tabular}
\end{table}


\subsection{Quadratic relations for Aomoto-Gelfand hypergeoemtric functions and its confluence}


In the last two sections, we apply \cref{thm:IntroTheorem} to Aomoto-Gelfand system (\S \ref{QuadraticRelationsForGrassman}) and its confluence (\S \ref{E(21...1)}). The special feature of these classes is that there is a special convergent unimodular regular triangulation $T$ called the staircase triangulation (\cite{GelfandGraevRetakh}). Since there is a one-to-one correspondence between simplices of $T$ and spanning trees of a complete bipartite graph, we can express the homology intersection numbers in terms of these graphs. It is noteworthy that our basis of cycles is different from that consisting of regularizations of bounded chambers $\{ {\rm reg}(\Delta^\pm)\}_{\Delta:\text{bounded chambers}}$. Namely, our cycles may go around several divisors $\{ h_{l}(x)=0\}$ simultaneously so that they are linked in a more complicated way. In this sense, our cycles can be referred to as ``linked cycles''. 

Besides the precise description of the twisted homology group, there is a basis of the algebraic de Rham cohomology group whose cohomology intersection matrix has a closed formula (\cite{MatsumotoIntersection} and \cref{prop:E(21...1)c.i.n.}). Therefore, \cref{thm:IntroTheorem} gives rise to a general quadratic relation for Aomoto-Gelfand hypergeometric functions (\cref{thm:QuadraticRelationsForAomotoGelfand}) and its confluence (\cref{thm:QuadraticRelationsForAomotoGelfand2}). The simplest example of such an identity is the following relation:

\begin{align}
&(1-\gamma+\alpha)(1-\gamma+\beta){}_2F_1\left(\substack{\alpha,\beta\\ \gamma};z\right){}_2F_1\left(\substack{-\alpha,-\beta\\ 2-\gamma};z\right)-\alpha\beta{}_2F_1\left(\substack{\gamma-\alpha-1,\gamma-\beta-1\\ \gamma};z\right)
{}_2F_1\left(\substack{1-\gamma+\alpha,1-\gamma+\beta \\ 2-\gamma};z\right)\nonumber\\
=&(1-\gamma+\alpha+\beta)(1-\gamma).\label{QRGauss}
\end{align}

\noindent
Here, ${}_2F_1\left(\substack{\alpha,\beta\\ \gamma} ;z\right)$ is the usual Gau\ss' hypergeometric series
\begin{equation}\label{series}
{}_2F_1\left(\substack{\alpha,\beta\\ \gamma} ;z\right)=\displaystyle\sum_{n=0}^\infty\frac{(\alpha)_n(\beta)_n}{(\gamma)_n(1)_n}z^n
\end{equation}
with complex parameters $\alpha,\beta\in\C$ and $\gamma\in\C\setminus\Z_{\leq 0}$.

\subsection{The structure of the paper}

We give a plan of this paper. The first three sections \S\ref{SectionDModules},  \S\ref{SectionRapidDecay} and \S\ref{RDIntersection} are devoted to the foundations of the study of the integral (\ref{MixedIntegral}). In \S\ref{SectionDModules}, we establish an isomorphism between the Gau\ss-Manin connection associated to the integral (\ref{MixedIntegral}) and GKZ $\DD$-module $M_A(\delta)$ under the non-resonance assumption of $\delta$ (\cref{thm:mainDresult}). The technique of the proof is an adaptation of the arguments of \cite[\S 3]{AdolphsonSperber} and \cite[\S 9]{DworkLoeser} in the framework of $\DD$-modules. Based on \S\ref{SectionDModules}, we establish an isomorphism between the rapid decay homology group $\Homo_n^{\rm r.d.}\left( U;(\mathcal{O}_{U^{an}}^\vee,\nabla_z^{{\rm an}\vee})\right)$ and the stalk of the solutions of $M_A(\delta)$ at a generic point $z$ in \S\ref{SectionRapidDecay}. The essential part of the construction is a compactification of $U$ in the spirit of \cite{KhovanskiNewton} and \cite{EsterovTakeuchi}. In \S\ref{RDIntersection}, we give a foundation of the (co)homology intersection pairings. One can find a similar discussion in \cite{FSY}. The definitions of the pairing is a natural generalization of that of \cite{MajimaMatsumotoTakayama}. From \S\ref{SectionSeries}, we combine the general theory above with the combinatorial structure of GKZ system. In \S\ref{SectionSeries}, we summarize the construction of hypergeoemtric series developed in \cite{FernandezFernandez}. The readers familiar with this topic can skip this section. In \S\ref{SectionEuler} we construct a basis $\Gamma_T$ of the rapid decay homology group for a given convergent regular triangulation $T$. We give an explicit transformation formula between the basis of series solutions $\Phi_T$ and the basis $\Gamma_T$ (\cref{thm:fundamentalthm3}). In \S\ref{IntersectionNumbers}, we establish a closed formula of intersection matrix with respect to the bases $\Gamma_T$ and $\check{\Gamma}_T$ when the convergent regular triangulation $T$ is unimodular. In \S\ref{QuadraticRelationsForGammaSeries}, we give a proof of \cref{thm:IntroTheorem}. In the last two sections \S\ref{QuadraticRelationsForGrassman} and \S\ref{E(21...1)}, we discuss the application of \cref{thm:IntroTheorem} to Aomoto-Gelfand system and its confluence. After recalling/establishing closed formulas of the cohomology intersection numbers (\cite{MatsumotoIntersection}, \cref{prop:E(21...1)c.i.n.}) we give closed formulas of quadratic relations associated to the staircase triangulation in terms of bipartite graphs (\cref{thm:QuadraticRelationsForAomotoGelfand} and  \cref{thm:QuadraticRelationsForAomotoGelfand2}).



\section{General framework of Euler-Laplace integral representation}\label{SectionDModules}
In this section, we discuss $\DD$-modules corresponding to the integral (\ref{MixedIntegral}). Namely, we establish an isomorphism between GKZ system $M_A(\delta)$ and the Gauss-Manin system associated to the integral (\ref{MixedIntegral}) when the parameter $\delta$ is non-resonant (Theorem \ref{thm:mainDresult}). 

\subsection{Some equivalences of integral representations}\label{subsec:2.1}
Let us recall some basic notation and results of algebraic $\DD$-modules. For their proofs, see \cite{BorelEtAl} or \cite{HTT}. Let $X$ and $Y$ be smooth algebraic varieties over the complex number field $\C$ and let $f:X\rightarrow Y$ be a morphism. Throughout this paper, we write $X$ as $X_x$ when we emphasize that $X$ is equipped with the coordinate $x$. We write $\DD_X$ for the sheaf of linear partial differential operators on $X$ and write $D^b_{q.c.}(\DD_X)$ (resp. $D^b_{coh}(\DD_X)$, resp. $D^b_{h}(\DD_X)$) for the derived category of bounded complexes of left $\DD_X$-modules whose cohomologies are quasi-coherent  (resp. coherent, resp. holonomic). We write $D^b_{*}(\DD_X)$ for either of two categories $D^b_{q.c.}(\DD_X)$ or $D^b_{h}(\DD_X)$. For any coherent $\DD_X$-module $M$, ${\rm Char}(M)$ denotes its characteristic variety in $T^*X$. In general, for any object $M\in D^b_{coh}(\DD_X)$, we define its characteristic variety by ${\rm Char}(M)=\displaystyle\cup_{n\in\Z}{\rm Char}\left(\Homo^n(M)\right)$. We write ${\rm Sing }(M)$ for the image of ${\rm Char}(M)$ by the canonical projection $T^*X\rightarrow X$. For any object $N\in D^b_*(\DD_Y)$, we define its inverse image $\LL f^*N\in D^b_*(\DD_X)$ (resp. its shifted inverse image $f^\dagger N\in D^b_*(\DD_X)$)  by the formula
\begin{equation}
\LL f^*N=\DD_{X\rightarrow Y}\overset{\LL}{\underset{f^{-1}\DD_Y}{\otimes}}f^{-1}N \;\;\;(\text{resp. } f^{\dagger}N=\LL f^*N[\dim X-\dim Y]),
\end{equation} 
where $\DD_{X\rightarrow Y}$ is the transfer module $\mathcal{O}_X\otimes_{f^{-1}\mathcal{O}_Y}f^{-1}\DD_Y.$ For any object $M\in D^b_*(\DD_X)$, we define its holonomic dual $\HD_X M\in D^b_*(\DD_X)^{op}$ by 
\begin{equation}
\HD_X M=\R\mathcal{H}om_{\DD_X}(M,\DD_X)\otimes_{\mathcal{O}_X}\Omega_X^{\otimes -1}.
\end{equation}
Note that $\HD_X$ is involutive, i.e., we have $\HD_X\circ\HD_X\simeq {\rm id}_X$. For any object $M\in D^b_*(\DD_X)$, we define its direct image $\int_fM\in D^b_*(\DD_Y)$  (resp. its proper direct image $\int_{f!}M\in D^b_*(\DD_Y)$) by
\begin{equation}
\int_fM=\R f_*(\DD_{Y\leftarrow X}\overset{\LL}{\underset{\DD_X}{\otimes}}M),\;\;\;(\text{resp. } \int_{f!}=\HD_Y\circ\int_f\circ\HD_X M),
\end{equation}
where $\DD_{Y\leftarrow X}$ is the transfer module $\Omega_X\otimes_{\mathcal{O}_X}\DD_{X\rightarrow Y}\otimes_{f^{-1}\mathcal{O}_Y}f^{-1}\Omega_Y$. If $X$ is a product variety $X=Y\times Z$ and $f:Y\times Z\rightarrow Y$ is the natural projection, the direct image can be computated in terms of (algebraic) relative de Rham complex
\begin{equation}\label{RelativeDeRhamFormula}
\int_fM\simeq \R f_*(\DR_{X/Y}(M)).
\end{equation}
In particular, if $Y=\{*\}$ (one point), and $M$ is a connection $M=(E,\nabla)$ on $Z$, then for any integer $p$, we have a canonical isomorphism
\begin{equation}
\Homo^p\left( \int_fM\right)\simeq\mathbb{H}_{dR}^{p+\dim Z}(Z,(E,\nabla)),
\end{equation} 
where $\mathbb{H}_{dR}^k$ denotes the $k$-th algebraic de-Rham cohomology group. If $\rho_f:X\times_YT^*Y\rightarrow T^*X$ and $\varpi_f:X\times_YT^*Y\rightarrow T^*Y$ denote associated morphisms, we have
\begin{equation}\label{eqn:EstimateOfChar}
{\rm Char}(\int_fM)\subset\varpi_f\rho_f^{-1}\left({\rm Char}(M)\right)
\end{equation}
for any $M\in D^b_{coh}(\mathcal{D}_X)$. Suppose that a cartesian diagram 
\begin{equation}
\xymatrix{
 X^\prime \ar[r]^{f^\prime } \ar[d]_{g^\prime}&Y^\prime\ar[d]^{g}\\
 X \ar[r]^{f}                             &Y
}
\end{equation}
is given. For any object $M\in D^b_*(\DD_X)$, we have the base change formula 
\begin{equation}\label{BaseChange}
g^\dagger\int_fM\simeq\int_{f^\prime}g^{\prime\dagger}M.
\end{equation}
For objects $M,M^\prime\in D^b_*(\DD_X)$ and $N\in D^b_*(\DD_Y)$, the tensor product $M\overset{\mathbb{D}}{\otimes}M^\prime\in D^b_*(\DD_X)$ and external tensor product $M\boxtimes N\in D^b_*(\DD_{X\times Y})$ are defined by
\begin{equation}
M\overset{\mathbb{D}}{\otimes}M^\prime=M\overset{\mathbb{L}}{\underset{\mathcal{O}_X}{\otimes}}M^\prime,\;\; M\boxtimes N=M\underset{\C}{\otimes}N.
\end{equation}
Note that for any objects $N,N^\prime\in D^b_{*}(\DD_Y)$, we have a canonical isomorphism 
\begin{equation}
\mathbb{L}f^*(N\overset{\D}{\otimes}N^\prime)\simeq \mathbb{L}f^*N\overset{\D}{\otimes}\mathbb{L}f^*N^\prime.
\end{equation}
For any objects $M\in D_*^b(\DD_X)$ and $N\in D^b_*(\DD_Y)$, we have the projection formula
\begin{equation}
\int_f\left( M\overset{\D}{\otimes}\mathbb{L}f^*N\right)\simeq\left(\int_fM\right)\overset{\D}{\otimes}N.
\end{equation}

\noindent
Let $Z$ be a smooth closed subvariety of $X$ and let $i:Z\hookrightarrow X$ and $j:X\setminus Z\hookrightarrow X$ be natural inclusions. Then, for any object $M\in D_*^b(\DD_X)$, there is a distinguished triangle
\begin{equation}\label{sdt}
\int_ii^\dagger M\rightarrow M\rightarrow\int_j j^\dagger M\overset{+1}{\rightarrow}.
\end{equation}
If we write $\Gamma_{[Z]}$ for the algebraic local cohomology functor supported on $Z$, we have canonical isomorphisms 
\begin{equation}
\R\Gamma_{[Z]}(\mathcal{O}_X)\overset{\D}{\otimes}M\simeq\R\Gamma_{[Z]}M\simeq\int_ii^\dagger M.
\end{equation}

\noindent
For any (possibly multivalued) function $\varphi$ on $X$ such that $\varphi$ is nowhere-vanishing and that $\frac{d\varphi}{\varphi}$ belongs to $\Omega^1_X(X)$, we define a $\DD_X$-module $\mathcal{O}_X\varphi$ by twisting its action as
\begin{equation}
\theta\cdot h=\Big\{\theta+\Big(\frac{\theta\varphi}{\varphi}\Big)\Big\}h\;\;\;(h\in\mathcal{O}_X,\;\theta\in\Theta_X).
\end{equation}
Here, $\Theta_X$ denotes the sheaf of vector fields on $X$. For any $\DD_X$-module $M,$ we define $M\varphi$ by $M\varphi=M\underset{\mathcal{O}_X}{\otimes}\mathcal{O}_X\varphi.$ We write $\underline{\C}\varphi$ for the local system of flat sections of $\left(\mathcal{O}_X\varphi^{-1}\right)^{an}$ on $X^{an}.$


Now, we are going to prove the isomorphism between Laplace-Gauss-Manin connections associted to Euler-Laplace and Laplace integral. We first prove the following identity which is ``obvious'' from the definition of $\Gamma$ function.

\begin{prop}\label{prop:GammaInt}
Let $h:X\rightarrow\A^1$ be a non-zero regular function such that $h^{-1}(0)$ is smooth, $\pi:X\times (\Gm)_y\rightarrow X$ be the canonical projection, $j:X\setminus h^{-1}(0)\hookrightarrow X$ and $i:h^{-1}(0)\hookrightarrow X$ be inclusions, and let $\gamma\in\C\setminus\Z$ be a parameter. In this setting, for any $M\in D^b_{q.c.}(\DD_X)$, one has a canonical isomorphism

\begin{equation}\label{TheFirstIsomorphism}
\int_{\pi}(\mathbb{L}\pi^*M)y^\gamma e^{yh}\simeq\int_j (j^\dagger M)h^{-\gamma}.
\end{equation}
and a vanishing result
\begin{equation}\label{TheVanishingResult}
\int_{\pi}\R\Gamma_{[h^{-1}(0)\times(\Gm)_y]}(\mathbb{L}\pi^*M)y^\gamma e^{yh}=0.
\end{equation}

\end{prop}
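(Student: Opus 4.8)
The plan is to reduce everything to the one-variable Laplace/Mellin computation that underlies the Gamma function, and then propagate it through the derived functors by the standard machinery recalled above. First I would localize the problem along $h^{-1}(0)$: applying the standard distinguished triangle \eqref{sdt} for the closed subvariety $Z=h^{-1}(0)\times(\Gm)_y$ of $X\times(\Gm)_y$ to the module $(\LL\pi^*M)y^\gamma e^{yh}$, and pushing forward along $\pi$, we obtain a distinguished triangle
\begin{equation*}
\int_\pi\R\Gamma_{[Z]}\bigl((\LL\pi^*M)y^\gamma e^{yh}\bigr)\to\int_\pi(\LL\pi^*M)y^\gamma e^{yh}\to\int_\pi\int_{j'} j'^\dagger\bigl((\LL\pi^*M)y^\gamma e^{yh}\bigr)\overset{+1}{\to},
\end{equation*}
where $j':X\setminus h^{-1}(0))\times(\Gm)_y\hookrightarrow X\times(\Gm)_y$. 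So \eqref{TheVanishingResult} and the fact that the third term computes the right-hand side of \eqref{TheFirstIsomorphism} together give \eqref{TheFirstIsomorphism}; conversely it suffices to prove \eqref{TheVanishingResult} and that the third term is $\int_j(j^\dagger M)h^{-\gamma}$. The latter is a compatibility of direct images with the open embedding and a change of variables: on the open locus $h\neq0$ the substitution $y\mapsto y/h$ identifies $y^\gamma e^{yh}$ with $h^{-\gamma}y^\gamma e^{y}$, and integrating out the $(\Gm)_y$-factor against $y^\gamma e^y$ produces a one-dimensional de Rham cohomology that is $1$-dimensional and trivial as a connection when $\gamma\notin\Z$ — this is exactly the statement that $\int_{\Gm}(\mathcal O_{\Gm}y^\gamma e^y)\simeq\C$ in degree $0$ (the "Gamma function" computation), which I would check directly by writing the relative de Rham complex $\mathcal O[y^\pm]\xrightarrow{\partial_y+\gamma/y+1}\mathcal O[y^\pm]$ and seeing it has $1$-dimensional cohomology concentrated in degree $1$.

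The heart of the matter is the vanishing \eqref{TheVanishingResult}. Using $\R\Gamma_{[Z]}N\simeq\int_{i'}i'^\dagger N$ for $i':Z\hookrightarrow X\times(\Gm)_y$ and the fact that $Z=h^{-1}(0)\times(\Gm)_y$ is itself a product over $h^{-1}(0)$, the restriction $i'^\dagger\bigl((\LL\pi^*M)y^\gamma e^{yh}\bigr)$ is $(\LL\pi'^*i^\dagger M)y^\gamma e^{yh|_Z}$ where $\pi':h^{-1}(0)\times(\Gm)_y\to h^{-1}(0)$; but $h$ vanishes on $h^{-1}(0)$, so $e^{yh}$ restricts to the trivial exponential $e^0=1$, and we are left with $\int_{\pi'}(\LL\pi'^*i^\dagger M)y^\gamma$. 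By the projection formula this is $(i^\dagger M)\overset{\D}{\otimes}\int_{\pi'}\mathcal O_{h^{-1}(0)\times(\Gm)_y}y^\gamma$, and the fiber integral $\int_{\Gm}\mathcal O_{\Gm}y^\gamma$ vanishes for $\gamma\notin\Z$ because the relative de Rham complex $\mathcal O[y^\pm]\xrightarrow{\partial_y+\gamma/y}\mathcal O[y^\pm]$ is acyclic (multiplication by $y^n$ has image with nonzero coefficient $n+\gamma\neq0$, so the map is an isomorphism). This gives \eqref{TheVanishingResult}. The slightly delicate point is handling the formal/analytic subtleties if $h^{-1}(0)$ is not transverse to things, but the hypothesis that $h^{-1}(0)$ is smooth is exactly what makes $i$ a closed embedding of smooth varieties so that $i^\dagger$ and the base-change formula \eqref{BaseChange} apply cleanly.

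The main obstacle I anticipate is bookkeeping the twists $\varphi\mapsto\mathcal O_X\varphi$ correctly through $\LL\pi^*$, restriction $i^\dagger$, and the change of variables $y\mapsto y/h$ — in particular verifying that $(\LL\pi^*M)y^\gamma e^{yh}$ restricted to the open set, after the substitution, is genuinely isomorphic as a $\DD$-module to $(\LL\pi^*(j^\dagger M h^{-\gamma}))\,y^\gamma e^{y}$, with all the connection-twist contributions $\tfrac{d\varphi}{\varphi}$ matching up. This is routine once one writes out $\theta\cdot(\text{section})$ using the twisted action formula, but it is where sign and cross-term errors creep in. Everything else — the distinguished triangle, the projection formula, the relative de Rham description \eqref{RelativeDeRhamFormula} of $\int_\pi$ — is formal and already recorded above.
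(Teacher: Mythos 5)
Your proposal is correct and takes essentially the same approach as the paper: both reduce the vanishing and the isomorphism to the one-variable $\Gamma$-function computation $\int_{\Gm}\mathcal{O}_{\Gm}y^\gamma e^{hy}$ via projection formula, base change across the Cartesian square over $h^{-1}(0)$, the standard distinguished triangle, and the change of variables $y\mapsto y/h$ on the open locus. The only difference is cosmetic — you apply the distinguished triangle before pushing forward by $\pi$ while the paper applies it afterward (using $i^\dagger\int_\pi=0$), but since $\int_i$ is fully faithful and these are exchanged by base change, the two arrangements are equivalent.
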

\noindent
For the proof, we insert the following elementary

\begin{lem}\label{lem:gamma}
Let $pt:(\Gm)_y\rightarrow\{ *\}$ be the trivial morphism. If $\gamma\in\C\setminus \Z$ and $h\in\C,$ one has
\begin{equation}
\int_{pt}\mathcal{O}_{(\Gm)_y}y^\gamma e^{hy}=
\begin{cases}
0&(h=0)\\
\C&(h\neq 0).
\end{cases}
\end{equation}
\end{lem}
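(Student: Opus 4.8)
The plan is to compute the de Rham cohomology of the rank-one connection $\mathcal{O}_{(\Gm)_y}y^\gamma e^{hy}$ directly, since $\int_{pt}$ of a connection on the curve $(\Gm)_y$ is computed by the two-term de Rham complex. Writing $\nabla = d + \left(\frac{\gamma}{y} + h\right)dy$ acting on $\C[y^{\pm}]$, we have $\int_{pt}\mathcal{O}_{(\Gm)_y}y^\gamma e^{hy} \simeq [\C[y^{\pm}] \xrightarrow{\nabla} \C[y^{\pm}]\,dy]$ placed in degrees $-1,0$, so the whole point is to identify the kernel and cokernel of the operator $P = y\frac{d}{dy} + \gamma + hy$ on $\C[y^{\pm}]$ (after clearing $y$).

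First I would treat the case $h=0$. Here $P = y\frac{d}{dy}+\gamma$ acts on the monomial basis by $y^m \mapsto (m+\gamma)y^m$, and since $\gamma\notin\Z$ every eigenvalue $m+\gamma$ is nonzero; hence $P$ is bijective on $\C[y^{\pm}]$, so both kernel and cokernel vanish and $\int_{pt}\mathcal{O}_{(\Gm)_y}y^\gamma e^{0\cdot y}=0$. Next, for $h\neq 0$, I would show the kernel is zero and the cokernel is one-dimensional. Injectivity: if $\sum_{m} c_m y^m$ is killed by $\nabla$, comparing coefficients gives a recursion $(m+\gamma)c_m + h\,c_{m-1}=0$; for a Laurent polynomial the lowest nonzero term $c_{m_0}y^{m_0}$ forces $(m_0+\gamma)c_{m_0}=0$, impossible since $\gamma\notin\Z$, so the kernel is trivial. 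For the cokernel, I would argue that modulo the image, every class is represented by a scalar multiple of (say) $[dy/y]$: the relations $\nabla(y^m) = (m+\gamma)y^m\,dy + h\,y^{m+1}\,dy$ let one express, for each $m\neq -1$ hmm — more carefully, rescaling by $y$, one checks $y^{m+1}\,dy \equiv -\tfrac{m+\gamma}{h}\,y^m\,dy \pmod{\operatorname{im}\nabla}$, a downward recursion that reduces any positive power to lower ones and, using $\gamma\notin\Z$ to invert, reduces any negative power upward, so the cokernel is spanned by one class; that it is nonzero follows because the complex has Euler characteristic $-\operatorname{rank}\cdot\chi_c(\Gm) = 0$ forced — actually the cleanest route is: the $\DD$-module is an irregular rank-one connection on $\Gm$, its Euler characteristic equals (minus) the sum of the drop plus irregularity, which here is $1$ at $y=\infty$, giving $\dim H^0 - \dim H^{-1} = 1$, and combined with $H^{-1}=0$ we get $\dim H^0 = 1$.

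I expect the main obstacle to be pinning down the cokernel dimension cleanly without circularity: the coefficient-recursion argument shows the cokernel is \emph{at most} one-dimensional, but showing it is \emph{exactly} one-dimensional is where one wants either the index/Euler-characteristic formula for connections on curves (Deligne–Malgrange: $\chi_{dR} = \operatorname{rank}\cdot\chi(\Gm) - \sum_{\text{sing}}(\operatorname{irr} + \text{drop})$, here $0 - 1 = -1$) or an explicit check that $[dy/y]\notin\operatorname{im}\nabla$ by a direct (finite) linear-algebra argument on the finitely many monomials that could contribute. I would present the Euler-characteristic route as the main argument, since it is the least computational, and remark that it is consistent with the ``$\C$ for $h\neq 0$'' being exactly the stalk of the (one-dimensional) solution sheaf of the exponential connection.
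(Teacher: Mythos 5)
Your proof is correct and follows the paper's computational framework: reduce to the two-term de Rham complex $\C[y^\pm]\xrightarrow{\nabla}\C[y^\pm]\,dy$ and analyze the coefficient recursion. The treatment of $h=0$ (eigenvalues $m+\gamma\neq 0$), the injectivity of $\nabla$ for $h\neq 0$, and the upward/downward reduction showing $\dim\operatorname{coker}\nabla\leq 1$ all match the paper's elementary computation. Where you diverge is in establishing $\dim\operatorname{coker}\nabla\geq 1$: you invoke the Deligne--Euler--Poincar\'e index formula $\chi_{\rm dR}=\operatorname{rank}\cdot\chi(\Gm)-\operatorname{irr}_\infty=0-1=-1$, whereas the paper verifies by hand that $1\notin\operatorname{im}\nabla$, observing that any preimage $\sum_n a_ny^n$ would have to satisfy $(n+\gamma)a_n+h\,a_{n-1}=0$ for all $n\neq 1$, which forces infinitely many nonzero coefficients once $a_0\neq 0$ or $a_1\neq 0$, contradicting $\sum a_ny^n\in\C[y^\pm]$. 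Both routes are valid; the index-theoretic one is more conceptual and generalizes immediately (higher rank, higher genus, several irregular points), while the paper's direct recursion argument is self-contained and keeps the lemma at the level of elementary linear algebra, avoiding any appeal to the irregularity formula. One small inaccuracy in your statement of the index formula: for the algebraic de Rham cohomology of a connection on an affine curve $U$ with compactification $X$, the formula reads $\chi_{\rm dR}=\operatorname{rank}\cdot\chi(U)-\sum_{x\in X\setminus U}\operatorname{irr}_x$; there is no separate ``drop'' contribution. This is harmless here since your extra summand vanishes, but worth keeping in mind.
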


\begin{proof}
By the formula (\ref{RelativeDeRhamFormula}), we have equalities
\begin{equation}
\int_{pt}\mathcal{O}_{(\Gm)_y}y^\gamma e^{hy}=\Big(\Omega^{\bullet+1}((\Gm)_y), \nabla\Big)=\Big(0\rightarrow \overset{\overset{-1}{\smile}}{\C[y^{\pm}]}\overset{\nabla}{\rightarrow}\overset{\overset{0}{\smile}}{\C[y^\pm]}\rightarrow 0\Big),
\end{equation}
where $\nabla=\frac{\partial}{\partial y}+\frac{\gamma}{y}+h.$ In view of this formula, the lemma is a consequence of an elementary computation.

\end{proof}

\noindent
(Proof of Proposition \ref{prop:GammaInt})

\noindent
By projection formula, we have isomorphisms
\begin{equation}
\int_{\pi}(\mathbb{L}\pi^*M)y^\gamma e^{yh}\simeq M\overset{\D}{\otimes}\int_\pi\mathcal{O}_{X\times (\Gm)_y}y^\gamma e^{yh}
\end{equation}
and

\begin{equation}
\int_j (j^\dagger M)h^{-\gamma}\simeq M\overset{\D}{\otimes }\int_j\mathcal{O}_{X\setminus h^{-1}(0)}h^{-\gamma}.
\end{equation}

\noindent
Therefore, the first isomorphism (\ref{TheFirstIsomorphism}) is reduced to the case when $M=\mathcal{O}_X$. Consider the following cartesian diagram:

\begin{equation}
\xymatrix{
 h^{-1}(0)\times(\Gm)_y \ar[r]^{\tilde{i}} \ar[d]_{\tilde{\pi}}&X\times(\Gm)_y \ar[d]^{\pi}\\
 h^{-1}(0)   \ar[r]^{i}                             &X.}
\end{equation}

\noindent
By base change formula and \cref{lem:gamma}, we have
\begin{equation}
i^\dagger\int_\pi\mathcal{O}_{X\times (\Gm)_y}y^\gamma e^{yh}=\int_{\tilde{\pi}}\tilde{i}^\dagger\mathcal{O}_{X\times (\Gm)_y}y^\gamma e^{yh}=\int_{\tilde{\pi}}\mathcal{O}_{h^{-1}(0)\times (\Gm)_y}y^\gamma [-1]=0.
\end{equation}
Therefore, by the standard distinguished triangle (\ref{sdt}), we have a canonical isomorphism
\begin{equation}\label{eqn:223}
\int_\pi\mathcal{O}_{X\times (\Gm)_y}y^\gamma e^{yh}\simeq\int_jj^\dagger\int_\pi\mathcal{O}_{X\times (\Gm)_y}y^\gamma e^{yh}.
\end{equation}
We are going to compute the right-hand side (\ref{eqn:223}). We consider the following cartesian square:

\begin{equation}
\xymatrix{
 \Big(X\setminus h^{-1}(0)\Big)\times(\Gm)_y \ar[r]^{\;\;\;\;\;\;\tilde{j}} \ar[d]_{\tilde{\pi}^\prime}&X\times(\Gm)_y \ar[d]^{\pi}\\
 X\setminus h^{-1}(0)   \ar[r]^{   \;\;\;\;\;\;   j}                             &X.}
\end{equation}
Again by projection formula, we have
\begin{equation}
j^\dagger\int_\pi\mathcal{O}_{X\times (\Gm)_y}y^\gamma e^{yh}\simeq\int_{\tilde{\pi}^\prime}\tilde{j}^\dagger\mathcal{O}_{X\times (\Gm)_y}y^\gamma e^{yh}.
\end{equation}

\noindent
We consider an isomorphism $\varphi: \Big( X\setminus h^{-1}(0)\Big)\times(\Gm)_y\tilde{\rightarrow}\Big( X\setminus h^{-1}(0)\Big)\times(\Gm)_y$ defined by $\varphi(x,y)=(x,h(x)y).$ Since $\tilde{\pi}^\prime=\tilde{\pi}^\prime\circ\varphi,$ we have

\begin{equation}
\int_{\tilde{\pi}^\prime}\tilde{j}^\dagger\mathcal{O}_{X\times (\Gm)_y}y^\gamma e^{yh}\simeq\int_{\tilde{\pi}^\prime}\int_\varphi\mathcal{O}_{\Big(X\setminus h^{-1}(0)\Big)\times (\Gm)_y}y^\gamma e^{yh}\simeq\int_{\tilde{\pi}^\prime}\mathcal{O}_{X\setminus h^{-1}(0)}h^{-\gamma}\boxtimes \mathcal{O}_{(\Gm)_y}y^\gamma e^{y}\simeq \mathcal{O}_{X\setminus h^{-1}(0)}h^{-\gamma}.
\end{equation}
Thus, the first isomorphism (\ref{TheFirstIsomorphism}) follows. As for the vanishing result (\ref{TheVanishingResult}), we have a sequence of isomorphisms
\begin{align} 
\int_\pi\R\Gamma_{[h^{-1}(0)\times (\Gm)_y]}\left( (\mathbb{L}\pi^*M)y^\gamma e^{yh}\right)&\simeq \int_\pi \int_{\tilde{i}}\tilde{i}^\dagger\left( (\mathbb{L}\pi^*M)y^\gamma e^{yh}\right)\\
 &\simeq \int_{\pi\circ\tilde{i}}\left( \mathbb{L}(\pi\circ\tilde{i})^*My^\gamma\right)[-1]\\
 &\simeq M\overset{\D}{\otimes}\int_{\pi\circ\tilde{i}}\mathcal{O}_{h^{-1}(0)\times (\Gm)_y}y^\gamma[-1]\\
 &\simeq M\overset{\D}{\otimes}\int_{i\circ\tilde{\pi}}\mathcal{O}_{h^{-1}(0)\times (\Gm)_y}y^\gamma[-1]\\
 &\simeq 0.
\end{align}
The last isomorphism is a consequence of Lemma \ref{lem:gamma}.

\qed

\begin{rem}
In the proof above, we have used the following simple fact: Let $f:X\rightarrow X$ be an isomorphism. Then, we have an isomorphism
\begin{equation}\label{Cart}
\int_f\simeq (f^{-1})^{\dagger}=\mathbb{L}(f^{-1})^*.
\end{equation}
Indeed, base change formula applied to the following cartesian diagram gives the isomorphism (\ref{Cart}):
\begin{equation}
\xymatrix{
 X \ar[r]^{{\rm id}_X } \ar[d]_{f^{-1}}&X\ar[d]^{{\rm id}_X}\\
 X \ar[r]^{f}                             &X.
}
\end{equation}
\end{rem}

Repeated applications of the \cref{prop:GammaInt} give the following

\begin{cor}\label{cor:gamma}
Let $h_l:X\rightarrow\A^1$ $(l=1,\cdots, k)$ be non-zero regular functions such that $h_l^{-1}(0)$ are smooth, $\pi:X\times (\Gm)_y^k\rightarrow X$ be the canonical projection, $j:X\setminus \lef h_1\dots h_k=0\righ\hookrightarrow X$ be the inclusion, and let $\gamma_l\in\C\setminus\Z$ be parameters. In this setting, for any object $M\in D^b_{q.c.}(\DD_X)$, one has a natural isomorphism

\begin{equation}
\int_{\pi}(\mathbb{L}\pi^*M)y_1^{\gamma_1}\dots y_k^{\gamma_k} e^{y_1h_1+\dots+y_kh_k}\simeq\int_j(j^\dagger M)h_1^{-\gamma_1}\cdots h_k^{-\gamma_k}.
\end{equation}
\end{cor}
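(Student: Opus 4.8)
The plan is to prove \cref{cor:gamma} by induction on $k$, using \cref{prop:GammaInt} at each step. The base case $k=1$ is exactly the first isomorphism (\ref{TheFirstIsomorphism}) of \cref{prop:GammaInt} (with the function $h$ there taken to be $h_1$, which is a non-zero regular function with smooth zero locus by hypothesis). For the inductive step, suppose the statement holds for $k-1$ functions on any smooth variety. Factor the projection $\pi:X\times(\Gm)_y^k\to X$ as $\pi=\pi'\circ\pi''$, where $\pi'':X\times(\Gm)_y^k\to X\times(\Gm)_{y_k}$ is the projection forgetting $(\Gm)_{y_1}\times\cdots\times(\Gm)_{y_{k-1}}$ and $\pi':X\times(\Gm)_{y_k}\to X$ forgets the last coordinate. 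Then $\int_\pi\simeq\int_{\pi'}\circ\int_{\pi''}$, and inside the exponential we separate $y_1h_1+\cdots+y_kh_k=(y_1h_1+\cdots+y_{k-1}h_{k-1})+y_kh_k$.

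The key step is to apply the inductive hypothesis to $\int_{\pi''}$, regarding $X\times(\Gm)_{y_k}$ as the new base variety $X'$ and $h_1,\dots,h_{k-1}$ (pulled back along the projection $X'\to X$, which does not affect smoothness of their zero loci) as the new functions; the factor $y_k^{\gamma_k}e^{y_kh_k}$ and $\mathbb{L}\pi^*M$ together play the role of the input module $M'\in D^b_{q.c.}(\DD_{X'})$. This yields
\begin{equation}
\int_{\pi''}(\mathbb{L}\pi^*M)y_1^{\gamma_1}\cdots y_k^{\gamma_k}e^{y_1h_1+\cdots+y_kh_k}\simeq\int_{\tilde j}(\tilde j^\dagger M')h_1^{-\gamma_1}\cdots h_{k-1}^{-\gamma_{k-1}},
\end{equation}
where $\tilde j:(X\setminus\{h_1\cdots h_{k-1}=0\})\times(\Gm)_{y_k}\hookrightarrow X\times(\Gm)_{y_k}$ is the open inclusion and $M'=(\mathbb{L}\pi'^*M)y_k^{\gamma_k}e^{y_kh_k}$. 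Now push forward along $\pi'$: using $\R\pi'_*\circ\R\tilde j_*=\R j''_*\circ\R\pi'''_*$ for the obvious cartesian-type rearrangement (equivalently, commuting $\int_{\pi'}$ past $\int_{\tilde j}$ via base change), one is reduced to computing $\int_{\pi'''}(\mathbb{L}\pi'''^*(j''^\dagger M))y_k^{\gamma_k}e^{y_kh_k}$ over the base $X\setminus\{h_1\cdots h_{k-1}=0\}$, which by the case $k=1$ of \cref{prop:GammaInt} equals $\int_{j_k}((j_k)^\dagger(j''^\dagger M))h_k^{-\gamma_k}$. Composing the open inclusions $j''$ and $j_k$ gives $j:X\setminus\{h_1\cdots h_k=0\}\hookrightarrow X$, and tracking the twists $h_1^{-\gamma_1}\cdots h_{k-1}^{-\gamma_{k-1}}$ through (they are unaffected by the $y_k$-integration since they are pulled back) yields the claimed isomorphism.

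The main obstacle I anticipate is purely bookkeeping: making sure the pullbacks of $h_1,\dots,h_{k-1}$ to the intermediate variety $X\times(\Gm)_{y_k}$ still have smooth zero loci (true, since $(\Gm)_{y_k}$ is smooth and the relevant maps are smooth, so $h_l^{-1}(0)\times(\Gm)_{y_k}$ is smooth), and that the commutation of $\int_{\pi'}$ with the open direct image $\int_{\tilde j}$ and with the twist $h_l^{-\gamma_l}$ is justified by the base change formula (\ref{BaseChange}) and the projection formula. The vanishing result (\ref{TheVanishingResult}) is not needed for \cref{cor:gamma} itself, but it guarantees along the way that no spurious local-cohomology contributions appear, so one could alternatively phrase the induction entirely in terms of the distinguished triangle (\ref{sdt}), as in the proof of \cref{prop:GammaInt}. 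Either route is routine once the factorization $\pi=\pi'\circ\pi''$ and the separation of variables in the exponential are set up.
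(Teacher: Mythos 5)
Your induction on $k$, factoring $\pi$ through the one-variable projection and applying \cref{prop:GammaInt} at each step, is exactly what the paper means by ``repeated applications of \cref{prop:GammaInt}''; the bookkeeping you carry out (pulling back $h_1,\dots,h_{k-1}$ to $X\times(\Gm)_{y_k}$, using $\int_{\pi'}\circ\int_{\tilde j}=\int_{j''}\circ\int_{\pi'''}$ from functoriality of direct images, and moving the $h_l^{-\gamma_l}$ twists through $\int_{\pi'''}$ via the projection formula) is correct. The proposal matches the paper's intended argument.
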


The following theorem proves the equivalence of Laplace integral representation and Euler-Laplace integral representation.
\begin{thm}[Cayley trick for Euler-Laplace integrals]\label{thm:CayleyTrick}
Let $h_{l,z^{(l)}}(x)=\sum_{j=1}^{N_l}z_j^{(l)}x^{{\bf a}^{(l)}(j)}$ $(l=0,1,\dots,k)$ be Laurent polynomials on $(\Gm)^n_x$. We put $N=N_0+\cdots+N_k$, $z=(z^{(0)},\dots,z^{(k)})$, $X_0=\A^N_z\times (\Gm)_x^n\setminus\lef (z,x)\in\A^N\times(\Gm)^n\mid h_{1,z^{(1)}}(x)\cdots h_{k,z^{(k)}}(x)=0\righ$, and $X_k=\A^N_z\times(\Gm)_y^k\times(\Gm)_x^n$. Let $\pi:X_0\rightarrow\A^N_z$ and $\varpi: X_k\rightarrow\A^N_z$ be projections and $\gamma_l\in\C\setminus\Z$ be parameters. Then, one has an isomorphism

\begin{equation}\label{Desired}
\int_\pi\mathcal{O}_{X_0 }e^{h_{0,z^{(0)}}(x)}h_{1,z^{(1)}}(x)^{-\gamma_1}\cdots h_{k,z^{(k)}}(x)^{-\gamma_k}x^c \simeq\int_\varpi\mathcal{O}_{X_k}y^\gamma x^c e^{h_z(y,x)},
\end{equation}
where $h_z(y,x)=h_{0,z^{(0)}}(x)+\displaystyle\sum_{l=1}^ky_lh_{l,z^{(l)}}(x).$
\end{thm}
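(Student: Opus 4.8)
The plan is to reduce \cref{thm:CayleyTrick} to \cref{cor:gamma} by introducing the auxiliary variables $y_1,\dots,y_k$ one at a time and absorbing the exponential factor $e^{h_{0,z^{(0)}}(x)}$ and the monomial $x^c$ into the base module. Concretely, I would take $X = X_0$ in the notation of \cref{cor:gamma}, or more precisely work with the smooth variety $X_0' = \A^N_z \times (\Gm)^n_x$ on which $h_{1,z^{(1)}},\dots,h_{k,z^{(k)}}$ are regular functions (viewed as functions of $(z,x)$, with $z$ a spectator); the hypersurfaces $\{h_{l,z^{(l)}} = 0\}$ in $\A^N_z \times (\Gm)^n_x$ are smooth since each $h_{l,z^{(l)}}$ is linear in the $z^{(l)}$-variables, so the differential never vanishes. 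One sets $M = \mathcal{O}_{X_0'}\, e^{h_{0,z^{(0)}}(x)}\, x^c$, a rank-one integrable connection on $X_0'$, and observes that $j^\dagger M = \mathcal{O}_{X_0}\, e^{h_{0,z^{(0)}}(x)}\, x^c$ where $j:X_0 \hookrightarrow X_0'$ is the open inclusion removing $\{h_1\cdots h_k = 0\}$.

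Applying \cref{cor:gamma} to this $M$ (with the fiber coordinates $y = (y_1,\dots,y_k)$) gives
\begin{equation}
\int_{\pi'}(\mathbb{L}\pi'^*M)\, y_1^{\gamma_1}\cdots y_k^{\gamma_k}\, e^{y_1 h_1 + \dots + y_k h_k} \simeq \int_{j}(j^\dagger M)\, h_1^{-\gamma_1}\cdots h_k^{-\gamma_k},
\end{equation}
where $\pi':X_0' \times (\Gm)^k_y \to X_0'$ is the projection. The right-hand side is exactly $\mathcal{O}_{X_0}\, e^{h_{0,z^{(0)}}(x)}\, h_{1,z^{(1)}}(x)^{-\gamma_1}\cdots h_{k,z^{(k)}}(x)^{-\gamma_k}\, x^c$, while on the left-hand side $X_0' \times (\Gm)^k_y = X_k$ and $(\mathbb{L}\pi'^*M)\, y^\gamma\, e^{\sum y_l h_l}$ is the connection $\mathcal{O}_{X_k}\, y^\gamma\, x^c\, e^{h_z(y,x)}$, since $e^{h_{0,z^{(0)}}(x)}\cdot e^{\sum_{l=1}^k y_l h_{l,z^{(l)}}(x)} = e^{h_z(y,x)}$ and $\mathbb{L}\pi'^*$ of a connection is just its pullback. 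Finally I would push forward along the further projection $\A^N_z \times (\Gm)^n_x \to \A^N_z$ (resp. its restriction to $X_0$): composing direct images, $\int_{\varpi} = \int_{(\text{proj to }\A^N_z)}\circ\int_{\pi'}$ on the left and $\int_{\pi} = \int_{(\text{proj to }\A^N_z)}\circ\int_{j}$ on the right, and functoriality of $\int$ under composition turns the displayed isomorphism into \eqref{Desired}.

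The main obstacle, and the point that needs care rather than cleverness, is the bookkeeping of which variables are "base" and which are "fiber" at each stage: \cref{cor:gamma} is stated with $X$ an arbitrary smooth variety and the $h_l$ regular on all of $X$, so one must check that taking $X = \A^N_z \times (\Gm)^n_x$ (so that $z$ rides along passively) is legitimate, that the smoothness hypothesis $h_l^{-1}(0)$ smooth holds in this relative-over-$\A^N_z$ setting, and that the projection formula and composition-of-direct-images identities used to strip off the $e^{h_0}x^c$ factor and to add back the final projection to $\A^N_z$ are the ones already recorded in \cref{SectionDModules}. I expect no genuinely hard analytic or homological input beyond \cref{cor:gamma} itself; the proof is essentially a matter of correctly chaining the base-change formula \eqref{BaseChange}, the projection formula, and the identity $\int_f \simeq \mathbb{L}(f^{-1})^*$ for isomorphisms, exactly as in the proof of \cref{prop:GammaInt}.
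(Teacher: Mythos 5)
Your proof is correct and is essentially the paper's argument: both factor the projection $\pi$ (resp.\ $\varpi$) through $\A^N_z\times(\Gm)^n_x$, apply \cref{cor:gamma} over that intermediate base, and finish by composing direct images. The only variation is that you feed $M=\mathcal{O}_{X_0'}e^{h_0}x^c$ into \cref{cor:gamma} directly, whereas the paper applies \cref{cor:gamma} with $M=\mathcal{O}_{X_0'}$ and then twists by $e^{h_0}x^c$ using two explicit projection-formula steps — but since the proof of \cref{cor:gamma} (via \cref{prop:GammaInt}) already uses the projection formula to reduce to $M=\mathcal{O}$, your version just absorbs that bookkeeping into the cited statement.
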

\begin{proof}
Note first that hypersurfaces $\{ (z,x)\in\A^N\times (\Gm)^n\mid h_{l,z^{(l)}}(x)=0\}\subset\A^N_z\times(\Gm)^n_x$ $(l=1,\dots,k)$ are smooth. Now, consider the following commutative diagram:

\begin{equation}
\xymatrix{
 X_0 \ar[d]_{\pi } \ar[dr]^-{j}& \\
 \A^N_z  & \A^N_z\times(\Gm)_x^n \ar[l]_-{\tilde{\pi}}\\
 X_k   \ar[ur]_-{p}   \ar[u]^-{\varpi}    & .
}
\end{equation}
\noindent
By projection formula,
\begin{align}
 &\int_j\mathcal{O}_{X_0 }h_{1,z^{(1)}}(x)^{-\gamma_1}\cdots h_{k,z^{(k)}}(x)^{-\gamma_k}x^c e^{h_{0,z^{(0)}}(x)}\nonumber\\
 \simeq&\int_j\Big( \mathcal{O}_{X_0 }h_{1,z^{(1)}}(x)^{-\gamma_1}\cdots h_{k,z^{(k)}}(x)^{-\gamma_k}\Big)\overset{\mathbb{D}}{\otimes}\mathcal{O}_{\A^N_z\times(\Gm)^n_x}x^ce^{h_{0,z^{(0)}}(x)}.\label{CIsom1}
\end{align}
\noindent
By \cref{cor:gamma}, we have
\begin{equation}
\int_j\Big( \mathcal{O}_{X_0 }h_{1,z^{(1)}}(x)^{-\gamma_1}\cdots h_{k,z^{(k)}}(x)^{-\gamma_k}\Big)\simeq\int_p\mathcal{O}_{X_k}y^\gamma e^{y_1h_{1,z^{(1)}}(x)+\cdots+y_kh_{k,z^{(k)}}(x)}.
\end{equation}
Again by projection formula, we have
\begin{equation}\Big(\int_p\mathcal{O}_{X_k}y^\gamma e^{y_1h_{1,z^{(1)}}(x)+\cdots+y_kh_{k,z^{(k)}}(x)}\Big)\overset{\mathbb{D}}{\otimes}\mathcal{O}_{\A^N_z\times(\Gm)^n_x}x^ce^{h_{0,z^{(0)}}(x)}\simeq\int_p\mathcal{O}_{X_k}y^\gamma x^c e^{h_z(y,x)}\label{CIsom2}
\end{equation}

\noindent
Since one has canonical isomorphisms 
\begin{equation}
\int_\pi\simeq\int_{\tilde{\pi}}\circ\int_j\;\;\;\;\;\;\;\;\int_\varpi\simeq\int_{\tilde{\pi}}\circ\int_p\;\;\;,
\end{equation}
applying the functor $\int_{\varpi}$ to the left hand side of (\ref{CIsom1}) and to the right hand side of (\ref{CIsom2}) gives the desired isomorphism (\ref{Desired}).
\end{proof}

\begin{cor}\label{cor:Cor2.5}
Under the assumption of \cref{thm:CayleyTrick}, one has a canonical isomorphism

\begin{equation}
\int_{\pi!}\mathcal{O}_{X_0 }e^{h_{0,z^{(0)}}(x)}h_{1,z^{(1)}}(x)^{-\gamma_1}\cdots h_{k,z^{(k)}}(x)^{-\gamma_k}x^c \simeq\int_{\varpi!}\mathcal{O}_{X_k}y^\gamma x^c e^{h_z(y,x)}
\end{equation}

\end{cor}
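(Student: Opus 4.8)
The plan is to obtain this from the Cayley trick of \cref{thm:CayleyTrick} by applying holonomic duality and then absorbing the sign changes that appear. Write $\varphi=e^{h_{0,z^{(0)}}(x)}h_{1,z^{(1)}}(x)^{-\gamma_1}\cdots h_{k,z^{(k)}}(x)^{-\gamma_k}x^c$ and $\psi=y^\gamma x^c e^{h_z(y,x)}$, so that \cref{thm:CayleyTrick} reads $\int_\pi\mathcal{O}_{X_0}\varphi\simeq\int_\varpi\mathcal{O}_{X_k}\psi$. Since $\int_{\pi!}=\HD_{\A^N_z}\circ\int_\pi\circ\HD_{X_0}$ and $\int_{\varpi!}=\HD_{\A^N_z}\circ\int_\varpi\circ\HD_{X_k}$, and $\HD_{\A^N_z}$ is an equivalence, applying $\HD_{\A^N_z}$ to this isomorphism yields
\[
\int_{\pi!}\HD_{X_0}\!\left(\mathcal{O}_{X_0}\varphi\right)\simeq\int_{\varpi!}\HD_{X_k}\!\left(\mathcal{O}_{X_k}\psi\right).
\]
Now $X_0$ and $X_k$ are smooth, so $\mathcal{O}_{X_0}$ and $\mathcal{O}_{X_k}$ are holonomically self-dual, and $\HD$ sends a twist by a rank-one integrable connection to the twist by its inverse, i.e. $\HD_X(M\vartheta)\simeq(\HD_X M)\vartheta^{-1}$; hence $\HD_{X_0}(\mathcal{O}_{X_0}\varphi)\simeq\mathcal{O}_{X_0}\varphi^{-1}$ and $\HD_{X_k}(\mathcal{O}_{X_k}\psi)\simeq\mathcal{O}_{X_k}\psi^{-1}$, where $\varphi^{-1}=e^{-h_{0,z^{(0)}}(x)}h_{1,z^{(1)}}(x)^{\gamma_1}\cdots h_{k,z^{(k)}}(x)^{\gamma_k}x^{-c}$ and $\psi^{-1}=y^{-\gamma}x^{-c}e^{-h_z(y,x)}$. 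Thus $\int_{\pi!}\mathcal{O}_{X_0}\varphi^{-1}\simeq\int_{\varpi!}\mathcal{O}_{X_k}\psi^{-1}$, and, like \cref{thm:CayleyTrick} itself, this holds for all $\gamma\in(\C\setminus\Z)^k$, all $c\in\C^n$ and all choices of coefficient families.

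It remains to undo the sign reversals of $\gamma$, $c$, $h_0$ and $y$. First I would apply the automorphism $\sigma_0$ of $\A^N_z$ that negates the block of coordinates $z^{(0)}$ and fixes $z^{(1)},\dots,z^{(k)}$; its lifts to $X_0$ and $X_k$ also negate $z^{(0)}$ and fix $x$ and $y$, so they send $h_{0,z^{(0)}}(x)$ to $-h_{0,z^{(0)}}(x)$ and fix $h_{1,z^{(1)}}(x),\dots,h_{k,z^{(k)}}(x)$. Pulling the last isomorphism back along $\sigma_0^\dagger$ and using the base change formula (\ref{BaseChange}) (legitimate since $\sigma_0$ is an isomorphism) together with its evident $!$-analogue, the factor $e^{-h_{0,z^{(0)}}(x)}$ becomes $e^{h_{0,z^{(0)}}(x)}$ in both integrands, so that
\[
\int_{\pi!}\mathcal{O}_{X_0}e^{h_{0,z^{(0)}}(x)}h_{1,z^{(1)}}(x)^{\gamma_1}\cdots h_{k,z^{(k)}}(x)^{\gamma_k}x^{-c}\simeq\int_{\varpi!}\mathcal{O}_{X_k}y^{-\gamma}x^{-c}e^{h_{0,z^{(0)}}(x)-\sum_{l=1}^{k}y_lh_{l,z^{(l)}}(x)}.
\]
Next, the automorphism of $X_k$ over $\A^N_z$ negating $y=(y_1,\dots,y_k)$ leaves $\int_{\varpi!}$ unchanged by (\ref{Cart}), and, using the isomorphism of rank-one connections $\mathcal{O}_{(\Gm)_y^k}(-y)^{-\gamma}\simeq\mathcal{O}_{(\Gm)_y^k}y^{-\gamma}$, it turns the right-hand integrand into $\mathcal{O}_{X_k}y^{-\gamma}x^{-c}e^{h_z(y,x)}$. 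Finally, replacing $\gamma$ by $-\gamma$ (still in $(\C\setminus\Z)^k$) and $c$ by $-c$ reverses the remaining exponents, yielding exactly $\int_{\pi!}\mathcal{O}_{X_0}\varphi\simeq\int_{\varpi!}\mathcal{O}_{X_k}\psi$, which is the asserted isomorphism.

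The only non-formal ingredient is the behaviour of $\HD$ on the twisted structure sheaves — the self-duality of $\mathcal{O}_{X_0}$ and $\mathcal{O}_{X_k}$ and the identity $\HD_X(M\vartheta)\simeq(\HD_X M)\vartheta^{-1}$ — which is standard for smooth $X$. The main, and rather minor, obstacle is the bookkeeping: checking that after dualising the two sides of (\ref{Desired}) differ precisely by the sign reversals of $\gamma$, $c$, $h_0$ and $y$, each of which can be removed either by the permitted negation of the parameters or by an automorphism of $\A^N_z$, resp. of the torus factor $(\Gm)_y^k$, to which the base change formula (\ref{BaseChange}) and the automorphism identity (\ref{Cart}) apply.
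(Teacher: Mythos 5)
Your proof is correct and uses the same underlying ingredients as the paper's: dualizing the Cayley-trick isomorphism (\ref{Desired}) via $\HD_{\A^N_z}$, using the definitions $\int_{\pi!}=\HD\circ\int_\pi\circ\HD$ and $\int_{\varpi!}=\HD\circ\int_\varpi\circ\HD$, the $y\mapsto -y$ involution on $X_k$, and a re-indexing of $\gamma$, $c$, $z^{(0)}$. The paper does the $y$-involution before dualizing and phrases the $z^{(0)}$ adjustment as a parameter substitution rather than as pullback along the explicit automorphism $\sigma_0$, but these are merely cosmetic differences; you make the twist-inversion property $\HD_X(\mathcal{O}_X\vartheta)\simeq\mathcal{O}_X\vartheta^{-1}$ explicit where the paper uses it implicitly.
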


\begin{proof}
Let $\iota:X_k\rightarrow X_k$ be an involution defined by $\iota(z,y,x)=(z,-y,x)$. Then, we see that $\varpi\circ\iota=\varpi$. This identity implies an equality $\int_\varpi=\int_\varpi\circ\int_\iota$, from which we obtain an identity
\begin{equation}
\int_{\varpi}\mathcal{O}_{X_k}y^\gamma x^c e^{h_z(y,x)}=\int_{\varpi}\mathcal{O}_{X_k}y^\gamma x^c e^{h_z(-y,x)}.
\end{equation}
In view of this identity and two equalities $\HD_{\A^N_z}\circ\int_\pi=\int_{\pi!}\circ\HD_{X_0}$ and $\HD_{\A^N_z}\circ\int_\varpi=\int_{\varpi!}\circ\HD_{X_k}$, we obtain the desired isomorphism by applying $\HD_{\A^N_z}$ to
 (\ref{Desired}) and replace $-\gamma$, $-c$ and $-z^{(0)}$ by $\gamma$, $c$ and $z^{(0)}$.

\end{proof}

\begin{rem}
All the arguments in this subsection can easily be extended to the case when $\gamma_1,\dots,\gamma_k,c_1,\dots,c_n\in {\rm End}_{\C}(V)$ for some finite dimensional $\C$-vector space $V$ and they are mutually commutative. The condition $\gamma_l\notin\Z$ is simply replaced by the condition that any eigenvalue of $\gamma_l$ is non-integral. 

\end{rem}

\subsection{Euler-Laplace integral representation of GKZ system}

Let us refer to the result of M.Schulze and U.Walther (\cite[Corollary 3.8]{SchulzeWaltherLGM}, see also \cite{SchulzeWaltherIrreducibility}) which relates $M_A(c)$ for non-resonant parameters to Laplace-Gauss-Manin connection. It is stated in the following form.

\begin{thm}[\cite{SchulzeWaltherLGM}]
Let $\phi:(\Gm)^d_x\rightarrow\A^n$ be a morphism defined by $\phi(x)=(x^{{\bf a}(1)},\dots,x^{{\bf a}(n)})$. If $c\in\C^{d\times 1}$ is non-resonant, one has a canonical isomorphism 
\begin{equation}\label{SWisom}
M_A(c)\simeq \FL\circ\int_\phi\mathcal{O}_{(\Gm)^d}x^c,
\end{equation}
where $\FL$ stands for Fourier-Laplace transform.
\end{thm}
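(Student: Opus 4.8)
The plan is to transport the identity to the torus $(\Gm)^N$, where it is elementary, and then to use non-resonance to extend it across the coordinate hyperplanes. Since $\FL$ is an exact auto-equivalence of the category of holonomic $\DD_{\A^N}$-modules, the assertion is equivalent to $\FL^{-1}M_A(c)\simeq\int_\phi\mathcal O_{(\Gm)^n}x^c$ with $\phi\colon(\Gm)^n_x\to\A^N_w$, $\phi(x)=(x^{{\bf a}(1)},\dots,x^{{\bf a}(N)})$; and, by the same base-change and projection-formula manipulations used in the proof of \cref{thm:CayleyTrick}, this may in turn be rewritten (up to a cohomological shift and the sign convention of $\FL$) as
\begin{equation}
M_A(c)\;\simeq\;\Homo^0\!\int_q\mathcal O_{(\Gm)^n_x\times\A^N_z}\,x^c\,e^{\sum_{j=1}^N z_j x^{{\bf a}(j)}},
\end{equation}
where $q\colon(\Gm)^n_x\times\A^N_z\to\A^N_z$ is the projection and $z$ are the GKZ coordinates. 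First I would write down the comparison morphism: in the relative $\DR$-complex over $\A^N_z$ the operator $\partial_{z_j}$ acts as multiplication by $x^{{\bf a}(j)}$, so on the class $\xi$ of $\tfrac{dx}{x}$ one gets $\Box_u\,\xi=\bigl(x^{\sum_{u_j>0}u_j{\bf a}(j)}-x^{-\sum_{u_j<0}u_j{\bf a}(j)}\bigr)\xi=0$ because $Au=0$, while Cartan's formula applied to $x_i\partial_{x_i}$ (using that $\tfrac{dx}{x}$ is a relative top form invariant under the torus) shows that $E_i\,\xi=\bigl(x_i\partial_{x_i}(\textstyle\sum_j z_j x^{{\bf a}(j)})+c_i\bigr)\xi$ is $\nabla$-exact, hence $0$; the parameter thus matches on the nose. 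So $H_A(c)$ annihilates $\xi$, giving a $\DD_{\A^N}$-morphism $M_A(c)\to\Homo^0\int_q(\cdots)$, $1\mapsto\xi$.

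To see that this morphism is an isomorphism I would first compare the two sides over the open torus $(\Gm)^N$. Factoring $\phi=j\circ\phi'$, where $\phi'\colon(\Gm)^n\xrightarrow{\,\sim\,}\T_A\hookrightarrow(\Gm)^N_w$ is the closed embedding onto the subtorus $\T_A=\{w\mid w^u=1\text{ for all }u\in L_A\}$ (a closed embedding because $\Z A=\Z^n$) and $j\colon(\Gm)^N\hookrightarrow\A^N$ is the open immersion, one has $\int_j=j_*$ exact and equal to localization along $w_1\cdots w_N=0$, so $\int_\phi\mathcal O_{(\Gm)^n}x^c=j_*\bigl(\int_{\phi'}\mathcal O_{(\Gm)^n}x^c\bigr)$ is the maximal (localization) extension of a $\DD_{(\Gm)^N}$-module supported on $\T_A$. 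On the other hand, under $\FL^{-1}$ each $\Box_u$ becomes the toric binomial $w^{u^+}-w^{u^-}$, and on $(\Gm)^N_w$ these generate the prime ideal of $\T_A$ (the lattice $L_A=\Ker A$ is saturated), while the transformed Euler operators restrict on $\T_A$ to Euler operators in the $x$-variables via $w_j=x^{{\bf a}(j)}$; by Kashiwara's equivalence this identifies $\bigl(\FL^{-1}M_A(c)\bigr)|_{(\Gm)^N}$ with $\int_{\phi'}\mathcal O_{(\Gm)^n}x^c$, i.e.\ with $\bigl(\int_\phi\mathcal O_{(\Gm)^n}x^c\bigr)|_{(\Gm)^N}$ — this is exactly the degenerate case $k=0$ of \cref{cor:gamma}. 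Hence the comparison morphism is an isomorphism over $(\Gm)^N$, and its kernel and cokernel are supported on $\bigcup_{j=1}^N\{w_j=0\}$.

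The remaining point, which I expect to be the genuine obstacle, is that this kernel and cokernel vanish — equivalently that $\FL^{-1}M_A(c)$ already coincides with its own localization $j_*j^*(\FL^{-1}M_A(c))$ along the coordinate hyperplanes. This is exactly where non-resonance of $c$ is indispensable, and I do not expect a soft argument for it: the natural route is the Euler--Koszul complex of the semigroup ring $S_A=\C[\mathbb N A]$, whose degree-zero homology is $\FL^{-1}M_A(c)$ up to shift, together with the fact that for non-resonant parameters this complex is acyclic in positive degrees, which forces $\FL^{-1}M_A(c)$ to have no nonzero submodule or quotient supported on a union of coordinate hyperplanes — this is the substance of \cite{SchulzeWaltherLGM} (see also \cite{SchulzeWaltherIrreducibility}). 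As a consistency check one may compare holonomic ranks: both $M_A(c)$ and $\FL(\int_\phi\mathcal O_{(\Gm)^n}x^c)$ are holonomic of generic rank equal to the normalized volume of $\mathrm{conv}(\{0\}\cup\{{\bf a}(1),\dots,{\bf a}(N)\})$ — Adolphson's rank formula under non-resonance \cite{Adolphson} on the one hand, the count of non-degenerate critical points of $\sum_j z_j x^{{\bf a}(j)}+\sum_i c_i\log x_i$ at generic $z$ (cf.\ \cite{AdolphsonSperber}) on the other — but turning this coincidence into a proof still rests on the cleanness input above. Modulo that input, everything else is bookkeeping with base change, the projection formula, and Kashiwara's equivalence, already used repeatedly in this section.
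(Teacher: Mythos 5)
The paper does not prove this statement: it is quoted verbatim as \cite[Corollary~3.8]{SchulzeWaltherLGM} and used as a black box, so there is no internal proof to compare against. Your sketch is therefore best judged as a reconstruction of the cited argument rather than as an alternative to anything in the paper.

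As a reconstruction it is structurally sound, and the local pieces you write out are correct: $\Box_u$ kills the class $\xi=[dx/x]$ because $Au=0$; the Euler operators $E_i\xi$ are exactly the classes $\nabla_x\bigl((-1)^{i-1}\,dx_{\hat i}/x_{\hat i}\bigr)$ (this computation appears, commented out, in the source of \S 3); the factorization $\phi=j\circ\phi'$ with $\phi'$ a closed embedding onto $\T_A$ is valid under the standing hypothesis $\Z A=\Z^{d\times 1}$; and invoking Kashiwara's equivalence on $(\Gm)^N$ to identify $(\FL^{-1}M_A(c))|_{(\Gm)^N}$ with $\int_{\phi'}\mathcal O_{(\Gm)^n}x^c$ is the right move, though one should say explicitly that the Fourier transforms of the $\Box_u$ land in the commutative annihilator, forcing scheme-theoretic support on $\T_A$ before Kashiwara applies. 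You are also right that this part only reproduces the degenerate $k=0$ case of \cref{cor:gamma}.

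The genuine content, as you correctly flag, is the cleanness of $\FL^{-1}M_A(c)$ along $\{w_1\cdots w_N=0\}$, i.e.\ that the comparison map from $\FL^{-1}M_A(c)$ to its localization is already an isomorphism when $c$ is non-resonant. But at precisely that point you defer to Schulze--Walther's Euler--Koszul machinery, which is the very result being cited. So what you have is not an independent proof but an accurate roadmap of the one in \cite{SchulzeWaltherLGM}: the reduction to the torus and the rank-matching sanity check are genuinely "bookkeeping," while the acyclicity of the Euler--Koszul complex in positive degrees for non-resonant parameters (and the resulting absence of sub/quotient modules supported on coordinate hyperplanes) is the load-bearing input you have not supplied. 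That is an honest and well-aimed account, but it should be presented as exposition of the cited theorem, not as a proof of it.
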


\noindent
Recall that the parameter $c$ is non-resonant (with respect to $A$) if for any face $\Gamma<\R_{\geq 0}A$ such that $0\in \Gamma$, one has $c\notin\Z^{d\times 1}+\spanning_{\C}\Gamma$.

For readers' convenience, we include a proof of an isomorphism which rewrites the right-hand side of (\ref{SWisom}) as a direct image of an integrable connection. The following result is essentially obtained in \cite{EsterovTakeuchi}.

\begin{prop}\label{prop:EsterovTakeuchi}
Let $f_j\in\mathcal{O}(X)\setminus\C$ $(j=1,\dots,p)$ be non-constant regular functions. Put $f=(f_1,\dots,f_p):X\rightarrow\A^p_\zeta$. Define the Fourier-Laplace transform $\FL:D^b_{q.c.}(\mathcal{D}_{\A^p_\zeta})\rightarrow D^b_{q.c.}(\mathcal{D}_{\A^p_z})$ by the formula 
\begin{equation}
\FL(N)=\int_{\pi_z}(\mathbb{L}\pi^*_\zeta N)\overset{\mathbb{D}}{\otimes}\mathcal{O}_{\A^p_\zeta\times\A^p_z}e^{z\cdot\zeta},
\end{equation}
where $\pi_z:\A^p_z\times\A^p_\zeta\rightarrow\A^p_z$ and $\pi_{\zeta}:\A^p_z\times\A^p_\zeta\rightarrow\A^p_\zeta$ are canonical projections. Let $\pi:X\times\A^p_z\rightarrow\A^p_z$ be the canonical projection. Under these settings, for any object $M\in D^b_{q.c.}(\DD_X),$ one has an isomorphism
\begin{equation}
\FL\Big(\int_f M\Big)\simeq\int_\pi\left\{ (M\boxtimes\mathcal{O}_{\A^p_z})\overset{\mathbb{D}}{\otimes}(\mathcal{O}_{X\times\A^p_z}e^{\sum_{j=1}^pz_jf_j})\right\}.
\end{equation}
\end{prop}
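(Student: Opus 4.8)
The plan is to reduce the statement to a base change computation after rewriting both sides in terms of direct images along projections. First I would unwind the definition of $\FL$ applied to $\int_f M$: by the projection formula we have $\FL\left(\int_f M\right)\simeq\int_{\pi_z}\left\{\left(\mathbb{L}\pi_\zeta^*\int_f M\right)\overset{\mathbb{D}}{\otimes}\mathcal{O}_{\A^p_\zeta\times\A^p_z}e^{z\cdot\zeta}\right\}$. The key point is that $\mathbb{L}\pi_\zeta^*$ commutes with the direct image $\int_f$ via the base change formula (\ref{BaseChange}) applied to the cartesian square whose corners are $X\times\A^p_z$, $\A^p_\zeta\times\A^p_z$, $X$ and $\A^p_\zeta$, with horizontal maps $f\times\id$ and $f$, and vertical maps the projections onto the first factors. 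This identifies $\mathbb{L}\pi_\zeta^*\int_f M$ with $\int_{f\times\id}\left(M\boxtimes\mathcal{O}_{\A^p_z}\right)$.

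Next I would substitute this into the expression for $\FL$ and use the projection formula once more, now for the map $f\times\id:X\times\A^p_z\rightarrow\A^p_\zeta\times\A^p_z$, to pull the exponential twist back inside. Concretely, one has
\begin{equation}
\int_{\pi_z}\left\{\left(\int_{f\times\id}(M\boxtimes\mathcal{O}_{\A^p_z})\right)\overset{\mathbb{D}}{\otimes}\mathcal{O}_{\A^p_\zeta\times\A^p_z}e^{z\cdot\zeta}\right\}\simeq\int_{\pi_z}\int_{f\times\id}\left\{(M\boxtimes\mathcal{O}_{\A^p_z})\overset{\mathbb{D}}{\otimes}\mathbb{L}(f\times\id)^*\mathcal{O}_{\A^p_\zeta\times\A^p_z}e^{z\cdot\zeta}\right\},
\end{equation}
where I use that $\mathbb{L}(f\times\id)^*$ of the exponential $\DD$-module $\mathcal{O}_{\A^p_\zeta\times\A^p_z}e^{z\cdot\zeta}$ is $\mathcal{O}_{X\times\A^p_z}e^{\sum_j z_j f_j}$ — this follows from the compatibility of the twist construction $\bullet\,\varphi$ with pullback, since $(z\cdot\zeta)\circ(f\times\id)=\sum_j z_j f_j(x)$ and the pullback of a connection given by an exponential twist is the exponential twist by the pulled-back function. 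Composing $\pi_z\circ(f\times\id)$ with the projection $\pi:X\times\A^p_z\rightarrow\A^p_z$, the functoriality of direct images $\int_{\pi_z}\circ\int_{f\times\id}\simeq\int_\pi$ then yields the claimed formula.

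The main obstacle is the careful bookkeeping of the cartesian squares and making sure the base change and projection formulas are applied to the correct maps; in particular one must check that $f\times\id$ really does fit into a cartesian square over $f$ with the projections, which is immediate but must be stated, and that $\mathbb{L}\pi_\zeta^*$ appearing in the definition of $\FL$ matches the vertical pullback in that square. A second, more technical point is the identification $\mathbb{L}(f\times\id)^*\left(\mathcal{O}_{\A^p_\zeta\times\A^p_z}e^{z\cdot\zeta}\right)\simeq\mathcal{O}_{X\times\A^p_z}e^{\sum_j z_j f_j}$: this is where the hypothesis that the $f_j$ are regular functions (so that $d(z\cdot\zeta)/(z\cdot\zeta)$ and its pullback lie in the appropriate spaces of $1$-forms, and the exponential twist is well defined) is used, and one should note that $e^{z\cdot\zeta}$ is nowhere vanishing so the twist $\mathcal{O}\,\varphi$ of the introduction applies verbatim with $\varphi=e^{z\cdot\zeta}$. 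Everything else is a formal manipulation with the six operations, so the proof is short once these compatibilities are in place; this is why the author attributes the result essentially to \cite{EsterovTakeuchi}.
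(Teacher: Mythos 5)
Your argument is correct and follows the same route as the paper: identify $\mathbb{L}\pi_\zeta^*\int_f M$ with $\int_{f\times\mathrm{id}}(M\boxtimes\mathcal{O}_{\A^p_z})$ (you via base change, the paper via the compatibility of $\boxtimes$ with direct images, which amounts to the same cartesian square), then pull the exponential twist inside with the projection formula, note $\mathbb{L}(f\times\mathrm{id})^*\bigl(\mathcal{O}e^{z\cdot\zeta}\bigr)\simeq\mathcal{O}e^{\sum_jz_jf_j}$, and compose $\int_{\pi_z}\circ\int_{f\times\mathrm{id}}\simeq\int_\pi$. One small slip: your opening step is the literal definition of $\FL$, not an application of the projection formula, and the relevant relation for the twist is $d(e^{z\cdot\zeta})/e^{z\cdot\zeta}=d(z\cdot\zeta)$ rather than $d(z\cdot\zeta)/(z\cdot\zeta)$, but neither affects the substance.
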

\begin{proof}
Consider the following commutative diagram

\begin{equation}
\xymatrix{
 X\times\A^p_z \ar[d]_{\pi } \ar[r]^{f\times \id}& \A^p_\zeta\times\A^p_z \ar[dl]_{\pi_z}\\
 \A^p_z  &
}.
\end{equation}
\noindent
By the projection formula, we have canonical isomorphisms
\begin{align}
\FL\Big(\int_fM\Big)&\simeq\int_{\pi_z}\left\{ \Big(\left(\int_fM\right)\boxtimes\mathcal{O}_{\A^p_z}\Big)\overset{\mathbb{D}}{\otimes}\mathcal{O}_{\A^p_z\times\A^p_\zeta}e^{z\cdot\zeta }\right\}\\
 &\simeq\int_{\pi_z}\left\{ \Big(\int_{f\times\id_z}M\boxtimes\mathcal{O}_{\A^p_z}\Big)\overset{\mathbb{D}}{\otimes}\mathcal{O}_{\A^p_z\times\A^p_\zeta}e^{z\cdot\zeta }\right\}\\
 &\simeq\int_\pi\left\{ \left(M\boxtimes\mathcal{O}_{\A^p_z}\right)\overset{\mathbb{D}}{\otimes}(\mathcal{O}_{X\times\A^p_z}e^{\sum_{j=1}^pz_jf_j})\right\}.
\end{align}

\end{proof}
\noindent
If we take $X$ to be $(\Gm)^d_x,$ $M$ to be $\mathcal{O}_{(\Gm)^d_x}x^c,$ and $f$ to be $f=(x^{{\bf a}(1)},\dots,x^{{\bf a}(n)}),$ we have
\begin{equation}
\FL\Big(\int_f\mathcal{O}_{(\Gm)^d_x}x^c\Big)\simeq\int_\pi\mathcal{O}_{(\Gm)^d_x\times\A^n_z}x^ce^{h_{z}(x)},
\end{equation}
where $h_z(x)=\sum_{j=1}^nz_jx^{{\bf a}(j)}.$ Therefore, we obtain a

\begin{cor}\label{cor:Cor2.9}
If $c$ is non-resonant, one has a canonical isomorphism
\begin{equation}
M_A(c)\simeq\int_\pi\mathcal{O}_{(\Gm)^d\times\A^n}x^ce^{h_z(x)}.
\end{equation}
\end{cor}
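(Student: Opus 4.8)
The plan is to combine the Schulze--Walther isomorphism (\ref{SWisom}) with \cref{prop:EsterovTakeuchi} applied to the specific data at hand. Concretely, take $X=(\Gm)^n_x$, take $M=\mathcal{O}_{(\Gm)^n_x}x^c$, take $p=N$, and take $f=(f_1,\dots,f_N)=(x^{{\bf a}(1)},\dots,x^{{\bf a}(N)})$, which is exactly the morphism $\phi$ appearing in the Schulze--Walther statement. One first checks that each $f_j=x^{{\bf a}(j)}$ is a non-constant regular function on $(\Gm)^n_x$; this uses the standing hypothesis $\Z A=\Z^{d\times 1}$, or more simply just that the columns ${\bf a}(j)$ are nonzero (which we may assume, since a zero column contributes a constant and can be discarded, or absorbed). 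Hence \cref{prop:EsterovTakeuchi} applies verbatim.

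The key steps, in order: first, invoke \cite{SchulzeWaltherLGM} in the form (\ref{SWisom}), namely $M_A(c)\simeq\FL\circ\int_\phi\mathcal{O}_{(\Gm)^n}x^c$, valid because $c$ is assumed non-resonant. Second, apply \cref{prop:EsterovTakeuchi} with the data above to rewrite $\FL(\int_f\mathcal{O}_{(\Gm)^n_x}x^c)$ as
\begin{equation}
\int_\pi\left\{ \left(\mathcal{O}_{(\Gm)^n_x}x^c\boxtimes\mathcal{O}_{\A^N_z}\right)\overset{\mathbb{D}}{\otimes}\left(\mathcal{O}_{(\Gm)^n_x\times\A^N_z}e^{\sum_{j=1}^Nz_jx^{{\bf a}(j)}}\right)\right\},
\end{equation}
where $\pi:(\Gm)^n_x\times\A^N_z\rightarrow\A^N_z$ is the projection. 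Third, identify the external tensor twisted by the exponential: since $\sum_{j=1}^N z_j x^{{\bf a}(j)}=h_z(x)$ by definition, and since $\left(\mathcal{O}_{(\Gm)^n_x}x^c\boxtimes\mathcal{O}_{\A^N_z}\right)\overset{\mathbb{D}}{\otimes}\mathcal{O}_{(\Gm)^n_x\times\A^N_z}e^{h_z(x)}$ is canonically $\mathcal{O}_{(\Gm)^n_x\times\A^N_z}x^c e^{h_z(x)}$ as a $\DD$-module (the twist by $x^c$ and the twist by $e^{h_z(x)}$ being given by compatible modifications of the connection, with $x$-part $x^c$ and no $z$-part from the monomial factor), the right-hand side becomes $\int_\pi\mathcal{O}_{(\Gm)^n\times\A^N}x^c e^{h_z(x)}$, which is the claimed expression. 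Chaining the three isomorphisms yields the corollary. (These last identifications are exactly the ones already recorded in the excerpt in the sentence preceding the corollary.)

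The only genuine point requiring care — and the one I would flag as the main obstacle, though it is a mild one — is the passage between the ``twist'' notations: one must verify that $\left(\mathcal{O}_{(\Gm)^n_x}x^c\boxtimes\mathcal{O}_{\A^N_z}\right)\overset{\mathbb{D}}{\otimes}\mathcal{O}_{X\times\A^N_z}e^{\sum z_j f_j}$ agrees, as a $\DD_{X\times\A^N_z}$-module, with the module $\mathcal{O}_{X\times\A^N_z}x^c e^{h_z(x)}$ defined directly by twisting the action of $\Theta_{X\times\A^N_z}$ by $d_x\log(x^c)+d_{(x,z)}(h_z(x))$. This is a routine unravelling of the definitions of $\overset{\mathbb{D}}{\otimes}$, $\boxtimes$, and the operation $M\mapsto M\varphi$ given at the start of \S\ref{SectionDModules}: both describe $\mathcal{O}_{X\times\A^N_z}$ with the connection modified additively by the logarithmic derivative of the product $x^c e^{h_z(x)}$, and $\frac{d(x^c e^{h_z})}{x^c e^{h_z}}=\frac{d x^c}{x^c}+d h_z$ lies in $\Omega^1_{X\times\A^N_z}(X\times\A^N_z)$ since $h_z(x)$ is a global regular function on $(\Gm)^n_x\times\A^N_z$. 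Everything else is a formal composition of canonical isomorphisms already established, so no further difficulty arises.
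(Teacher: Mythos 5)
Your proposal is correct and is essentially the paper's own argument: the author likewise invokes the Schulze--Walther isomorphism (\ref{SWisom}) together with \cref{prop:EsterovTakeuchi} specialized to $X=(\Gm)^n_x$, $M=\mathcal{O}_{(\Gm)^n_x}x^c$, $f=\phi$, and then identifies $\left(\mathcal{O}_{(\Gm)^n_x}x^c\boxtimes\mathcal{O}_{\A^N_z}\right)\overset{\mathbb{D}}{\otimes}\mathcal{O}_{(\Gm)^n_x\times\A^N_z}e^{h_z(x)}$ with $\mathcal{O}_{(\Gm)^n_x\times\A^N_z}x^c e^{h_z(x)}$, exactly the unravelling you flag. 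The one detail you raise about possibly constant $f_j$ (i.e.\ zero columns of $A$) is genuine but easily dispatched as you note, and does not affect the argument.
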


We have a similar result for the Fourier transform of the proper direct image. For the proof, we need a simple

\begin{lem}\label{lem:Duality}

For any objects $M,N\in D^b_{coh}(\DD_X)$, if the inclusion ${\rm Ch}(M)\cap{\rm Ch}(N)\subset T^*_XX$ holds, one has a canonical isomorphism $\D_X(M\overset{\D}{\otimes}N)\simeq \D_X M\overset{\D}{\otimes}\D_X N$ in $D^b_{coh(\mathcal{D}_X)}$.
\end{lem}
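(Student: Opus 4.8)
The plan is to reduce the statement about the holonomic dual of a tensor product to the well-understood behaviour of $\HD_X$ under exterior tensor products via restriction to the diagonal. First I would recall that for $M,N\in D^b_{coh}(\DD_X)$ one has $M\overset{\D}{\otimes}N\simeq\Delta^\dagger(M\boxtimes N)$ up to a shift by $\dim X$, where $\Delta:X\hookrightarrow X\times X$ is the diagonal embedding; this is the standard comparison between the $\DD$-module tensor product and the external tensor product pulled back along the diagonal. On the other side, $\HD_{X\times X}(M\boxtimes N)\simeq \HD_X M\boxtimes \HD_X N$ always holds (duality commutes with external tensor product for complexes with coherent cohomology), so the only remaining ingredient is the compatibility of $\HD$ with the inverse image $\Delta^\dagger$. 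The latter holds \emph{when} $\Delta^\dagger$ is applied to a complex that is non-characteristic with respect to $\Delta$, i.e.\ when the condition $\mathrm{Ch}(M\boxtimes N)\cap (T^*X\times_X T^*X)\subset T^*_{X\times X}(X\times X)$ is satisfied along the diagonal; and this is exactly the hypothesis $\mathrm{Ch}(M)\cap\mathrm{Ch}(N)\subset T^*_XX$ of the lemma, since $\mathrm{Ch}(M\boxtimes N)=\mathrm{Ch}(M)\times\mathrm{Ch}(N)$ and the conormal intersection along the diagonal picks out pairs $(\xi,-\xi)$ with $\xi\in\mathrm{Ch}(M)\cap\mathrm{Ch}(N)$.

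The key steps, in order, are: (1) establish the isomorphism $M\overset{\D}{\otimes}N\simeq\Delta^\dagger(M\boxtimes N)$ and the dual statement for the $!$-pullback, keeping careful track of the shift $\dim X$ coming from the definition of $f^\dagger$ versus $\LL f^*$; (2) invoke $\HD_{X\times X}(M\boxtimes N)\simeq \HD_X M\boxtimes \HD_X N$; (3) apply the non-characteristic base-change/duality statement $\HD_X\circ\Delta^\dagger\simeq\Delta^\dagger\circ\HD_{X\times X}$ under the hypothesis that $M\boxtimes N$ is non-characteristic for $\Delta$; (4) combine (1)--(3) to get $\HD_X(M\overset{\D}{\otimes}N)\simeq\Delta^\dagger(\HD_X M\boxtimes\HD_X N)\simeq\HD_X M\overset{\D}{\otimes}\HD_X N$. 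Finally I would verify that the coherence hypothesis is preserved under $\overset{\D}{\otimes}$ in this non-characteristic situation, so that all objects involved genuinely live in $D^b_{coh}(\DD_X)$ and the quasi-isomorphism is between honest complexes there.

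The main obstacle I expect is step (3): the compatibility of holonomic duality with pullback is \emph{false} for arbitrary morphisms and arbitrary coherent complexes, and one must carefully cite (or reprove) the version valid for non-characteristic inverse images — for instance Kashiwara's theorem that for $f$ non-characteristic with respect to $M$ one has $f^\dagger\HD_Y M\simeq \HD_X f^\dagger M$ (see \cite{HTT} or \cite{BorelEtAl}). The subtlety is purely in translating ``$\mathrm{Ch}(M)\cap\mathrm{Ch}(N)\subset T^*_XX$'' into ``$\Delta$ is non-characteristic for $M\boxtimes N$''; once that translation is done, the rest is formal manipulation of the six operations. A secondary, minor bookkeeping point is getting the cohomological shifts right, since $\Delta$ has codimension $\dim X$ and both $\Delta^\dagger$ and the definition of $\HD$ introduce shifts — but these cancel, as they must, given that the statement is shift-free.
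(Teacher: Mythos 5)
Your proposal follows exactly the same route as the paper's proof: rewrite $M\overset{\D}{\otimes}N$ as $\LL\Delta_X^*(M\boxtimes N)$ for the diagonal embedding, observe that $\mathrm{Ch}(M\boxtimes N)=\mathrm{Ch}(M)\times\mathrm{Ch}(N)$ so the hypothesis $\mathrm{Ch}(M)\cap\mathrm{Ch}(N)\subset T^*_XX$ is precisely the non-characteristicity of $\Delta_X$ for $M\boxtimes N$, and then invoke the compatibility of $\HD$ with non-characteristic inverse image (HTT Theorem 2.7.1) together with $\HD_{X\times X}(M\boxtimes N)\simeq\HD_XM\boxtimes\HD_XN$. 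The only cosmetic difference is that you phrase the diagonal pullback via $\Delta^\dagger$ with an explicit shift while the paper uses $\LL\Delta_X^*$ directly, so the shifts never appear.
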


\noindent
The proof of this lemma will be given in the appendix.

\begin{prop}\label{prop:Prop2.11}
Under the setting of \cref{prop:EsterovTakeuchi}, for any $M\in D^b_{coh}(\DD_X)$, one has
\begin{equation}
\FL\Big(\int_{f!}M\Big)\simeq\int_{\pi!}\left\{ (M\boxtimes\mathcal{O}_{\A^p_z})\overset{\mathbb{D}}{\otimes}(\mathcal{O}_{X\times\A^p_z}e^{\sum_{j=1}^pz_jf_j})\right\}.
\end{equation}
\end{prop}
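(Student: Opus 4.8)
The plan is to mimic the proof of \cref{prop:EsterovTakeuchi} but to work throughout with proper direct images rather than direct images, and then to reconcile the two via duality using \cref{lem:Duality}. First I would observe that the Fourier--Laplace transform $\FL$ commutes with the appropriate duality functor up to the substitution $z\mapsto -z$ (equivalently, up to the involution $\iota(z)=-z$ on $\A^p_z$), essentially because $\D_{\A^p_z\times\A^p_\zeta}(\mathcal{O}e^{z\cdot\zeta})\simeq\mathcal{O}e^{-z\cdot\zeta}$ and $\HD$ exchanges $\int$ and $\int_!$. Concretely, writing $\FL^-$ for the variant of $\FL$ with kernel $e^{-z\cdot\zeta}$, one has $\HD_{\A^p_z}\circ\FL\simeq\FL^-\circ\HD_{\A^p_\zeta}$ as functors on $D^b_{coh}$; and on $\A^N_z$ the two transforms $\FL$ and $\FL^-$ differ only by pullback along $\iota$, which does not affect the statement because the right-hand side is manifestly $\iota$-symmetric after the change of variables $z_j\mapsto -z_j$ absorbed into replacing $f_j$ by $-f_j$ (as in the proof of \cref{cor:Cor2.5}).

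Next I would run the diagram chase of \cref{prop:EsterovTakeuchi} with $\int$ replaced by $\int_!$. The commutative triangle
\begin{equation}
\xymatrix{
 X\times\A^p_z \ar[d]_{\pi } \ar[r]^{f\times \id}& \A^p_\zeta\times\A^p_z \ar[dl]_{\pi_z}\\
 \A^p_z  &
}
\end{equation}
gives $\int_{\pi_z!}\circ\int_{(f\times\id)!}\simeq\int_{\pi!}$, exactly parallel to the non-proper case. What is needed in addition are: (i) the projection formula for proper direct images, $\int_{g!}(M\overset{\D}{\otimes}\LL g^*N)\simeq(\int_{g!}M)\overset{\D}{\otimes}N$, which holds for $\int_!$ by the same argument as for $\int$ (apply $\HD$ to the usual projection formula, using $\HD(M\overset{\D}{\otimes}\LL g^*N)\simeq \HD M\overset{\D}{\otimes}\LL g^*\HD N$ --- and here is where \cref{lem:Duality} enters, since we must know $\HD$ distributes over the relevant tensor product; the characteristic-variety transversality hypothesis of \cref{lem:Duality} is satisfied because one factor is an integrable connection, so its characteristic variety is the zero section); and (ii) the external-tensor compatibility $\int_{(f\times\id)!}(M\boxtimes\mathcal{O}_{\A^p_z})\simeq(\int_{f!}M)\boxtimes\mathcal{O}_{\A^p_z}$, which follows from the Künneth-type formula $\int_{(g\times\id)!}(M\boxtimes N)\simeq(\int_{g!}M)\boxtimes N$ together with $\int_{\id!}\simeq\id$. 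Chaining these three isomorphisms in the same order as in \cref{prop:EsterovTakeuchi} yields
\begin{equation}
\FL\Big(\int_{f!}M\Big)\simeq\int_{\pi!}\left\{ (M\boxtimes\mathcal{O}_{\A^p_z})\overset{\mathbb{D}}{\otimes}(\mathcal{O}_{X\times\A^p_z}e^{\sum_{j=1}^pz_jf_j})\right\}.
\end{equation}

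An alternative, perhaps cleaner, route is to deduce the statement directly from \cref{prop:EsterovTakeuchi} by applying $\HD$. Starting from $\FL(\int_f \HD_X M)\simeq\int_\pi\{(\HD_X M\boxtimes\mathcal{O}_{\A^p_z})\overset{\D}{\otimes}(\mathcal{O}e^{\sum z_j f_j})\}$, one applies $\HD_{\A^p_z}$ to both sides. On the left, $\HD_{\A^p_z}\circ\FL\circ\int_f\circ\HD_X$ becomes (up to $z\mapsto -z$) $\FL\circ\int_{f!}$, using $\HD_{\A^p_\zeta}\circ\int_f\circ\HD_X=\int_{f!}$ and the commutation of $\HD$ with $\FL$. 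On the right, $\HD_{\A^p_z}$ commutes with $\int_\pi$ to give $\int_{\pi!}\HD_{X\times\A^p_z}$, and then $\HD$ of the tensor product distributes by \cref{lem:Duality} (again the exponential factor is an integrable connection, so transversality of characteristic varieties is automatic), turning $\HD_X M$ back into $M$ and $e^{\sum z_j f_j}$ into $e^{-\sum z_j f_j}$; absorbing the sign into $z_j\mapsto -z_j$ and invoking the $\iota$-invariance already noted produces the claimed formula. I would present this second route as the main argument and relegate the verification of the proper-direct-image projection and Künneth formulas to a remark, since they are standard consequences of \cite{HTT}.

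The main obstacle I anticipate is bookkeeping around the sign/involution $z\mapsto -z$ and making sure the duality identities are invoked with the correct variance: $\HD$ sends $D^b_*(\DD_X)$ to its opposite category, $\FL$ does not literally commute with $\HD$ on the nose but only up to the Fourier kernel's self-duality (which introduces the sign), and one must check that the target expression $\int_{\pi!}\{(M\boxtimes\mathcal{O})\overset{\D}{\otimes}\mathcal{O}e^{\sum z_j f_j}\}$ is genuinely insensitive to this sign --- it is, because replacing $z$ by $-z$ on $\A^N_z$ is an isomorphism and, by the trick in \cref{cor:Cor2.5} (using $\int_{\varpi}=\int_\varpi\circ\int_\iota$ for the involution $\iota$), the resulting module is isomorphic to the original after the compensating substitution. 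The only other point requiring care is confirming that \cref{lem:Duality} applies: we need ${\rm Ch}(M\boxtimes\mathcal{O}_{\A^p_z})\cap{\rm Ch}(\mathcal{O}_{X\times\A^p_z}e^{\sum z_j f_j})\subset T^*_{X\times\A^p_z}(X\times\A^p_z)$, and since the second factor is a line bundle with integrable connection its characteristic variety is exactly the zero section, so the intersection is automatically contained in the zero section regardless of $M$. Once these checks are in place, the proof is a formal consequence of the functorial machinery recalled at the start of \S\ref{SectionDModules}.
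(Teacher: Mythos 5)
Your Route 2, which you prefer, follows essentially the same strategy as the paper's proof: dualize \cref{prop:EsterovTakeuchi} and use \cref{lem:Duality} to push $\HD$ through the tensors. But the commutation $\HD_{\A^p_z}\circ\FL\simeq\FL^-\circ\HD_{\A^p_\zeta}$ that you posit as an easy observation is in fact the crux, and your stated reason for it --- that $\D(\mathcal{O}e^{z\cdot\zeta})\simeq\mathcal{O}e^{-z\cdot\zeta}$ and that $\HD$ exchanges $\int$ and $\int_!$ --- only yields
\begin{equation*}
\HD_{\A^p_z}\circ\FL(N)\simeq\int_{\pi_z!}\bigl(\mathbb{L}\pi_\zeta^*\HD N\overset{\D}{\otimes}\mathcal{O}_{\A^p_z\times\A^p_\zeta}e^{-z\cdot\zeta}\bigr),
\end{equation*}
that is, the \emph{shrieked} Fourier transform with minus kernel, not $\FL^-(\HD N)=\int_{\pi_z}(\cdots)$. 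The equality of the shrieked and unshrieked formulas for the Fourier--Laplace transform is a genuine theorem (it reflects special properties of the Fourier kernel, e.g.\ that $\FL$ is induced by a Weyl-algebra automorphism, and is not a piece of categorical formalism), and it is precisely what the paper imports from \cite[PROPOSITION2.2.3.2.]{Daia} as the starting point of its proof. Without invoking or proving that result the argument does not close. The same missing input also undercuts Route 1 as written: $\FL$ is defined via $\int_{\pi_z}$, not $\int_{\pi_z!}$, so replacing $\int_f$ by $\int_{f!}$ in the diagram chase does not automatically turn the outer pushforward into a shriek, and the composition $\int_{\pi_z!}\circ\int_{(f\times\id)!}\simeq\int_{\pi!}$ you invoke is not yet available --- you need Daia's $\int_!$-formula for $\FL$ before the parallel chase can begin.

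A smaller issue concerns the $\iota$ bookkeeping. The module $\int_{\pi!}\{(M\boxtimes\mathcal{O}_{\A^p_z})\overset{\D}{\otimes}\mathcal{O}_{X\times\A^p_z}e^{\sum_j z_jf_j}\}$ is \emph{not} ``$\iota$-symmetric'' --- pullback by $z\mapsto -z$ genuinely flips the sign in the exponential. What you actually want is that both sides of the intermediate identity $\FL^-(\int_{f!}M)\simeq\int_{\pi!}\{(M\boxtimes\mathcal{O}_{\A^p_z})\overset{\D}{\otimes}\mathcal{O}_{X\times\A^p_z}e^{-\sum_j z_jf_j}\}$ transform compatibly under $\iota^*$ (base change on the right, $\iota^*\circ\FL^-\simeq\FL$ on the left), so applying $\iota^*$ produces the stated formula. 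Your reference to the trick in \cref{cor:Cor2.5} is also misplaced: there $\iota$ is an involution of the \emph{source} absorbed by the pushforward, whereas here $\iota$ acts on the \emph{target}, and the relevant tool is base change, not $\int_\varpi\simeq\int_\varpi\circ\int_\iota$. Note that the paper's proof avoids any residual $\iota$ entirely by applying duality twice --- once inside Daia's shrieked formula, once at the end --- so that the two sign flips cancel internally.
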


\begin{proof}
By \cite[PROPOSITION2.2.3.2.]{Daia}, for any $N\in D^b_{coh}(\DD_{\A^N_\zeta})$, we have a canonical isomorphism ${\rm FL}(N)\simeq\int_{\pi_z!}(\mathbb{L}\pi_\zeta^*N)\overset{\HD}{\otimes}\mathcal{O}_{\A^p_z\times\A_\zeta^p}e^{z\cdot\zeta}.$ We remark that the convention of inverse image functor in \cite{Daia} is different from ours. By \cite[Theorem 2.7.1.]{HTT}, we see that functors $\mathbb{L}\pi^*_\zeta$ and $\HD$ commute. Therefore, by \cref{lem:Duality}, we have 
\begin{align}
\FL\Big(\int_{f!}M\Big)&\simeq\int_{\pi_z!}\left(\mathbb{L}\pi_\zeta^*\left( \int_{f!}M\right)\right)\overset{\HD}{\otimes}\mathcal{O}_{\A^p_z\times\A_\zeta^p}e^{z\cdot\zeta}\\
 &\simeq \HD\circ\int_{\pi_z}\left(\mathbb{L}\pi_\zeta^*\left( \int_{f}\HD M\right)\right)\overset{\HD}{\otimes}\mathcal{O}_{\A^p_z\times\A_\zeta^p}e^{-z\cdot\zeta}\\
 &\overset{\cref{lem:Duality}}{\simeq} \HD\circ\int_{\pi_z}\left(\D \mathbb{L}\pi_\zeta^*\left( \int_{f!} M\right)\right)\overset{\HD}{\otimes}\mathcal{O}_{\A^p_z\times\A_\zeta^p}e^{-z\cdot\zeta}\\
 &\simeq\HD\circ\int_{\pi_z}\left\{ ((\HD M)\boxtimes\mathcal{O}_{\A^p_z})\overset{\mathbb{D}}{\otimes}(\mathcal{O}_{X\times\A^p_z}e^{-\sum_{j=1}^pz_jf_j})\right\}\\
 &\simeq \HD\circ\int_{\pi_z}\left\{ \HD (M\boxtimes\mathcal{O}_{\A^p_z})\overset{\mathbb{D}}{\otimes}(\mathcal{O}_{X\times\A^p_z}e^{-\sum_{j=1}^pz_jf_j})\right\}\\
 &\overset{\cref{lem:Duality}}{\simeq} \int_{\pi_z!}\left\{ (M\boxtimes\mathcal{O}_{\A^p_z})\overset{\mathbb{D}}{\otimes}(\mathcal{O}_{X\times\A^p_z}e^{\sum_{j=1}^pz_jf_j})\right\}.
\end{align}

\end{proof}

\noindent
Now, we use the same notation as \cref{thm:CayleyTrick}. We put 
\begin{equation}
\Phi=\Phi(z,x)=e^{h_{0,z^{(0)}}(x)}h_{1,z^{(1)}}(x)^{-\gamma_1}\cdots h_{k,z^{(k)}}(x)^{-\gamma_k}x^c,\;\; \Phi_k=y^\gamma x^c e^{h_z(y,x)}
\end{equation}
to simplify the notation. Let us formulate the main theorem of this section. We put $N=N_0+N_1+\cdots+N_k$ and define an $n\times N_l$ matrix $A_l$ by $A_l=({\bf a}^{(l)}(1)\mid\cdots\mid {\bf a}^{(l)}(N_l))$. Then, we define the Cayley configuration $A$ as an $(n+k)\times N$ matrix by
\begin{equation}\label{CayleyConfig}
A
=
\left(
\begin{array}{ccc|ccc|ccc|c|ccc}
0&\cdots&0&1&\cdots&1&0&\cdots&0&\cdots&0&\cdots&0\\
\hline
0&\cdots&0&0&\cdots&0&1&\cdots&1&\cdots&0&\cdots&0\\
\hline
&\vdots& & &\vdots& & &\vdots& &\ddots& &\vdots& \\
\hline
0&\cdots&0&0&\cdots&0&0&\cdots&0&\cdots&1&\cdots&1\\
\hline
&A_0& & &A_1& & &A_2& &\cdots & &A_k& 
\end{array}
\right).
\end{equation}
We define a morphism $j_A:(\Gm)^k_y\times (\Gm)^n_x\rightarrow\A^N_z$ by $j_A(y,x)=(y,x)^A$. In view of the proof of \cite[LEMMA 4.2]{EsterovTakeuchi}, one has a canonical isomorphism $\int_{j_A!}\mathcal{O}_{(\Gm)^k_y\times (\Gm)^n_x}y^\gamma x^c\overset{\sim}{\rightarrow}\int_{j_A}\mathcal{O}_{(\Gm)^k_y\times (\Gm)^n_x}y^\gamma x^c$. Combining \cref{thm:CayleyTrick}, \cref{cor:Cor2.5}, \cref{cor:Cor2.9}, and \cref{prop:Prop2.11}, we have the following main result of this section.

\begin{thm}\label{thm:mainDresult}
Suppose that the parameter $\delta={}^t(\gamma_1,\cdots,\gamma_k,c)\in\C^{(n+k)\times 1}$ is non-resonant and $\gamma_l\notin\Z$ for $l=1,\dots,k$. Then, one has a sequence of canonical isomorphisms of $\DD_{\A^{N}_z}$-modules
\begin{equation}\label{EulerLaplaceGaussManin}
M_A(\delta)\simeq\int_\varpi\mathcal{O}_{X_k}\Phi_k\simeq\int_\pi\mathcal{O}_{X_0 }\Phi.
\end{equation}

\noindent
Moreover, one has canonical isomorphisms
\begin{equation}\label{RegularizationCondition}
\int_\varpi\mathcal{O}_{X_k}\Phi_k\simeq\int_{\varpi!}\mathcal{O}_{X_k}\Phi_k
\;\;\;\text{  and  }\;\;\;
\int_\pi\mathcal{O}_{X_0}\Phi\simeq\int_{\pi!}\mathcal{O}_{X_0 }\Phi.
\end{equation}
The isomorphism (\ref{RegularizationCondition}) is called the regularization condition.
\end{thm}

\begin{rem}
In general, non-resonance of the parameter $\delta$ does not imply $\gamma_l\notin\Z$. The simplest example is $A=\begin{pmatrix}1&1\\ 0&1\end{pmatrix}$. If $\gamma_l\in \Z$ for some $l$ and $A_0=\varnothing$, the regularization condition (\ref{RegularizationCondition}) is no longer true. Indeed, if (\ref{RegularizationCondition}) is true, the analytic de Rham functor applied to (\ref{RegularizationCondition}) shows that the canonical morphism ${\rm can}:\Homo_1(\pi^{-1}(z)^{an};\C\Phi)\rightarrow\Homo_1^{lf}(\pi^{-1}(z)^{an};\C\Phi)$ is an isomorphism. Let $C_l$ be a $1$-dimensional cycle which turns around the divisor $\{ h_l(x)=0\}\subset(\Gm)^n$ once. $C_l$ defines a non-zero homology class $[C_l]$ since we have $\frac{1}{2\pi\ii}\int_{C_l}d_x\log h_l=\pm1$. On the other hand, it is easy to see that the class ${\rm can}[C_l]$ is homologous to $0$, which is a contradiction. The condition $\gamma_l\notin\Z$ is missing in \cite[Theorem 2.15]{GKZEuler}.
\end{rem}

Let $\Delta_A$ be the convex hull of column vectors of $A$ and the origin in $\R^{(n+k)\times 1}$. Recall that a point $z\in\A^{N}$ is said to be Newton non-degenerate if for any face $\Gamma$ of $\Delta_A$ which does not contain the origin, the set $\{ (y,x)\in(\Gm)^k\times (\Gm)^n\mid d_{(y,x)}h^\Gamma_z(y,x)=0\}$ is empty. Here, $h^\Gamma_z(y,x)$ denotes the Laurent polynomial associated to the face $\Gamma$ (before LEMMA 3.3 of \cite{Adolphson}). We set $d_x+\sum_{i=1}^nc_i\frac{dx_i}{x_i}\wedge-\sum_{l=1}^k\gamma_l\frac{d_xh_{l,z^{(l)}}(x)}{h_{l,z^{(l)}}(x)}\wedge+d_xh_{0,z^{(0)}}(x)\wedge$. The following result is also obtained in \cite{AdolphsonSperber}

\begin{cor}
Under the same assumption of \cref{thm:mainDresult}, if $z$ is a Newton non-degenerate point, the algebraic de Rham cohomology group $\Homo^*_{dR}\left( \pi^{-1}(z);(\mathcal{O}_{\pi^{-1}(z)},\nabla_z)\right)$ is purely $n$-codimensional.
\end{cor}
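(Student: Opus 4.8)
The plan is to reduce the purity statement to the corresponding fact for the Laplace integral $\int_\varpi \mathcal{O}_{X_k}\Phi_k$ via the Cayley trick of \cref{thm:CayleyTrick}, and then invoke the Newton non-degeneracy hypothesis together with Adolphson's results. First I would recall that, by the base change formula (\ref{BaseChange}) applied to the inclusion $\{z\}\hookrightarrow\A^N_z$, the fibre at $z$ of the direct image $\int_\pi\mathcal{O}_{X_0}\Phi$ is computed by the de Rham cohomology $\Homo^*_{dR}\left(\pi^{-1}(z);(\mathcal{O}_{\pi^{-1}(z)},\nabla_z)\right)$; likewise the fibre of $\int_\varpi\mathcal{O}_{X_k}\Phi_k$ is the de Rham cohomology of the restricted connection $(\mathcal{O}_{\varpi^{-1}(z)},\widetilde{\nabla}_z)$ on $\varpi^{-1}(z)=(\Gm)^k_y\times(\Gm)^n_x$, where $\widetilde{\nabla}_z=d+d_{(y,x)}h_z(y,x)\wedge+\sum_l\gamma_l\frac{dy_l}{y_l}\wedge+\sum_i c_i\frac{dx_i}{x_i}\wedge$. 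Since \cref{thm:CayleyTrick} gives an isomorphism of $\DD_{\A^N_z}$-modules $\int_\pi\mathcal{O}_{X_0}\Phi\simeq\int_\varpi\mathcal{O}_{X_k}\Phi_k$, and this isomorphism is compatible with restriction to $z$ (it is a genuine isomorphism of complexes on $\A^N_z$, so $\LL i_z^*$ applied to both sides agrees), purity of one de Rham cohomology is equivalent to purity of the other.

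Next I would appeal to the main result of \cite{Adolphson}: for a Laurent polynomial whose Newton polytope is $\Delta_A$ and which is non-degenerate with respect to $\Delta_A$ (in the sense that for every face $\Gamma$ not containing the origin the restricted differential $d_{(y,x)}h^\Gamma_z$ has no zero on the torus — which is exactly the Newton non-degeneracy hypothesis on $z$ stated just before the corollary), the twisted de Rham cohomology of the corresponding exponential-times-monomial connection on the torus is concentrated in the top degree, i.e.\ purely $n+k$-codimensional relative to the ambient — in the present normalization this is the statement that $\Homo^m_{dR}\left((\Gm)^{k}\times(\Gm)^n;(\mathcal{O},\widetilde{\nabla}_z)\right)=0$ for $m\neq n+k$. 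Here I would need to check that the non-resonance of $\delta$ and the condition $\gamma_l\notin\Z$ supply the genericity of the monomial weights $(\gamma,c)$ that Adolphson's theorem requires on the toric-boundary strata; this is where one unwinds the definition of non-resonance (for every face $\Gamma<\Delta_A$ with $0\in\Gamma$ one has $\delta\notin\Z^{(n+k)\times 1}+\spanning_\C\Gamma$) and matches it to the hypotheses of \cite[\S3]{AdolphsonSperber}.

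Concretely the steps are: (1) identify $\pi^{-1}(z)$, $\varpi^{-1}(z)$ and the restricted connections; (2) use \cref{thm:CayleyTrick} plus base change to transfer the purity question to $\varpi^{-1}(z)$; (3) verify that the pair $(\Delta_A,h_z(y,x))$ is non-degenerate in Adolphson's sense precisely when $z$ is Newton non-degenerate — note the Laplace integrand $h_z(y,x)$ has Newton polytope $\Delta_A$ by construction of the Cayley matrix (\ref{CayleyConfig}); (4) quote Adolphson's vanishing theorem to conclude concentration in codimension $n+k$ over the torus; (5) translate the codimension count back through the Cayley trick (which adds $k$ extra torus directions $y_1,\dots,y_k$) to land at the asserted purity in codimension $n$ for $\Homo^*_{dR}\left(\pi^{-1}(z);(\mathcal{O}_{\pi^{-1}(z)},\nabla_z)\right)$. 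The main obstacle I anticipate is step (3) together with the bookkeeping in step (5): one must be careful that Newton non-degeneracy is phrased in terms of $h^\Gamma_z(y,x)$ for faces $\Gamma$ of $\Delta_A$ and that the faces not containing the origin correspond exactly to the boundary strata where Adolphson's acyclicity is needed, and that the cohomological degree shift coming from the $k$ auxiliary variables is accounted for correctly so the ``$n$-codimensional'' in the statement is the right number rather than ``$n+k$''. Once the dictionary between the two sides of the Cayley trick is set up cleanly, the rest is a direct citation.
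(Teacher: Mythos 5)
Your approach is correct in principle but takes a genuinely different route from the paper. You transfer the purity question from $\pi^{-1}(z)$ to the Laplace side $\varpi^{-1}(z)$ via the Cayley trick of \cref{thm:CayleyTrick}, and then invoke Adolphson's acyclicity theorem for the twisted exponential connection on $(\Gm)^{n+k}$. The paper instead goes the other direction along the chain of isomorphisms supplied by \cref{thm:mainDresult}: it uses $M_A(\delta)\simeq\int_\pi\mathcal{O}_{X_0}\Phi$ outright (the Cayley trick is already absorbed into that theorem) and then observes that the inclusion $\iota_z:\{z\}\hookrightarrow\A^N$ is \emph{non-characteristic} with respect to $M_A(\delta)$ for Newton non-degenerate $z$ — this is exactly the content of Adolphson's Lemma~3.3. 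Since $M_A(\delta)$ is a single holonomic module in degree zero and $\iota_z$ is non-characteristic, $\LL\iota_z^*M_A(\delta)$ is automatically concentrated in degree zero; combined with the base-change identification $\LL\iota_z^*\int_\pi\mathcal{O}_{X_0}\Phi\simeq\DR_{\pi^{-1}(z)/\{z\}}(\mathcal{O}_{\pi^{-1}(z)},\nabla_z)$, this gives the purity statement immediately. The practical gain of the paper's route over yours is precisely the concern you flagged in your step (3): Adolphson's full de Rham vanishing theorem requires you to re-check that the non-resonance of $\delta$ (and the conditions $\gamma_l\notin\Z$) match the genericity hypotheses of that theorem, whereas the non-characteristic statement of Lemma 3.3 is about the characteristic variety of $M_A(\delta)$ — governed by the combinatorics of $A$ and the Newton non-degeneracy of $z$ — so no parameter bookkeeping is needed. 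Your step (5) codimension count is correct but, once you use the $\DD$-module isomorphism of \cref{thm:CayleyTrick} at the level of complexes, it is automatic, so there is no real subtlety there.
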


\begin{proof}
We write $\iota_z:\{ z\}\hookrightarrow\A^N$ for the canonical inclusion. Since $\iota_z$ is non-characteristic with respect to the $\DD$-module $M_A(\delta)$ (\cite[LEMMA 3.3]{Adolphson}), the complex $\mathbb{L}\iota_z^*M_A(\delta)$ is concentrated in degree $0$. On the other hand, we have $\mathbb{L}\iota_z^*\int_\pi\mathcal{O}_{X_0 }\Phi=DR_{\pi^{-1}(z)/\{ z\}}(\mathcal{O}_{\pi^{-1}(z)},\nabla_z)$ by the projection formula. Therefore, the result follows from the isomorphism (\ref{EulerLaplaceGaussManin}).

\end{proof}

\section{Description of the rapid decay homology groups of Euler-Laplace integrals}\label{SectionRapidDecay}
\subsection{A concrete version of Theorem \ref{thm:mainDresult}}\label{subsec:3.1}
We inherit the notation of \S\ref{subsec:2.1}. We begin with proving an explicit version of \cref{thm:mainDresult}. Let $Y$ be a smooth product variety $Y=X\times Z$, $X$ be affine and let $M=(E,\nabla)$ be a (meromorphic) integrable connection on $Y$. We write $\pi_Z:Y\rightarrow Z$ for the canonical projection. We revise the explicit $\DD_Z$-module structure of $\int_{\pi_Z}M$. We can assume that $Z$ is affine since the argument is local. From the product structure of $Y$, we can naturally define a decomposition $\Omega_{Y}^1(E)=\Omega^1_{Y/X}(E)\oplus\Omega^1_{Y/Z}(E)$. Here, $\Omega^1_{Y/X}(E)$ and $\Omega^1_{Y/Z}(E)$ are the sheaves of relative differential forms with values in $E$. By taking a local frame of $E$, we see that $\nabla$ can locally be expressed as $\nabla=d+\Omega\wedge$ where $\Omega\in\Omega^1(\End (E))$. We see that $\Omega$ can be decomposed into $\Omega=\Omega_x+\Omega_z$ with $\Omega_x\in \Omega^1_{Y/Z}(\End (E))$ and $\Omega_z\in \Omega^1_{Y/X}(\End (E))$. Then, $\nabla_{Y/Z}=d_x+\Omega_x\wedge$ and $\nabla_{Y/X}=d_z+\Omega_z\wedge$ are both globally well-defined and we have $\nabla=\nabla_{Y/X}+\nabla_{Y/Z}$. Here, $\nabla_{Y/X}:\mathcal{O}_{Y}(E)\rightarrow\Omega^1_{Y/X}(E)$ and $\nabla_{Y/Z}:\mathcal{O}_{Y}(E)\rightarrow\Omega^1_{Y/Z}(E)$. Note that the integrability condition $\nabla^2=0$ is equivalent to three conditions $\nabla_{Y/X}^2=0, \nabla_{Y/Z}^2=0,$ and $\nabla_{Y/X}\circ\nabla_{Y/Z}+\nabla_{Y/Z}\circ\nabla_{Y/X}=0$. For any (local algebraic) vector field $\theta$ on $Z$ and for any form $\omega\in\Omega_{Y/Z}^*(E)$, we define the action $\theta\cdot \omega$ by $\theta\cdot \omega=\iota_\theta(\nabla_{Y/X}\omega)$, where $\iota_\theta$ is the interior derivative. In this way, ${\rm DR}_{Y/Z}(E,\nabla)=(\Omega^{\dim X+*}_{Y/Z}(E),\nabla_{Y/Z})$ is a complex of $\DD_Z$-modules. It can be shown that ${\rm DR}_{Y/Z}(E,\nabla)$ represents $\int_{\pi_Z}M$ (\cite[pp.45-46]{HTT}). 

For any non-constant regular function $h$ on $Y$ such that $h^{-1}(0)$ is smooth and a parameter $\gamma\in\C\setminus\Z$, we are going to give an explicit version of the isomorphism 
\begin{equation}
\int^0_{\pi_Z\circ\pi}(\mathbb{L}\pi^*M)y^\gamma e^{yh(x,z)}\simeq \int^0_{\pi_Z}\int_j(j^\dagger M)h^{-\gamma},
\end{equation}
where $\pi:Y\times(\Gm)_y\rightarrow Y$ is the canonical projection, $j:Y\setminus h^{-1}(0)\rightarrow Y$ is the canonical inclusion, and $\int^0_*$ is the $0$-th cohomology group of $\int_*$. We denote $(E_1, \nabla_1)$ the integrable connection $(\mathbb{L}\pi^*M)y^\gamma e^{yh}$. We set $D=h^{-1}(0)\times (\Gm)_y$ and consider a short exact sequence of complexes of $\DD_Z$-modules
\begin{equation}
0\rightarrow{\rm DR}_{Y\times(\Gm)_y/Z}(E_1, \nabla_1)\rightarrow
{\rm DR}_{Y\times(\Gm)_y/Z}((E_1, \nabla_1)(*D))
\rightarrow
\frac{
{\rm DR}_{Y\times(\Gm)_y/Z}((E_1, \nabla_1)(*D))
}
{
{\rm DR}_{Y\times(\Gm)_y/Z}(E_1, \nabla_1)
}
\rightarrow 0.
\end{equation}
Here, the first and the second morphism are natural inclusion and projection respectively. Since the third complex is quasi-isomorphic to $\int_{\pi_Z\circ\pi}\R\Gamma_{[D]}((\mathbb{L}\pi^*M)y^\gamma e^{yh})$, this is quasi-isomorphic to $0$ by (\ref{TheVanishingResult}). 



Now, we consider an isomorphism $\varphi:(Y\setminus h^{-1}(0))\times(\Gm)_y\rightarrow (Y\setminus h^{-1}(0))\times(\Gm)_y$ defined by $\varphi(z,x,y)=(z,x,\frac{y}{h(z,x)})$. For any $\omega\in \Omega^p_{Y\times(\Gm)_y/Z}(E_1)(*D)$, we define $\varphi_z^*\omega$ to be the pull-back of $\omega$ by $\varphi$ freezing the variable $z$. More precisely, we consider the decomposition $\Omega_{Y\times(\Gm)_y}^p=\Omega_{Y\times(\Gm)_y/Z}^p\oplus\Omega^1_Z\wedge\Omega^{p-1}_{Y\times(\Gm)_y}$. Then, $\varphi_z^*\omega$ is defined to be the projection of $\varphi^*\omega$ to the component $\Omega_{Y\times(\Gm)_y/Z}^p$. We put $(E_2,\nabla_2)$ to be the meromorphic integrable connection $\left(\int_j (j^\dagger M)h^{-\gamma}\right)\boxtimes \mathcal{O}_{(\Gm)_y}y^\gamma e^y.$ By a direct computation, we can verify that $\varphi_z^*$ induces a $\C$-linear isomorphism of complexes $\varphi_z^*:{\rm DR}_{Y\times(\Gm)_y/Z}((E_1, \nabla_1)(*D))\rightarrow {\rm DR}_{Y\times(\Gm)_y/Z}(E_2, \nabla_2)$. However, this is not a morphism of $\DD_Z$-modules. Nonetheless, we can prove the following 

\begin{prop}\label{prop:Prop312}

\begin{equation}
\Homo^{0}(\varphi_z^*):\Homo^{0}({\rm DR}_{Y\times(\Gm)_y/Z}((E_1, \nabla_1)(*D)))\rightarrow \Homo^{0}({\rm DR}_{Y\times(\Gm)_y/Z}((E_2, \nabla_2))
\end{equation}
 is an isomorphism of $\DD_Z$-modules.
\end{prop}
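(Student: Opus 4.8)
The plan is to leverage the $\C$-linear isomorphism $\varphi_z^\ast$ that has already been constructed on the level of relative de Rham complexes, and to verify that on $\Homo^0$ it intertwines the two $\DD_Z$-module structures, even though it fails to do so at the chain level. First I would recall exactly how the $\DD_Z$-action is defined: for a vector field $\theta$ on $Z$ and a class represented by a relative form $\omega$, the action is $\theta\cdot\omega=\iota_\theta(\nabla_{Y\times(\Gm)_y/X}\omega)$ where $\nabla_\bullet$ is decomposed as $\nabla_{\text{rel}}+\nabla_{\text{base}}$ with $\nabla_{\text{base}}=\nabla_{Y\times(\Gm)_y/((\Gm)_y\times Y_{\text{rel}})}$ the ``$z$-direction'' piece. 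So the content of the proposition is the commutation, modulo exact forms, of $\varphi_z^\ast$ with $\iota_\theta\circ\nabla_{\text{base}}$.

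The key computation is the following. Since $\varphi$ is an honest automorphism of $(Y\setminus h^{-1}(0))\times(\Gm)_y$, the full pull-back $\varphi^\ast$ commutes with the full connection $\nabla_1$ (viewed as the twisted de Rham differential for $(E_1,\nabla_1)(\ast D)$), because $\varphi$ was chosen precisely so that $\varphi^\ast(E_1,\nabla_1)(\ast D)\cong (E_2,\nabla_2)$ as connections on the total space. What breaks is only the splitting of the differential into the relative part and the $z$-part: $\varphi_z^\ast$ is the relative projection of $\varphi^\ast$, and $\varphi$ mixes $y$ with $z$ through the factor $h(z,x)^{-1}$. Concretely, for a relative form $\omega$ one has $\varphi^\ast\omega=\varphi_z^\ast\omega+ (\text{terms containing } dz)$, and the $dz$-terms are controlled by $d_z\log h$. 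I would therefore write, for a vector field $\theta$ on $Z$,
\begin{equation}
\theta\cdot\varphi_z^\ast\omega=\iota_\theta\nabla_{\text{base}}\varphi_z^\ast\omega
=\varphi_z^\ast\bigl(\iota_\theta\nabla_{\text{base}}\omega\bigr)+\nabla_{2,\text{rel}}\bigl(\text{something}\bigr),
\end{equation}
where the correction term arises from commuting $\iota_\theta$ past the part of $\varphi^\ast$ that lands in $\Omega^1_Z\wedge\Omega^{\bullet-1}$; this part is a contraction of a $d_z\log h$-multiple of $\varphi_z^\ast\omega$ together with the Euler vector field $y\partial_y$ acting on the $(\Gm)_y$-factor. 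The crucial point making the correction term relatively exact is that $y\partial_y$ contracted against the relative form $y^\gamma e^y(\text{stuff})$ differs from a relative coboundary by the genericity in $\gamma$ — more precisely, on $\Homo^0$ of the $(\Gm)_y$-factor, multiplication by $y\partial_y+\gamma+y$ is an isomorphism, which is exactly the content of \cref{lem:gamma}.

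I expect the main obstacle to be bookkeeping: keeping careful track of signs and of which differential ($d_x$ on $Y$, $d$ on $(\Gm)_y$, $d_z$ on $Z$) each term belongs to after applying $\varphi^\ast$, and then identifying the correction term as $\nabla_{2,\text{rel}}$ of an explicit relative form so that it dies in $\Homo^0$. The cleanest way to organize this, which is what I would actually carry out, is to avoid computing $\theta\cdot(-)$ for a general $\theta$ and instead argue structurally: the total-space connection intertwining $\varphi^\ast(E_1,\nabla_1)(\ast D)\simeq(E_2,\nabla_2)$ already shows $\varphi^\ast$ induces an isomorphism $\Homo^0$ of the \emph{total} de Rham complexes over $Z$, compatibly with the $\DD_Z$-structure coming from the Gauss--Manin connection on those; and one checks (using \eqref{TheVanishingResult}, i.e. the vanishing of the $\R\Gamma_{[D]}$-term, together with \cref{lem:gamma} for the $(\Gm)_y$-direction) that the natural maps from $\Homo^0$ of the relative complexes to $\Homo^0$ of the total complexes are $\DD_Z$-linear isomorphisms onto the respective images, and that $\varphi_z^\ast$ is compatible with $\varphi^\ast$ under these identifications. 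Chasing this diagram then promotes the chain-level $\C$-linear isomorphism $\varphi_z^\ast$ to a $\DD_Z$-module isomorphism on $\Homo^0$, which is the assertion of \cref{prop:Prop312}.
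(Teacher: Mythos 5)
The paper's proof is a direct, explicit computation and is more elementary than what you sketch. One takes a degree-zero representative $\xi=a(z,x,y)\frac{dy}{y}\wedge\omega(x)$, computes the $\DD_Z$-action of a vector field $\theta$ with respect to both connections, and observes that the discrepancy $\theta\overset{(2)}{\bullet}(\varphi_z^*\xi)-\varphi_z^*(\theta\overset{(1)}{\bullet}\xi)$ is \emph{literally} $(\nabla_2)_{\rm rel}$ applied to the explicit relative $(\dim X)$-form $\frac{(\theta h)}{h}\,a\!\left(z,x,\frac{y}{h}\right)\omega(x)$, hence dies in $\Homo^0$. No hypothesis on $\gamma$ enters anywhere. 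Your invocation of \cref{lem:gamma} in the second paragraph is therefore a red herring: the correction term does not need to be shown exact via invertibility of $y\partial_y+\gamma+y$ on the $(\Gm)_y$-factor --- it has an explicit $\nabla_{2,\rm rel}$-antiderivative. This suggests you haven't actually carried out the bookkeeping that you (correctly) identify as the main obstacle.

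The ``cleaner structural route'' in your third paragraph has a genuine gap. You propose to compare $\Homo^0$ of the \emph{relative} complexes with $\Homo^0$ of the \emph{total} (absolute) de Rham complexes via ``natural maps'' that you claim are $\DD_Z$-linear isomorphisms onto their images. But there is no natural chain map from ${\rm DR}_{Y\times(\Gm)_y/Z}(M)$ into the absolute de Rham complex: the obvious inclusion of relative forms into absolute forms does not intertwine $\nabla_{\rm rel}$ with the full $\nabla$, so it does not induce a map on cohomology. Moreover the absolute de Rham complex, pushed to $Z$, does not carry a chain-level $\DD_Z$-structure that would make the phrase ``$\DD_Z$-linear isomorphism onto the respective images'' meaningful. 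A structural argument in the spirit you are after does exist --- filter the absolute de Rham complex by $F^p=\Omega^p_Z\wedge\Omega^{\bullet-p}$ (Katz--Oda), note $\varphi$ is a morphism over $Z$ so $\varphi^*$ is filtered, and read off that the induced map on $E_1^{0,\bullet}$ is $\varphi_z^*$ and commutes with the $d_1$-differential, which is the Gauss--Manin connection --- but your write-up does not produce this, and invoking \eqref{TheVanishingResult} and \cref{lem:gamma} there is again beside the point. As written the proposal would not compile into a proof without substantially rebuilding the key step.
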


\begin{proof}
Remember that the connection $(E,\nabla)$ can locally be expressed as $\nabla=d+\Omega\wedge=d_x+\Omega_x\wedge+d_z+\Omega_z\wedge$. Therefore, we locally have $\nabla_1=\nabla+\gamma\frac{dy}{y}\wedge +d(yh)\wedge=(d_{x,y}+\Omega_x\wedge+\gamma\frac{dy}{y}\wedge+h{dy}\wedge +yd_xh\wedge)+(d_z+\Omega_z\wedge+yd_zh\wedge)$ and 
$\nabla_2=\nabla-\gamma\frac{dh}{h}\wedge+\gamma\frac{dy}{y}\wedge+dy\wedge=(d_{x,y}+\Omega_x\wedge-\gamma\frac{d_xh}{h}\wedge+\gamma\frac{dy}{y}\wedge+dy\wedge)+(d_z+\Omega_z-\gamma\frac{d_zh}{h}\wedge)$.

Let us take any element $\xi\in {\rm DR}^{\dim X+1}_{Y\times(\Gm)_y/Z}((E_1, \nabla_1)(*D))$. By definition, $\xi$ can be written in the form $\xi=a(z,x,y)\frac{dy}{y}\wedge\omega(x)$ where $\omega(x)\in\Omega^{\dim X}_{Y/Z}(E)$ and $a(z,x,y)$ is a  regular function on $Y\times(\Gm)_y$ having poles along $h^{-1}(0)$. In the following we fix a vector field $\theta$ on $Z$ and compute its actions to $\xi$ and $\varphi^*_z\xi$. In order to emphasize that the actions are different, we write the resulting elements as $\theta\overset{(1)}{\bullet}\xi$ and $\theta\overset{(2)}{\bullet}(\varphi^*_z\xi)$. Firstly, we have an equality
\begin{equation}\label{EquationAbove}
\theta\overset{(1)}{\bullet}\xi=(\theta a)(z,x,y)\frac{dy}{y}\wedge\omega(x)+\Omega_z(\theta)\xi+y(\theta h)(z,x)\xi.
\end{equation}
Applying $\varphi_z^*$ to (\ref{EquationAbove}), we have
\begin{align}
\varphi_z^*(\theta\overset{(1)}{\bullet}\xi)=&(\theta a)(z,x,\frac{y}{h(z,x)})\frac{dy}{y}\wedge\omega(x)+\Omega_z(\theta)a(z,x,\frac{y}{h(z,x)})\frac{dy}{y}\wedge\omega(x)\nonumber\\
&+\frac{y}{h(z,x)}(\theta h)(z,x)a(z,x,\frac{y}{h(z,x)})\frac{dy}{y}\wedge\omega(x).
\end{align}

\noindent
Secondly, by a direct computation, we have an equality 
\begin{equation}
\theta\overset{(2)}{\bullet}(\varphi^*_z\xi)=(\theta a)(z,x,\frac{y}{h(z,x)})\omega(x)\wedge\frac{dy}{y}-\frac{y (\theta h)(z,x)}{h(z,x)^2}a_y(z,x,\frac{y}{h(z,x)})\omega(x)\wedge\frac{dy}{y}+\Omega_z(\theta)\varphi_z^*\xi-\gamma\frac{(\theta h)(z,x)}{h(z,x)}\varphi_z^*\xi,
\end{equation}

\noindent
where $a_y$ is the partial derivative of $a$ in $y$. Finally, we have an equality
\begin{align}
& (\nabla_2)_{Y\times(\Gm)_y/Z}\left(\frac{(\theta h)(z,x)}{h(z,x)}a(z,x,\frac{y}{h(z,x)})\omega(x)\right)\nonumber\\
=&\frac{y(\theta h)(z,x)}{h(z,x)^2}a_y(z,x,\frac{y}{h(z,x)})\frac{dy}{y}\wedge\omega(x)+\gamma\frac{(\theta h)(z,x)}{h(z,x)}a(z,x,\frac{y}{h(z,x)})\frac{dy}{y}\wedge\omega(x)\nonumber\\
 &+\frac{y (\theta h)(z,x)}{h(z,x)}a(z,x,\frac{y}{h(z,x)})\frac{dy}{y}\wedge\omega(x),
\end{align}
from which we obtain a relation
\begin{equation}
\varphi_z^*(\theta\overset{(1)}{\bullet}\xi)+(\nabla_2)_{Y\times(\Gm)_y/Z}\left(\frac{(\theta h)(z,x)}{h(z,x)}a(z,x,\frac{y}{h(z,x)})\omega(x)\right)=\theta\overset{(2)}{\bullet}(\varphi^*_z\xi).
\end{equation}
Taking the cohomology groups, we can conclude that $\varphi^*_z$ is a morphism of $\DD_Z$-modules.

\end{proof}

We write $(E_3,\nabla_3)$ for the meromorphic connection $\int_j(j^\dagger M)h^{-\gamma}$. The relative de Rham complex for $\int_{\pi_Z}(E_3,\nabla_3)$ is explicitly given by the formula ${\rm DR}_{Y/Z}(E_3,\nabla_3)=\left( \Omega^{\dim X+*}_{Y/Z}(*h^{-1}(0)),\nabla-\gamma\frac{d h}{h}\wedge\right).$ 

\begin{prop}\label{prop:TrivialProp}
Wedge product induces an isomorphism of complexes of $\DD_Z$-modules
\begin{equation}
{\rm DR}_{(\Gm)_y/{\rm pt}}\left(\mathcal{O}_{(\Gm)_y},d_y+\gamma\frac{dy}{y}\wedge+dy\wedge\right)\boxtimes {\rm DR}_{Y/Z}(E_3,\nabla_3) \overset{\sim}{\rightarrow} {\rm DR}_{Y\times(\Gm)_y/Z}(E_2, \nabla_2).
\end{equation}
\end{prop}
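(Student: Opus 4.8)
The plan is to read \cref{prop:TrivialProp} as the chain-level Künneth isomorphism for relative de Rham complexes of an external product of connections. By construction one has $(E_2,\nabla_2)=(E_3,\nabla_3)\boxtimes\bigl(\mathcal{O}_{(\Gm)_y},\,d_y+\gamma\frac{dy}{y}\wedge+dy\wedge\bigr)$ on $Y\times(\Gm)_y=(X\times Z)\times(\Gm)_y$, where the de Rham twist of $\mathcal{O}_{(\Gm)_y}y^\gamma e^y$ is $\frac{d(y^\gamma e^y)}{y^\gamma e^y}=\gamma\frac{dy}{y}+dy$; thus the second tensor factor, with its relative de Rham complex over the point, is precisely the first factor ${\rm DR}_{(\Gm)_y/{\rm pt}}\bigl(\mathcal{O}_{(\Gm)_y},d_y+\gamma\frac{dy}{y}\wedge+dy\wedge\bigr)$ in the statement. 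So what must be shown is that, forming relative de Rham complexes over $Z$ (with $Y\to Z$ and $(\Gm)_y\to{\rm pt}$), the wedge product of forms identifies the external tensor product of the two relative de Rham complexes with the relative de Rham complex of the external product, compatibly with the $\DD_Z$-module structures.

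First I would exhibit the underlying graded isomorphism. Write $\mathrm{pr}_Y:Y\times(\Gm)_y\to Y$ and $\mathrm{pr}_y:Y\times(\Gm)_y\to(\Gm)_y$ for the two projections. The product structure relative to $Z$ and the wedge product identify the relative $r$-forms $\Omega^r_{Y\times(\Gm)_y/Z}$ with $\bigoplus_{p+q=r}\mathrm{pr}_y^*\Omega^p_{(\Gm)_y}\wedge\mathrm{pr}_Y^*\Omega^q_{Y/Z}$; since the underlying sheaf of $E_2$ is $\mathrm{pr}_Y^*E_3$ and the pole divisor $h^{-1}(0)\times(\Gm)_y$ is pulled back from $Y$, this becomes, after pushing down to $Z$ (legitimate as $X$ is affine), an identification of $\Omega^r_{Y\times(\Gm)_y/Z}(E_2)$ with $\bigoplus_{p+q=r}\Omega^p_{(\Gm)_y}\otimes_\C\Omega^q_{Y/Z}(E_3)(*h^{-1}(0))$. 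Matching this against the degrees of the external tensor product of the two complexes — bearing in mind the de Rham shift conventions, namely $\dim(\Gm)_y=1$ for the first factor, $\dim X$ for the second, and $\dim X+1$ for the target — the assignment $\eta\otimes\omega\mapsto\mathrm{pr}_y^*\eta\wedge\mathrm{pr}_Y^*\omega$ is an isomorphism of $\Z$-graded $\mathcal{O}_Z$-modules. It commutes with the differentials because the connection $1$-form of $E_2$, restricted to the directions relative to $Z$, splits into its $x$-part — which is that of $\nabla_3$ — and its $y$-part, which equals $\gamma\frac{dy}{y}+dy$; hence $(\nabla_2)_{Y\times(\Gm)_y/Z}$ is exactly the total differential $(\nabla_3)_{Y/Z}\otimes\mathrm{id}\pm\mathrm{id}\otimes\bigl(d_y+\gamma\frac{dy}{y}\wedge+dy\wedge\bigr)$ of the external tensor product of complexes, with the standard Koszul sign.

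It remains to check compatibility with the $\DD_Z$-actions, which is the only point requiring care. Recall that a vector field $\theta$ on $Z$ acts on a relative form $\xi$ by $\theta\cdot\xi=\iota_\theta\bigl(\nabla_{Y\times(\Gm)_y/X\times(\Gm)_y}\xi\bigr)$, i.e.\ through the component of the connection in the $Z$-direction together with the interior derivative. The key observation is that, because the factor $\bigl(\mathcal{O}_{(\Gm)_y},y^\gamma e^y\bigr)$ has no $z$-dependence, the $Z$-direction connection $1$-form of $E_2$ is pulled back from that of $E_3$; and since a form $\eta\in\Omega^\bullet_{(\Gm)_y}$ has no $dz$-component, one obtains $\theta\cdot\bigl(\mathrm{pr}_y^*\eta\wedge\mathrm{pr}_Y^*\omega\bigr)=\pm\,\mathrm{pr}_y^*\eta\wedge\mathrm{pr}_Y^*(\theta\cdot\omega)$, which is exactly how $\DD_Z$ operates on the external tensor product ${\rm DR}_{(\Gm)_y/{\rm pt}}(\mathcal{O}_{(\Gm)_y}y^\gamma e^y)\boxtimes{\rm DR}_{Y/Z}(E_3,\nabla_3)$, where it acts only on the ${\rm DR}_{Y/Z}(E_3,\nabla_3)$-factor. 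Combining the graded isomorphism with the compatibility of differentials and of the $\DD_Z$-actions yields the proposition. The only real obstacle is bookkeeping — tracking the de Rham shifts, the Koszul signs in the total differential, and above all phrasing precisely the decoupling of the $Z$-direction connection of $E_2$ from the $(\Gm)_y$-factor — which follows simply by unwinding the definition of the external product of connections together with the interior-derivative description of the $\DD_Z$-module structure; no input is needed beyond the explicit relative de Rham descriptions recalled just before the statement.
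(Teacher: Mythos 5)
Your proof is correct and is precisely the unwinding of definitions the paper has in mind; indeed, the paper simply declares the proposition ``straightforward'' without supplying any argument. The three checks you carry out --- the graded K\"unneth decomposition together with the de Rham degree shifts, the splitting of $(\nabla_2)_{Y\times(\Gm)_y/Z}$ into its $x$- and $y$-parts, and the compatibility with the $\DD_Z$-action via the interior derivative (where, as you observe, the two Koszul signs cancel) --- constitute exactly the omitted content.
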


\noindent
The proof of the \cref{prop:TrivialProp} is straightforward. Therefore, in view of \cref{lem:gamma}, we have a quasi-isomorphism of complexes of $\DD_Z$-modules ${\rm DR}_{Y/Z}(E_3,\nabla_3) \overset{\sim}{\rightarrow} {\rm DR}_{Y\times(\Gm)_y/Z}(E_2, \nabla_2)$ which sends any relative $p$-form $\xi\in\Omega^p_{Y/Z}(E)(*h^{-1}(0))$ to $\frac{dy}{y}\wedge\xi$.

Now, we apply the construction above to Euler-Laplace integral representation. For given Laurent polynomials $h_{l,z^{(l)}}(x)$ ($l=0,1,\dots,k$), we put $D_l=\{ h_{l,z^{(l)}}(x)=0\}\subset X_0$. Then, $\int_{\pi}\mathcal{O}_{X_{0} }\Phi$ is isomorphic to the complex 
\begin{equation}
{\rm DR}_{\A^N_z\times (\Gm)^n_x/\A^N_z}\left( \mathcal{O}_{\A^N_z\times (\Gm)^n_x}\left(*\left(\sum_{l=1}^kD_l\right)\right),d+\sum_{i=1}^nc_i\frac{dx_i}{x_i}\wedge-\sum_{l=1}^k\gamma_l\frac{dh_{l,z^{(l)}}(x)}{h_{l,z^{(l)}}(x)}\wedge+dh_{0,z^{(0)}}(x)\wedge\right).
\end{equation} 

\noindent
In the same way, $\int_{\varpi}\mathcal{O}_{X_{k} }\Phi_k$ is isomorphic to the complex 
\begin{equation}
{\rm DR}_{\A^N_z\times (\Gm)^n_x\times (\Gm)^k_y/\A^N_z}\left( \mathcal{O}_{X_k},d+\sum_{i=1}^nc_i\frac{dx_i}{x_i}\wedge+\sum_{l=1}^k\gamma_l\frac{dy_l}{y_l}\wedge+dh_{z}(y,x)\wedge\right). 
\end{equation}

\noindent
We set $\frac{dx}{x}=\frac{dx_1\wedge\dots\wedge dx_n}{x_1\dots x_n}$ for brevity. Applying \cref{prop:Prop312} and \cref{prop:TrivialProp} repeatedly, we obtain a

\begin{thm}\label{thm:CyclicGenerator}
There is an isomorphism 
\begin{equation}
\int_{\pi}^0\mathcal{O}_{X_{0} }\Phi
\rightarrow
\int_{\varpi}^0\mathcal{O}_{X_{k} }\Phi_k,
\end{equation}
of $\DD_{\A^N_z}$-modules which sends $[\frac{dx}{x}]$ to $[\frac{dy}{y}\wedge\frac{dx}{x}]$.
\end{thm}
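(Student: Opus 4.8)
The plan is to peel off the auxiliary variables $y_1,\dots,y_k$ one at a time, applying at each stage the combination of \cref{prop:Prop312}, \cref{prop:TrivialProp} (with \cref{lem:gamma}) and the vanishing (\ref{TheVanishingResult}), while tracking the distinguished top relative form. Throughout set $Z=\A^N_z$. Suppose $y_{m+1},\dots,y_k$ have already been peeled, so that we are looking at $\Homo^0$ of the relative de Rham complex of a meromorphic integrable connection on $\bigl(\A^N_z\times(\Gm)^{m}_{y_1,\dots,y_m}\times(\Gm)^n_x\bigr)\setminus\{h_{m+1}\cdots h_k=0\}$, with connection form $d+\sum_i c_i\tfrac{dx_i}{x_i}\wedge+\sum_{l\le m}\gamma_l\tfrac{dy_l}{y_l}\wedge-\sum_{l>m}\gamma_l\tfrac{dh_l}{h_l}\wedge+d\bigl(h_0+\sum_{l\le m}y_l h_l\bigr)\wedge$. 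Write this total space as $Y\times(\Gm)_{y_m}$ with $Y=\bigl(\A^N_z\times(\Gm)^{m-1}_{y_1,\dots,y_{m-1}}\times(\Gm)^n_x\bigr)\setminus\{h_{m+1}\cdots h_k=0\}$; separating the $y_m$-part identifies the connection with $(\mathbb{L}\pi^*M)y_m^{\gamma_m}e^{y_m h_m}$ for the evident meromorphic connection $M$ on $Y$ and $h=h_m$ (a Laurent polynomial in $x$ only). Exactly as in the proof of \cref{thm:CayleyTrick}, $\{h_{m,z^{(m)}}(x)=0\}$ is smooth in $Y$, so the hypotheses of \cref{prop:Prop312} are met (recall that proposition allows $M$ to be meromorphic, which is all we need after the first peeling).

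At the $m$-th step we compose three isomorphisms on $\Homo^0$ (writing $\gamma=\gamma_m$, $y=y_m$, $h=h_m$, $D=h_m^{-1}(0)\times(\Gm)_{y_m}$): the one induced by the quasi-isomorphism ${\rm DR}_{Y\times(\Gm)_{y_m}/Z}(E_1,\nabla_1)\hookrightarrow{\rm DR}_{Y\times(\Gm)_{y_m}/Z}((E_1,\nabla_1)(*D))$, from the short exact sequence of complexes and (\ref{TheVanishingResult}); the $\DD_Z$-linear isomorphism $\Homo^0(\varphi_z^*)$ of \cref{prop:Prop312}; and the inverse of the quasi-isomorphism $\xi\mapsto\tfrac{dy_m}{y_m}\wedge\xi$ of \cref{prop:TrivialProp} together with \cref{lem:gamma}. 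This yields a $\DD_{\A^N_z}$-module isomorphism from $\Homo^0$ of the connection on $Y\times(\Gm)_{y_m}$ onto $\Homo^0$ of the connection on $Y$ carrying the extra pole term $-\gamma_m\tfrac{dh_m}{h_m}\wedge$; that is, it peels $y_m$ and converts $y_m^{\gamma_m}e^{y_m h_m}$ into $h_m^{-\gamma_m}$. Running $m=k,k-1,\dots,1$ and composing, we obtain an isomorphism $\int_\varpi^0\mathcal{O}_{X_k}\Phi_k\xrightarrow{\ \sim\ }\int_\pi^0\mathcal{O}_{X_0}\Phi$ of $\DD_{\A^N_z}$-modules; the isomorphism asserted in the theorem is its inverse.

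It remains to follow the generator. On the $X_k$-side the class $[\tfrac{dy}{y}\wedge\tfrac{dx}{x}]=[\tfrac{dy_1}{y_1}\wedge\cdots\wedge\tfrac{dy_k}{y_k}\wedge\tfrac{dx}{x}]$ is represented by a pole-free relative top form, hence is unchanged by the inclusion quasi-isomorphism at each stage. Writing the stage generator as $\tfrac{dy_m}{y_m}\wedge\xi'$ with $\xi'=\tfrac{dy_1}{y_1}\wedge\cdots\wedge\tfrac{dy_{m-1}}{y_{m-1}}\wedge\tfrac{dx}{x}$, and using $\varphi(z,x,y_1,\dots,y_m)=(z,x,y_1,\dots,y_{m-1},y_m/h_m(x))$, one computes $\varphi_z^*\bigl(\tfrac{dy_m}{y_m}\wedge\xi'\bigr)=\bigl(\tfrac{dy_m}{y_m}-\tfrac{d_xh_m}{h_m}\bigr)\wedge\xi'=\tfrac{dy_m}{y_m}\wedge\xi'$, because $\tfrac{dx}{x}$ is already the top $x$-form so $\tfrac{d_xh_m}{h_m}\wedge\xi'=0$. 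Applying the inverse of $\xi\mapsto\tfrac{dy_m}{y_m}\wedge\xi$ then sends the class to $[\xi']=[\tfrac{dy_1}{y_1}\wedge\cdots\wedge\tfrac{dy_{m-1}}{y_{m-1}}\wedge\tfrac{dx}{x}]$, which is precisely the distinguished generator of the next stage. Iterating over $m=k,\dots,1$ gives $[\tfrac{dy}{y}\wedge\tfrac{dx}{x}]\mapsto[\tfrac{dx}{x}]$, so the inverse isomorphism sends $[\tfrac{dx}{x}]$ to $[\tfrac{dy}{y}\wedge\tfrac{dx}{x}]$, as claimed.

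The main obstacle is organizational rather than conceptual: one must check that the meromorphic connections and the (smooth) hypersurfaces $\{h_m=0\}$ produced at intermediate stages genuinely verify the hypotheses of \cref{prop:Prop312}, and that the composite of the three maps at each stage is indeed $\DD_Z$-linear (each factor is, by the cited results). The one substantive point in tracking the generator is the identity $\tfrac{d_xh_m}{h_m}\wedge\tfrac{dx}{x}=0$; this is what makes $\Homo^0(\varphi_z^*)$ fix the relevant top class on the nose, so that $[\tfrac{dx}{x}]$ is transported to $[\tfrac{dy}{y}\wedge\tfrac{dx}{x}]$ exactly, with no scalar factor or correction term.
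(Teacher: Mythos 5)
Your proof is correct and takes essentially the same iterative peeling approach that the paper invokes (the paper's entire proof is the one-line ``Applying \cref{prop:Prop312} and \cref{prop:TrivialProp} repeatedly, we obtain a\dots''). Your careful tracking of the generator through each stage, in particular the observation that $\varphi_z^*$ fixes $\bigl[\tfrac{dy_m}{y_m}\wedge\xi'\bigr]$ on the nose because $\tfrac{d_xh_m}{h_m}\wedge\tfrac{dx}{x}=0$, is exactly the detail the paper leaves implicit and is needed to conclude that $[\tfrac{dx}{x}]\mapsto[\tfrac{dy}{y}\wedge\tfrac{dx}{x}]$ with no correction term.
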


\noindent
Let $A$ be as in (\ref{CayleyConfig}). We obtain an explicit version of Theorem \ref{thm:mainDresult}.

\begin{cor}\label{cor:Corollary216}
If the parameter $\delta$ is non-resonant and $\gamma_l\notin\Z$ for any $l=1,\dots,k$, $M_A(\delta)\ni[1]\mapsto [\frac{dx}{x}]\in \int_{\pi}^0\mathcal{O}_{X_{0} }\Phi$ defines an isomorphism of $\DD_{\A_z^N}$-modules.
\end{cor}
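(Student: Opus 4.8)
The plan is to deduce the statement from \cref{thm:mainDresult} and \cref{thm:CyclicGenerator}, the only substantive point being the identification of the image of the cyclic generator $[1]\in M_A(\delta)=\DD_{\A^N_z}/H_A(\delta)$. Under the hypothesis (which is exactly that of \cref{thm:mainDresult}), \cref{thm:mainDresult} provides a canonical isomorphism of $\DD_{\A^N_z}$-modules $\Psi\colon M_A(\delta)\xrightarrow{\sim}\int_\varpi^0\mathcal{O}_{X_k}\Phi_k$, while \cref{thm:CyclicGenerator} provides a canonical isomorphism $\Theta\colon\int_\pi^0\mathcal{O}_{X_0}\Phi\xrightarrow{\sim}\int_\varpi^0\mathcal{O}_{X_k}\Phi_k$ with $\Theta[\tfrac{dx}{x}]=[\tfrac{dy}{y}\wedge\tfrac{dx}{x}]$. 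Then $\Theta^{-1}\circ\Psi$ is a $\DD_{\A^N_z}$-module isomorphism $M_A(\delta)\xrightarrow{\sim}\int_\pi^0\mathcal{O}_{X_0}\Phi$, so it remains only to check that $\Psi[1]=[\tfrac{dy}{y}\wedge\tfrac{dx}{x}]$; once this is known, $(\Theta^{-1}\circ\Psi)[1]=[\tfrac{dx}{x}]$, which is the assertion, and in particular exhibits $[\tfrac{dx}{x}]$ as a cyclic generator of $\int_\pi^0\mathcal{O}_{X_0}\Phi$.

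To verify $\Psi[1]=[\tfrac{dy}{y}\wedge\tfrac{dx}{x}]$ I would unwind the construction of $\Psi$ in \cref{thm:mainDresult}. It factors as the Schulze--Walther isomorphism (\ref{SWisom}) for the Cayley configuration $A$, namely $M_A(\delta)\xrightarrow{\sim}\FL\circ\int_{j_A}\mathcal{O}_{(\Gm)^k_y\times(\Gm)^n_x}y^\gamma x^c$ where $j_A(y,x)=(y,x)^A$, followed by the Fourier--Laplace identity of \cref{prop:EsterovTakeuchi}, i.e.\ \cref{cor:Cor2.9} applied with $X=(\Gm)^k_y\times(\Gm)^n_x$, $M=\mathcal{O}_X\,y^\gamma x^c$ and $f=j_A$, which produces $\FL\circ\int_{j_A}\mathcal{O}_X\,y^\gamma x^c\xrightarrow{\sim}\int_\varpi\mathcal{O}_{X_k}\Phi_k$ since $\sum_j z_j(y,x)^{{\bf a}(j)}=h_z(y,x)$ and $\mathcal{O}_{X_k}\Phi_k=\mathcal{O}_{X_k}\,y^\gamma x^c e^{h_z(y,x)}$. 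In the Schulze--Walther isomorphism the generator $[1]$ of the Weyl-algebra presentation of $M_A(\delta)$ corresponds to the tautological generator of $\FL\circ\int_{j_A}\mathcal{O}_X\,y^\gamma x^c$; factoring $j_A$ through its graph and invoking the relative de Rham description (\ref{RelativeDeRhamFormula}) of the direct image, this tautological generator is represented by the twisted relative volume form $y^\gamma x^c\,\tfrac{dy}{y}\wedge\tfrac{dx}{x}$, the passage through $\FL$ replacing the graph ``delta'' by the exponential factor $e^{h_z(y,x)}$. As the isomorphisms constituting \cref{prop:EsterovTakeuchi} are all instances of the projection formula, which is multiplicative and hence carries generators to generators, the class is sent to the class of $\tfrac{dy}{y}\wedge\tfrac{dx}{x}$ in the relative de Rham complex $\DR_{X_k/\A^N_z}(\mathcal{O}_{X_k}\Phi_k)$ computing $\int_\varpi^0\mathcal{O}_{X_k}\Phi_k$, which is exactly $[\tfrac{dy}{y}\wedge\tfrac{dx}{x}]$.

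I expect the genuine obstacle to be this last bookkeeping: pinning down ``generator of $M_A(\delta)\mapsto$ relative volume form'' forces one to re-enter the proof of the Schulze--Walther theorem (and of \cite[LEMMA 4.2]{EsterovTakeuchi}, used for the comparison $\int_{j_A!}\simeq\int_{j_A}$), where one must track the Fourier--Laplace sign conventions, the shift $[\dim X-\dim Y]$ built into $f^{\dagger}$, and the identification of the transfer-module side $\DD_{\A^N_z\leftarrow(\Gm)^{n+k}}\overset{\LL}{\otimes}(-)$ with the de Rham complex. Once that single identification is secured, everything else is formal: $\Theta^{-1}\circ\Psi$ is a composite of $\DD_{\A^N_z}$-module isomorphisms, it sends $[1]$ to $[\tfrac{dx}{x}]$, and the corollary follows.
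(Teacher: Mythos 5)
Your proposal identifies the right ingredients but routes around the paper's key idea and, by your own admission, leaves the decisive step unproved.

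The paper's actual argument does \emph{not} trace the generator $[1]$ through the abstract isomorphism $\Psi$ of \cref{thm:mainDresult}. Instead it proceeds in three short steps. First, it cites \cite[Lemma 4.7]{EsterovTakeuchi} for the fact that $[\tfrac{dy}{y}\wedge\tfrac{dx}{x}]$ is a cyclic generator of $\int_\varpi^0\mathcal{O}_{X_k}\Phi_k$, and then uses \cref{thm:CyclicGenerator} to transfer cyclicity to $[\tfrac{dx}{x}]$ in $\int_\pi^0\mathcal{O}_{X_0}\Phi$. Second, it observes directly that $[1]\mapsto[\tfrac{dx}{x}]$ is a well-defined morphism of $\DD_{\A^N_z}$-modules (this amounts to a straightforward check that the GKZ operators $E_i,\Box_u$ kill the class $[\tfrac{dx}{x}]$ in the twisted relative de Rham complex). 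Third, since $M_A(\delta)$ is irreducible for non-resonant $\delta$ by \cite{SchulzeWaltherIrreducibility}, any nonzero $\DD$-module morphism out of it is injective; combined with cyclicity for surjectivity, one gets an isomorphism. Crucially, this argument does not require knowing where $\Psi$ sends $[1]$ — it only needs $M_A(\delta)$ to be irreducible and the target to be generated by $[\tfrac{dx}{x}]$.

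Your route instead rests entirely on the claim $\Psi[1]=[\tfrac{dy}{y}\wedge\tfrac{dx}{x}]$, which you do not verify; you describe a plan to "unwind" \cref{thm:mainDresult} through the Schulze--Walther isomorphism and \cref{prop:EsterovTakeuchi}, and you yourself flag this as "the genuine obstacle." It is indeed the hard part: the Schulze--Walther isomorphism (\ref{SWisom}) is an abstract $\DD$-module isomorphism, and the phrase "tautological generator of $\FL\circ\int_{j_A}\mathcal{O}_X y^\gamma x^c$" is not given a precise meaning; similarly the assertion that the projection formula "carries generators to generators" does not by itself determine the image of a specific element. Since both $\Psi[1]$ and $[\tfrac{dy}{y}\wedge\tfrac{dx}{x}]$ are cyclic generators of an irreducible module, they can a priori differ by a nonzero scalar, and establishing even that much requires work you have not done. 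So the proposal, as written, has a genuine gap at exactly the point you identify. The lesson is that tracking an explicit element through a chain of canonical isomorphisms is often the wrong strategy; the paper's use of cyclicity plus irreducibility renders the tracking unnecessary.
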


\begin{proof}
In \cite[Lemma 4.7]{EsterovTakeuchi}, it was proved that $[\frac{dy}{y}\wedge\frac{dx}{x}]$ is a cyclic generator (Gauss-Manin vector) of $\int_{\pi_{k}}^0\mathcal{O}_{X_{k} }\Phi_k$. Therefore, by \cref{thm:CyclicGenerator}, $[\frac{dx}{x}]$ is a cyclic generator of $\int_{\pi}^0\mathcal{O}_{X_{0} }\Phi$. On the other hand, it can easily be proved that $M_A(\delta)\ni[1]\mapsto [\frac{dx}{x}]\in \int_{\pi}^0\mathcal{O}_{X_{0} }\Phi$ defines a morphism of $\DD_{\A_z^N}$-modules. When the parameter $d$ is non-resonant, this is an isomorphim since $M_A(\delta)$ is irreducible by \cite{SchulzeWaltherIrreducibility}.
\end{proof}

\subsection{A description of the rapid decay homology group}\label{subsec:3.2}
Now we discuss the solutions of Laplace-Gauss-Manin connection $\int_{\pi}\mathcal{O}_{X_{0} }\Phi$. For the convenience of the reader we repeat the relevant material from \cite{EsterovTakeuchi} and \cite{HienRDHomology} without proofs, thus making our exposition self-contained. Let $U$ be a smooth complex affine variety, let $f:U\rightarrow \A^1$ be a non-constant morphism, and let $M=(E,\nabla)$ be a regular integrable connection on $U$. We consider an embedding of $U$ into a smooth projective variety $X$ with a meromorphic prolongation $f:X\rightarrow\mathbb{P}^1$. We assume that $D=X\setminus U$ is a normal crossing divisor. We decompose $D$ as $D=f^{-1}(\infty)\cup D_{irr}$. Then, we write $\widetilde{X^D}=\widetilde{X}$ for the real oriented blow-up of $X$ along $D$ and write $\varpi:\widetilde{X}\rightarrow X$ for the associated morphism (\cite[\S 8.2]{SabbahIntroductionToStokes}). Let $\widetilde{\PP^1}$ denote the real oriented blow-up of $\PP^1$ at infinity and $\varpi_\infty:\widetilde{\PP^1}\rightarrow\PP^1$ the associated morphism. Note that the closure of the ray $[0,\infty)e^{\ii\theta}$ in $\widetilde{\PP^1}$ intersects with $\widetilde{\PP^1}\setminus\C$ at a unique point which is denoted by $e^{\ii\theta}\infty$. Now, a morphism $\tilde{f}:\widetilde{X}\rightarrow \widetilde{\PP^1}$ is naturally induced so that it fits into a commutative diagram
\begin{equation}
\xymatrix{
 \widetilde{X} \ar[r]^{\tilde{f} } \ar[d]_{\varpi}&\widetilde{\PP^1}\ar[d]^{\varpi_\infty}\\
 X \ar[r]^{f}                             &\PP^1.
}
\end{equation}

\noindent
We set $\widetilde{D^{r.d.}}=\tilde{f}^{-1}\left( \{e^{\ii\theta}\infty\mid \theta\in(\frac{\pi}{2},\frac{3\pi}{2})\}\right)\setminus\varpi^{-1}({D_{irr}})\subset\widetilde{X}$. Let $\mathcal{L}$ be the dual local system of $\Ker \left(\nabla^{an}:\mathcal{O}_{X^{an}}(E^{an})\rightarrow\Omega_{X^{an}}^1(E^{an})\right)$. We consider the natural inclusion $U^{an}\overset{j}{\hookrightarrow} U^{an}\cup \widetilde{D^{r.d.}}$. Then, the rapid decay homology group of M.Hien $\Homo^{r.d.}_*\left(U^{an},(E^\vee,\nabla^\vee)\right)$ is defined in this setting by 
\begin{equation}
\Homo^{r.d.}_*\left(U^{an},(Me^f)^\vee\right)=\Homo_*\left(U^{an}\cup \widetilde{D^{r.d.}},\widetilde{D^{r.d.}};j_*\mathcal{L}\right)
\end{equation}
(\cite{HienRDHomology}, see also \cite{EsterovTakeuchi} and \cite{MatsubaraRapidDecay}). Note that $U^{an}\cup \widetilde{D^{r.d.}}$ is a topological manifold with boundary and that $j_*\mathcal{L}$ is a local system on $U^{an}\cup \widetilde{D^{r.d.}}$. The main result of \cite{HienRDHomology} states that the period pairing $\Homo^{r.d.}_*\left(U^{an},(Me^f)^\vee\right)\times \Homo_{\rm dR}^{*}(U,Me^f)\rightarrow \C$ is perfect.

\begin{rem}\label{rem:RemarkRDHomology}
We put $\widetilde{D^{r.d.}_0}=\tilde{f}^{-1}\left( \{ e^{\ii\theta}\infty\mid\theta\in (\frac{\pi}{2},\frac{3\pi}{2})\}\right)$ and denote by $\bar{j}$ the natural inclusion $U^{an}\hookrightarrow U^{an}\cup \widetilde{D^{r.d.}_0}$. It can easily be seen that the inclusion $\left( U^{an}\cup \widetilde{D^{r.d.}}, \widetilde{D^{r.d.}}\right)\hookrightarrow \left( U^{an}\cup\widetilde{D_0^{r.d.}},\widetilde{D_0^{r.d.}}\right)$ is a homotopy equivalence (\cite[Lemma 2.3]{MatsubaraRapidDecay}). Therefore, the rapid decay homology group can be computed by the formula $\Homo^{r.d.}_*\left(U^{an},(E^\vee,\nabla^\vee)\right)=\Homo_*\left(U^{an}\cup \widetilde{D_0^{r.d.}},\widetilde{D_0^{r.d.}};\bar{j}_*\mathcal{L}\right).$ Note that this realization is compatible with the period pairing.
\end{rem}

\begin{rem}
The formulation of \cite{HienRoucairol} is not suitable in our setting. In their formulation, $\widetilde{X}$ is taken to be the fiber product $X\underset{\PP^1}{\times}\widetilde{\PP^1}$. However, the corresponding embedding $j:U^{an}\hookrightarrow U^{an}\cup \widetilde{D^{r.d.}}$ may have higher cohomology groups $R^pj_*\underline{\C}_{U^{an}}$. None the less, under a suitable genericity condition of eigenvalues of monodromies of $\mathcal{L}$, we can recover the vanishing of higher direct images $R^pj_*\mathcal{L}$. We do not discuss this aspect in this paper.
\end{rem}

We construct a family of good compactifications $X$ associated to the Laplace-Gauss-Manin connection $\int_{\pi}^0\mathcal{O}_{X_{0} }\Phi.$ Let us use the notation of Theorem \ref{thm:CayleyTrick}. First, we put $\Delta_0=\text{\rm convex hull}\{ 0,{\bf a}^{(0)}(1),\dots,{\bf a}^{(0)}(N_0)\}$ and $\Delta_l=\text{\rm convex hull}\{ {\bf a}^{(l)}(1),\dots,{\bf a}^{(l)}(N_l)\}$ ($l=1,\dots,k$). For any covector $\xi\in (\R^n)^*$, we set $\Delta^\xi_l=\{ v\in \Delta_l\mid \langle \xi,v\rangle=\underset{w\in\Delta_l}{\rm min}\langle \xi,w\rangle\}$ and $h^\xi_{l,{z}^{(l)}}(x)=\sum_{{\bf a}(j)\in\Delta^\xi_l}z_jx^{{\bf a}^{(l)}(j)}$. Here, $\langle\bullet,\bullet\rangle$ denotes the duality pairing. Now, we consider the dual fan $\Sigma$ of the Minkowski sum $\Delta_0+\Delta_1+\dots+\Delta_k$. By taking a refinement if necessary, we may assume that $\Sigma$ is a smooth fan. Then, the associated toric variety $X=X(\Sigma)$ is sufficiently full for any $\Delta_l$ in the sense of \cite{KhovanskiNewton}. We write $\{ D_j\}_{j\in J}$ for the set of torus invariant divisors of $X$. 
\begin{defn}\label{defn:Nonsingularity}
We say that a point $z=(z^{(0)},z^{(1)},\dots,z^{(k)})\in\C^N$ is nonsingular if the following two conditions are both satisfied:
\begin{enumerate}
\item For any $1\leq l_1<\dots <l_s\leq k$, the Laurent polynomials $h_{l_1,z^{(l_1)}}(x),\dots,h_{l_s,z^{(l_s)}}(x)$ are nonsingular in the sense of \cite{KhovanskiNewton}, i.e., for any covector $\xi\in (\R^n)^*$, the $s$-form $d_xh^\xi_{l_1,z^{(l_1)}}(x)\wedge\dots\wedge d_xh^\xi_{l_s,z^{(l_s)}}(x)$ never vanishes on the set $\{ x\in(\C^\times)^n\mid h^\xi_{l_1,z^{(l_1)}}(x)=\dots=h^\xi_{l_s,z^{(l_s)}}(x)=0\}$.
\item For any covector $\xi\in (\R^n)^*$ such that $0\notin\Delta_0^\xi$ and for any $1\leq l_1<\dots <l_s\leq k$ ($s$ can be $0$), the $s+1$-form $dh^\xi_{0,z^{(0)}}(x)\wedge dh^\xi_{l_1,z^{(l_1)}}(x)\wedge\dots\wedge dh^\xi_{l_s,z^{(l_s)}}(x)$ never vanishes on the set $\{ x\in(\C^\times)^n\mid h^\xi_{l_1,z^{(l_1)}}(x)=\dots=h^\xi_{l_s,z^{(l_s)}}(x)=0\}$.
\end{enumerate}
\end{defn}

\begin{prop}
The set of nonsingular points is Zariski open and dense.
\end{prop}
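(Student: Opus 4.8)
The plan is to replace the quantifier ``for every covector $\xi$'' by a finite list of conditions, to show that each one defines a Zariski dense open subset of $\A^N$, and then to intersect these finitely many subsets. For the reduction: the truncated polytope $\Delta^\xi_l$, hence the truncated Laurent polynomial $h^\xi_{l,z^{(l)}}$, and likewise the predicate ``$0\in\Delta^\xi_0$'', depend on $\xi\in(\R^n)^*$ only through the cone of the normal fan of $\Delta_0+\dots+\Delta_k$ containing $\xi$. Since $\Sigma$ was chosen to refine this normal fan, all of these data are constant as $\xi$ ranges over the relative interior of a cone $\sigma\in\Sigma$. Fixing one representative $\xi_\sigma$ per relative interior, \cref{defn:Nonsingularity} becomes a conjunction indexed by the finite set of pairs $(\sigma,\{l_1<\dots<l_s\})$, and since $\A^N$ is irreducible a finite intersection of Zariski dense open subsets is again Zariski dense open; so it suffices to treat one such pair at a time.

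The second step recasts the two conditions uniformly. Fix $\sigma$ and $l_1<\dots<l_s$ and write $\xi=\xi_\sigma$. Condition (1) says precisely that $h^\xi_{l_1,z^{(l_1)}},\dots,h^\xi_{l_s,z^{(l_s)}}$ cut out a smooth complete intersection of codimension $s$ in $(\C^\times)^n$. For condition (2), note that the Euler vector field $\theta_\xi=\sum_{i=1}^n\xi_ix_i\partial_{x_i}$ satisfies $\theta_\xi h^\xi_{l,z^{(l)}}=m_l\,h^\xi_{l,z^{(l)}}$ with $m_l=\min_{v\in\Delta_l}\langle\xi,v\rangle$; hence $\theta_\xi$ is tangent to $\{h^\xi_{l_1}=\dots=h^\xi_{l_s}=0\}$, and since $0\notin\Delta^\xi_0$ forces $m_0\neq0$, at every point of that locus where $dh^\xi_{0}$ belongs to the span of $dh^\xi_{l_1},\dots,dh^\xi_{l_s}$ one must also have $h^\xi_{0}=0$. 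Therefore, granting condition (1), condition (2) is equivalent to saying that $h^\xi_{0,z^{(0)}},h^\xi_{l_1,z^{(l_1)}},\dots,h^\xi_{l_s,z^{(l_s)}}$ cut out a smooth complete intersection of codimension $s+1$ in $(\C^\times)^n$. In either case we are reduced to a single assertion: given Laurent polynomials $f_1,\dots,f_r$ on $(\C^\times)^n$, each depending linearly on its own, pairwise disjoint block of the variables $z_j$ and each supported on a face orthogonal to $\sigma$, the locus of $z$ for which $\{f_1=\dots=f_r=0\}$ is smooth of pure codimension $r$ is Zariski dense and open.

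The last step proves this assertion. Density is the Bertini/generic-smoothness part: the incidence variety $\mathcal{H}=\{(z,x)\in\A^N\times(\C^\times)^n\mid f_1(x)=\dots=f_r(x)=0\}$ projects onto $(\C^\times)^n$ with each fibre cut out by the $r$ linear forms $z\mapsto f_i(x)$, which are nonzero (the monomials occurring do not vanish on the torus) and involve pairwise disjoint blocks of variables, hence are independent; thus $\mathcal{H}$ is smooth and irreducible of dimension $N+n-r$, and by generic smoothness in characteristic zero the projection $\mathcal{H}\to\A^N$ is smooth over a Zariski dense open $V\subseteq\A^N$, which forces $\{f_i=0\}$ to be a smooth transverse complete intersection for $z\in V$ (and forces it to be empty over a dense open when $r>n$, since then $\dim\mathcal{H}<N$). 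Openness is the step I expect to be the main obstacle: Chevalley's theorem only tells us that the bad locus $B$, being the image of the Zariski closed ``singular incidence'' under the projection $\A^N\times(\C^\times)^n\to\A^N$, is constructible. To see that $B$ is in fact closed I would use that $X=X(\Sigma)$ is sufficiently full: after recentring each $f_i$ by an invertible monomial it descends to a regular function on the orbit $O_\sigma$, the singular incidence then extends to a Zariski closed subvariety of $\A^N\times\overline{O_\sigma}$ with $\overline{O_\sigma}$ complete, hence has closed image in $\A^N$, and no spurious components come from the smaller orbits $O_{\sigma'}\subset\overline{O_\sigma}$ because the truncation conditions for those deeper cones $\sigma'$ are among the conjuncts already imposed (equivalently, $B$ is cut out by sparse resultants and discriminants in the style of Gelfand--Kapranov--Zelevinsky, hence is an algebraic subset). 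Consequently $\A^N\setminus B\supseteq V$ is Zariski dense and open, and intersecting over the finitely many pairs $(\sigma,\{l_1<\dots<l_s\})$ gives the proposition.
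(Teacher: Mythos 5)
Your proof is correct and follows the same overall approach as the paper: reduce the quantifier over covectors to a finite conjunction indexed by the cones of the fan $\Sigma$, obtain closedness of the singular locus by exhibiting it as the image of a Zariski closed incidence set under the (proper, hence closed) projection from a product with a complete toric variety, and obtain density from a Bertini/generic-smoothness argument, the paper invoking Bertini--Sard and you making this explicit via the incidence variety $\mathcal{H}$. The reformulation of condition (2) through the Euler vector field $\theta_\xi$ --- using $\theta_\xi h^\xi_l = m_l h^\xi_l$ and $m_0\neq 0$ to show that $dh^\xi_0$ can only become dependent where $h^\xi_0$ also vanishes, so that both conditions of \cref{defn:Nonsingularity} become smooth-complete-intersection statements --- is a clean unification not spelled out in the paper's proof, but it does not change the essential mechanism.
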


\begin{proof}
We say $z\in\A^N$ is singular if it is not nonsingular. We prove that the set $Z\overset{def}{=}\{ z\in\A^N\mid z\text{ is singular}\}\subset\A^N$ is Zariski closed. For this purpose, it is enough to prove that there is a Zariski closed subset $\widetilde{Z}\subset\A^N\times X$ such that $\pi_{\A^N}(\widetilde{Z})=Z$, where $\pi_{\A^N}:\A^N\times X\rightarrow\A^N$ is the canonical projection. Indeed, since $\Sigma$ is a complete fan, $X\rightarrow pt$ is a proper morphism, its base change $\pi_{\A^N}$ is also a closed morphism. We consider the case when the condition 1 of \cref{defn:Nonsingularity} fails. We take a maximal cone $\tau\in\Sigma$. Since $\Sigma$ is taken to be smooth, there are exactly $n$ primitive vectors $\kappa_1,\dots,\kappa_n\in\Z^{n\times 1}\setminus \{0\}$ such that $\tau\cap\Z^{n\times 1}=\Z_{\geq 0}\kappa_1+\dots+\Z_{\geq 0}\kappa_n$. We set $m_i^{(l)}=\underset{a\in\Delta_l}{\rm min}\langle\kappa_i,a\rangle$ for $l=0,\dots,k$, $i=1,\dots,n$ and $m^{(l)}=(m_1^{(l)},\dots,m_n^{(l)})$. We choose a coordinate $\xi=(\xi_1,\dots,\xi_n)$ so that the equality $\C[\tau^\vee\cap\Z^{n\times 1}]=\C[\xi]$ holds. Then, $\tilde{h}_{l,z^{(l)}}(\xi)=\xi^{-m^{(l)}}h_{l,z^{(l)}}(\xi)$ ($l=1,\dots,k$) is a polynomial with non-zero constant term. For any subset $I\subset\{ 1,\dots,n\}$, we set $\tilde{h}^I_{l,z^{(l)}}(\xi_{\bar{I}})=\tilde{h}_{l,z^{(l)}}(\xi)\restriction_{\cap_{i\in I}\{ \xi_i=0\}}$. Then, the condition 1 of \cref{defn:Nonsingularity} fails if and only if $d_{\xi_{\bar{I}}}\tilde{h}^I_{l_1,z^{(l_1)}}(\xi_{\bar{I}})\wedge\dots\wedge d_{\xi_{\bar{I}}}\tilde{h}^I_{l_s,z^{(l_s)}}(\xi_{\bar{I}})=0$ for some $\xi_{\bar{I}}\in\left\{ \xi_{\bar{I}}\in\C^{\bar{I}}\mid \tilde{h}^I_{l_1,z^{(l_1)}}(\xi_{\bar{I}})=\dots=\tilde{h}^I_{l_s,z^{(l_s)}}(\xi_{\bar{I}})=0\right\}$. This condition is clearly a Zariski closed condition. 

As for condition 2 of \cref{defn:Nonsingularity}, we rearrange the index $\{ 1,\dots,n\}=\{ 1,\dots,i_0,i_0+1,\dots, n\}$ so that $m_i^{(0)}<0$ for $i=1,\dots,i_0$ and $m_i^{(0)}=0$ for $i=i_0+1,\dots,n$. For any subset $I\subset\{ 1,\dots,n\}$ such that $I\cap\{1,\dots,i_0\}\neq\varnothing$, we set $\tilde{h}^I_{0,z^{(0)}}(\xi_{\bar{I}})=\displaystyle\prod_{i\in I}\xi_i^{-m_i^{(0)}} h_{0,z^{(0)}}(\xi)\restriction_{\cap_{i\in I}\{ \xi_i=0\}}$. Then, condition 2 of \cref{defn:Nonsingularity} fails if and only if $\tilde{h}^I_{0,z^{(0)}}(\xi_{\bar{I}})d_{\xi_{\bar{I}}}\tilde{h}^I_{l_1,z^{(l_1)}}(\xi_{\bar{I}})\wedge\dots\wedge d_{\xi_{\bar{I}}}\tilde{h}^I_{l_s,z^{(l_s)}}(\xi_{\bar{I}})=0$ and 
$d_{\xi_{\bar{I}}}\tilde{h}^I_{0,z^{(0)}}(\xi_{\bar{I}})\wedge d_{\xi_{\bar{I}}}\tilde{h}^I_{l_1,z^{(l_1)}}(\xi_{\bar{I}})\wedge\dots\wedge d_{\xi_{\bar{I}}}\tilde{h}^I_{l_s,z^{(l_s)}}(\xi_{\bar{I}})=0$ for some $\xi_{\bar{I}}\in\left\{ \xi_{\bar{I}}\in\C^{\bar{I}}\mid \tilde{h}^I_{l_1,z^{(l_1)}}(\xi_{\bar{I}})=\dots=\tilde{h}^I_{l_s,z^{(l_s)}}(\xi_{\bar{I}})=0\right\}$. This is also a Zariski closed condition. Finally, the non-emptiness of nonsingular points follows immediately from the description above and Bertini-Sard's lemma.
\end{proof}

\begin{rem}
If $k=0$, the nonsingularity condition is equivalent to the non-degenerate condition of \cite[p274]{Adolphson}. In general, nonsingularity condition is stronger than non-degenerate condition. Never the less, it is still a Zariski open dense condition as we saw above.
\end{rem}

\begin{rem}
By classical elimination theory, the defining equations of the set of singular points are polynomials with rational coefficients.
\end{rem}

In the following, we fix a nonsingular $z$ and a small positive real number $\varepsilon.$ Let $\Omega$ be the Zariski open subset of $\A^N$ consisting of nonsingular points. By abuse of notation, we write $D_j$ for the product $\Omega\times D_j$. By the condition 1 of \cref{defn:Nonsingularity}, for any subset $I\subset\{ 1,\dots,k\}$, the closure $Z_I=\overline{\displaystyle\bigcap_{l\in I}\{ (z,x)\in\Omega\times(\C^\times)^n_x\mid h_{l,z^{(l)}}(x)=0\}}$ $\subset \Omega\times X$ intersects transversally with $D_{J^\prime}=\displaystyle\bigcap_{j\in J^\prime}D_j$ for any $J^\prime\subset J$. Let us rename the divisors $D_j$ so that $D_j$ with $j\in J_1$ is a part of the pole divisor of $h_{0,z^{(0)}}(x)$ on $X$ and that any $D_j$ with $j\in J_2$ is not. Then by the condition 2 of \cref{defn:Nonsingularity}, the closure $Z_0=\overline{\{ (z, x)\in\Omega\times(\C^\times)^n_x\mid h_{0,z^{(0)}}(x)=0\}}\subset \Omega\times X$ intersects transversally with $Z_I\cap D_{J^\prime}$ such that $J^\prime\cap J_1\neq \varnothing$. 

Now we consider the canonical projection $p:\Omega\times X\rightarrow\Omega$. We recall the blowing up process of \cite{EsterovTakeuchi} (see also \cite{MatsuiTakeuchi}). We consider a sequence of blow-ups of $\Omega\times X$ along codimension $2$ divisors $Z_0\cap D_j$ ($j\in J_1$). If the pole order of $h_{0,z^{(0)}}(x)$ along $D_j$ is $m_j\in\Z_{>0}$, one needs at most $m_j$ blow-ups along $Z_0\cap D_j$. Repeating this process finitely many times, we obtain a non-singular complex variety $\bar{X}$ so that the meromorphic map $h_{0,z^{(0)}}:\Omega\times X\dashrightarrow \mathbb{P}^1$ extends to a regular map $\bar{h}_{0,z^{(0)}}:\bar{X}\rightarrow \mathbb{P}^1$. We write $\bar{p}:\bar{X}\rightarrow \Omega$ for the composition of the natural morphism $\bar{X}\rightarrow \Omega\times X$ with the canonical projection $\Omega\times X\rightarrow \Omega$. We also write $\bar{Z}_l$ and $\bar{D}_j$ for the proper transforms of $Z_l$ and $D_j$. We equip $\bar{X}$ with the Whitney stratification coming from the normal crossing divisors $\bar{D}=\{ \bar{Z}_l\}_{l=1}^k\cup\{ \bar{D}_j\}_{j\in J}\cup\{\text{exceptional divisors of blow-ups}\}$. By construction, we see that $\bar{h}_{0,z^{\prime(0)}}^{-1}(\infty)$ intersects transversally with any stratum of $\bar{p}^{-1}(z)$ for any $z\in\Omega$. Let us consider a real oriented blow-up $\widetilde{X}=\widetilde{\bar{X}^{\bar{D}}}$ of $\bar{X}$ along $\bar{D}$. We naturally have the following commutative diagram
\begin{equation}
\xymatrix{
 \widetilde{X} \ar[r]^{\tilde{h}_{0,z^{(0)}} } \ar[d]_{\varpi}&\widetilde{\PP^1}\ar[d]^{\varpi_\infty}\\
 \bar{X} \ar[r]^{\bar{h}_{0,z^{(0)}}}                             &\PP^1.
}
\end{equation}
We also equip $\widetilde{X}$ with the Whitney stratification coming from the pull-back of  the normal crossing divisor $\bar{D}$. We set $\tilde{p}=\bar{p}\circ\varpi$. Then, $\tilde{p}^{-1}(z)$ for any $z\in\Omega$ is naturally equipped with an induced Whitney stratification. By construction, $\tilde{h}_{0,z^{(0)}}^{-1}(e^{\ii\theta}\infty)$ intersects transversally with any stratum of  $\tilde{p}^{-1}(z)$ for any $\theta$. Now it is routine to take a ruguous vector field $\Theta$ on $\tilde{p}^{-1}(\Delta(z;\ve))$ for a small positive number $\ve>0$ and a point $z\in\Omega$ with an additional condition 
\begin{equation}\label{AdditionalCondition}
\Theta (\tilde{h}_{0,z^{(0)}}(x))=0
\end{equation}
near $\tilde{h}_{0,z^{(0)}}^{-1}(S^1\infty )$ (\cite[\S4]{Verdier}, see also \cite[\S3.3.]{HienRoucairol}). Taking the flow of $\Theta$, we have a stratified local trivialization of the morphism $\tilde{p}:\widetilde{X}\rightarrow\Omega$. We define $\widetilde{D^{r.d.}}\subset\widetilde{X}$ by the formula $\widetilde{D^{r.d.}}=\tilde{h}_{0,z^{(0)}}^{-1}\left( (\frac{\pi}{2},\frac{3\pi}{2})\infty\right)\setminus\varpi^{-1}\bar{D}^{irr}$ and put $\widetilde{D^{r.d.}_{z}}=\widetilde{D^{r.d.}}\cap \tilde{p}^{-1}(z)$. With the aid of the additional condition (\ref{AdditionalCondition}), we have a local trivialization 
\begin{equation}\label{Trivialization}
\xymatrix{
 \left(\pi^{-1}(z)^{an}\cup \widetilde{D^{r.d.}_{z}}\right)\times \Delta(z;\varepsilon) \ar[d]_{ } \ar[r]^-{\Lambda}& \pi^{-1}(\Delta(z;\varepsilon))^{an}\cup \widetilde{D^{r.d.}} \ar[dl]_{\tilde{p}}\\
\Delta(z;\varepsilon)  &
}
\end{equation}
with an additional condition $\Lambda\left(\widetilde{D^{r.d.}_{z}}\times \Delta(z;\varepsilon)\right)\subset \widetilde{D^{r.d.}}$. Here, the first vertical arrow is the canonical projection. It is clear that $\tilde{p}^{-1}(z)$ is a good compactification for any $z\in\Omega$. For any $z\in\C^N$, we write $\Phi_{z}$ for the multivalued function on $\pi^{-1}(z)$ defined by $\pi^{-1}(z)\ni x\mapsto \Phi(z,x)$. Writing $j_{z}:\pi^{-1}(z)^{an}\hookrightarrow \pi^{-1}(z)^{an}\cup \widetilde{D^{r.d.}_{z}}$ for the natural inclusion,  we set 
\begin{equation}
\Homo^{r.d.}_{*,z}=\Homo_*\left( \pi^{-1}(z)^{an}\cup \widetilde{D^{r.d.}_{z}},\widetilde{D^{r.d.}_{z}};j_{z*}\left(\underline{\C} \Phi_{z}\right)\right).
\end{equation}

\begin{thm}\label{thm:SolutionDescription}
For any $z\in\Omega$, the map
\begin{equation}
\int:\Homo^{r.d.}_{n,z}\ni[\Gamma]\mapsto\left( [\omega]\mapsto\int_\Gamma \Phi \omega \right)\in\Hom_{\DD_{\C^N}}\left( \left(\int_{\pi}^0\mathcal{O}_{X_{0} }\Phi\right)^{an},\mathcal{O}_{\C^N}\right)_{z}\label{Morphism}
\end{equation}
is a well-defined isomorphism of complex vector spaces.
\end{thm}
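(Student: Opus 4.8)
The plan is to recognise the map $\int$ as M.\ Hien's period pairing for the good compactification $\tilde p^{-1}(z)$. Well-definedness will then follow from the convergence theory of rapid decay homology together with the local triviality (\ref{Trivialization}), while injectivity will follow from the perfectness of Hien's pairing combined with the fact, already extracted in \S\ref{SectionDModules}, that the fibre at $z$ of the Gauss--Manin module is the de Rham cohomology of $\pi^{-1}(z)$.

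First I would settle well-definedness. Fix $[\Gamma]\in\Homo^{r.d.}_{n,z}$ and a class $[\omega]\in\Homo^0\big(\int_\pi^0\mathcal{O}_{X_0}\Phi\big)$, represented via the relative de Rham complex by a relative $n$-form $\omega$ on $X_0$. Using the stratified trivialisation $\Lambda$ of (\ref{Trivialization}), which respects $\widetilde{D^{r.d.}}$ and is built from a rugous vector field satisfying (\ref{AdditionalCondition}), transport a representing rapid decay chain of $[\Gamma]$ to a continuous family of rapid decay chains $\Gamma(z^\prime)$ over $\Delta(z;\varepsilon)$. For each $z^\prime$ the fibre $\tilde p^{-1}(z^\prime)$ is a good compactification and $\Phi_{z^\prime}$ decays rapidly along $\widetilde{D^{r.d.}_{z^\prime}}$ --- the part of the boundary over which $\Re h_{0,z^{\prime(0)}}(x)\to-\infty$ --- so by \cite{HienRDHomology} the integral $\int_{\Gamma(z^\prime)}\Phi\omega$ converges, and Stokes' theorem for rapid decay cycles shows that it depends only on $\omega$ modulo $\nabla_{X_0/\A^N}$-exact relative forms, hence only on $[\omega]$. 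Condition (\ref{AdditionalCondition}) makes these decay estimates locally uniform in $z^\prime$, so $z^\prime\mapsto\int_{\Gamma(z^\prime)}\Phi\omega$ is holomorphic on $\Delta(z;\varepsilon)$; its germ at $z$ is independent of the choice of $\Lambda$ and of the representing chain, two such choices differing by an automorphism of the trivialisation restricting to the identity over $z$. To see that the resulting germ lies in $\Hom_{\DD_{\C^N}}$ and not merely in $\Hom_{\mathcal{O}_{\C^N}}$, I would differentiate under the integral sign: since $\Gamma(z^\prime)$ is flat for the topological connection on the sheaf of rapid decay cycles, $\partial_{z_j}\int_{\Gamma(z^\prime)}(\,\cdot\,)=\int_{\Gamma(z^\prime)}\partial_{z_j}(\,\cdot\,)$, and the right-hand operation is precisely the $\DD_{\C^N}$-action on $\int_\pi^0\mathcal{O}_{X_0}\Phi$ in its relative de Rham incarnation.

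For injectivity I would use that $\iota_z\colon\{z\}\hookrightarrow\A^N$ is non-characteristic for $M_A(\delta)$ (\cite[Lemma 3.3]{Adolphson}, as recalled in \S\ref{SectionDModules}), so that $\mathbb{L}\iota_z^*\int_\pi\mathcal{O}_{X_0}\Phi\simeq\DR_{\pi^{-1}(z)/\{z\}}(\mathcal{O}_{\pi^{-1}(z)},\nabla_z)$ is concentrated in degree $0$ and the fibre at $z$ of the locally free module $\int_\pi^0\mathcal{O}_{X_0}\Phi$ is canonically $\Homo^n_{dR}(\pi^{-1}(z),(\mathcal{O}_{\pi^{-1}(z)},\nabla_z))$. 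Evaluating $\int([\Gamma])$ at $z$ and pairing against the restriction of $[\omega]$ to $\pi^{-1}(z)$ returns exactly $\int_\Gamma\Phi\omega$ (the parallel transport has base point $\Gamma$ itself); hence the composite of $\int$ with evaluation at $z$ is M.\ Hien's period pairing
\[
\Homo^{r.d.}_{n,z}\times\Homo^n_{dR}\big(\pi^{-1}(z),(\mathcal{O}_{\pi^{-1}(z)},\nabla_z)\big)\longrightarrow\C .
\]
Evaluation at $z$ is injective (a flat section near $z$ is determined by its value at $z$), and Hien's pairing is perfect --- $\tilde p^{-1}(z)$ being a good compactification and $z$ nonsingular, so that both (co)homology groups sit in degree $n$ --- hence non-degenerate in the homology variable. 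Therefore $\int([\Gamma])=0$ forces $[\Gamma]=0$.

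The hard part will be the analytic input of the first step: showing that $z^\prime\mapsto\int_{\Gamma(z^\prime)}\Phi\omega$ really converges, is holomorphic, and admits differentiation under the integral sign \emph{uniformly} over $\Delta(z;\varepsilon)$. This is exactly what the elaborate compactification was arranged to provide --- the toric variety $X(\Sigma)$ sufficiently full for each $\Delta_l$, the blow-ups producing $\bar X$, and the rugous vector field $\Theta$ with $\Theta(\tilde h_{0,z^{\prime(0)}}(x))=0$ near $\tilde h_{0,z^{\prime(0)}}^{-1}(S^1\infty)$ --- so that $\Lambda$ preserves the rapid decay of $\Phi$ along $\widetilde{D^{r.d.}}$ with locally uniform estimates. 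Granting this and Hien's perfectness theorem, the remaining identifications are formal.
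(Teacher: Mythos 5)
Your proof is correct and takes essentially the same approach as the paper: well-definedness via the stratified trivialisation (\ref{Trivialization}) and Hien's convergence theory, injectivity via the perfectness of Hien's period pairing on the good compactification $\tilde{p}^{-1}(z)$. The paper's proof is considerably terser; you make explicit the identification of the fibre of the Gauss--Manin module at $z$ with $\Homo^n_{\rm dR}\left(\pi^{-1}(z),\nabla_z\right)$ via non-characteristic restriction and the $\DD$-linearity check by differentiation under the integral sign, both of which the paper leaves implicit.
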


\begin{proof}

Note first that, for any $[\omega]\in \int_{\pi}^0\mathcal{O}_{X_{0} }\Phi$, the integral $\int_\Gamma \Phi\omega$ is well-defined in a neighborhood of $z$. Indeed, with the aid of the trivialization (\ref{Trivialization}), one can construct a continuous family $\{\Gamma_{z^\prime}\}_{z^\prime\in\Delta(z;\varepsilon)}$ of rapid decay cycles such that $\Gamma_{z}=\Gamma$. For any $z^\prime$ close to $z$, $\Gamma_{z^\prime}$ is homotopic to $\Gamma$. This argument shows that (\ref{Morphism}) is well-defined. 

Let us show that (\ref{Morphism}) is an isomorphism. If $\int_\Gamma \Phi\omega=0$ for any $[\omega]$, we have $[\Gamma]=0$ by the perfectness of the period pairing. Thus, (\ref{Morphism}) is injective. By abuse of notation, we write $\pi$ for the composition of $\pi^{-1}(\Omega)\overset{i}{\hookrightarrow} \bar{X}\overset{\bar{p}}{\rightarrow}\Omega$. Since $i$ is an open embedding, we have an estimate ${\rm Char}(\int_i\mathcal{O}_{\pi^{-1}(\Omega)}\Phi)\subset T_{\bar{D}}\bar{X}$. In view of the estimate (\ref{eqn:EstimateOfChar}) and the fact that each fiber of $\bar{p}$ meets transversally with each component of $\bar{D}$, we obtain an estimate ${\rm Char}(\int_\pi\mathcal{O}_{\pi^{-1}(\Omega)}\Phi)={\rm Char}(\int_{\bar{p}}\circ\int_i\mathcal{O}_{\pi^{-1}(\Omega)}\Phi)\subset T^*_\Omega\Omega$. This estimate shows that $\int_{\pi}\mathcal{O}_{X_0}\Phi$ is a connection on $\Omega$. Thus, by Cauchy-Kovalevsky-Kashiwara theorem (\cite[Theorem 4.3.2]{HTT}) and base change formula (\ref{BaseChange}), the complex dimension of the right-hand side of (\ref{Morphism}) is equal to that of $\Homo^n_{dR}(\pi^{-1}(z);\nabla_z)$, which in turn equals that of $\Homo^{r.d.}_{n,z}$.
\end{proof}

\begin{rem}
The assumption that $z$ is nonsingular is important. As a simple example, we consider a Laplace-Gauss-Manin connection $\int_\pi \mathcal{O}_{\A^2_z\times \Gm}e^{z_1x+z_2x^2}x^c$ with $c\notin\Z$. In this case, we can easily see that $z$ is nonsingular (non-degenerate) if $z_2\neq 0$. Let us fix a point $z=(1,0).$ Then, the Hankel contour $\Gamma$ which begins from $-\infty$ turns around the origin and goes back to $-\infty$ belongs to $\Homo^{r.d.}_{1,z}.$ However, as soon as ${\rm Re}(z_2)> 0$, the integral $\int_\Gamma e^{x+z_2x^2}x^c\frac{dx}{x}$ diverges. 

\end{rem}

\noindent
By \cref{cor:Corollary216}, an isomorphism
\begin{equation}
\Hom_{\DD_{\C_z^N}}(\int_{\pi}^0\mathcal{O}_{X_{0} }\Phi,\mathcal{O}_{\C^N})\rightarrow\Hom_{\DD_{\C^N_z}}(M_A(\delta),\mathcal{O}_{\C^N})
\end{equation}
is induced. In view of \cref{thm:SolutionDescription}, we obtain the second main result of this section. 

\begin{thm}\label{thm:EulerLaplaceRepresentationTheorem}
Suppose the parameter vector $\delta$ is non-resonant and $\gamma_l\notin\Z$ for any $l=1,\dots,k$. For any $z\in \Omega$, one has an isomorphism
\begin{equation}\label{Integration}
\Homo^{r.d.}_{n,z}\overset{\int}{\rightarrow}\Hom_{\DD_{\C^N_z}}(M_A(\delta),\mathcal{O}_{\C^N})_{z}
\end{equation}
given by
\begin{equation}
[\Gamma]\mapsto \int_\Gamma \Phi \frac{dx}{x}.
\end{equation}
\end{thm}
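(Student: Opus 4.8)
\emph{Proof sketch.} The plan is to read the statement off from \cref{thm:Theorem221} after transporting its source along the $\DD_{\C^N_z}$-module isomorphism $M_A(\delta)\xrightarrow{\sim}\int_\pi^0\mathcal{O}_{X_0}\Phi$ provided by \cref{cor:Corollary216}, which sends $[1]$ to $[\frac{dx}{x}]$.

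First I would make explicit the identification of the relevant $\Hom$-spaces with solution spaces. Since $M_A(\delta)=\DD_{\A^N}/H_A(\delta)$ is generated by the class $[1]$, a homomorphism $M_A(\delta)\to\mathcal{O}_{\C^N}$ is determined by the image of $[1]$, and evaluation at $[1]$ identifies $\Hom_{\DD_{\C^N_z}}(M_A(\delta),\mathcal{O}_{\C^N})_z$ with the stalk at $z$ of the sheaf of holomorphic solutions of $M_A(\delta)$; this is the space in which the map (\ref{Integration}) is asserted to take values. Likewise, \cref{cor:Corollary216} tells us that $[\frac{dx}{x}]$ generates $\int_\pi^0\mathcal{O}_{X_0}\Phi$ and that $[1]\mapsto[\frac{dx}{x}]$ is an isomorphism of $\DD_{\C^N_z}$-modules; precomposition with this isomorphism therefore yields a bijection
\[
\Hom_{\DD_{\C^N_z}}\!\left(\Big(\int_\pi^0\mathcal{O}_{X_0}\Phi\Big)^{an},\mathcal{O}_{\C^N}\right)_z\ \xrightarrow{\ \sim\ }\ \Hom_{\DD_{\C^N_z}}(M_A(\delta),\mathcal{O}_{\C^N})_z
\]
sending a homomorphism $\psi$ to the homomorphism carrying $[1]$ to $\psi([\frac{dx}{x}])$.

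Next I would check that this bijection intertwines the two integration maps. By \cref{thm:SolutionDescription} the map (\ref{Morphism}) sends $[\Gamma]$ to the homomorphism $[\omega]\mapsto\int_\Gamma\Phi\omega$; composing with the bijection above sends $[\Gamma]$ to the homomorphism that carries $[1]$ to $\int_\Gamma\Phi\frac{dx}{x}$, which, under the identification of the preceding paragraph, is precisely the value of the map (\ref{Integration}) at $[\Gamma]$. Hence (\ref{Integration}) is the composite of the isomorphism (\ref{Morphism}) of \cref{thm:Theorem221} with the bijection above, so it is an isomorphism of $\C$-vector spaces.

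I do not expect a genuine obstacle here: all the substance — the comparison isomorphism \cref{thm:mainDresult}, the cyclicity of $[\frac{dx}{x}]$ and the $\DD$-module isomorphism of \cref{cor:Corollary216}, the well-definedness and injectivity in \cref{thm:SolutionDescription}, and the surjectivity (via the dimension count) in \cref{thm:Theorem221} — is already in place. The only points to verify are bookkeeping: that the isomorphism of \cref{cor:Corollary216} is literally the one given by $[1]\mapsto[\frac{dx}{x}]$, so that the induced map on $\Hom(-,\mathcal{O}_{\C^N})$ is plain precomposition, and that restricting the period pairing $[\omega]\mapsto\int_\Gamma\Phi\omega$ to the class $[\frac{dx}{x}]$ returns $\int_\Gamma\Phi\frac{dx}{x}$ — both immediate from the constructions of \S\ref{SectionDModules} and \S\ref{SectionRapidDecay}.
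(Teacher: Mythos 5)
Your proposal is correct and follows essentially the same route as the paper: the text immediately preceding the theorem notes that \cref{cor:Corollary216} induces an isomorphism $\Hom_{\DD_{\C^N_z}}(\int_\pi^0\mathcal{O}_{X_0}\Phi,\mathcal{O}_{\C^N})\to\Hom_{\DD_{\C^N_z}}(M_A(\delta),\mathcal{O}_{\C^N})$ and then invokes \cref{thm:Theorem221} to conclude, which is precisely your composition. You simply make explicit the bookkeeping that the paper leaves implicit (generator-tracking $[1]\mapsto[\frac{dx}{x}]$ and evaluating the period pairing on $\frac{dx}{x}$).
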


\begin{rem}
It is straightforward to construct a local system $\mathcal{H}^{r.d.}_{n}=\displaystyle\bigcup_{z\in\Omega^{an}}\Homo^{r.d.}_{n,z}\rightarrow\Omega^{an}$ and an isomorphism $\mathcal{H}^{r.d.}_{n}\overset{\int}{\rightarrow}\Hom_{\DD_{\C^N_z}}(M_A(\delta),\mathcal{O}_{\C^N})\restriction_{\Omega^{an}}$ whose stalks are identical with (\ref{Integration}). See the proofs of \cite[Proposition 3.4. and Theorem 3.5.]{HienRoucairol}.

\end{rem}


\section{Rapid decay intersection pairing and its localization}\label{RDIntersection}

In this section, we develop an intersection theory of rapid decay homology groups along the line of the preceding studies \cite{ChoMatsumoto}, \cite{GotoCyclesForFC}, \cite{IwasakiWittenLaplacian}, \cite{KitaYoshida2}, \cite{MajimaMatsumotoTakayama}, \cite{MimachiYoshida}, and \cite{OharaSugikiTakayama}. We use the same notation as \S\ref{subsec:3.1}. Namely, we consider a smooth complex Affine variety $U$ and a regular singular connection $(E,\nabla)$ on $U$. In order to simplify the discussion and the notation, we assume that $E$ is a trivial bundle and $\nabla$ is given by $\nabla=d+\sum_{i=1}^k\alpha_i\frac{df_i}{f_i}\wedge$ for some regular functions $f_i\in\mathcal{O}(U)\setminus\C$ and complex numbers $\alpha_i$. For another regular function $f\in\mathcal{O}(U)$, we set $\nabla_f=\nabla+df\wedge$. We take a standard orientation of $\C^n$ so that for any holomorphic coordinate $(z_1,\dots,z_n)$, the real form $\left(\frac{\ii}{2}\right)^ndz_1\wedge\dots\wedge dz_n\wedge d\bar{z}_1\wedge\dots\wedge d\bar{z}_n$ is positive. Note that this choice of orientation is not compatible with the product orientation. For example, our orientation of $\C^2$ is different from the product orientation of $\C\times\C$.

Let us recall several sheaves on the real oriented blow-up $\widetilde{X}$. We write $\mathcal{P}^{<D}_{\widetilde{X}}$ (resp. $\mathcal{P}^{{\rm mod}D}_{\widetilde{X}}$) for the sheaf of $C^\infty$ functions on $U^{an}$ which are flat (resp. moderate growth) along $D$. We write $\Omega^{(p,q)}_{X^{an}}$ for the sheaf of smooth $(p,q)$-forms on $X^{an}$. We set $\mathcal{A}^{?D}_{\widetilde{X}}=\Ker\left( \bar{\partial}:\mathcal{P}^{?D}_{\widetilde{X}}\rightarrow \mathcal{P}^{?D}_{\widetilde{X}}\otimes_{\varpi^{-1}\mathcal{O}_{X^{an}}}\varpi^{-1}\Omega^{(0,1)}_{X^{an}}\right)$ for $?=<,{\rm mod}$. As in \cite{HienRDHomology}, we set
\begin{equation}\label{RapidDecayComplex}
DR^{?D}_{\widetilde{X}}(\nabla_f):=\mathcal{A}^{?D}_{\widetilde{X}}\otimes_{\varpi^{-1}\mathcal{O}_{X^{an}}}\varpi^{-1}DR_{X^{an}}(\nabla_f).
\end{equation}
With this notation, we set $\mathcal{S}^{?D}:=\mathcal{H}^0\left( DR^{?D}_{\widetilde{X}}(\nabla_f)\right)$ and ${}^\vee\mathcal{S}^{?D}:=\mathcal{H}^0\left( DR^{?D}_{\widetilde{X}}(\nabla^\vee_f)\right)$.

The main result of \cite{HienRDHomology} is an explicit description of the perfect pairing between $\mathbb{H}^{2n-*}(\widetilde{X},{}^\vee\mathcal{S}^{<D})$ and $\mathbb{H}^*(\widetilde{X},\mathcal{S}^{{\rm mod}D})$. Namely, $\mathbb{H}^{2n-*}(\widetilde{X},{}^\vee\mathcal{S}^{<D})$ is described by the rapid decay homology group, $\mathbb{H}^*(\widetilde{X},\mathcal{S}^{{\rm mod}D})$ is the algebraic de Rham cohomology group, and the pairing is given by the exponential period pairing. For later use, we need other realizations of these cohomology groups and the period pairing.

We first remark that $DR_{\widetilde{X}}^{?D}(\nabla_f)$ is a resolution of ${}^\vee\mathcal{S}^{?D}$ (\cite[Proposition 1]{HienRDHomology}). Combining this result with the quasi-isomorphism
\begin{equation}
\mathcal{A}^{?D}_{\widetilde{X}}\otimes_{\varpi^{-1}\mathcal{O}_{X^{an}}}\varpi^{-1}\Omega^{r}_{X^{an}}\tilde{\rightarrow}\left(\mathcal{P}^{?D}_{\widetilde{X}}\otimes_{\varpi^{-1}\mathcal{O}_{X^{an}}}\varpi^{-1}\Omega^{(r,\bullet)}_{X^{an}},\bar{\partial}\right),
\end{equation}
we see that $\mathcal{S}^{?D}$ is quasi-isomorphic to
$
\mathcal{P}DR^{?D}_{\widetilde{X}}(\nabla_f)\overset{def}{=}\left(\mathcal{P}^{?D}_{\widetilde{X}}\otimes_{\varpi^{-1}\mathcal{O}_{X}}\varpi^{-1}\Omega^{(\bullet,\bullet)}_{X},\nabla_f,\bar{\partial}\right)$. We set 
\begin{equation}
\Homo_{r.d.}^*\left( U,\nabla_f^\vee\right)\overset{def}{=}\mathbb{H}^{*}\left( \widetilde{X}; {}^\vee\mathcal{S}^{<D}\right).
\end{equation}

\noindent
Since $\mathcal{P}DR^{<D}_{\widetilde{X}}(\nabla^\vee_f)$ is a soft resolution of ${}^\vee\mathcal{S}^{<D}$, $\Homo_{r.d.}^*\left( U,(\mathcal{O}_U,\nabla_f^\vee)\right)$ can be computed by taking global sections of the complex $\mathcal{P}DR^{<D}_{\widetilde{X}}(\nabla^\vee_f)$. We also remark that the factorization $\Gamma_{\widetilde{X}}=\Gamma_{X^{an}}\circ\R\varpi$ and the relation $\R\varpi_*\left( \mathcal{A}^{{\rm mod}D}_{\widetilde{X}}\right)=\mathcal{O}_{X^{an}}(*D)$ (\cite[CHAPITRE I\vspace{-.1em}I, Corollaire 1.1.8]{SabbahDim2}) implies $\mathbb{H}^*(\widetilde{X},\mathcal{S}^{{\rm mod}D})=\Homo^*\left(\R\Gamma_{X^{an}}\R\varpi_*\left( DR^{{\rm mod}D}_{\widetilde{X}}(\nabla_f)\right)\right)=\mathbb{H}^*\left(X^{an};\mathcal{O}_{X^{an}}(*D)\otimes_{\mathcal{O}_{X^{an}}}DR_{X^{an}}(\nabla_f)\right)=\Homo_{\rm dR}^*\left( U,\nabla_f\right)$. Here, the last equality is a consequence of GAGA (\cite{SerreGAGA}).

Hence, if we write $\tilde{j}:U^{an}\rightarrow \widetilde{X}$ for the natural inclusion, the canonical duality pairing $DR^{<D}_{\widetilde{X}}(\nabla^\vee_f)\otimes DR^{{\rm mod}D}_{\widetilde{X}}(\nabla_f)\rightarrow\tilde{j}_!\C_{U^{an}}$ (\cite{HienRDHomology}[Theorem 3]) yields a perfect pairing
\begin{equation}\label{RDc.i.n.}
\begin{array}{cccc}
\langle\bullet,\bullet\rangle_{ch} &\Homo_{\rm dR}^{*}\left( U; \nabla_f\right)\times\Homo_{r.d.}^{2n-*}\left( U,\nabla_f^\vee\right)  &\rightarrow&\C\\
&\rotatebox{90}{$\in$}& &\rotatebox{90}{$\in$}\\
&([\omega],[\eta]) &\mapsto&\int\omega\wedge\eta.
\end{array}
\end{equation}

\noindent
We often write $\langle\omega,\eta\rangle_{ch}$ instead of $\langle[\omega],[\eta]\rangle_{ch}$. We also give a realization of $\mathbb{H}^*(\widetilde{X},\mathcal{S}^{{\rm mod}D})$ in terms of a certain relative homology group. We set $\widetilde{D^{mod}}=\left( \widetilde{D_{irr}}\setminus\tilde{f}^{-1}(S^1\infty)\right)\cup\tilde{f}^{-1}\left( \left\{ e^{\theta\ii}\infty\mid -\frac{\pi}{2}<\theta<\frac{\pi}{2}\right\}\right)$. We consider a sequence of natural inclusions $U^{an}\overset{l}{\rightarrow} U^{an}\cup \widetilde{D^{mod}}\overset{k}{\rightarrow}\widetilde{X}$. By the local description of $\mathcal{S}^{{\rm mod}D}$ (\cite{HienRDHomology}[p12]), we can easily confirm that the equality $\mathcal{S}^{{\rm mod}D}=k_!l_*\mathcal{L}^\vee$ holds. Moreover, if we set 
\begin{equation}
\mathcal{C}^{-p}_{U^{an}\cup \widetilde{D^{mod}},\widetilde{D^{mod}}}(\mathcal{L}^\vee)=\left( V\mapsto S_p\left( U^{an}\cup \widetilde{D^{mod}},\left(U^{an}\cup \widetilde{D^{mod}}\setminus V\right)\cup\widetilde{D^{mod}};l_*\mathcal{L}^\vee\right)\right)^\dagger,
\end{equation}
we see that $\mathcal{S}^{{\rm mod}D}_{\widetilde{X}}[2n]\simeq k_!\mathcal{C}^{-*}_{U^{an}\cup \widetilde{D^{mod}},\widetilde{D^{mod}}}(\mathcal{L}^\vee)$ as in the arguments after Proposition 2.1 of \cite{MatsubaraRapidDecay}. Here, the superscript $\dagger$ denotes the sheafification. Therefore, we have a realization $\mathbb{H}^*\left( \widetilde{X};\mathcal{S}^{{\rm mod}D}\right)=\Homo^{mod}_{2n-*}\left( U,\nabla_f\right)\overset{def}{=}\Homo_{2n-*}\left( U^{an}\cup \widetilde{D^{mod}},\widetilde{D^{mod}};l_*\mathcal{L}^\vee\right)$. Moreover, the same argument as \S 5 of \cite{HienRDHomology} proves the perfectness of the pairing

\begin{equation}
\begin{array}{ccc}
 \Homo_{r.d.}^{*}\left( U,\nabla_f^\vee\right)\times \Homo^{mod}_{2n-*}\left( U,\nabla_f\right) &\rightarrow&\C\\
\rotatebox{90}{$\in$}& &\rotatebox{90}{$\in$}\\
(\eta,\delta^\vee) &\mapsto&\int_{\delta^\vee}e^{-f}\prod_{i=1}^kf_i^{-\alpha_i}\eta.
\end{array}
\end{equation}

With these setups, we can naturally define the Poincar\'e duality isomorphism $\Phi:\Homo^{r.d.}_{*}\left( U^{an},\nabla_f^\vee\right)\overset{\sim}{\rightarrow}\Homo_{r.d.}^{2n-*}\left( U,\nabla_f^\vee\right)$ and $\Phi^\vee:\Homo^{mod}_{*}\left( U,\nabla_f\right)\overset{\sim}{\rightarrow}\Homo^{2n-*}_{dR}\left(U,\nabla_f\right)$. Namely, for any element $[\gamma]\in\Homo^{r.d.}_{*}\left( U^{an},\nabla_f^\vee\right)$, $\Phi(\gamma)\in\Homo_{r.d.}^{2n-*}\left( U,\nabla_f^\vee\right)$ is the unique element such that the equality $\int_{\gamma}e^f\prod_{i=1}^kf_i^{\alpha_i}\omega=\int \Phi(\gamma)\wedge\omega$ holds for any $\omega\in\Homo^{*}_{dR}\left(U,\nabla_f\right)$. In the same way, for any element $[\delta^\vee]\in\Homo^{mod}_{*}\left( U,\nabla_f\right)$, $\Phi^\vee(\delta^\vee)\in\Homo^{2n-*}_{dR}\left(U,\nabla_f\right)$ is the unique element such that the equality $\int_{\delta^\vee}e^{-f}\prod_{i=1}^kf_i^{-\alpha_i}\eta=\int \Phi^\vee(\delta^\vee)\wedge\eta$ holds for any $\eta\in \Homo_{r.d.}^{*}\left( U,\nabla_f^\vee\right)$. We define the homology intersection pairing $\langle\bullet,\bullet\rangle_h$ by

\begin{equation}\label{eqn:4.7}
\begin{array}{cccc}
 \langle\bullet,\bullet\rangle_h:&\Homo_*^{r.d.}\left( U,\nabla^\vee_f\right)\times\Homo^{mod}_{2n-*}\left( U,\nabla_f\right)&\rightarrow&\C\\
&\rotatebox{90}{$\in$}& &\rotatebox{90}{$\in$}\\
&(\gamma,\delta^\vee) &\mapsto&\int\Phi(\gamma)\wedge\Phi^\vee(\delta^\vee).
\end{array}
\end{equation}

We are ready to state the twisted period relation (cf. \cite[Theorem 2]{ChoMatsumoto}) for rapid decay homology groups. Let us fix four bases $\{\omega_i\}_{i=1}^r\subset\Homo_{dR}^{*}\left( U; \nabla_f\right)$, $\{\gamma_i\}_{i=1}^r\subset\Homo_*^{r.d.}\left( U;\nabla_f^\vee\right)$, $\{\eta_i\}_{i=1}^r\subset\Homo_{r.d.}^{2n-*}\left( U;\nabla_f^\vee\right)$, and $\{\delta_i^\vee\}_{i=1}^r\subset\Homo^{mod}_{2n-*}\left( U;\nabla_f\right)$. We set $I_{ch}=(\langle \omega_i, \eta_j\rangle_{ch})_{i,j}$, $I_h=(\langle \gamma_i,\delta_j^\vee\rangle_h)_{i,j}$, $P=\left( \int_{\gamma_j}e^{f}\prod_{l=1}^kf_l^{\alpha_l}\omega_i\right)_{i,j}$, and $P^\vee=\left( \int_{\delta_j^\vee}e^{-f}\prod_{l=1}^kf_l^{-\alpha_l}\eta_i\right)_{i,j}$. With these setups, we can state a basic 
\begin{prop}
The following identity is true:
\begin{equation}\label{GeneralQuadraticRelation}
I_{ch}=P{}^tI_h^{-1}{}^tP^\vee.
\end{equation}
\end{prop}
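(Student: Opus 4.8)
The plan is to reduce the identity to a short computation with change-of-basis matrices, using only the defining properties of the two Poincar\'e duality isomorphisms $\Phi$ and $\Phi^\vee$ together with the bilinearity of the pairing $(\alpha,\beta)\mapsto\int\alpha\wedge\beta$ on $\widetilde{X}$.

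The first step is to record that the three matrices $P$, $P^\vee$ and $I_{ch}$ are invertible: $P$ is the Gram matrix, in the chosen bases, of the perfect period pairing $\Homo_{dR}^{*}(U,\nabla_f)\times\Homo_*^{r.d.}(U,\nabla_f^\vee)\to\C$; $P^\vee$ is the Gram matrix of the perfect pairing $\Homo_{r.d.}^{2n-*}(U,\nabla_f^\vee)\times\Homo_{2n-*}^{r.g.}(U,\nabla_f)\to\C$; and $I_{ch}$ is the Gram matrix of the perfect cohomology intersection pairing. All three perfectnesses were recorded above following \cite{HienRDHomology}.

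The second step is the bookkeeping. Since $\Phi$ maps $\Homo_*^{r.d.}(U,\nabla_f^\vee)$ isomorphically onto $\Homo_{r.d.}^{2n-*}(U,\nabla_f^\vee)$, write $\Phi(\gamma_j)=\sum_k c_{jk}\eta_k$ with $C=(c_{jk})$; since $\Phi^\vee$ maps $\Homo_{2n-*}^{r.g.}(U,\nabla_f)$ isomorphically onto $\Homo_{dR}^{*}(U,\nabla_f)$, write $\Phi^\vee(\delta_j^\vee)=\sum_k d_{jk}\omega_k$ with $D=(d_{jk})$. Unwinding the definitions of $\Phi$ and $\Phi^\vee$ (the equalities $\int_{\gamma_j}e^{f}\prod_l f_l^{\alpha_l}\omega_i=\int\Phi(\gamma_j)\wedge\omega_i$ and $\int_{\delta_j^\vee}e^{-f}\prod_l f_l^{-\alpha_l}\eta_i=\int\Phi^\vee(\delta_j^\vee)\wedge\eta_i$), inserting these expansions, and using $\int\beta\wedge\alpha=(-1)^{*(2n-*)}\int\alpha\wedge\beta$ for forms of complementary degrees $*$ and $2n-*$, one gets, with $s:=(-1)^{*(2n-*)}$,
\begin{equation}
P=s\,I_{ch}\,{}^tC,\qquad P^\vee={}^tI_{ch}\,{}^tD,\qquad I_h=s\,C\,{}^tI_{ch}\,{}^tD.
\end{equation}
Solving the first two relations for $C$ and ${}^tD$ and substituting into the third, the factor $s$ squares away and one obtains $I_h={}^tP\,({}^tI_{ch})^{-1}P^\vee$; in particular $I_h$ is invertible. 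Transposing gives ${}^tI_h={}^tP^\vee\,I_{ch}^{-1}\,P$, and inverting this, then multiplying by $P$ on the left and ${}^tP^\vee$ on the right, yields precisely $I_{ch}=P\,{}^tI_h^{-1}\,{}^tP^\vee$.

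The computation is routine; the only point that needs care is the orientation and sign bookkeeping — the factors $(-1)^{*(2n-*)}$ arising from commuting forms of complementary degree, together with the non-product orientation of $\C^n$ fixed at the start of \S\ref{RDIntersection}. Since these constants cancel pairwise in the final combination, the stated identity holds with no sign correction; in the only case used later ($*=n$) one may simply verify this directly in the middle degree. A slightly more conceptual alternative would be to phrase the four pairings and the two Poincar\'e dualities as a single commutative square of perfect pairings and read off (\ref{GeneralQuadraticRelation}) as the matrix form of that commutativity, which avoids introducing $C$ and $D$ explicitly but amounts to the same linear algebra.
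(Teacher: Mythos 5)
Your proof is correct, and it spells out in full detail what the paper only cites (the linear-algebra argument of Cho--Matsumoto, Theorem~2). A few small checks for the record: with $\Phi(\gamma_j)=\sum_k c_{jk}\eta_k$ and $\Phi^\vee(\delta_j^\vee)=\sum_k d_{jk}\omega_k$, the defining properties give $P_{ij}=\int\Phi(\gamma_j)\wedge\omega_i=s\sum_k c_{jk}(I_{ch})_{ik}$, $P^\vee_{ij}=\int\Phi^\vee(\delta_j^\vee)\wedge\eta_i=\sum_k d_{jk}(I_{ch})_{ki}$, and $(I_h)_{ij}=\int\Phi(\gamma_i)\wedge\Phi^\vee(\delta_j^\vee)=s\sum_{k,l}c_{ik}d_{jl}(I_{ch})_{lk}$, exactly the three matrix relations you wrote. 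Eliminating $C$ and $D$ gives $I_h={}^tP\,({}^tI_{ch})^{-1}\,P^\vee$, with $s^2=1$ absorbing the sign, and transposing and inverting yields (\ref{GeneralQuadraticRelation}). The invertibility of $P$, $P^\vee$, $I_{ch}$ does indeed follow from the perfectness statements recalled in \S\ref{RDIntersection}, so the required inverses exist. Your closing remark that the same content could be phrased as a commutative square of perfect pairings is precisely how Cho--Matsumoto present it; the two formulations are equivalent and there is nothing to add or correct.
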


\noindent
The proof is exactly same as that of the twisted period relation \cite[Theorem 2]{ChoMatsumoto}.

Now, we need to prove an important technique to compute homology intersection numbers. For any open subset $\widetilde{V}$ of $\widetilde{X}$, we set $V=\widetilde{V}\cap U$, $DR^{?D}_{\widetilde{V}}(\nabla_f)=DR^{?D}_{\widetilde{X}}(\nabla_f)\restriction_{\widetilde{V}}$ and $\mathcal{S}^{?D}_{V}=\mathcal{S}^{?D}_{\widetilde{X}}\restriction_{\widetilde{V}}$. Let $a_{\widetilde{V}}:\widetilde{V}\rightarrow pt$ denote the  morphism to a point. We set $\widetilde{D_V^{r.d.}}=\widetilde{D^{r.d.}}\cap\widetilde{V}$, $\widetilde{D_V^{mod}}=\widetilde{D^{mod}}\cap\widetilde{V}$, $\Homo^{mod}_*\left( V; \nabla_f\right)=\Homo_{*}^{\rm lf}\left( V\cup \widetilde{D_V^{mod}},\widetilde{D_V^{mod}};l_*\mathcal{L}^\vee\right)$, $\Homo^*_{mod}\left(V; \nabla_f\right)=\mathbb{H}^{*}\left( \widetilde{V};DR^{{\rm mod}D}_{\widetilde{V}}(\nabla_f)\right)$, 
$\Homo^{r.d.}_*\left(V;\nabla_f^\vee\right)=\Homo_{*}\left( V\cup \widetilde{D_V^{r.d.}},\widetilde{D_V^{r.d.}};\mathcal{L}\right)$, and $\Homo_{r.d.}^*\left(V;\nabla_f^\vee\right)=\mathbb{H}_c^{*}\left( \widetilde{V};DR^{<D}_{\widetilde{V}}(\nabla_f^\vee)\right)$. We write $\tilde{j}_V:V\rightarrow\widetilde{V}$ for the natural inclusion. By the perfectness of the duality pairing $DR_{\widetilde{V}}^{<D}(\nabla_f^\vee)\otimes DR^{\rm mod D}_{\widetilde{V}}(\nabla_f)\rightarrow \tilde{j}_{V!}\C$ and the identity $a_{\widetilde{V}}^!\C\simeq\tilde{j}_{V!}\C[2n]$ we get a sequence of isomorphisms

\begin{align}
\R\Gamma_{\widetilde{V}}\left( DR^{\rm mod D}_{\widetilde{V}}(\nabla_f)[2n]\right)&\simeq\R\Gamma_{\widetilde{V}}\R\mathcal{H}om\left( DR_{\widetilde{V}}^{<D}(\nabla_f^\vee),\tilde{j}_{V!}\C_V[2n]\right)\\
&\simeq\R\mathcal{H}om\left( a_{\widetilde{V}!}DR_{\widetilde{V}}^{<D}(\nabla_f^\vee),\C\right).
\end{align}

\noindent
Here, the last isomorphism is a result of Poincar\'e-Verdier duality. Since we have identities $\mathbb{H}^*\left( \widetilde{V};\mathcal{S}^{{\rm mod}D}_{V}\right)=\Homo^{mod}_{2n-*}\left( V; \nabla_f\right)$ and $\mathbb{H}^*\left( \widetilde{V};\mathcal{S}^{<D}_{V}\right)=\Homo^{r.d.}_{2n-*}\left(V;\nabla_f^\vee\right)$, this isomorphism gives rise to perfect pairings
\begin{equation}
\begin{array}{ccc}
\Homo^{r.d.}_*\left( V;\nabla_f^\vee\right)\times \Homo^{*}_{mod}\left( V;\nabla_f\right)&\rightarrow&\C\\
\rotatebox{90}{$\in$}& &\rotatebox{90}{$\in$}\\
(\gamma,\omega)&\mapsto&\int_{\gamma}e^{f}\prod_{i=1}^kf_i^{\alpha_i}\omega,
\end{array}
\end{equation}

\begin{equation}
\begin{array}{ccc}
\Homo_{r.d.}^*\left( V;\nabla_f^\vee\right)\times \Homo_{*}^{mod}\left(V;\nabla_f\right)&\rightarrow&\C\\
\rotatebox{90}{$\in$}& &\rotatebox{90}{$\in$}\\
(\eta,\delta^\vee)&\mapsto&\int_{\delta^\vee}e^{-f}\prod_{i=1}^kf_i^{-\alpha_i}\eta,
\end{array}
\end{equation}

and

\begin{equation}
\begin{array}{cccc}
\langle\bullet,\bullet\rangle_{ch}:&\Homo^{*}_{mod}\left(V;\nabla_f\right)\times \Homo_{r.d.}^{2n-*}\left( V;\nabla_f^\vee\right)&\rightarrow&\C\\
&\rotatebox{90}{$\in$}& &\rotatebox{90}{$\in$}\\
&(\omega,\eta)&\mapsto&\int\omega\wedge\eta.
\end{array}
\end{equation}

\noindent
Then, the Poincar\'e duality morphisms $\Phi_{V}:\Homo^{r.d.}_*\left( V;\nabla_f^\vee\right)\tilde{\rightarrow}\Homo_{r.d.}^{2n-*}\left( V;\nabla_f^\vee\right)$ and its dual counterpart $\Phi_{V}^\vee:\Homo^{mod}_*\left( V;\nabla_f\right)\tilde{\rightarrow}\Homo_{mod}^{2n-*}\left( V;\nabla_f\right)$ are naturally defined with the aid of the perfect pairings above. Namely, the definitions of $\Phi_V$ and $\Phi^\vee_V$ are parallel to those of $\Phi$ and $\Phi^\vee.$  Therefore, we define the homology intersection pairing by

\begin{equation}
\begin{array}{cccc}
 \langle\bullet,\bullet\rangle_h:&\Homo^{r.d.}_*\left( V;\nabla_f^\vee\right)\times\Homo^{mod}_{2n-*}\left( V;\nabla_f\right)&\rightarrow&\C\\
&\rotatebox{90}{$\in$}& &\rotatebox{90}{$\in$}\\
&(\gamma,\delta^\vee) &\mapsto&\int\Phi(\gamma)\wedge\Phi^\vee(\delta^\vee).
\end{array}
\end{equation}

With these set-ups, we can discuss localization of intersection pairings. If we denote by $\iota_{\widetilde{X}\widetilde{V}}:\widetilde{V}\rightarrow\tilde{X}$ the natural inclusion, the natural transform 
$
id_{\widetilde{X}}\rightarrow\iota_{\widetilde{X}\widetilde{V}*}\iota_{\widetilde{X}\widetilde{V}}^{-1}
$ 
induces a commutative diagram

\begin{equation}
\xymatrix{
 \Homo_{*}^{mod}\left( U;\nabla_f\right) \ar[r]^{rest } \ar[d]_{\Phi^\vee}&\Homo_{*}^{mod}\left( V;\nabla_f\right)\ar[d]^{\Phi^\vee}\\
 \Homo_{\rm dR}^{2n-*}\left( U;\nabla_f\right) \ar[r]^{rest}                             &\Homo^{2n-*}_{mod}\left( V;\nabla_f\right).
}
\end{equation}
Here, the first horizontal morphism is nothing but the usual restriction morphism of locally finite homology groups and the second horizontal morphism is induced by taking pull-backs of differential forms. 
On the other hand, the natural transform $\iota_{\widetilde{X}\widetilde{V}!}\iota_{\widetilde{X}\widetilde{V}}^{-1}\rightarrow id_{\widetilde{X}}$ induces a commutative diagram

\begin{equation}
\xymatrix{
 \Homo_{*}^{r.d.}\left( U;\nabla^\vee_f\right) \ar[d]_{\Phi}&\Homo_{*}^{r.d.}\left( V;\nabla_f^\vee\right)\ar[l]^{\iota_{\widetilde{X}\widetilde{V}!} } \ar[d]^{\Phi}\\
 \Homo^{2n-*}_{r.d.}\left( U;\nabla_f^\vee\right)  &\Homo^{2n-*}_{r.d.}\left( V;\nabla_f^\vee\right)\ar[l]^{\iota_{\widetilde{X}\widetilde{V}!}}                            .
}
\end{equation}
\noindent
Both horizontal morphisms are given by extension by zero. By definition, we see that the morphisms $rest$ and $\iota_{\widetilde{X}\widetilde{V}!} $ satisfy an adjoint relation
\begin{equation}\label{Adjoint}
\langle \iota_{\widetilde{X}\widetilde{V}!}(\gamma),\delta^\vee\rangle_h=\langle \gamma,rest(\delta^\vee)\rangle_h\;\;\;\;\left(\gamma\in\Homo_{*}^{r.d.}\left( V;\nabla_f^\vee\right),\delta^\vee\in\Homo_{2n-*}^{mod}\left( U;\nabla_f\right)\right).
\end{equation}

\noindent
We consider a commutative diagram

\begin{equation}\label{ThisCD}
\xymatrix{
\mathcal{S}^{<D}_{\widetilde{X}}\ar[r]^{can}&\mathcal{S}^{{\rm mod}D}_{\widetilde{X}}\ar[dr]&\\
\iota_{\widetilde{X}\widetilde{V}!}\mathcal{S}^{<D}_{\widetilde{V}}\ar[u]\ar[r]&\iota_{\widetilde{X}\widetilde{V}*}\mathcal{S}^{<D}_{\widetilde{V}}\ar[r]&\iota_{\widetilde{X}\widetilde{V}*}\mathcal{S}^{{\rm mod}D}_{\widetilde{V}},
}
\end{equation}
where the morphism $can:\mathcal{S}^{<D}_{\widetilde{X}}\rightarrow\mathcal{S}^{{\rm mod}D}_{\widetilde{X}}$ is induced by the canonical morphism $\mathcal{A}^{<D}_{\widetilde{X}}\rightarrow\mathcal{A}^{{\rm mod}D}_{\widetilde{X}}$ and the vertical arrows are induced by natural transforms 
$
id_{\widetilde{X}}\rightarrow\iota_{\widetilde{X}\widetilde{V}*}\iota_{\widetilde{X}\widetilde{V}}^{-1}
$ and 
$\iota_{\widetilde{X}\widetilde{V}!}\iota_{\widetilde{X}\widetilde{V}}^{-1}\rightarrow id_{\widetilde{X}}$. By taking hypercohomologies, the diagram (\ref{ThisCD}) induces another commutative diagram
\begin{equation}\label{RDCommutativity}
\xymatrix{
\Homo_*^{r.d.}\left( U;\nabla_f\right)\ar[r]^{can_U}&\Homo_{*}^{mod}\left( U;\nabla_f\right)\ar[d]^{rest}\\
\Homo_{*}^{r.d.}\left( V;\nabla_f\right)\ar[u]^{\iota_{\widetilde{X}\widetilde{V}!}}\ar[r]^{can_V}&\Homo_{*}^{mod}\left( V;\nabla_f\right).
}
\end{equation}

\noindent
Combining (\ref{Adjoint}) and (\ref{RDCommutativity}), we have the following localization formula

\begin{prop}\label{prop:TheLocalizationFormula}
For any $\gamma\in\Homo_{*}^{r.d.}\left( V;\nabla_f^\vee\right)$ and $\gamma^\vee\in\Homo_{2n-*}^{r.d.}\left( V;\nabla_f\right)$, the identity
\begin{equation}
\langle\iota_{\widetilde{X}\widetilde{V}!}(\gamma),can_U\circ\iota_{\widetilde{X}\widetilde{V}!}(\gamma^\vee)\rangle_h=\langle\gamma,can_V(\gamma^\vee)\rangle_h
\end{equation}
holds.
\end{prop}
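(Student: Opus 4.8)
The plan is to obtain the identity as a direct consequence of the two facts already assembled in the preceding discussion: the adjoint relation (\ref{Adjoint}) between the extension-by-zero map $\iota_{\widetilde{X}\widetilde{V}!}$ and the restriction map $rest$, and the commutative square (\ref{RDCommutativity}) relating $can_U$, $can_V$, $\iota_{\widetilde{X}\widetilde{V}!}$ and $rest$. So the argument will be essentially a one-line diagram chase; no new geometry is needed once these two inputs are granted.

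First I would check that the class paired on the left is legitimate: starting from $\gamma^\vee\in\Homo_{2n-*}^{r.d.}\left( V;\nabla_f\right)$, the left vertical arrow of (\ref{RDCommutativity}) produces $\iota_{\widetilde{X}\widetilde{V}!}(\gamma^\vee)\in\Homo_{2n-*}^{r.d.}\left( U;\nabla_f\right)$, and $can_U$ then sends it into $\Homo_{2n-*}^{r.g.}\left( U;\nabla_f\right)$, which is the source of $rest$ and the correct slot for the second argument of $\langle\bullet,\bullet\rangle_h$ over $U$. Taking $\delta^\vee:=can_U\circ\iota_{\widetilde{X}\widetilde{V}!}(\gamma^\vee)$ in (\ref{Adjoint}) gives
$$\langle \iota_{\widetilde{X}\widetilde{V}!}(\gamma),\,can_U\circ\iota_{\widetilde{X}\widetilde{V}!}(\gamma^\vee)\rangle_h=\langle\gamma,\,rest\big(can_U\circ\iota_{\widetilde{X}\widetilde{V}!}(\gamma^\vee)\big)\rangle_h.$$
Then I would invoke the commutativity of (\ref{RDCommutativity}) along its lower-left path, which reads $rest\circ can_U\circ\iota_{\widetilde{X}\widetilde{V}!}=can_V$ as a morphism $\Homo_{2n-*}^{r.d.}\left( V;\nabla_f\right)\rightarrow\Homo_{2n-*}^{r.g.}\left( V;\nabla_f\right)$; substituting this into the displayed equation collapses its right-hand side to $\langle\gamma,can_V(\gamma^\vee)\rangle_h$, which is exactly the asserted identity.

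The substance of the proof is thus carried entirely by its two inputs, and the only point requiring care is that those inputs be compatible with the intersection pairings. Specifically, (\ref{RDCommutativity}) follows from applying hypercohomology to the sheaf-level diagram (\ref{ThisCD}), once one knows that $can_U$, $can_V$, $\iota_{\widetilde{X}\widetilde{V}!}$ and $rest$ on the various (co)homology groups really are the maps induced by $can:\mathcal{S}^{<D}_{\widetilde{X}}\rightarrow\mathcal{S}^{{\rm mod}D}_{\widetilde{X}}$ and by the unit and counit of the open-immersion adjunctions for $\iota_{\widetilde{X}\widetilde{V}}$; and (\ref{Adjoint}) holds because, under the Poincar\'e--Verdier duality used to construct $\Phi,\Phi^\vee$ and their restrictions $\Phi_V,\Phi_V^\vee$, the functors $\iota_{\widetilde{X}\widetilde{V}!}$ and $rest$ are mutually adjoint with respect to the duality pairing $DR^{<D}_{\widetilde{X}}(\nabla_f^\vee)\otimes DR^{{\rm mod}D}_{\widetilde{X}}(\nabla_f)\rightarrow\tilde{j}_!\C_{U^{an}}$ of \cite{HienRDHomology}. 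These are of a formal categorical nature, since the flatness and moderate-growth conditions defining $\mathcal{S}^{<D}$ and $\mathcal{S}^{{\rm mod}D}$ are stable under restriction to the open subset $\widetilde{V}$, so I do not anticipate a genuine obstacle; the only bookkeeping needed is to fix, for each group, the soft resolution $\mathcal{P}DR^{?D}$ that computes it, so that all of $can$, $\iota_{\widetilde{X}\widetilde{V}!}$ and $rest$ are defined on the nose rather than merely up to quasi-isomorphism, and the pairings $\langle\bullet,\bullet\rangle_h$ over $U$ and over $V$ are then literally the same integral.
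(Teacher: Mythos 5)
Your argument is exactly the paper's: the proposition is stated immediately after the sentence ``Combining (\ref{Adjoint}) and (\ref{RDCommutativity}), we have the following localization formula,'' and your diagram chase (take $\delta^\vee=can_U\circ\iota_{\widetilde{X}\widetilde{V}!}(\gamma^\vee)$ in (\ref{Adjoint}), then replace $rest\circ can_U\circ\iota_{\widetilde{X}\widetilde{V}!}$ by $can_V$ via (\ref{RDCommutativity})) is precisely that combination, spelled out. Correct and same route.
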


Finally, we discuss cross products of chains. Let us consider another smooth complex affine variety $W$ and a regular connection $\nabla^\prime=d+\sum_{l=1}^m\beta_l\frac{dg_l}{g_l}\wedge:\mathcal{O}_W\rightarrow\Omega^1_W$. We compactify $W$ into a smooth projective variety $Y$ so that $D^\prime=Y\setminus W$ is a normal crossing divisor. Then, we see that $X\times Y$ is a good compactification of $(\mathcal{O}_{U\times W},\nabla_f+\nabla^\prime)$. We set $D_{U\times W}=X\times Y\setminus U\times W.$  By considering K\"unneth isomorphism $\Homo_{\rm dR}^*\left(U\times W,(\mathcal{O}_{U\times W},\nabla_f+\nabla^\prime)\right)\simeq\bigoplus_{p+q=*}\Homo_{\rm dR}^p\left(U,(\mathcal{O}_{U},\nabla_f)\right)\boxtimes\Homo_{dR}^q\left(W,(\mathcal{O}_{ W},\nabla^\prime)\right)$ induced by the wedge product, we have an isomorphism
\begin{equation}
\mathbb{H}^{*}\left( \widetilde{X\times Y};DR^{<D_{U\times W}}_{\widetilde{X\times Y}}(\nabla_f+\nabla^\prime)\right) \simeq \bigoplus_{p+q=*}\mathbb{H}^{p}\left( \widetilde{X};DR^{<D}_{\widetilde{X}}(\nabla_f)\right)\boxtimes\mathbb{H}^{q}\left( \widetilde{Y};DR^{<D_{W}}_{\widetilde{Y}}(\nabla^\prime)\right)
\end{equation}
induced again by the wedge product. Therefore, if we set $\mathcal{L}_W^\vee=\Ker\left( \nabla^\prime:\mathcal{O}_{W^{an}}\rightarrow\Omega^1_{W^{an}}\right)$, for any $\delta^\vee\in\Homo^{mod}_p\left( U, (\mathcal{O}_U,\nabla_f)\right)$ and $\delta_W^\vee\in\Homo^{mod}_q\left( W, (\mathcal{O}_W,\nabla^\prime)\right)$, we can define the cross product $\delta^\vee\times\delta^\vee_W$ so that the formula
\begin{equation}
\int_{\delta^\vee\times\delta^\vee_W}e^{-f}\prod_{l=1}^kf_l^{-\alpha_l}\prod_{l=1}^mg_l^{-\beta_l}\eta\wedge\eta_W=\left(\int_{\delta^\vee}e^{-f}\prod_{l=1}^kf_l^{-\alpha_l}\eta\right)\left(\int_{\delta_W^\vee}\prod_{l=1}^mg_l^{-\beta_l}\eta_W\right)
\end{equation}

\noindent
is true for any $\eta\in\mathbb{H}^{p}\left( \widetilde{X};DR^{<D}_{\widetilde{X}}(\nabla_f)\right)$ and $\eta_W\in\mathbb{H}^{q}\left( \widetilde{Y};DR^{<D^\prime}_{\widetilde{Y}}(\nabla^\prime)\right)$. Likewise, we can also define the cross product $\gamma\times \gamma_W$ for any $\gamma\in\Homo^{r.d.}_p\left( U,\nabla_f\right)$ and $\gamma_W\in\Homo^{r.d.}_p\left( W,\nabla^\prime\right)$ (\cite[Lemma 2.4]{MatsubaraMellinBarnesKyushu}). If we write $n^\prime $ for the complex dimension of $W$, we have a

\begin{prop}\label{prop:TheProductFormula}
For $\gamma\in\Homo^{r.d.}_p\left( U,(\mathcal{O}_U,\nabla_f)\right)$, $\gamma_W\in\Homo^{r.d.}_q\left( W,(\mathcal{O}_W,\nabla^\prime)\right)$, $\delta^\vee\in\Homo^{mod}_{2n-p}\left( U, (\mathcal{O}_U,\nabla_f)\right)$ and $\delta_W^\vee\in\Homo^{mod}_{2n^\prime-q}\left( W, (\mathcal{O}_W,\nabla^\prime)\right)$, one has an identity
\begin{equation}
\langle \gamma\times\gamma_W,\delta^\vee\times\delta^\vee_W\rangle_h=(-1)^{nn^\prime+pq}\langle \gamma,\delta^\vee\rangle_h\langle \gamma_W,\delta^\vee_W\rangle_h.
\end{equation}
In particular, if $p=n$ and $q=n^\prime$, one has $\langle \gamma\times\gamma_W,\delta^\vee\times\delta^\vee_W\rangle_h=\langle \gamma,\delta^\vee\rangle_h\langle \gamma_W,\delta^\vee_W\rangle_h$.
\end{prop}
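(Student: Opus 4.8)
The plan is to deduce the product formula from two facts: the compatibility of the Poincar\'e duality isomorphisms $\Phi$ and $\Phi^\vee$ with the cross products introduced above, and Fubini's theorem, which contributes the sign $(-1)^{nn^\prime}$ because the orientation fixed at the beginning of this section on an $(n+n^\prime)$-dimensional complex manifold differs from the product of the orientations of its $n$- and $n^\prime$-dimensional factors exactly by $(-1)^{nn^\prime}$.

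First I would establish the cross-product compatibility of $\Phi$. By definition $\Phi_{U\times W}(\gamma\times\gamma_W)$ is the unique class with $\int_{\gamma\times\gamma_W}e^{f}\prod_l f_l^{\alpha_l}\prod_l g_l^{\beta_l}\,\mu=\int\Phi_{U\times W}(\gamma\times\gamma_W)\wedge\mu$ for all $\mu\in\Homo_{\rm dR}^{p+q}\left(U\times W,(\mathcal{O}_{U\times W},\nabla_f+\nabla^\prime)\right)$, and by the K\"unneth isomorphism realised by the wedge product it suffices to check this for $\mu=p_1^*\omega\wedge p_2^*\omega_W$, where $p_1,p_2$ are the two projections of the good compactification $X\times Y$ considered above. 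For such $\mu$ the left-hand side factors, by the defining property of the rapid decay cross product, as $\bigl(\int_\gamma e^{f}\prod_l f_l^{\alpha_l}\omega\bigr)\bigl(\int_{\gamma_W}\prod_l g_l^{\beta_l}\omega_W\bigr)=\bigl(\int_X\Phi(\gamma)\wedge\omega\bigr)\bigl(\int_Y\Phi_W(\gamma_W)\wedge\omega_W\bigr)$, while, writing $\Phi_{U\times W}(\gamma\times\gamma_W)=c\,p_1^*\Phi(\gamma)\wedge p_2^*\Phi_W(\gamma_W)$, moving the $(2n^\prime-q)$-form $p_2^*\Phi_W(\gamma_W)$ past the $p$-form $p_1^*\omega$ and applying Fubini gives $c\,(-1)^{pq}(-1)^{nn^\prime}$ times the same product. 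Hence $c=(-1)^{nn^\prime+pq}$, and the parallel computation (interchanging the roles of de Rham and rapid decay cohomology, using $(2n-p)(2n^\prime-q)\equiv pq\pmod 2$) gives the same sign for $\Phi^\vee$. We record
\begin{align}
\Phi_{U\times W}(\gamma\times\gamma_W)&=(-1)^{nn^\prime+pq}\,p_1^*\Phi(\gamma)\wedge p_2^*\Phi_W(\gamma_W),\label{CrossPDcompat}\\
\Phi^\vee_{U\times W}(\delta^\vee\times\delta^\vee_W)&=(-1)^{nn^\prime+pq}\,p_1^*\Phi^\vee(\delta^\vee)\wedge p_2^*\Phi^\vee_W(\delta^\vee_W).\nonumber
\end{align}

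Next I would substitute (\ref{CrossPDcompat}) into $\langle\gamma\times\gamma_W,\delta^\vee\times\delta^\vee_W\rangle_h=\int\Phi_{U\times W}(\gamma\times\gamma_W)\wedge\Phi^\vee_{U\times W}(\delta^\vee\times\delta^\vee_W)$. The two prefactors $(-1)^{nn^\prime+pq}$ cancel; reordering the resulting fourfold wedge product so that the two pull-backs from $X$ and the two pull-backs from $Y$ stand together costs the Koszul sign $(-1)^{(2n^\prime-q)p}=(-1)^{pq}$; and a last application of Fubini, with its orientation sign $(-1)^{nn^\prime}$, splits the integral as $\bigl(\int_X\Phi(\gamma)\wedge\Phi^\vee(\delta^\vee)\bigr)\bigl(\int_Y\Phi_W(\gamma_W)\wedge\Phi^\vee_W(\delta^\vee_W)\bigr)=\langle\gamma,\delta^\vee\rangle_h\,\langle\gamma_W,\delta^\vee_W\rangle_h$. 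Collecting the two surviving signs yields the coefficient $(-1)^{nn^\prime+pq}$, as asserted; the case $p=n$, $q=n^\prime$ follows since $2nn^\prime$ is even.

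The main obstacle is entirely the sign bookkeeping: the orientation convention of this section enters three times --- twice through the compatibilities (\ref{CrossPDcompat}) and once through the final Fubini --- and one must keep careful track of the Koszul signs produced when permuting pulled-back forms of degrees $p$, $q$, $2n-p$, $2n^\prime-q$; the feature that makes the answer come out clean is that the two compatibility signs are equal and therefore cancel. The remaining inputs are routine and already contained in the discussion of cross products preceding the proposition: that $X\times Y$ is a good compactification of $(\mathcal{O}_{U\times W},\nabla_f+\nabla^\prime)$, so that $\Phi_{U\times W}$, $\Phi^\vee_{U\times W}$ and $\langle\bullet,\bullet\rangle_h$ on $U\times W$ are defined by the same recipe as on $U$; and that the K\"unneth isomorphisms induced by the wedge product are compatible with the period pairings. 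One could alternatively bypass (\ref{CrossPDcompat}) and appeal to the twisted period relation (\ref{GeneralQuadraticRelation}): with respect to product bases the matrices $I_{ch}$, $P$ and $P^\vee$ are, up to the very same signs, tensor products of the corresponding matrices for $U$ and for $W$, hence so is $I_h$; but that route still requires the identical sign analysis.
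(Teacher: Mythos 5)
The paper gives no written proof for this proposition---only the one-line warning about the orientation of $\C^n$---so there is nothing to compare against in detail; your argument supplies exactly what that warning is pointing at. The two sign compatibilities for $\Phi$ and $\Phi^\vee$ under the K\"unneth cross product are verified correctly (both produce the prefactor $(-1)^{nn^\prime+pq}$, which cancel in the pairing), and the remaining Koszul sign $(-1)^{(2n^\prime-q)p}=(-1)^{pq}$ together with the orientation discrepancy $(-1)^{nn^\prime}$ from Fubini gives the stated coefficient, so the proof is correct.
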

\noindent
The readers should be aware of our choice of the orientation of $\C^n.$

\section{Review on the combinatorial structure of series solutions}\label{SectionSeries}

In this section, we briefly recall the construction of a basis of solutions of GKZ system in terms of $\Gamma$-series following the exposition of M.-C. Fern\'andez-Fern\'andez (\cite{FernandezFernandez}). For any commutative ring $R$ and for any pair of finite sets $I$ and $J$, we denote by $R^{I\times J}$ the set of matrices with entries in $R$ whose rows (resp. columns) are indexed by $I$ (resp. $J$). For any univariate function $F$ and for any vector $w={}^t(w_1,\dots,w_n)\in\C^{n\times 1}$, we define $F(w)$ by $F(w)=F(w_1)\cdots F(w_n)$. In this section, $A$ is a $d\times n$ $(d<n)$ integer matrix whose column vectors generate the lattice $\Z^{d\times 1}$. Under this notation, for any vector $v\in\C^{n\times 1}$ such that $Av=-\delta,$ we put
\begin{equation}\label{GammaSeries1}
\varphi_v(z)=\displaystyle\sum_{u\in L_A}\frac{z^{u+v}}{\Gamma(1+u+v)}.
\end{equation}
It can readily be seen that $\varphi_{v}(z)$ is a formal solution of $M_A(\delta)$ (\cite{GKZToral}). We call (\ref{GammaSeries1}) a $\Gamma$-series solution of $M_A(\delta)$.
For any subset $\tau\subset\{1,\dots,n\}$, $A_\tau$ denotes the matrix given by the columns of $A$ indexed by $\tau.$ In the following, we take $\sigma\subset\{1,\dots,n\}$ such that the cardinality $|\sigma|$ is equal to $d$ and $\det A_\sigma\neq 0.$ Note that $A_\s\in\Z^{d\times\s}$ and $A_\s^{-1}\in\Q^{\s\times d}$. Taking a vector ${\bf k}\in\Z^{\bar{\sigma}\times 1},$ we put
\begin{equation}
v_\sigma^{\bf k}=
\begin{pmatrix}
-A_{\sigma}^{-1}(\delta+A_{\bar{\sigma}}{\bf k})\\
{\bf k}
\end{pmatrix},
\end{equation}
where $\bs$ denotes the complement $\{ 1,\dots,n\}\setminus \s$. Then, by a direct computation, we have
\begin{equation}\label{seriesphi}
\varphi_{\s,{\bf k}}(z;\delta)\overset{\rm def}{=}\varphi_{v_\sigma^{\bf k}}(z)=
z_\sigma^{-A_\sigma^{-1}\delta}
\sum_{{\bf k+m}\in\Lambda_{\bf k}}\frac{(z_\sigma^{-A_\sigma^{-1}A_{\bar{\sigma}}}z_{\bar{\sigma}})^{\bf k+m}}{\Gamma({\bf 1}_\sigma-A_\sigma^{-1}(\delta+A_{\bar{\sigma}}({\bf k+m}))){\bf (k+m)!}},
\end{equation}
where $\Lambda_{\bf k}$ is given by
\begin{equation}\label{lambdak}
\Lambda_{\bf k}=\Big\{{\bf k+m}\in\Z^{\bar{\sigma}\times 1}_{\geq 0}\mid A_{\bar{\sigma}}{\bf m}\in\Z A_\sigma\Big\}.
\end{equation}
By (\cite[Lemma 3.1,3.2, Remark 3.4.]{FernandezFernandez}), a complete set of representatives $\{ [A_{\barsigma}{\bf k}(i)]\}_{i=1}^{r_\s}$ of the finite Abelian group $\Z^{d\times 1}/\Z A_\sigma$ induces a decomposition $\Z^{\barsigma\times 1}_{\geq 0}=\bigsqcup_{j=1}^{r_\s}\Lambda_{{\bf k}(j)}.$  Therefore, we can observe that $\{\varphi_{\sigma,{\bf k}(i)}(z;\delta)\}_{i=1}^{r_\s}$ is a set of $r_\s$ linearly independent formal solutions of $M_A(\delta)$ unless $\varphi_{\sigma,{\bf k}(i)}(z;\delta)=0$ for some $i$. In order to ensure that $\varphi_{\s,{\bf k}(i)}(z;\delta)$ does not vanish, we say that a parameter vector $\delta$ is very generic with respect to $\sigma$ if $A_\sigma^{-1}(\delta+A_{\bar{\sigma}}{\bf m})$ does not contain any integer entry for any ${\bf m}\in\mathbb{Z}_{\geq 0}^{\bar{\sigma}\times 1}.$ Using this terminology, we can rephrase the observation above as follows:

\begin{prop}\label{prop:independence}
If $\delta\in\C^{d\times 1}$ is very generic with respect to $\sigma$, $\Big\{\varphi_{\s,{\bf k}(i)}(z;\delta)\Big\}_{i=1}^{r_\s}$ is a linearly independent set of formal solutions of $M_A(\delta)$.
\end{prop}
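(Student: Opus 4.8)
The plan is to prove first that each $\varphi_{\s,{\bf k}(i)}(z;\delta)$ is non-zero (that it is a formal solution of $M_A(\delta)$ is already recorded above for any $\varphi_v$ with $Av=-\delta$), and then to deduce linear independence from a standard comparison of supports.

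For the non-vanishing, I would begin by fixing convenient representatives. Replacing ${\bf k}(i)$ by any element of $\Lambda_{{\bf k}(i)}$ changes neither the class $[A_{\bar{\sigma}}{\bf k}(i)]\in\Z^{d\times 1}/\Z A_\sigma$ (by the first lemma above) nor the series $\varphi_{\s,{\bf k}(i)}(z;\delta)$ itself — the two corresponding vectors $v_\sigma^{{\bf k}}$ differ by an element of $L_A$, and $\varphi_v$ depends only on $v+L_A$ by the reindexing $u\mapsto u+u_0$ in (\ref{GammaSeries1}) — so by the second lemma above we may assume ${\bf k}(i)\in\Z_{\geq 0}^{\bar{\sigma}\times 1}$, and in particular ${\bf k}(i)\in\Lambda_{{\bf k}(i)}$. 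Then the term with ${\bf m}=0$ genuinely occurs in (\ref{seriesphi}), and its coefficient is
\[
\frac{1}{\Gamma\!\left({\bf 1}_\sigma-A_\sigma^{-1}(\delta+A_{\bar{\sigma}}{\bf k}(i))\right)\,{\bf k}(i)!}.
\]
Here ${\bf k}(i)!$ is a positive integer because ${\bf k}(i)\geq 0$, and very genericity of $\delta$ with respect to $\sigma$ says exactly that $A_\sigma^{-1}(\delta+A_{\bar{\sigma}}{\bf k}(i))$ has no integer entry, so every entry of ${\bf 1}_\sigma-A_\sigma^{-1}(\delta+A_{\bar{\sigma}}{\bf k}(i))$ is a non-integer and $\Gamma$ is finite and non-zero there. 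Hence this coefficient is non-zero, and since the monomials $z^{u+v_\sigma^{{\bf k}(i)}}$ attached to distinct $u\in L_A$ in (\ref{GammaSeries1}) are pairwise distinct, no cancellation can occur and $\varphi_{\s,{\bf k}(i)}(z;\delta)\neq 0$.

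For linear independence I would compare the supports of the $r_\s$ series. The exponents of the monomials occurring in $\varphi_{\s,{\bf k}(i)}(z;\delta)$ all lie in the coset $v_\sigma^{{\bf k}(i)}+L_A$. When $i\neq j$ we have $[A_{\bar{\sigma}}{\bf k}(i)]\neq[A_{\bar{\sigma}}{\bf k}(j)]$, whence $v_\sigma^{{\bf k}(i)}-v_\sigma^{{\bf k}(j)}\notin\Z^{n\times 1}$ by the first lemma above, hence $\notin L_A$; therefore these cosets are pairwise disjoint. Consequently, in a relation $\sum_{i=1}^{r_\s}c_i\varphi_{\s,{\bf k}(i)}(z;\delta)=0$, any monomial that actually appears in $\varphi_{\s,{\bf k}(i)}(z;\delta)$ — and one does, by the non-vanishing just proved — is contributed solely by the $i$-th summand, which forces $c_i=0$ for every $i$. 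I expect the only substantive point to be the non-vanishing of the ${\bf m}=0$ coefficient, where both the non-negativity of the representatives and the very genericity hypothesis are used; the rest is bookkeeping with the lattice $L_A$, together with the elementary fact that monomials $z^w$ with pairwise distinct exponents $w\in\C^{n\times 1}$ are linearly independent in the ambient space of formal series.
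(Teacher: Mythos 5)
Your proof is correct and follows essentially the same approach as the paper, which (immediately before the proposition) reduces linear independence to the disjointness of the supports $v_\sigma^{{\bf k}(i)}+L_A$ via the two lemmata, and uses the very-genericity condition precisely to guarantee that no $\Gamma$ factor blows up, so each $\varphi_{\sigma,{\bf k}(i)}$ is a nonzero formal series. You have simply made explicit the choice of a nonnegative representative ${\bf k}(i)\in\Lambda_{{\bf k}(i)}$ and singled out the ${\bf m}=0$ term; this is the detail the paper leaves implicit.
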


\noindent

As is well-known in the literature, under a genericity condition, we can construct a basis of holomorphic solutions of  GKZ system $M_A(\delta)$ consisting of $\Gamma$-series with the aid of a regular triangulation. Let us recall the definition of a regular triangulation. In general, for any subset $\sigma$ of $\{1,\dots,n\},$ we write $\cone(\sigma)$ for the positive span of the column vectors of $A$ $\{{\bf a}(1),\dots,{\bf a}(n)\}$ i.e., $\cone(\sigma)=\sum_{i\in\sigma}\R_{\geq 0}{\bf a}(i).$ We often identify a subset $\sigma\subset\{1,\dots,n\}$ with the corresponding set of vectors $\{{\bf a}(i)\}_{i\in\sigma}$ or with the set $\cone(\s)$. A collection $T$ of subsets of $\{1,\dots,n\}$ is called a triangulation if $\{\cone(\sigma)\mid \sigma\in T\}$ is the set of cones in a simplicial fan whose support equals $\cone(A)$. We regard $\Z^{1\times n}$ as the dual lattice of $\Z^{n\times 1}$ via the standard dot product. We write $\pi_A:\Z^{1\times n}\rightarrow L_A^\vee$ for the dual of the natural inclusion $L_A\hookrightarrow \Z^{n\times 1}$ where $L_A^\vee$ is the dual lattice $\Hom_{\Z}(L_A,\Z)$. By abuse of notation, we still write $\pi_A:\R^{1\times n}\rightarrow L_A^\vee\underset{\Z}{\otimes}\R$ for the linear map $\pi_A\underset{\Z}{\otimes}{\rm id}_{\R}$ where ${\rm id}_{\R}:\R\rightarrow\R$ is the identity map. Then, for any generic choice of a vector $\omega\in\pi_A^{-1}\left(\pi_A(\R^{1\times n}_{\geq 0})\right),$ we can define a triangulation $T(\omega)$ as follows: A subset $\sigma\subset\{1,\dots,n\}$ belongs to $T(\omega)$ if there exists a vector ${\bf n}\in\R^{1\times d}$ such that ${\bf n}\cdot{\bf a}(i)=\omega_i$ if $i\in\sigma$ and ${\bf n}\cdot{\bf a}(j)<\omega_j$ if $ j\in\barsigma.$ A triangulation $T$ is called a regular triangulation if $T=T(\omega)$ for some $\omega\in\R^{1\times n}.$ For a fixed regular triangulation $T$, we say that the parameter vector $\delta$ is very generic if it is very generic with respect to any $\sigma\in T$. It is easy to see that if $\delta$ is very generic, $\delta$ must be non-resonant. Now suppose $\delta$ is very generic. Then, it was shown in \cite{FernandezFernandez} that we have $\rank M_A(\delta)=\vol_\Z(\Delta_A).$ Let us put $H_\sigma=\{ j\in\{ 1,\dots, n \}\mid |A_\sigma^{-1}{\bf a}(j)|=1\}$. Here, $|A_\sigma^{-1}{\bf a}(j)|$ denotes the sum of all entries of the vector $A_\sigma^{-1}{\bf a}(j)$. We set 
\begin{equation}
U_\sigma=\left\{z\in(\C^*)^n\mid {\rm abs}\left(z_\sigma^{-A_\sigma^{-1}{\bf a}(j)}z_{j}\right)<R, \text{for all } a(j)\in H_\sigma\setminus\sigma\right\},
\end{equation}
where $R>0$ is a small positive real number and abs stands for the absolute value.

\begin{defn}
A regular triangulation $T$ is said to be convergent if for any $n$-simplex $\s\in T$ and for any $j\in \bs$, one has the inequality $|A_\sigma^{-1}{\bf a}(j)|\leq 1$.
\end{defn}

\begin{rem}
By \cite[Remark 2.1]{FernandezFernandezLocalMonodromy}, there exists at least one convergent regular triangulation.
\end{rem}

With this terminology, the following result is a special case of \cite[Theorem 6.7.]{FernandezFernandez}.

\begin{prop}
Fix a convergent regular triangulation $T$. Assume $\delta$ is very generic. Then, the set 
$\displaystyle\bigcup_{\sigma\in T}\left\{ \varphi_{\sigma,{\bf k}(i)}(z;\delta)\right\}_{i=1}^{r_\s}$ 
is a basis of holomorphic solutions of $M_A(\delta)$ on $U_{T}\overset{def}{=}\displaystyle\bigcap_{\sigma\in T}U_\sigma\neq\varnothing$ where $r_\s=\vol_\Z(\sigma)=|\Z^{d\times 1}/\Z A_\sigma|.$ 

\end{prop}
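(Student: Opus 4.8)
The plan is to deduce the statement from \cite[Theorem 6.7]{FernandezFernandez}, of which it is the special case where each index set has the stated size; what remains is to check that the convergence hypothesis used there is implied by our notion of a convergent regular triangulation. Concretely I would organize the argument into three independent steps: a dimension count, a convergence statement, and a linear independence statement.

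First, the count. Since $\delta$ is very generic, the identity $\rank M_A(\delta)=\vol_\Z(\Delta_A)$ recalled above applies. Additivity of the normalized volume along the triangulation gives $\vol_\Z(\Delta_A)=\sum_{\s\in T}\vol_\Z(\s)=\sum_{\s\in T}r_\s$, and for each $\s\in T$ the index $i$ ranges over a complete set of representatives of $\Z^{d\times1}/\Z A_\s$, a group of order $r_\s=|\det A_\s|$. Hence the proposed family contains exactly $\sum_{\s\in T}r_\s=\rank M_A(\delta)$ series.

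Second, convergence on $U_T$. Fixing $\s\in T$ and a representative ${\bf k}$, I would rewrite the series (\ref{seriesphi}) in the variables $w_j=z_\s^{-A_\s^{-1}{\bf a}(j)}z_j$ $(j\in\bs)$, so that the coefficient of $w^{\bf k+m}$ is $1/\big(\Gamma({\bf 1}_\s-A_\s^{-1}(\delta+A_{\bar\s}({\bf k+m})))\,{\bf (k+m)!}\big)$. The convergence condition $|A_\s^{-1}{\bf a}(j)|\leq 1$ for every $j\in\bs$ is precisely what forces the product of $\Gamma$-factors in the denominator to outgrow the factorial numerator in each direction: where $|A_\s^{-1}{\bf a}(j)|<1$ the series is entire in $w_j$, and where $|A_\s^{-1}{\bf a}(j)|=1$ it converges on the small polydisc $U_\s$ (this Stirling estimate is the only analytic input). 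Thus $\varphi_{\s,{\bf k}}(z;\delta)$ converges on $U_\s$ and all the series converge simultaneously on $U_T=\bigcap_{\s\in T}U_\s$, which is non-empty because $T$ is regular: a defining weight vector identifies $U_T$, for $R$ small, with a neighbourhood of the torus fixed point $z_T^\infty$ of $X({\rm Fan}(A))$ lying in the smooth locus of $M_A(\delta)$ (cf. \cite{FernandezFernandezLocalMonodromy}). Consequently the holomorphic solution sheaf of $M_A(\delta)$ on $U_T$ is a local system of rank $\rank M_A(\delta)$.

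Finally --- and this is the step I expect to be the main obstacle --- I would prove linear independence of the whole union. For a single $\s$ this is \cref{prop:independence}. For the union, given a relation $\sum_{\s\in T}\sum_i c_{\s,i}\varphi_{\s,{\bf k}(i)}(z;\delta)=0$ on $U_T$, I would use that $\varphi_{\s,{\bf k}(i)}$ is supported on $v_\s^{{\bf k}(i)}+L_A$ with all exponents lying in the shifted orthant $v_\s^{{\bf k}(i)}+\big(\{0\}_\s\times\Z_{\geq 0}^{\bs\times 1}\big)$; in particular each series has a distinguished leading monomial $z_\s^{-A_\s^{-1}(\delta+A_{\bar\s}{\bf k}(i))}z_{\bs}^{{\bf k}(i)}$, and very genericity of $\delta$ guarantees that this leading monomial determines the pair $(\s,{\bf k}(i))$, so the leading monomials over all $(\s,i)$ are pairwise distinct. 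Ordering monomials by a term order that refines the partial order attached to the weight vector defining $T$, the minimal term occurring in the relation must cancel by itself, forcing the corresponding coefficient to vanish; iterating kills every coefficient. The delicate point is exactly that the leading exponents coming from the different simplices of a \emph{regular} triangulation neither collide nor are incomparable for this term order, and are simultaneously visible in a single coordinate chart around $z_T^\infty$ --- this is where regularity of $T$ (not merely its being a triangulation) enters, and where I would lean on the leading-term analysis of \cite{FernandezFernandez}. Combining the three steps, $\sum_{\s\in T}r_\s=\rank M_A(\delta)$ linearly independent holomorphic solutions on $U_T$ necessarily form a basis of the solution space, which is the assertion.
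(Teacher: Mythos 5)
Your proposal is correct and takes essentially the same route as the paper: both reduce the statement to \cite[Theorem 6.7]{FernandezFernandez}, which the paper cites without further comment. Your three-step sketch (volume additivity for the count, the $|A_\sigma^{-1}{\bf a}(j)|\leq 1$ estimate for convergence on $U_\sigma$, and the leading-monomial argument for linear independence relative to the weight vector defining $T$) is a faithful outline of what the cited theorem supplies, and you correctly identify the leading-term analysis as the place where regularity of $T$ does real work.
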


\begin{rem}
We define an $n\times\bs$ matrix $B_\sigma$ by
\begin{equation}
B_\sigma=
\begin{pmatrix}
-A_\sigma^{-1}A_{\barsigma}\\
{\bf I}_{\barsigma}
\end{pmatrix}
\end{equation}
and a cone $C_\sigma$ by
\begin{equation}
C_\sigma=\Big\{ \omega\in\R^{1\times n}\mid \omega\cdot B_\sigma>0\Big\}.
\end{equation}
Here, ${\bf I}_{\barsigma}\in\Z^{\{d+1,\dots,n\}\times\bs}$ is the identity matrix. Then, $T$ is a regular triangulation if and only if $C_{T}\overset{def}{=}\displaystyle\bigcap_{\sigma\in T}C_\sigma$ is a non-empty open cone. In this case, the cone $C_T$ is  characterized by the formula
\begin{equation}
C_T=\Big\{ \omega\in\R^{1\times n}\mid T(\omega)=T\Big\}.
\end{equation}
From the definition of $U_\sigma$, we can confirm that $z$ belongs to $U_T$ if $(-\log|z_1|,\dots,-\log|z_n|)$ belongs to a sufficiently far translation of $C_T$ inside itself, which implies $U_T\neq\varnothing.$
\end{rem}

We conclude this section by quoting a result of Gelfand, Kapranov, and Zelevinsky (\cite[Chapter 7, Proposition 1.5.]{GKZbook},\cite[Theorem 5.2.11.]{DeLoeraRambauSantos}).

\begin{thm}[\cite{GKZbook},\cite{DeLoeraRambauSantos}]
There exists a polyhedral fan ${\rm Fan}(A)$ in $\R^{1\times n}$ whose support is \newline $\pi_A^{-1}\left(\pi_A(\R^{1\times n}_{\geq 0})\right)$ and whose maximal cones are exactly $\{ C_T\}_{T: \text{regular triangulation}}$. The fan ${\rm Fan}(A)$ is called the secondary fan.
\end{thm}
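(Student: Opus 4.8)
As this is a classical theorem of Gelfand, Kapranov and Zelevinsky, one may simply cite \cite[Chapter 7, Proposition 1.5]{GKZbook} and \cite[Theorem 5.2.11]{DeLoeraRambauSantos}; for completeness I indicate the argument. The plan has three parts: exhibit each $C_T$ as a polyhedral cone, show the closed cones $\overline{C_T}$ and their faces form a fan, and identify the support.

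For the first part I would use the reformulation recorded in the Remark above. Given $\sigma\subset\{1,\dots,n\}$ with $|\sigma|=d$ and $\det A_\sigma\neq0$, the row vector $\mathbf n$ in the definition of $T(\omega)$ is forced to be $\mathbf n=\omega_\sigma A_\sigma^{-1}$, so $\sigma\in T(\omega)$ holds exactly when the row vector $\omega\cdot B_\sigma=\omega_{\barsigma}-\omega_\sigma A_\sigma^{-1}A_{\barsigma}$ is entrywise positive, i.e.\ exactly when $\omega\in C_\sigma=\{\omega\mid\omega\cdot B_\sigma>0\}$. Hence $C_T=\bigcap_{\sigma\in T}C_\sigma=\{\omega\mid T(\omega)=T\}$ is an open polyhedral cone; because $T$ is regular this cone is nonempty, so its closure is the closed polyhedral cone $\overline{C_T}=\{\omega\mid\omega\cdot B_\sigma\ge0\ \text{for all }\sigma\in T\}$, and the $C_T$ attached to distinct regular triangulations are pairwise disjoint. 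More generally the same recipe assigns to every $\omega$ a regular polyhedral subdivision $S(\omega)$ of $\cone(A)$, equal to $T(\omega)$ when $\omega$ is generic, and one has $\overline{C_T}=\{\omega\mid T\text{ refines }S(\omega)\}$.

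For the fan property I would invoke the standard structural fact (\cite{GKZbook},\cite{DeLoeraRambauSantos}) that the poset of regular polyhedral subdivisions of $\cone(A)$ ordered by refinement is the face poset of the collection $\{\overline{C_T}\}_T$: degenerating a subset of the defining inequalities $\omega\cdot B_\sigma\ge0$ of $\overline{C_T}$ to equalities yields precisely the closed cone $\{\omega\mid S_0\text{ refines }S(\omega)\}$ of a regular subdivision $S_0$ coarsening $T$ — the walls $\{\omega_j=\omega_\sigma A_\sigma^{-1}\mathbf a(j)\}$ recording bistellar flips — so every face of $\overline{C_T}$ has this shape, and for two regular triangulations $T,T'$ the intersection $\overline{C_T}\cap\overline{C_{T'}}=\{\omega\mid T\text{ and }T'\text{ both refine }S(\omega)\}$ is the cone of the finest regular subdivision coarsened to by both, hence a common face. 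With every regular subdivision refining to a regular triangulation, this makes $\{\overline{C_T}\}_T$ and their faces a polyhedral fan whose maximal cones are the $\overline{C_T}$. When the all-ones vector lies in the row span of $A$ one can instead obtain the fan axioms for free by realizing this fan as the normal fan of the GKZ secondary polytope $\mathrm{conv}\{\Sigma_A(T)\}_T$, the point being the elementary computation that $\langle\omega,\Sigma_A(T)\rangle$ — up to a fixed positive constant the integral over a transversal slice of $\cone(A)$ of the $T$-piecewise-linear interpolant of the heights $\omega$ — is extremized over the vertices exactly at $\Sigma_A(T(\omega))$.

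Finally, for the support I would use that $\ker\pi_A$ is the annihilator $(\ker A)^\perp$, i.e.\ the $\R$-row span $V$ of $A$, so $\pi_A^{-1}\bigl(\pi_A(\R^{1\times n}_{\ge0})\bigr)=\R^{1\times n}_{\ge0}+V$; and $\omega$ lies in this set iff some $\mathbf n$ satisfies $\mathbf n A\le\omega$ entrywise iff the heights $\omega$ admit a lower envelope subdividing $\cone(A)$ iff $S(\omega)$ is a subdivision of $\cone(A)$ iff $\omega\in\overline{C_T}$ for some regular triangulation $T$, giving $\bigcup_T\overline{C_T}=\pi_A^{-1}\bigl(\pi_A(\R^{1\times n}_{\ge0})\bigr)$. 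The step I expect to be the main obstacle is the one face-poset claim used above — that $\overline{C_T}\cap\overline{C_{T'}}$ is a common face of the two maximal cones, equivalently that any two regular triangulations of $\cone(A)$ have a well-defined finest common coarsening which is again regular; this is the combinatorial heart of the secondary-fan theorem, and it is where regularity is indispensable. Everything else reduces to bookkeeping with the explicit inequalities $\omega\cdot B_\sigma>0$.
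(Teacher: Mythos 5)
Your proposal correctly reproduces, in sketch form, the standard argument from the sources the paper cites; the paper itself offers no proof of this theorem, only the references to \cite{GKZbook} and \cite{DeLoeraRambauSantos}, so your approach is exactly the intended one. Your identification of the explicit inequalities $\omega\cdot B_\sigma>0$ for $C_T$, the characterization of the support via existence of a lower envelope, the honest flagging of the face-poset claim (finest common regular coarsening, equivalently the normal-fan/secondary-polytope construction) as the non-elementary core, and the pointer to the secondary polytope as the clean route to the fan axioms are all accurate and consistent with the cited treatments.
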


\begin{rem}
Let $F$ be a fan obtained by applying the projection $\pi_A$ to each cone of ${\rm Fan}(A)$. By definition, each cone of ${\rm Fan}(A)$ is a pull-back of a cone of $F$ through the projection $\pi_A$. Therefore, the fan $F$ is also called the secondary fan.
\end{rem}

\section{Combinatorial construction of integration contours via regular triangulations}\label{SectionEuler}
In this section, we construct integration contours associated to Euler-Laplace integral representation
\begin{equation}\label{EulerInt2}
f_{\Gamma}(z)=\frac{1}{(2\pi\ii)^{n+k}}\int_\Gamma e^{h_{0,z^{(0)}}(x)} h_{1,z^{(1)}}(x)^{-\gamma_1}\cdots h_{k,z^{(k)}}(x)^{-\gamma_k}x^{c}\frac{dx}{x}.
\end{equation}
with the aid of a convergent regular triangulation. Without loss of generality, we may assume $N_l\geq 2$ for any $l=1,\dots,k$. This is because $N_l=1$ implies that the corresponding Laurent polynomial $h_{l,z^{(l)}}$ is a monomial hence (\ref{EulerInt2}) is reduced to the integral with $k-1$ powers of Laurent polynomials. 

\noindent
We write ${\bf e}_l\;(l=1,\dots,k)$ for the standard basis of $\Z^{k\times 1}$, and put ${\bf e}_0=0\in \Z^{k\times 1}$. We set $I_l=\{ N_0+\dots+N_{l-1}+1,\dots,N_0+\dots+N_l\}$ or equivalently, $I_l=\left\{
\begin{pmatrix}
{\bf e}_l\\
\hline
{\bf a}^{(l)}(j)
\end{pmatrix}
\right\}_{j=1}^{N_l}$ ($l=0,\dots,k$). In the following we fix an $(n+k)$-simplex $\s\subset\{1,\dots,N\}$, i.e., a subset with cardinality $n+k$ and $\det A_\s\neq 0$. We also assume an additional condition $|A_\s^{-1}{\bf a}(j)|\leq 1$ for any $j\in\bs$. According to the partition $\{1,\dots,N\}=I_0\cup\dots\cup I_k$, we have an induced partition $\s=\s^{(0)}\cup\dots\cup\s^{(k)}$, where $\s^{(l)}=\s\cap I_l$. The symbol $\bs^{(l)}$ denotes the complement $I_l\setminus\s^{(l)}$. Since $\det A_\s\neq 0$, we have $\s^{(l)}\neq \varnothing$ for any $l=1,\dots,k$. For any finite set $S$, we write $|S|$ for the cardinality of $S$.

Let us consider an $n$-dimensional projective space $\mathbb{P}^n$ with a homogeneous  coordinate $\tau=[\tau_0:\cdots:\tau_n]$. Let $\alpha_0,\dots,\alpha_{n+1}\in\C$ be parameters such that $\alpha_0+\dots+\alpha_{n+1}=1$ and $\omega(\tau)$ be the section of $\Omega_{\mathbb{P}^n}^n(n+1)$ defined by $\omega(\tau)=\sum_{i=0}^n(-1)^i\tau_i d\tau_0\wedge\dots\wedge\widehat{d\tau_i}\wedge\dots\wedge d\tau_n$. We consider an affine open set $\{\tau_0\neq 0\}$. We define the coordinate $t=(t_1,\dots,t_n)$ of $\{\tau_0\neq 0\}$ by $\frac{\tau_i}{\tau_0}=e^{\pi\ii}t_i$ and $t_{n+1}$ by $t_{n+1}=1-t_1\dots- t_n$. Let $P_{\tau}$ denote the $n$-dimensional Pochhammer cycle constructed as in \cite[\S 6]{Beukers} (see also the appendix of this paper). Then we have the following
\begin{lem}\label{lemma:ProjBeukers}(\cite[Proposition 6.1]{Beukers})
For any complex numbers $\alpha_0,\dots,\alpha_{n+1}\in\C$ such that $\alpha_0+\cdots+\alpha_{n+1}=1,$ one has
\begin{align}
\int_{P_\tau}\tau_0^{\alpha_0-1}\cdots\tau_n^{\alpha_n-1}(\tau_0+\dots+\tau_n)^{\alpha_{n+1}-1}\omega(\tau)
&=\frac{(2\pi\ii)^{n+1}e^{-\pi\ii\alpha_{n+1}}}{\Gamma(1-\alpha_0)\cdots\Gamma(1-\alpha_{n+1})}.
\end{align}
\end{lem}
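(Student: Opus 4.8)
The plan is to recognize the lemma as the homogeneous form of Beukers' multidimensional Pochhammer integral \cite[\S 6]{Beukers}, reduce it to the affine version, and then track branches and $2\pi\ii$-factors. First I would dehomogenize: restrict to the chart $U=\{\tau_0\neq 0\}$ and substitute $\tau_i=\tau_0e^{\pi\ii}t_i$ $(i=1,\dots,n)$, so that $\tau_0+\dots+\tau_n=\tau_0t_{n+1}$ with $t_{n+1}=1-t_1-\dots-t_n$, and $\omega(\tau)=(-1)^n\tau_0^{n+1}\,dt_1\wedge\dots\wedge dt_n$ on $U$. Since $\alpha_0+\dots+\alpha_{n+1}=1$, every power of $\tau_0$ in the integrand cancels — as it must, $P_\tau$ being a cycle in $U$ — and collecting the phases $(e^{\pi\ii})^{\alpha_i-1}$ and $(-1)^n$ produced by the substitution one obtains
\[
\int_{P_\tau}\tau_0^{\alpha_0-1}\cdots\tau_n^{\alpha_n-1}(\tau_0+\dots+\tau_n)^{\alpha_{n+1}-1}\omega(\tau)
= e^{\pi\ii(1-\alpha_0-\alpha_{n+1})}\int_{P_t}t_1^{\alpha_1-1}\cdots t_n^{\alpha_n-1}t_{n+1}^{\alpha_{n+1}-1}\,dt_1\wedge\dots\wedge dt_n,
\]
where $P_t$ is the $n$-dimensional Pochhammer cycle of $U$ written in the coordinate $t$.

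Next I would evaluate the affine integral. One way is induction on $n$: realize $P_t$ as the regularization of a family, over the $(n-1)$-dimensional Pochhammer cycle in $(t_1,\dots,t_{n-1})$, of one-dimensional Pochhammer loops in $t_n$ encircling $0$ and $1-t_1-\dots-t_{n-1}$; integrating in $t_n$ first by the classical one-variable identity $\int_{P^1}s^{a-1}(c-s)^{b-1}\,ds=\varepsilon_1\,(e^{2\pi\ii a}-1)(e^{2\pi\ii b}-1)\,c^{a+b-1}B(a,b)$ and then applying the inductive hypothesis yields
\[
\int_{P_t}t_1^{\alpha_1-1}\cdots t_n^{\alpha_n-1}t_{n+1}^{\alpha_{n+1}-1}\,dt_1\wedge\dots\wedge dt_n
= \varepsilon\prod_{i=1}^{n+1}\bigl(e^{2\pi\ii\alpha_i}-1\bigr)\,\frac{\Gamma(\alpha_1)\cdots\Gamma(\alpha_{n+1})}{\Gamma(\alpha_1+\dots+\alpha_{n+1})},
\]
the last quotient being the Dirichlet integral over the standard $n$-simplex and $\varepsilon$ an explicit phase depending only on $\alpha_1,\dots,\alpha_{n+1}$ (alternatively one just quotes \cite[Proposition 6.1]{Beukers}). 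Then I would rewrite this using $e^{2\pi\ii\alpha}-1=2\ii\,e^{\pi\ii\alpha}\sin\pi\alpha$ together with the reflection formula $\Gamma(\alpha)\sin\pi\alpha=\pi/\Gamma(1-\alpha)$, so each factor $(e^{2\pi\ii\alpha_i}-1)\Gamma(\alpha_i)$ becomes $2\pi\ii\,e^{\pi\ii\alpha_i}/\Gamma(1-\alpha_i)$; combined with $\Gamma(\alpha_1+\dots+\alpha_{n+1})=\Gamma(1-\alpha_0)$, with the prefactor $e^{\pi\ii(1-\alpha_0-\alpha_{n+1})}$ from the first step, and with $\alpha_0+\dots+\alpha_{n+1}=1$ used once more to collapse the total phase, one arrives at $(2\pi\ii)^{n+1}e^{-\pi\ii\alpha_{n+1}}/\bigl(\Gamma(1-\alpha_0)\cdots\Gamma(1-\alpha_{n+1})\bigr)$. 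Since both sides are holomorphic in $(\alpha_0,\dots,\alpha_{n+1})$ on the hyperplane $\sum\alpha_i=1$ — the $1/\Gamma$ factors being entire — it suffices to verify the identity for $0<\Re\alpha_i\ll 1$, where $P_t$ can be contracted onto the simplex and the induction is literally valid, and then extend by analytic continuation.

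The main obstacle is the phase bookkeeping, not the analysis. One must pin down, consistently for all $n$, the branches of the multivalued integrand along the multidimensional Pochhammer cycle (and the orientation of $P_t$ relative to the chart coordinates, which is exactly where the definition fixed in the appendix enters) so that the accumulated factors $e^{\pi\ii(1-\alpha_0-\alpha_{n+1})}$, the $e^{\pi\ii\alpha_i}$ coming from the reflection formula, and the sign $\varepsilon$ from the Pochhammer regularization combine to precisely $e^{-\pi\ii\alpha_{n+1}}$ rather than to something differing by a root of unity or by a spurious $\alpha_i$-dependent phase. Everything else — the one-variable Pochhammer integral, the Dirichlet integral, the reflection-formula manipulation, and the analytic continuation — is routine.
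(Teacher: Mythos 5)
Your dehomogenization via $\tau_i=\tau_0 e^{\pi\ii}t_i$ on the chart $\{\tau_0\neq 0\}$, reducing the homogeneous integral to Beukers' affine Proposition~6.1 and collecting the resulting phases, is precisely the computation the paper relies on (stated there in the reverse direction as the ``note'' immediately following the lemma, checking that the homogeneous form is equivalent to Beukers' original formula). Your additional inductive reproof of Beukers' affine identity and the analytic-continuation remark are correct but go beyond what is needed; the core reduction is the same as the paper's.
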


Now we consider projective spaces $\PP^{|\s^{(l)}|-1}$. Writing $\s^{(l)}=\{i^{(l)}_0,\dots,i^{(l)}_{|\s^{(l)}|-1}\}$ so that $i^{(l)}_0<\dots<i^{(l)}_{|\s^{(l)}|-1}$, we equip $\PP^{|\s^{(l)}|-1}$ with a homogeneous coordinate $[\tau_{\s^{(l)}}]=\left[\tau_{i^{(l)}_0}:\dots:\tau_{i^{(l)}_{|\s^{(l)}|-1}}\right]$. Here, we use the convention $\PP^0=\{ *\}$ (one point). We define a covering map 
$
p:(\C^\times)^n_x\rightarrow(\C^\times)^{\s^{(0)}}_{\xi_{\s^{(0)}}}\times\prod_{l=1}^k\left(\PP^{|\s^{(l)}|-1}_{\tau_{\s^{(l)}}}\setminus\bigcup_{i\in\s^{(l)}}\{ \tau_i=0\}\right)
$
by $p(x)=\left( z_{\s^{(0)}}({\bf 1}_k,x)^{A_{\s^{(0)}}}, \left([z_{\s^{(l)}}\cdot({\bf 1}_k,x)^{A_{\s^{(l)}}}]\right)_{l=1}^k\right)$, where ${\bf 1}_k=\overbrace{(1,\dots,1)}^k$ and $z_{\s^{(l)}}({\bf 1}_k,x)^{A_{\s^{(l)}}}=$ $\left( z_{i}({\bf 1}_k,x)^{{\bf a}^{(l)}(i)}\right)_{i\in\s^{(l)}}$ for $l=0,\dots,k$. We define $\omega(\tau_{\s^{(l)}})$ by $\omega(\tau_{\s^{(l)}})=\sum_{j=0}^{|\s^{(l)}|-1}(-1)^j\tau_{i_j} d\tau_{i_0}\wedge\dots\wedge\widehat{d\tau_{i_j}}\wedge\dots\wedge d\tau_{i_{|\s^{(l)}|-1}}.$ We write $\tau_{\s^{(l)}}$ for the product $\prod_{j=0}^{|\s^{(l)}|-1}\tau_{i_j}$. We set $\tau_\s=\prod_{l=1}^k\tau_{\s^{(l)}}.$ By a direct computation employing Laplace expansion, we have the identity
\begin{equation}
p^*\left(\frac{d\xi_{\s^{(0)}}}{\xi_{\s^{(0)}}}\wedge\frac{\omega(\tau_\s)}{\tau_\s}\right)=p^*\left(\frac{d\xi_{\s^{(0)}}}{\xi_{\s^{(0)}}}\wedge\frac{\omega(\tau_{\s^{(1)}})}{\tau_{\s^{(1)}}}\wedge\cdots\wedge\frac{\omega(\tau_{\s^{(k)}})}{\tau_{\s^{(k)}}}\right)=\text{sgn} (A,\s)(\det A_\s)\frac{dx}{x},
\end{equation}
where we have put $\text{sgn} (A,\s)=(-1)^{k|\s^{(0)}|+(k-1)|\s^{(1)}|+\dots+|\s^{(k-1)}|+\frac{k(k-1)}{2}}$.

Now we use the plane wave expansion coordinate. Let us introduce a coordinate transform of $\xi_{\s^{(0)}}$ by $\xi_i=\rho u_i\;\;(i\in\sigma^{(0)}),$
 where $\rho$ and $u_i$ are coordinates of $\C^\times$ and $\{u_{\s^{(0)}}=(u_i)_{i\in\s^{(0)}}\in(\C^\times)^{\s^{(0)}}\mid\displaystyle\sum_{i\in\sigma^{(0)}}u_i=1\}$ respectively. Then, it is standard that we have an equality of volume forms $d\xi_{\s^{(0)}}=\rho^{|\s^{(0)}|-1}d\rho du_{\s^{(0)}},$ 
where $du_{\s^{(0)}}=\sum_{j=1}^{|\s^{(0)}|}(-1)^{j-1}u_{i_j}du_{\widehat{i_j}}$ with $du_{\widehat{i_j}}=du_{i_1}\wedge\cdots\wedge \widehat{du_{i_j}}\wedge\cdots\wedge du_{i_{|\s^{(0)}|}}$ and $\s^{(0)}=\{ i_1,\dots,i_{|\s^{(0)}|}\}$ ($i_1<\dots<i_{|\s^{(0)}|}$). For any vector $v\in\C^{(n+k)\times 1}$, the $i$($\in\s$)-th entry of $A_\s^{-1}v\in\C^{\s\times 1}$ is denoted by the symbol $p_{\s i}(v)$. For any $v\in\C^{(n+k)\times 1}$ and $l=0,1,\dots,k$, we set $S_l(v)\overset{def}{=}\sum_{i\in\s^{(l)}}p_{\s i}(v)$. Using these formulae and notation, we obtain 

\begin{align}
f_{\Gamma}(z)
=&\frac{sgn(A,\s)}{\det A_\s}\frac{z_\s^{-A_\s^{-1}\delta}}{(2\pi\ii)^{n+k}}\displaystyle\int_{p_*\Gamma}\prod_{l=1}^k\left(\sum_{i\in\s^{(l)}}\tau_i+\sum_{j\in\bar{\s}^{(l)}}z_\s^{-A_\s^{-1}{\bf a}(j)}z_j(\xi_{\s^{(0)}},\tau_\s)^{A_\s^{-1}{\bf a}(j)}\right)^{-\gamma_l}\times\nonumber\\
 &\exp\left\{ \sum_{i\in\s^{(0)}}\xi_i+\sum_{j\in\bs^{(0)}}z_\s^{-A_\s^{-1}{\bf a}(j)}z_j(\xi_{\s^{(0)}},\tau_\s)^{A_\s^{-1}{\bf a}(j)}\right\}
(\xi_{\s^{(0)}},\tau_\s)^{A_\s^{-1}\delta}\frac{d\xi_{\s^{(0)}}\omega(\tau_\s)}{\xi_{\s^{(0)}}\tau_\s}\\
 =&\frac{sgn(A,\s)}{\det A_\s}\frac{z_\s^{-A_\s^{-1}\delta}}{(2\pi\ii)^{n+k}}\displaystyle\int_{p_*\Gamma}\prod_{l=1}^k\left(\sum_{i\in\s^{(l)}}\tau_i+\sum_{j\in\bar{\s}^{(l)}}z_\s^{-A_\s^{-1}{\bf a}(j)}z_j\rho^{S_0({\bf a}(j))}(u_{\s^{(0)}},\tau_\s)^{A_\s^{-1}{\bf a}(j)}\right)^{-\gamma_l}\times\nonumber\\
 &\exp\left\{ 
\rho+\sum_{j\in\bs^{(0)}}z_\s^{-A_\s^{-1}{\bf a}(j)}z_j
\rho^{S_0({\bf a}(j))}
(u_{\s^{(0)}},\tau_\s)^{A_\s^{-1}{\bf a}(j)}
\right\}
\rho^{S_0(\delta)}
(u_{\s^{(0)}},\tau_\s)^{A_\s^{-1}\delta}\nonumber\\
&
\frac{d\rho d u_{\s^{(0)}} \omega(\tau_\s) }{\rho u_{\s^{(0)}} \tau_\s},\label{PlaneWaveIntegral}
\end{align}
where $\Gamma$ is an integration contour to be clarified below. We have also used the convention that $\tau_i$ for $i\in\s^{(l)}$ with $|\s^{(l)}|=1$ is equal to $z_i({\bf 1}_k,x)^{{\bf a}(i)}$.

Let us construct the cycle $\Gamma$. For this purpose, we consider a degeneration of the  the integrand $\Phi$. Namely, we consider the following limit: variables $z_j\neq 0$ with $j\in\bs $ are very small while variables $z_j\neq 0$ with $j\in\s$ are frozen. Symbolically, we write this limit as $z\approx z_{\infty}^\s$. The corresponding degeneration of the integrand is 
\begin{equation}
\Phi\approx e^{\sum_{i\in\s^{(0)}}z_ix^{{\bf a}^{(0)}(i)}}\left(\displaystyle\sum_{i\in\s^{(1)}}z_ix^{{\bf a}^{(1)}(i)}\right)^{-\gamma_1}\dots \left(\displaystyle\sum_{i\in\s^{(k)}}z_ix^{{\bf a}^{(k)}(i)}\right)^{-\gamma_k}.
\end{equation} We first set $\rho=e^{-\pi\ii}$ and construct a cycle in $u_{\s^{(0)}}$ and $\tau_\s$ directions. We take a cycle $\Gamma_{0}$ in $\{\rho=e^{-\pi\ii}\}\times\displaystyle\prod_{l=1}^k\left(\PP^{|\s^{(l)}|-1}_{\tau_{\s^{(l)}}}\setminus\bigcup_{i\in\s^{(l)}}\{ \tau_i=0\}\cup\left\{ \sum_{i\in\s^{(l)}}\tau_i=0\right\}\right)$ as a product cycle $\Gamma_0=P_{u_\s^{(0)}}\times\displaystyle\prod_{l=1}^kP_{\tau_{\s^{(l)}}}$.  We take a $(n-1)$-dimensional twisted cycle $\tilde{\Gamma}_{\s,0}$ in $\{ \rho=e^{-\pi\ii}\}\subset(\C)^n_x$ so that $p_*\tilde{\Gamma}_{\s,0}=\Gamma_0.$ For the construction of such a cycle, see Appendix 3. Note that we  determine the branch of multivalued functions $h_{l,z^{(l)}}(x)^{-\gamma_l}$ so that the expansion 

\begin{equation}\label{TheExpansion}
h_{l,z^{(l)}}^{-\gamma_l}(x)=
\sum_{{\bf m}_l\in\Z^{\bar{\s}^{(l)}}_{\geq 0}}
\frac{(-1)^{|{\bf m}_l|}(\gamma_l)_{|{\bf m}_l|}}{{\bf m}_l!}
\left(\sum_{i\in\s^{(l)}}z_ix^{{\bf a}^{(l)}(i)}\right)^{-\gamma_l-|{\bf m}_l|}
z_{\bs^{(l)}}^{{\bf m}_l}
({\bf 1}_k,x)^{A_{\bs}{\bf m}_l}
\end{equation}
is valid when $z\approx z^\s_\infty$. Thus, the branch of $h_{l,z^{(l)}}(x)^{-\gamma_l}$ is determined by that of $\left(\displaystyle\sum_{i\in\s^{(l)}}z_ix^{{\bf a}^{(l)}(i)}\right)^{-\gamma_l}$, which is determined by the choice of $\tilde{\Gamma}_{\s,0}$. 

In $\rho$ direction, we take the so-called Hankel contour $C_0$. $C_0$ is given by the formula
$C_0=(-\infty,-\delta]e^{-\pi\sqrt{-1}}+l_{(0+)}-(-\infty,-\delta]e^{\pi\sqrt{-1}},$
where $e^{\pm\pi\sqrt{-1}}$ stands for the argument of the variable and $l_{(0+)}$ is a small loop which encircles the origin in the counter-clockwise direction starting from and ending at the point $-\delta$ for some small positive $\delta$. Using this notation, we have 

\begin{lem}\label{lemma:lemma}
Suppose $\alpha\in\mathbb{C}.$ One has an identity
\begin{equation}
\int_{C_{0}}\xi^{\alpha-1}e^\xi d\xi=\frac{2\pi\sqrt{-1}}{\Gamma(1-\alpha)}.
\end{equation}
\end{lem}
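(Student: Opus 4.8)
The plan is to recognize \cref{lemma:lemma} as the classical Hankel-contour representation of the reciprocal Gamma function, and to prove it by an explicit evaluation along $C_0$ followed by analytic continuation in $\alpha$. First I would note that on the two rays of $C_0$ running out to $-\infty$ one has $|e^\xi| = e^{{\rm Re}(\xi)} \to 0$ exponentially, so the integral $\int_{C_0}\xi^{\alpha-1}e^\xi\, d\xi$ converges absolutely, and locally uniformly in $\alpha$; consequently $\alpha \mapsto \int_{C_0}\xi^{\alpha-1}e^\xi\, d\xi$ is an entire function. Since $1/\Gamma(1-\alpha)$ is also entire, by the identity theorem it is enough to verify the claimed formula on the strip $0 < {\rm Re}(\alpha) < 1$ and then extend.

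So fix $\alpha$ with $0 < {\rm Re}(\alpha) < 1$ and a small $\delta > 0$, and split $C_0$ into the incoming ray $(-\infty,-\delta]e^{-\pi\sqrt{-1}}$, the circle $l_{(0+)}$ of radius $\delta$ traversed counterclockwise, and the outgoing ray $-(-\infty,-\delta]e^{\pi\sqrt{-1}}$. Parametrizing $\xi = t e^{\mp \pi\sqrt{-1}}$ on the two rays, a direct substitution gives the ray contributions $-e^{-\pi\sqrt{-1}\alpha}\int_\delta^\infty t^{\alpha-1}e^{-t}\, dt$ and $+ e^{\pi\sqrt{-1}\alpha}\int_\delta^\infty t^{\alpha-1}e^{-t}\, dt$ respectively, while on the circle $|\xi^{\alpha-1}e^\xi\, d\xi| = O(\delta^{{\rm Re}(\alpha)})$, which tends to $0$ as $\delta \to 0$ precisely because ${\rm Re}(\alpha) > 0$. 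Letting $\delta \to 0$ and using Euler's integral $\Gamma(\alpha) = \int_0^\infty t^{\alpha-1}e^{-t}\, dt$ (legitimate since ${\rm Re}(\alpha) > 0$), I obtain $\int_{C_0}\xi^{\alpha-1}e^\xi\, d\xi = (e^{\pi\sqrt{-1}\alpha} - e^{-\pi\sqrt{-1}\alpha})\Gamma(\alpha) = 2\sqrt{-1}\sin(\pi\alpha)\Gamma(\alpha)$.

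To conclude I would apply Euler's reflection formula in the form $\sin(\pi\alpha)\Gamma(\alpha) = \pi/\Gamma(1-\alpha)$, which turns the last expression into $2\pi\sqrt{-1}/\Gamma(1-\alpha)$ on the strip; the identity for all $\alpha \in \C$ then follows from the analytic continuation noted above. I do not expect a genuine obstacle here: the only places that merit care are the bookkeeping of the branch of $\xi^{\alpha-1}$ and of the orientations encoded in the shorthand $C_0 = (-\infty,-\delta]e^{-\pi\sqrt{-1}} + l_{(0+)} - (-\infty,-\delta]e^{\pi\sqrt{-1}}$, together with the uniform tail bound that makes the contour integral holomorphic in $\alpha$ — all routine, but worth spelling out.
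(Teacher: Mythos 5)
Your proof is correct and is exactly the classical argument (Hankel's loop-integral representation of $1/\Gamma$): evaluate on the two rays to get $(e^{\pi\sqrt{-1}\alpha}-e^{-\pi\sqrt{-1}\alpha})\Gamma(\alpha)=2\sqrt{-1}\sin(\pi\alpha)\Gamma(\alpha)$ for $0<\mathrm{Re}(\alpha)<1$, apply the reflection formula, and extend by holomorphy in $\alpha$ since both sides are entire. The paper states the lemma without proof, treating it as a standard fact; your write-out fills in precisely the expected verification, with the sign and branch bookkeeping for $C_0=(-\infty,-\delta]e^{-\pi\sqrt{-1}}+l_{(0+)}-(-\infty,-\delta]e^{\pi\sqrt{-1}}$ handled consistently with the paper's convention that $e^{\pm\pi\sqrt{-1}}$ fixes the argument.
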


\begin{figure}[t]
\begin{minipage}{0.5\hsize}
\begin{center}
\begin{tikzpicture}
\draw[->-=.5,domain=-175:175] plot ({-1+cos(\x)}, {sin(\x)});
\draw[-<-=.5] ({-1+cos(-175)},{sin(-175)}) -- (-6, {sin(-175)});
\draw[->-=.5] ({-1+cos(175)},{sin(175)}) -- (-6, {sin(175)});
\node at (-1,0){$\cdot$};
\draw (-1,0) node[below right]{O};
\end{tikzpicture}
\caption{Hankel contour}
\end{center}
\end{minipage}
\begin{minipage}{0.5\hsize}
\begin{center}
\begin{tikzpicture}
\node at (0,0){$\cdot$};
\node at (4,0){$\cdot$};
\draw (0,0) node[below]{$t_1=0$};
\draw (4,0) node[below]{$t_1=1$};
\draw[-<-=.5,domain=-140:140] plot ({4+cos(\x)}, {0.4+sin(\x)});
\draw[-<-=.5,domain=40:320] plot ({cos(\x)}, {0.4+sin(\x)});
\draw[->-=.5,domain=-140:140] plot ({4+cos(\x)}, {-0.4+sin(\x)});
\draw[->-=.5,domain=40:320] plot ({cos(\x)}, {-0.4+sin(\x)});
    \coordinate (A1) at ({cos(40)}, {0.4+sin(40)});
    \coordinate (A2) at ({4+cos(-140)}, {-0.4+sin(-140)});
    \coordinate (B1) at ({4+cos(140)}, {-0.4+sin(140)});
    \coordinate (B2) at ({cos(40)}, {-0.4+sin(40)});
    \coordinate (C1) at ({cos(320)}, {-0.4+sin(320)});
    \coordinate (C2) at ({4+cos(140)}, {0.4+sin(140)});
    \coordinate (D1) at ({4+cos(-140)}, {0.4+sin(-140)});
    \coordinate (D2) at ({cos(320)}, {0.4+sin(320)});
\draw[->-=.75] (A1) -- (A2);
\draw[->-=.5] (B1) -- (B2);
\draw[->-=.75] (C1) -- (C2);
\draw[->-=.5] (D1) -- (D2);
\end{tikzpicture}
\caption{Pochhammer cycle $P_1$}
\end{center}
\end{minipage}
\end{figure}

\noindent
We wish to integrate the integrand along the product contour $C_0\times \Gamma_0$. To do this, we need a simple

\begin{lem}\label{lem:sum2}
For any $l=1,\cdots,k$ and for any $j\in\barsigma^{(l)}$, one has
\begin{equation}\label{Formula4.17}
S_m({\bf a}(j))=
\begin{cases}
1\; (m=l)\\
0\; (m\neq 0,l).
\end{cases}
\end{equation}
Moreover, if $j\in\barsigma^{(0)}$, one has
\begin{equation}\label{WeirdEq}
S_m({\bf a}(j))=
0\;\; (m=1,\dots,k). 
\end{equation}
\end{lem}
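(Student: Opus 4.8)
The plan is to exploit the block structure of the Cayley configuration matrix $A$ in (\ref{CayleyConfig}). Write the rows of $A$ in two groups: the first $k$ rows, which record via $\mathbf e_l$ in which block $I_l$ a column lies, and the last $n$ rows, which carry the vectors $\mathbf a^{(l)}(j)$. For the index set $\sigma = \sigma^{(0)}\cup\cdots\cup\sigma^{(k)}$, the submatrix $A_\sigma$ inherits this block pattern: its first $k$ rows have support confined to the columns indexed by $\sigma^{(1)},\dots,\sigma^{(k)}$ (the block $\sigma^{(0)}$ contributes zeros in these rows since $\mathbf e_0 = 0$). The quantity $\sum_{i\in\sigma^{(m)}}{}^t\mathbf e_i A_\sigma^{-1}\mathbf a(j)$ is, by definition, the sum of those entries of the vector $A_\sigma^{-1}\mathbf a(j)$ whose indices fall in the sub-block $\sigma^{(m)}$; equivalently it is $\langle \mathbf v_m, A_\sigma^{-1}\mathbf a(j)\rangle$ where $\mathbf v_m$ is the $0/1$ indicator vector of $\sigma^{(m)}$ inside $\sigma$.

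The key step is to compute the left action of each $\mathbf v_m$ on $A_\sigma$, i.e. the row vector ${}^t\mathbf v_m A_\sigma$, and recognise it as (a combination of) rows of $A_\sigma$. For $m=1,\dots,k$, summing the columns of $A_\sigma$ over the block $\sigma^{(m)}$ reproduces exactly the $m$-th of the first $k$ rows of $A_\sigma$ (each such column has a single $1$ in row $m$ among the first $k$ rows, and zeros elsewhere in those rows; the block $\sigma^{(0)}$ has all-zero entries there). Hence ${}^t\mathbf v_m A_\sigma = {}^t\mathbf e_m^{(n+k)}A_\sigma$ where $\mathbf e_m^{(n+k)}$ is the $m$-th standard basis covector of $\C^{1\times(n+k)}$, so that ${}^t\mathbf v_m A_\sigma^{-1} = {}^t\mathbf e_m^{(n+k)}$, i.e. $\langle \mathbf v_m, A_\sigma^{-1}\mathbf a(j)\rangle$ is simply the $m$-th coordinate of $\mathbf a(j)$ in the first $k$ rows of $A$. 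Now read off: if $j\in\bar\sigma^{(l)}$ with $1\le l\le k$, that $m$-th coordinate is $\delta_{ml}$ (it is $1$ when $m=l$ and $0$ when $m\ne l$), which is (\ref{Formula4.17}); if $j\in\bar\sigma^{(0)}$, all of the first $k$ coordinates of $\mathbf a(j)$ vanish because $\mathbf e_0 = 0$, giving (\ref{WeirdEq}).

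The main (and only) subtlety to handle carefully is the bookkeeping between the ambient index set $\{1,\dots,N\}$ and the ``internal'' indices of $A_\sigma$: the covectors ${}^t\mathbf e_i$ appearing in the statement are standard covectors on $\C^{1\times(n+k)}$ selecting rows of $A_\sigma^{-1}$, so one must verify that $\sum_{i\in\sigma^{(m)}}{}^t\mathbf e_i A_\sigma^{-1}$ equals the $m$-th row covector ${}^t\mathbf e_m^{(n+k)}$ — equivalently that $\big(\sum_{i\in\sigma^{(m)}}{}^t\mathbf e_i A_\sigma^{-1}\big)A_\sigma = \sum_{i\in\sigma^{(m)}}{}^t\mathbf e_i$ picks out exactly the $m$-th block of rows. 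This is immediate once the columns of $A_\sigma$ are grouped by the partition $\sigma = \bigsqcup_l \sigma^{(l)}$ and one observes that each column in $\sigma^{(l)}$ has first-$k$-block equal to $\mathbf e_l$; I would simply display $A_\sigma$ in block form and let the computation ${}^t\mathbf v_m A_\sigma = {}^t\mathbf e_m^{(n+k)}A_\sigma$ speak for itself. No genericity or triangulation hypothesis is needed beyond $\det A_\sigma \ne 0$, which guarantees $A_\sigma^{-1}$ exists.
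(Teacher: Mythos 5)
Your proof is correct and takes essentially the same route as the paper: you key on the same observation that for $m=1,\dots,k$ the $m$-th row of $A_\sigma$ is precisely the indicator covector ${}^t\mathbf v_m$ of $\sigma^{(m)}$ inside $\sigma$, hence ${}^t\mathbf v_m A_\sigma^{-1} = {}^t\mathbf e_m$ and the left-hand side of the lemma reduces to reading off the $m$-th of the first $k$ entries of $\mathbf a(j)$ --- exactly the content of the paper's manipulation with the projector $\bigl(\begin{smallmatrix}I_k & \\ & \mathbb O_n\end{smallmatrix}\bigr)$ and the insertion of $A_\sigma A_\sigma^{-1}$. The only thing to tidy is the sentence ``summing the columns of $A_\sigma$ over the block $\sigma^{(m)}$ reproduces\dots the $m$-th row'' and the accompanying display ``${}^t\mathbf v_m A_\sigma = {}^t\mathbf e_m A_\sigma$'': $A_\sigma\mathbf v_m$ is a column vector rather than a row, and ${}^t\mathbf v_m A_\sigma$ is dimensionally ill-formed since $A_\sigma$ has columns indexed by $\sigma$, so what you mean --- and what your parenthetical correctly describes --- is the row identity ${}^t\mathbf e_m A_\sigma = {}^t\mathbf v_m$, equivalently ${}^t\mathbf v_m A_\sigma^{-1} = {}^t\mathbf e_m$.
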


\begin{proof}
Observe first that, if we write $A$ as $A=({\bf a}(1)|\cdots|{\bf a}(N)),$ then for any $j\in\bar{\sigma}^{(l)}$ ($l=1,\dots,k$) and $m=1,\dots,k$, we have
\begin{equation}
\transp{
\begin{pmatrix}
{\bf e}_m\\
\hline
O
\end{pmatrix}
}
{\bf a}(j)=
\begin{cases}
1\; (m=l)\\
0\; (m\neq l)
\end{cases}
\end{equation}

\noindent
This can be written as 
\begin{equation}
\left(
\begin{array}{c|c}
I_k& \\
\hline
 &\mathbb{O}_n
\end{array}
\right)
{\bf a}(j)=
\begin{pmatrix}
{\bf e}_l\\
\hline
O
\end{pmatrix}.
\end{equation}

\noindent
We thus have
$
\begin{pmatrix}
{\bf e}_l\\
\hline
O
\end{pmatrix}
=
\left(
\begin{array}{c|c}
I_k& \\
\hline
 &\mathbb{O}_n
\end{array}
\right)
{\bf a}(j)
=
\left(
\begin{array}{c|c}
I_k& \\
\hline
 &\mathbb{O}_n
\end{array}
\right)
A_\sigma
A_\sigma^{-1}
{\bf a}(j),
$
which clearly shows (\ref{Formula4.17}). On the other hand, for any $j\in\bar{\sigma}^{(0)}$ we have
\begin{equation}
\transp{
\begin{pmatrix}
{\bf e}_m\\
\hline
O
\end{pmatrix}
}
{\bf a}(j)=
0\;\;(m=1,\dots,k).
\end{equation}
Thus, the same argument as above shows (\ref{WeirdEq}).
\end{proof}

\noindent
From \cref{lem:sum2} and the equality
$
\sum_{m=0}^kS_m({\bf a}(j))=|A_\sigma^{-1}{\bf a}(j)|,
$
we obtain two inequalities on the degree of divergence
$
S_0({\bf a}(j))\leq 0\;\;(j\in\barsigma^{(l)},\; l=1,\dots,k)
$
and
$
S_0({\bf a}(j))\leq 1\;\;(j\in\barsigma^{(0)}).
$
From these inequalities we can verify that the expansion (\ref{TheExpansion}) is valid uniformly along $C_0\times \Gamma_0$ and the integral (\ref{PlaneWaveIntegral}) is convergent if $z\approx z_\infty^\s$.
In order to define the lift of the product cycle $C_0\times\Gamma_0$ to $x$ coordinate, we need a 
\begin{lem}\label{lem:ThomMather}
Let $z_j\neq 0$ ($j=1,\dots,N$) be complex numbers and let $\varphi(x)=\sum_{j=1}^Nz_jx^{{\bf a}(j)}$ be a Laurent polynomial in $x=(x_1,\dots,x_n)$. If there is a vector $w=(w_1,\dots,w_n)\in\Z^{1\times n}$ and an integer $m\in\Z\setminus\{ 0\}$ such that for any $j$, one has $w\cdot {\bf a}(j)=m$, then the smooth map $\varphi:\varphi^{-1}(\C^\times)\rightarrow\C^\times$ is a fiber bundle.
\end{lem}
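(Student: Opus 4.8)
The plan is to recognize that the hypothesis $w\cdot{\bf a}(j)=m$ for every $j$ is exactly a quasi-homogeneity condition, and then produce local trivializations by rescaling with $m$-th roots. Concretely, consider the $\Gm$-action on $(\C^\times)^n_x$ given by $\lambda\cdot x=(\lambda^{w_1}x_1,\dots,\lambda^{w_n}x_n)$. Since $(\lambda\cdot x)^{{\bf a}(j)}=\lambda^{w\cdot{\bf a}(j)}x^{{\bf a}(j)}=\lambda^m x^{{\bf a}(j)}$, one gets $\varphi(\lambda\cdot x)=\lambda^m\varphi(x)$. Hence $\varphi^{-1}(\C^\times)$ is invariant under this action, and $\varphi$ restricted to it intertwines the $\Gm$-action on the source with the action $\lambda\star s=\lambda^m s$ on the target $\C^\times$, which is transitive because $m\neq 0$.

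Next I would record the associated Euler identity. Applying the operator $\sum_i w_i x_i\partial_{x_i}$ to a monomial $x^{{\bf a}(j)}$ yields $(w\cdot{\bf a}(j))\,x^{{\bf a}(j)}=m\,x^{{\bf a}(j)}$, so $\sum_{i=1}^n w_i x_i\,\partial_{x_i}\varphi=m\,\varphi$ identically on $(\C^\times)^n$. Consequently, at any point $x$ with $\varphi(x)\neq 0$ the differential $d\varphi_x$ cannot vanish, so $\varphi\colon\varphi^{-1}(\C^\times)\to\C^\times$ is a holomorphic submersion; if $\varphi$ is not identically zero its image is a nonempty $\star$-invariant subset of $\C^\times$, hence all of $\C^\times$ (the case $\varphi\equiv 0$ being vacuous, as then $\varphi^{-1}(\C^\times)=\emptyset$). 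In particular $F\overset{def}{=}\varphi^{-1}(1)$ is a smooth complex hypersurface in $(\C^\times)^n$.

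The core of the argument is then a direct trivialization. Fix $s_0\in\C^\times$, a simply connected open neighbourhood $V\subset\C^\times$ of $s_0$, and a holomorphic branch $r\colon V\to\C^\times$ of $s\mapsto s^{1/m}$, so that $r(s)^m=s$. Define $\Psi_V\colon F\times V\to\varphi^{-1}(V)$ by $\Psi_V(x,s)=r(s)\cdot x$. Then $\varphi(r(s)\cdot x)=r(s)^m\varphi(x)=s$ for $x\in F$, so $\Psi_V$ is well defined and commutes with the projections to $V$; it is holomorphic, and it has the holomorphic inverse $y\mapsto\big(r(\varphi(y))^{-1}\cdot y,\ \varphi(y)\big)$, since $\varphi\big(r(\varphi(y))^{-1}\cdot y\big)=r(\varphi(y))^{-m}\varphi(y)=1$. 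Thus $\Psi_V$ is a biholomorphism covering $\mathrm{id}_V$, i.e.\ a local trivialization of $\varphi$. As $s_0$ ranges over $\C^\times$ these charts cover the base, so $\varphi\colon\varphi^{-1}(\C^\times)\to\C^\times$ is a holomorphic (hence smooth) fiber bundle with fibre $F$. For two overlapping charts built from branches $r,r'$, the transition map sends $x$ to $\zeta(s)\cdot x$ with $\zeta(s)=r(s)/r'(s)$ satisfying $\zeta(s)^m=1$, and $\zeta\cdot$ preserves $F$; so the structure group is the finite group $\mu_m$ of $m$-th roots of unity.

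Since the statement only asserts a smooth fiber bundle structure, there is no genuine obstacle here. The one point demanding care is that a single-valued $m$-th root $r$ exists only over simply connected opens of $\C^\times$, which is precisely why the local trivializations must be patched and why the structure group is $\mu_m$ rather than trivial; the submersivity of $\varphi$ on $\varphi^{-1}(\C^\times)$ — needed only so that $F$ is a manifold — comes for free from the algebraic Euler identity, bypassing Sard's theorem.
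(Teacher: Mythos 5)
Your argument is correct, and it takes a genuinely different route from the paper's. You both start from the same weighted $\Gm$-action $\lambda\cdot x=(\lambda^{w_1}x_1,\dots,\lambda^{w_n}x_n)$ and the equivariance $\varphi(\lambda\cdot x)=\lambda^m\varphi(x)$. The paper then invokes Thom--Mather's first isotopy lemma (citing Verdier) to get local triviality over a nonempty Zariski open subset of $\C^\times_t$, and uses the transitivity of the induced $\Gm$-action on $\C^\times_t$ to translate that Zariski open piece over all of $\C^\times$. You instead bypass Thom--Mather entirely: you exhibit explicit holomorphic trivializations $\Psi_V(x,s)=r(s)\cdot x$ over any simply connected $V\subset\C^\times$ admitting a branch $r$ of $s\mapsto s^{1/m}$, check directly that $\Psi_V$ is a biholomorphism commuting with the projection, and observe that the transitions land in the finite group $\mu_m$. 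The Euler-identity observation $\sum_i w_ix_i\partial_{x_i}\varphi=m\varphi$ is a pleasant bonus — it gives smoothness of the fiber $F=\varphi^{-1}(1)$ for free, though it is not needed for the bare fiber-bundle statement. Your approach is more elementary and in fact gives a sharper conclusion (a $\mu_m$-bundle with an explicit atlas), whereas the paper's Thom--Mather argument would also apply in settings without such a large symmetry group, at the cost of being non-constructive and relying on a stratification-theoretic black box; as written, the paper's use of Thom--Mather also silently suppresses a compactification step needed because $\varphi$ is not proper, a subtlety your construction avoids altogether.
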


\begin{proof}
Define an action of a torus $\C^\times_\tau$ on $(\C^\times)^n_x$ (resp. on $\C^\times_t$) by $\tau\cdot x=(\tau^{w_1}x_1,\dots,\tau^{w_n}x_n)$ (resp. by $\tau\cdot t=\tau^mt$). Then, it can readily be seen that for any $\tau\in\C^\times$ and $t\in\C^\times$, we have $\tau\cdot\varphi^{-1}(t)=\varphi^{-1}(\tau\cdot t)$. Therefore, if $\varphi$ is a trivial fiber bundle on an open set $U\subset\C^\times_t$, it is again trivial on the open subset $\tau\cdot U$. By Thom-Mather's 1st isotopy lemma (\cite[(4.14) Th\'eor\`eme]{Verdier}), $\varphi$ defines a locally trivial fiber bundle on a non-empty Zariski open subset of $\C^\times_t$. Thus, we can conclude that $\varphi$ is locally trivial on $\C^\times_t$.
\end{proof}

\noindent
In view of \cref{lem:ThomMather}, let us define the twisted cycle $\Gamma_{\s,0}$ as the prolongation of $\tilde{\Gamma}_{\s,0}$ along the Hankel contour $C_0$ with respect to the map $\rho=\sum_{i\in\s^{(0)}}z_j({\bf 1}_k,x)^{{\bf a}(i)}:(\C^\times)^n_x\rightarrow\C$. Computing the integral on this contour, we obtain

\begin{align}
&f_{\s,0}(z;\delta)\nonumber\\
\overset{\rm def}{=}&f_{\Gamma_{\s,0}}(z)\\
=&\frac{sgn(A,\s)}{\det A_\s}\frac{z_\s^{-A_\s^{-1}\delta}}{(2\pi\ii)^{n+k}}\displaystyle\sum_{{\bf m}\in\Z_{\geq 0}^{\bs}}\frac{\prod_{l=1}^k(-1)^{|{\bf m}_l|}(\gamma_l)_{|{\bf m}_l|}}{{\bf m}!}(z_\s^{-A_\s^{-1} A_{\bs}}z_{\bs})^{{\bf m}}\nonumber\\
 &\int_{C_0\times \Gamma_0}\prod_{l=1}^k\left(\sum_{i\in\s^{(l)}}\tau_i\right)^{-\gamma_l-|{\bf m}_l|}e^{\rho}
\rho^{
S_0(\delta+A_{\bs}{\bf m})
}
(u_{\s^{(0)}},\tau_\s)^{A_\s^{-1}(\delta+A_{\bs}{\bf m})}
\frac{d\rho du_{\s^{(0)}}\omega(\tau_\s)}{\rho u_{\s^{(0)}}\tau_\s}.
\end{align}

\noindent
We put 
$
\tilde{\bf e}_l=
\begin{pmatrix}
{\bf e}_l\\
{\bf O}
\end{pmatrix}
\in\Z^{(k+n)\times 1}
$. Since
$
{}^t\tilde{\bf e}_l={}^t\tilde{\bf e}_lA_\s A_\s^{-1}=\displaystyle\sum_{i\in\s^{(l)}}{}^t{\bf e}_iA_\s^{-1},
$
we have
$
S_l(\delta+A_{\bs}{\bf m})={}^t\tilde{\bf e}_l(\delta+A_{\bs}{\bf m})=\gamma_l+|{\bf m}_l|
$ for any $l=1,\dots,k$.
Therefore, the assumption on the parameters in \cref{lemma:ProjBeukers} is satisfied. Moreover, in view of \cref{lem:sum2}, for any $l\geq 1$ such that $|\s^{(l)}|=1$, we also have that if $\{ i\}=\s^{(l)}$ then $p_{\s i}(v)=v_l$ for any $v\in\C^{(n+k)\times 1}$ and $\Gamma(1-p_{\s i}(\delta+A_{\bs}{\bf m}))=\Gamma(1-\gamma_l-|{\bf m}_l|)$. Let $\{ A_{\bs}{\bf k}(i)\}_{i=1}^{r_\s}$ be a complete system of representatives of $\Z^{(n+k)\times 1}/\Z A_\s$. Using \cref{lemma:ProjBeukers}, Lemma {lemma:lemma} and employing the formula
\begin{equation}
(\gamma_l)_{|{\bf m}_l|}=\frac{2\pi\ii e^{-\pi\ii\gamma_l}(-1)^{|{\bf m}_l|}}{\Gamma(\gamma_l)\Gamma(1-\gamma_l-|{\bf m}_l|)(1-e^{-2\pi\ii\gamma_l})},
\end{equation}
we obtain the basic formula
\begin{equation}
f_{\s,0}(z;\delta)=C_\s(\gamma)\sum_{i=1}^{r_{\s}}\varepsilon_\s(\delta,{\bf k}(i))\varphi_{\s,{\bf k}(i)}(z).
\end{equation}

\noindent
Here, we have put
\begin{equation}
C_\s(\gamma)=
\frac{{\rm sgn}(A,\s)\displaystyle\prod_{l:|\s^{(l)}|>1}e^{-\pi\ii(1-\gamma_l)}\displaystyle\prod_{l:|\s^{(l)}|=1}e^{-\pi\ii\gamma_l}}{\det A_\s\Gamma(\gamma_1)\dots\Gamma(\gamma_k)\displaystyle\prod_{l:|\s^{(l)}|=1}(1-e^{-2\pi\ii\gamma_l})}
\end{equation}
and
\begin{equation}
\varepsilon_\s(\delta,{\bf k})=
\begin{cases}
1&(|\s^{(0)}|\leq 1)\\
 1-\exp\left\{ -2\pi\ii S_0(\delta+A_{\bs}{\bf k})\right\}&(|\s^{(0)}|\geq 2).
\end{cases}
\end{equation}

To any integer vector $\tilde{\bf k}\in\Z^{\s\times 1}$, we associate a deck transform $\Gamma_{\s,\tilde{\bf k}}$ of $\Gamma_{\s,0}$ along the loop $(\xi_{\s^{(0)}},[\tau_\s])\mapsto e^{2\pi\ii{}^t\tilde{\bf k}}(\xi_{\s^{(0)}},[\tau_\s])$. If there is a label $\s^{(l)}$ such that $\s^{(l)}=\{ i\}$, the corresponding entry $\tilde{k}_{i}$ of $\tilde{\bf k}$ does not contribute to the deck transformation $\Gamma_{\s,\tilde{\bf k}}$ in the usual sense. However, we associate the scalar multiplication $e^{2\pi\ii k_i\gamma_l}$ for such an entry in the following discussion. By a direct computation, we have
\begin{align}
f_{\s,\tilde{\bf k}}(z;\delta)\overset{def}{=}&f_{\Gamma_{\s,\tilde{\bf k}}}(z)\nonumber\\
 =&e^{2\pi\ii{}^t\tilde{\bf k}A_\s^{-1}\delta}C_\s(\gamma)\sum_{i=1}^{r_\s}e^{2\pi\ii{}^t\tilde{\bf k}A_\s^{-1}A_{\bs}{\bf k}(i)}
\varepsilon_\s(\delta,{\bf k}(i))
\varphi_{\s,{\bf k}(i)}(z;\delta).
\end{align}
\noindent
We take a complete system of representatives $\{ \tilde{\bf k}(i)\}_{i=1}^{r_\s}$ of $\Z^{\s\times 1}/\Z{}^tA_\s$. Since it can readily be seen that the pairing $\Z^{\s\times 1}/\Z {}^tA_\s\times\Z^{(n+k)\times 1}/\Z A_\s\ni ([\tilde{\bf k}],[{\bf k}])\mapsto {}^t\tilde{\bf k}A_\s^{-1}{\bf k}\in\Q/\Z$ is perfect in the sense of Abelian groups, we can easily see that the matrix 
$U_\s\overset{def}{=}\Big(
\exp\left\{
2\pi\ii\transp{
\tilde{\bf k}(i)
}
A_{\s}^{-1}A_{\bs}{\bf k}(j)
\right\}
\Big)_{i,j=1}^{r_\s}$ is the character matrix of the finite Abelian group $\Z^{(n+k)\times 1}/\Z A_\s$, hence it is invertible. 

Let us take a convergent regular triangulation $T$. With the aid of the trivialization (\ref{Trivialization}), we can take a parallel transport of $\Gamma_{\s,\tilde{\bf k}(j)}$ constructed near $z^\s_\infty$ to a point $z_\infty\in U_T$. The resulting cycle is also denoted by $\Gamma_{\s,\tilde{\bf k}(j)}$. It is worth pointing out that the cycles $\Gamma_{\s,\tilde{\bf k}(j)}$ constructed above are locally finite cycles rather than finite ones. It is routine to regard $\Gamma_{\s,\tilde{\bf k}(j)}$ as a rapid decay cycle: We use the notation of \S\ref{subsec:3.2}. For simplicity, let us assume that $z\approx z^\s_\infty$ is nonsingular. Then, we regard $\Gamma_{\s,\tilde{\bf k}(j)}$ as a subset of $\tilde{p}^{-1}(z)$ and take its closure $\overline{\Gamma_{\s,\tilde{\bf k}(j)}}\subset\tilde{p}^{-1}(z)$. By construction, $\overline{\Gamma_{\s,\tilde{\bf k}(j)}}\subset \pi^{-1}(z)\cup\widetilde{D^{r.d.}_0}$. This is (a closure of) a semi-analytic set. By \cite[THEOREM 2.]{LojasiewiczTriangulations}, we can obtain a semi-analytic triangulation of $\overline{\Gamma_{\s,\tilde{\bf k}(j)}}$ which makes it an element of $\Homo_{n,z}^{r.d.}$ in view of \cref{rem:RemarkRDHomology}. 

\begin{figure}[b]
\begin{center}
\begin{tikzpicture}
\draw[-] (0,0)--(4,-1);
\draw[-] (0,0)--(4,3);
\draw[-] (0,0)--(3,4);
\draw[-] (0,0)--(-1,4);
\node at (3,3){$\cdot$};
\node at (3.3,3){$z_\infty$};
\node at (3,0){$\cdot$};
\node at (0,3){$\cdot$};
\node at (3,-0.5){$z_\infty^\s$};
\node at (0,2.5){$z_\infty^{\s^\prime}$};
\node at (1,0){$U_\s$};
\node at (0.3,1.3){$U_{\s^\prime}$};
\node at (1.6,1.6){$U_T$};
\draw[->-=.5] (3,0) to [out=120,in=200] (3,3);
\draw[->-=.5] (0,3) to [out=40,in=100] (3,3);
\draw[<->,domain=-15:50] plot ({0.5*cos(\x)}, {0.5*sin(\x)});
\draw[<->,domain=40:100] plot ({cos(\x)}, {sin(\x)});
\draw[<->,domain=40:50] plot ({1.5*cos(\x)}, {1.5*sin(\x)});
\end{tikzpicture}
\caption{Parallel transport}
\end{center}
\end{figure}
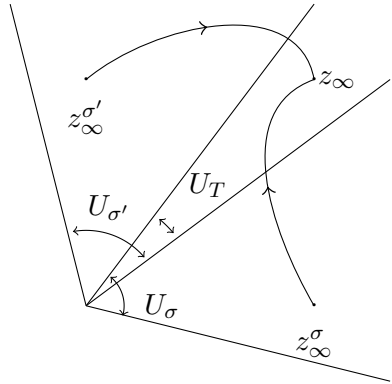

 Summing up all the arguments above and taking into account \cref{thm:EulerLaplaceRepresentationTheorem}, we obtain the main 

\begin{thm}\label{thm:fundamentalthm3}
Take a convergent regular triangulation $T$. Assume that the parameter vector $\delta$ is very generic and that $\gamma_l\notin\Z_{\leq 0}$ for any $l=1,\dots,k$. Then, if one puts
\begin{equation}
f_{\s,\tilde{\bf k}(j)}(z;\delta)=\frac{1}{(2\pi\ii)^{n+k}}\int_{\Gamma_{\s,\tilde{\bf k}(j)}} e^{h_{0,z^{(0)}}(x)}h_{1,z^{(1)}}(x)^{-\gamma_1}\cdots h_{k,z^{(k)}}(x)^{-\gamma_k}x^{c}\frac{dx}{x},
\end{equation}
$\displaystyle\bigcup_{\s\in T}\{ f_{\sigma,\tilde{\bf k}(j)}(z)\}_{j=1}^{r_\s}$ is a basis of solutions of $M_A(\delta)$ on the non-empty open set $U_T$, where $\{\tilde{\bf k}(j)\}_{j=1}^{r_\s}$ is a complete system of representatives of $\Z^{\s\times 1}/\Z{}^tA_\s$. Moreover, for each $\sigma\in T,$ one has a transformation formula 
\begin{equation}
\begin{pmatrix}
f_{\sigma,\tilde{\bf k}(1)}(z;\delta)\\
\vdots\\
f_{\sigma,\tilde{\bf k}(r_\s)}(z;\delta)
\end{pmatrix}
=
T_\sigma
\begin{pmatrix}
\varphi_{\sigma,{\bf k}(1)}(z;\delta)\\
\vdots\\
\varphi_{\sigma,{\bf k}(r_\s)}(z;\delta)
\end{pmatrix}.
\end{equation}
Here, $T_\sigma$ is an $r_\s\times r_\s$ matrix given by 
\begin{align}
T_\sigma=&C_\s(\gamma)
\diag\Big( \exp\left\{
2\pi\ii\transp{
\tilde{\bf k}(i)
}
A_{\s}^{-1}\delta
\right\}\Big)_{i=1}^{r_\s}
U_\s
\diag \left( \varepsilon_\s(\delta,{\bf k}(j))\right)_{j=1}^{r_\s}.
\end{align}
In particular, if $z$ is nonsingular and $\gamma_l\notin\Z$ for $l=1,\dots,k$, $\displaystyle\bigcup_{\s\in T}\left\{ [\Gamma_{\s,\tilde{\bf k}(j)}]\right\}_{j=1}^{r_\s}$ is a basis of the rapid decay homology group $\Homo_{n,z}^{r.d.}.$
\end{thm}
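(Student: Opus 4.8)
The plan is to reduce everything to the isomorphism of \cref{thm:EulerLaplaceRepresentationTheorem} once the period integrals $f_{\s,\tilde{\bf k}(j)}$ have been evaluated explicitly. First I would record that each transported cycle $\Gamma_{\s,\tilde{\bf k}(j)}$ genuinely represents a class in $\Homo^{r.d.}_{n,z}$: as explained right before the statement, its closure in $\pi^{-1}(z)\cup\widetilde{D^{r.d.}_0}$ is a (closure of a) semianalytic set, which admits a semianalytic triangulation by \cite[Theorem 2]{LojasiewiczTriangulations}, so that, in view of \cref{rem:RemarkRDHomology}, it defines an element of $\Homo^{r.d.}_{n,z}$. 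Since the nonsingular locus is Zariski dense it meets the non-empty open set $U_T$, so we may fix a nonsingular $z\in U_T$; then \cref{thm:EulerLaplaceRepresentationTheorem} gives an isomorphism $\Homo^{r.d.}_{n,z}\xrightarrow{\sim}\Hom_{\DD_{\C^N_z}}(M_A(\delta),\mathcal{O}_{\C^N})_z$, $[\Gamma]\mapsto\int_\Gamma\Phi\frac{dx}{x}$, under which $[\Gamma_{\s,\tilde{\bf k}(j)}]\mapsto(2\pi\ii)^{n+k}f_{\s,\tilde{\bf k}(j)}(z;\delta)$ by the very definition of $f_{\s,\tilde{\bf k}(j)}$.

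Next I would establish the transformation formula, which is precisely what the computations preceding the statement produce. Expanding each $h_{l,z^{(l)}}(x)^{-\gamma_l}$ as in (\ref{TheExpansion})---legitimate, and uniform along $C_0\times\Gamma_0$, thanks to the degree estimates (\ref{inequality1}) and (\ref{inequality2}) derived from \cref{lem:sum2}---and integrating term by term along the product contour $C_0\times\Gamma_0$, using \cref{lemma:lemma} in the $\rho$-direction and \cref{lemma:ProjBeukers} in the $(u_{\s^{(0)}},\tau_\s)$-directions, and finally rewriting the Pochhammer symbols $(\gamma_l)_{|{\bf m}_l|}$ through the $\Gamma$-reflection formula, one gets $f_{\s,0}(z;\delta)$ as the prescribed scalar multiple of $\sum_i\varepsilon_\s(\delta,{\bf k}(i))\varphi_{\s,{\bf k}(i)}(z;\delta)$; passing to the deck transform $\Gamma_{\s,\tilde{\bf k}(j)}$ multiplies the $i$-th summand by $\exp\{2\pi\ii\,{}^t\tilde{\bf k}(j)A_\s^{-1}(\delta+A_{\bs}{\bf k}(i))\}$, which is exactly the factorization of $T_\sigma$ written in the statement. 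Both sides of the resulting identity satisfy $M_A(\delta)$ and agree near $z^\s_\infty$, hence by analytic continuation along the path underlying the trivialization (\ref{Trivialization}) they agree near the fixed $z\in U_T$; since convergence of the right-hand side on $U_T$ is the content of the Fern\'andez-Fern\'andez theorem recalled in \S\ref{SectionSeries}, the $f_{\s,\tilde{\bf k}(j)}$ are solutions of $M_A(\delta)$ on $U_T$.

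Then I would check that each $T_\sigma$ is invertible, which, combined with the fact that $\bigcup_{\s\in T}\{\varphi_{\s,{\bf k}(j)}\}_{j}$ is a basis of the solution space of $M_A(\delta)$ on $U_T$ (Fern\'andez-Fern\'andez, \S\ref{SectionSeries}), yields that $\bigcup_{\s\in T}\{f_{\s,\tilde{\bf k}(j)}\}_{j}$ is such a basis as well. The scalar prefactor is non-zero because $\Gamma(\gamma_l)$ is finite and non-vanishing for $\gamma_l\notin\Z_{\leq 0}$ and $1-e^{-2\pi\ii\gamma_l}\neq 0$ for $\gamma_l\notin\Z$; the diagonal factor $\diag(\exp\{2\pi\ii\,{}^t\tilde{\bf k}(i)A_\s^{-1}\delta\})$ is obviously invertible; the matrix $(\exp\{2\pi\ii\,{}^t\tilde{\bf k}(i)A_\s^{-1}A_{\bs}{\bf k}(j)\})$ is the character table of the finite Abelian group $\Z^{(n+k)\times 1}/\Z A_\s$ (via the perfect pairing $\Z^{\s\times 1}/\Z\,{}^tA_\s\times\Z^{(n+k)\times 1}/\Z A_\s\to\Q/\Z$ noted above), hence invertible; and the diagonal factor $\diag(\varepsilon_\s(\delta,{\bf k}(j)))$ is invertible because $\delta$ is very generic---$\varepsilon_\s(\delta,{\bf k}(j))=1$ when $|\s^{(0)}|\leq 1$, while for $|\s^{(0)}|\geq 2$ one has $\varepsilon_\s(\delta,{\bf k}(j))\neq 0$ as soon as $\sum_{i\in\s^{(0)}}{}^t{\bf e}_iA_\s^{-1}(\delta+A_{\bs}{\bf k}(j))\notin\Z$, which is a generic linear condition on $\delta$.

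Finally, transporting this across the isomorphism of \cref{thm:EulerLaplaceRepresentationTheorem}: since $[\Gamma_{\s,\tilde{\bf k}(j)}]\mapsto(2\pi\ii)^{n+k}f_{\s,\tilde{\bf k}(j)}(z;\delta)$ and the target family is a basis of $\Hom_{\DD_{\C^N_z}}(M_A(\delta),\mathcal{O}_{\C^N})_z$, the family $\bigcup_{\s\in T}\{\Gamma_{\s,\tilde{\bf k}(j)}\}_{j=1}^{r_\s}$ is a basis of $\Homo^{r.d.}_{n,z}$ (one may also double-check dimensions: $\sum_{\s\in T}r_\s=\sum_{\s\in T}\vol_\Z(\s)=\vol_\Z(\Delta_A)=\rank M_A(\delta)=\dim_\C\Homo^{r.d.}_{n,z}$). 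The step I expect to be the real obstacle is not conceptual but the bookkeeping in the term-by-term integration that produces $T_\sigma$---tracking the sign ${\rm sgn}(A,\s)$, the branches of $h_{l,z^{(l)}}^{-\gamma_l}$ pinned down by the Pochhammer cycles $\Gamma_0$, and the split into the cases $|\s^{(l)}|=1$ and $|\s^{(l)}|>1$---together with making sure the genericity hypotheses are indeed strong enough to force $\varepsilon_\s(\delta,{\bf k}(j))\neq 0$.
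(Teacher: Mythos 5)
Your proposal is correct and follows essentially the same path as the paper: the paper's proof of this theorem is explicitly ``Summing up all the arguments above and taking into account Theorem~\ref{thm:EulerLaplaceRepresentationTheorem}'', i.e.\ precisely the semi-analytic-triangulation remark, the term-by-term integration using Lemmas~\ref{lemma:ProjBeukers}, \ref{lemma:lemma}, \ref{lem:sum2} and inequalities (\ref{inequality1})--(\ref{inequality2}), the deck-transform formula, the invertibility of $T_\sigma$ via the character matrix and non-vanishing of $\Gamma(\gamma_l)$ and $\varepsilon_\s$, and finally the isomorphism with $\Homo^{r.d.}_{n,z}$. You also correctly separate the two levels of hypotheses: the solution-basis and transformation-formula statements need only ``very generic'' and $\gamma_l\notin\Z_{\leq 0}$ (Fern\'andez-Fern\'andez plus the explicit calculation), while the homology-basis statement invokes the stronger hypotheses of Theorem~\ref{thm:EulerLaplaceRepresentationTheorem}.
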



For later use, we also give a formula for dual period integral. Consider an integral of the form
\begin{equation}\label{DualEulerInt}
f_{\check{\Gamma}}^\vee(z)=\frac{1}{(2\pi\ii)^{n+k}}\int_{\check{\Gamma}} e^{-h_{0,z^{(0)}}(x)} h_{1,z^{(1)}}(x)^{\gamma_1}\cdots h_{k,z^{(k)}}(x)^{\gamma_k}x^{-c}\frac{dx}{x}.
\end{equation}

\noindent
Using plane wave coordinate as before, we get
\begin{align}
f_{\check{\Gamma}}^\vee(z)
 =&\frac{sgn(A,\s)}{\det A_\s}\frac{z_\s^{A_\s^{-1}\delta}}{(2\pi\ii)^{n+k}}\displaystyle\int_{p_*\check{\Gamma}}\prod_{l=1}^k\left(\sum_{i\in\s^{(l)}}\tau_i+\sum_{j\in\bar{\s}^{(l)}}z_\s^{-A_\s^{-1}{\bf a}(j)}z_j\rho^{S_0({\bf a}(j))}(u_{\s^{(0)}},\tau_\s)^{A_\s^{-1}{\bf a}(j)}\right)^{\gamma_l}\times\nonumber\\
 &\exp\left\{ 
-\rho-\sum_{j\in\bs^{(0)}}z_\s^{-A_\s^{-1}{\bf a}(j)}z_j
\rho^{S_0({\bf a}(j))}
(u_{\s^{(0)}},\tau_\s)^{A_\s^{-1}{\bf a}(j)}
\right\}
\rho^{-S_0(\delta)}
(u_{\s^{(0)}},\tau_\s)^{-A_\s^{-1}\delta}
\frac{d\rho d u_{\s^{(0)}} \omega(\tau_\s) }{\rho u_{\s^{(0)}} \tau_\s},\label{DualPlaneWaveIntegral}
\end{align}

\noindent
The cycle $\check{\Gamma}$ in $(u_{\s^{(0)}},\tau_\s)$-direction is the product of Pochhammer cycles $\check{\Gamma}_0=\check{P}_{u_\s^{(0)}}\times\displaystyle\prod_{l=1}^k\check{P}_{\tau_{\s^{(l)}}}$. In $\rho$ direction, we take the dual Hankel contour $\check{C}_0$. $\check{C}_0$ is given by the formula
$\check{C}_0=-[\delta,\infty)+\check{l}_{(0+)}+[\delta,\infty)e^{2\pi\sqrt{-1}},$
where $e^{2\pi\sqrt{-1}}$ stands for the argument of the variable and $\check{l}_{(0+)}$ is a small loop which encircles the origin in the counter-clockwise direction starting from and ending at the point $\delta$ for some small positive $\delta$. Therefore, we take the cycle $\check{\Gamma}_{\s,0}$ so that $p_*\check{\Gamma}_{\s,0}=\check{C}_0\times \check{\Gamma}_0$. Note that the change of coordinate $\tilde{\rho}=e^{-\pi\ii}\rho$ transforms $\check{C}_0$ to $C_0$. We set
\begin{equation}\label{DualSeriesPhiVee}
\varphi^\vee_{\s,{\bf k}}(z;\delta)=z_\sigma^{A_\sigma^{-1}\delta}
\sum_{{\bf k+m}\in\Lambda_{\bf k}}\frac{(-1)^{\bf k_0+m_0}e^{\pi\ii S_0(A_{\bs}({\bf k+m}))}(z_\sigma^{-A_\sigma^{-1}A_{\bar{\sigma}}}z_{\bar{\sigma}})^{\bf k+m}}{\Gamma({\bf 1}_\sigma+A_\sigma^{-1}(\delta-A_{\bar{\sigma}}({\bf k+m}))){\bf (k+m)!}}.
\end{equation}

\noindent
Then, it is easy to see the formula
\begin{align}
f_{\s,0}^\vee(z;\delta)\overset{def}{=}&f^\vee_{\check{\Gamma}_{\s,0}}(z)\nonumber\\
=&e^{-\pi\ii S_0(\delta)}
C_\s(-\gamma)
\sum_{j=1}^{r_{\s}}\varepsilon_\s(-\delta,{\bf k}(j))\varphi_{\s,{\bf k}(j)}^\vee(z;\delta)
\end{align}
holds. As before, to any integer vector $\tilde{\bf k}\in\Z^{\s\times 1}$, we associate a deck transform $\check{\Gamma}_{\s,\tilde{\bf k}}$ of $\check{\Gamma}_{\s,0}$ along the loop $(\xi_{\s^{(0)}},[\tau_\s])\mapsto e^{2\pi\ii{}^t\tilde{\bf k}}(\xi_{\s^{(0)}},[\tau_\s])$. We have the dual statement of Theorem \ref{thm:fundamentalthm3}.

\begin{thm}\label{thm:dualfundamentalthm3}
Take a convergent regular triangulation $T$. Assume that the parameter vector $\delta$ is very generic and that $\gamma_l\notin\Z_{\geq 0}$ for any $l=1,\dots,k$. Then, if one puts
\begin{equation}
f_{\s,\tilde{\bf k}}^\vee(z;\delta)=\frac{1}{(2\pi\ii)^{n+k}}\int_{\check{\Gamma}_{\s,\tilde{\bf k}(j)}} e^{-h_{0,z^{(0)}}(x)}h_{1,z^{(1)}}(x)^{\gamma_1}\cdots h_{k,z^{(k)}}(x)^{\gamma_k}x^{-c}\frac{dx}{x},
\end{equation}
for each $\sigma\in T,$ one has a transformation formula 
\begin{equation}
\begin{pmatrix}
f^\vee_{\sigma,\tilde{\bf k}(1)}(z;\delta)\\
\vdots\\
f^\vee_{\sigma,\tilde{\bf k}(r_\s)}(z;\delta)
\end{pmatrix}
=
T_\sigma^\vee
\begin{pmatrix}
\varphi_{\sigma,{\bf k}(1)}^\vee(z;\delta)\\
\vdots\\
\varphi_{\sigma,{\bf k}(r_\s)}^\vee(z;\delta)
\end{pmatrix}.
\end{equation}
Here, $T_\sigma^\vee$ is an $r_\s\times r_\s$ matrix given by 
\begin{align}
T_\sigma^\vee=&e^{-\pi\ii S_0(\delta)}
C_\s(-\gamma)
\diag\Big( \exp\left\{
-2\pi\ii\transp{
\tilde{\bf k}(i)
}
A_{\s}^{-1}\delta
\right\}\Big)_{i=1}^{r_\s}
U_\s
\diag \Big( \varepsilon_\s\left(-\delta,{\bf k}(j)\right)\Big)_{j=1}^{r_\s}.
\end{align}
In particular, if $z$ is nonsingular, $\gamma_l\notin\Z$ for any $l=1,\dots,k$, $\displaystyle\bigcup_{\s\in T}\left\{ \check{\Gamma}_{\s,\tilde{\bf k}(j)}\right\}_{j=1}^{r_\s}$ is a basis of the rapid decay homology group $\check{\Homo}_{n,z}^{r.d.}\overset{def}{=}\Homo_n^{r.d.}\left( \pi^{-1}(z);\nabla_z\right).$
\end{thm}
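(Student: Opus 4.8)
The plan is to mirror the proof of \cref{thm:fundamentalthm3} and to reuse the computation already carried out above. The decisive observation is the change of variables $\tilde\rho=e^{-\pi\ii}\rho$, which converts the dual Hankel contour $\check C_0$ into the Hankel contour $C_0$ and the dual Pochhammer cycles $\check P$ into the ordinary Pochhammer cycles $P$; applied to the plane-wave representation (\ref{DualPlaneWaveIntegral}) it produces the identity
\[
f^\vee_{\s,0}(z;\delta)=e^{-\pi\ii\sum_{i\in\s^{(0)}}{}^t{\bf e}_iA_\s^{-1}\delta}\,f_{\s,0}\bigl(z_{\s^{(0)}},\bigl(-e^{\pi\ii\sum_{i\in\s^{(0)}}{}^t{\bf e}_iA_\s^{-1}{\bf a}(j)}z_j\bigr)_{j\in\bs^{(0)}},\ldots\,;-\delta\bigr)
\]
recorded before the statement. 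So the first step is to substitute into this the basic expansion of $f_{\s,0}$ from \cref{thm:fundamentalthm3}, taken with parameter vector $-\delta$ and with the $z_j$ ($j\in\bs$) replaced by the indicated values $\pm e^{\pi\ii(\cdots)}z_j$, and to check that under precisely these substitutions the series $\varphi_{\s,{\bf k}}(\,\cdot\,;-\delta)$ becomes the dual series $\varphi^\vee_{\s,{\bf k}}(z;\delta)$ of (\ref{DualSeriesPhiVee}); the sign $(-1)^{{\bf k}_0+{\bf m}_0}$ and the phase $e^{\pi\ii\sum_{i\in\s^{(0)}}{}^t{\bf e}_iA_\s^{-1}A_{\bs}({\bf k}+{\bf m})}$ are exactly what the twist applied to $z_{\bs}^{{\bf k}+{\bf m}}$ contributes. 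This yields the expansion of $f^\vee_{\s,0}$ into the $\varphi^\vee_{\s,{\bf k}(i)}$.

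Next I would treat the deck transforms. Applying the loop $(\xi_{\s^{(0)}},[\tau_\s])\mapsto e^{2\pi\ii{}^t\tilde{\bf k}}(\xi_{\s^{(0)}},[\tau_\s])$ multiplies the $i$-th term of the expansion of $f^\vee_{\s,0}$ by $\exp\{2\pi\ii{}^t\tilde{\bf k}A_\s^{-1}(-\delta+A_{\bs}{\bf k}(i))\}$, just as in the non-dual case, and packaging these scalars over a complete set of representatives $\{\tilde{\bf k}(i)\}_{i=1}^{r_\s}$ of $\Z^{\s\times1}/\Z{}^tA_\s$ factors $T^\vee_\sigma$ into the scalar prefactor, the diagonal matrix $\diag(\exp\{-2\pi\ii{}^t\tilde{\bf k}(i)A_\s^{-1}\delta\})$, the character matrix of $\Z^{(n+k)\times1}/\Z A_\s$, and $\diag(\varepsilon_\s(-\delta,{\bf k}(j)))$. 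This is the asserted transformation formula.

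For the basis statement I would observe that the good compactification and the trivialization (\ref{Trivialization}) of \S\ref{SectionRapidDecay} carry over verbatim to the dual connection $\nabla_z^\vee$ (whose associated GKZ system is $M_A(-\delta)$, and $-\delta$ is again non-resonant since non-resonance is symmetric under $\delta\mapsto-\delta$), so after parallel transport the $\check\Gamma_{\s,\tilde{\bf k}(j)}$ define classes in $\check\Homo^{r.d.}_{n,z}$ by the same semi-analytic triangulation argument via \cite{LojasiewiczTriangulations} and \cref{rem:RemarkRDHomology}. By \cref{thm:EulerLaplaceRepresentationTheorem} applied to the dual data, $\dim_\C\check\Homo^{r.d.}_{n,z}=\rank M_A(-\delta)=\vol_\Z(\Delta_A)=\sum_{\s\in T}r_\s$, the last equality since $T$ is a triangulation with $r_\s=\vol_\Z(\s)$. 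I would then check that each $T^\vee_\sigma$ is invertible: the scalar prefactor is a nonzero complex number because $\gamma_l\notin\Z$ (so $\Gamma(-\gamma_l)$ is finite and $1-e^{2\pi\ii\gamma_l}\neq0$); the character matrix is invertible because the pairing $\Z^{\s\times1}/\Z{}^tA_\s\times\Z^{(n+k)\times1}/\Z A_\s\to\Q/\Z$ is perfect; and $\diag(\varepsilon_\s(-\delta,{\bf k}(j)))$ is invertible because $\delta$ very generic forces the relevant exponents to be non-integral. Combining the transformation formulas over $\s\in T$ into a block-diagonal invertible change of basis, together with the linear independence of $\bigcup_{\s\in T}\{\varphi^\vee_{\s,{\bf k}(j)}\}$ (the dual analogue of \cref{prop:independence}, using the convergence of $T$), shows $\bigcup_{\s\in T}\{f^\vee_{\s,\tilde{\bf k}(j)}\}$ is a linearly independent set of $\sum_{\s\in T}r_\s$ solutions, hence a basis by the dimension count; pulling back through the isomorphism of \cref{thm:EulerLaplaceRepresentationTheorem} gives the basis statement for the cycles $\check\Gamma_{\s,\tilde{\bf k}(j)}$.

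The step I expect to be the main obstacle is the branch bookkeeping underlying the reduction $f^\vee_{\s,0}(z;\delta)=e^{-\pi\ii(\cdots)}f_{\s,0}(\cdots;-\delta)$ and the ensuing identification $\varphi_{\s,{\bf k}}(\cdots;-\delta)=\varphi^\vee_{\s,{\bf k}}(z;\delta)$: one must verify that the rotation $\tilde\rho=e^{-\pi\ii}\rho$ together with the root-of-unity twists of the $z_j$ ($j\in\bs$) are consistent with the branches of $h_{l,z^{(l)}}(x)^{\gamma_l}$ singled out by the dual Pochhammer cycle $\check P$, and that the lift $\check\Gamma_{\s,0}$ along $\check C_0$ is well-defined by \cref{lem:ThomMather} and the divergence inequalities (\ref{inequality1})--(\ref{inequality2}) exactly as for $\Gamma_{\s,0}$, these inequalities being insensitive to the sign change $\delta\mapsto-\delta$ and to conjugation by roots of unity.
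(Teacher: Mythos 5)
Your proposal is correct and follows the same route as the paper: the paper's own derivation, laid out in the text just before the theorem, is exactly the rotation $\tilde\rho=e^{-\pi\ii}\rho$ converting the dual Hankel contour (and dual Pochhammer cycles) to the ordinary ones, the resulting reduction $f^\vee_{\s,0}(z;\delta)=e^{-\pi\ii\sum_{i\in\s^{(0)}}{}^t{\bf e}_iA_\s^{-1}\delta}\,f_{\s,0}(\cdots;-\delta)$ with twisted arguments, the identification of the twisted $\Gamma$-series with $\varphi^\vee_{\s,{\bf k}}(z;\delta)$ of (\ref{DualSeriesPhiVee}), and the identical deck-transform and dimension-count arguments from \cref{thm:fundamentalthm3}. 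You have also correctly located the one place where care is required (the branch bookkeeping behind the "simple computation" the paper asserts), and your observation that non-resonance is preserved under $\delta\mapsto-\delta$ is the right way to justify invoking \cref{thm:EulerLaplaceRepresentationTheorem} for the dual connection.
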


\begin{exa}
We consider a $3\times 5$ matrix 
$
A=
\left(
\begin{array}{ccc|cc}
1&1&1&0&0\\
\hline
0&0&0&1&1\\
\hline
0&1&-1&0&1
\end{array}
\right)
$
and a $5\times 2$ matrix 
$
B
=
\begin{pmatrix}
-1&-1\\
1&0\\
0&1\\
1&-1\\
-1&1
\end{pmatrix}
$ so that $L_A=\Z B$ holds. For a parameter vector $\delta={}^t(\gamma_1,\gamma_2,c)$, the GKZ system $M_A(\delta)$ is related to the Horn's $G_1$ function (\cite[Vol.1, \S5.7.1]{ErdelyiEtAl}, \cite{DworkLoeser}). By considering an exact sequence $0\rightarrow \R^{1\times 3}\overset{\times A}{\rightarrow}\R^{1\times 5}\overset{\times B}{\rightarrow}\R^{1\times 2}\rightarrow 0$, we can draw a projected image of the secondary fan ${\rm Fan}(A)$ in $\R^{1\times 2}$ as in Figure \ref{SecondaryFanOfG1}.

The Euler integral representation we consider is $f_\Gamma(z)=\frac{1}{(2\pi\ii)^3}\int_\Gamma (z_1+z_2x+\frac{z_3}{x})^{-\gamma_1}(z_4+z_5x)^{-\gamma_2}x^c\frac{dx}{x}.$ Let us describe the basis of solutions associated to the regular triangulation $T_4$. We first consider the simplex $345\in T_4$. This choice of simplex corresponds to the degeneration $z_1,z_2\rightarrow 0$. This induces a degeneration of the configuration of branch points of the integrand. We write $\zeta_\pm$ for the zeros of the equation $z_1+z_2x+\frac{z_3}{x}=0$ in $x$. The induced degeneration is $\zeta_\pm\rightarrow\infty$. If we put $\zeta=-\frac{z_4}{z_5}$, the cycle $\Gamma_{345,0}$ is just a Pochhammer cycle connecting $\zeta$ and the origin as in Figure \ref{Cycle345G1}. Since $|\Z^{\{345\}\times 1}/\Z {}^tA_{345}|=1$, we are done for this simplex.

\begin{figure}[H]
\begin{center}
\begin{tikzpicture}[scale=0.85]
\draw (0,0) node[left]{O}; 
\draw[thick, ->] (0,0)--(1,0);
\draw[thick, ->] (0,0)--(0,1);
\draw[thick, ->] (0,0)--(-1,1);
\draw[thick, ->] (0,0)--(-1,-1);
\draw[thick, ->] (0,0)--(1,-1);
\draw[-] (0,0)--(4,0);
\draw[-] (0,0)--(0,4);
\draw[-] (0,0)--(-4,4);
\draw[-] (0,0)--(-3,-3);
\draw[-] (0,0)--(3,-3);
\node at (2,2){$T_1=\{ 125,134,145\}$};
\node at (-2,4){$T_2=\{ 124,134,245\}$};
\node at (-3,0){$T_3=\{ 234,245\}$};
\node at (0,-2){$T_4=\{ 235,345\}$};
\node at (3,-1){$T_5=\{ 125,135,345\}$};
\end{tikzpicture}
\caption{The secondary fan of Horn's $G_1$ in $\R^{1\times 2}$}\label{SecondaryFanOfG1}
\end{center}
\end{figure}
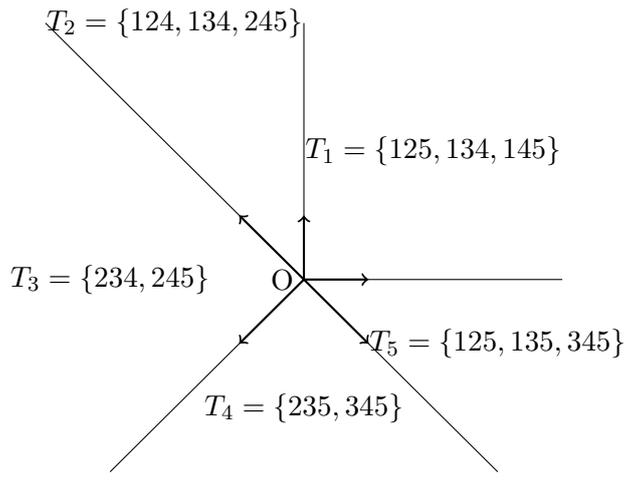

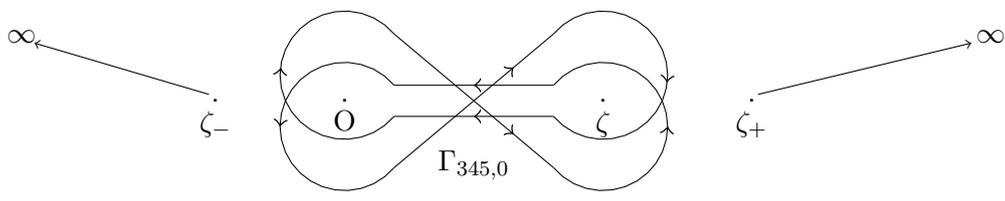
\begin{figure}[H]
\begin{center}
\begin{tikzpicture}[scale=0.85]
\node at (0,0){$\cdot$};
\node at (4,0){$\cdot$};
\draw (0,0) node[below]{O};
\draw (4,0) node[below]{$\zeta$};
\draw[-<-=.5,domain=-140:140] plot ({4+cos(\x)}, {0.4+sin(\x)});
\draw[-<-=.5,domain=40:320] plot ({cos(\x)}, {0.4+sin(\x)});
\draw[->-=.5,domain=-140:140] plot ({4+cos(\x)}, {-0.4+sin(\x)});
\draw[->-=.5,domain=40:320] plot ({cos(\x)}, {-0.4+sin(\x)});
    \coordinate (A1) at ({cos(40)}, {0.4+sin(40)});
    \coordinate (A2) at ({4+cos(-140)}, {-0.4+sin(-140)});
    \coordinate (B1) at ({4+cos(140)}, {-0.4+sin(140)});
    \coordinate (B2) at ({cos(40)}, {-0.4+sin(40)});
    \coordinate (C1) at ({cos(320)}, {-0.4+sin(320)});
    \coordinate (C2) at ({4+cos(140)}, {0.4+sin(140)});
    \coordinate (D1) at ({4+cos(-140)}, {0.4+sin(-140)});
    \coordinate (D2) at ({cos(320)}, {0.4+sin(320)});
\draw[->-=.75] (A1) -- (A2);
\draw[->-=.5] (B1) -- (B2);
\draw[->-=.75] (C1) -- (C2);
\draw[->-=.5] (D1) -- (D2);
\node at (6.3,0) {$\cdot$};
\draw (6.3,0) node[below]{$\zeta_+$};
\node at (-2,0) {$\cdot$};
\draw (-2,0) node[below]{$\zeta_-$};
\draw[->] (6.4,0.1) -- (9.7,0.9);
\draw[->] (-2.1,0.1) -- (-4.8,0.9);
\node at (10,1) {$\infty$};
\node at (-5,1) {$\infty$};
\node at (2,-1) {$\Gamma_{345,0}$};
\end{tikzpicture}
\caption{Degeneration of an arrangement associated to a simplex $345$}
\label{Cycle345G1}
\end{center}
\end{figure}

\begin{figure}[H]
\begin{minipage}{0.5\hsize}
\begin{center}
\begin{tikzpicture}
\node at (0,0){$\cdot$};
\node at (2,0){$\cdot$};
\draw (0,0) node[below]{O};
\draw (2,0) node[below]{$\zeta_+$};
\draw[-<-=.5,domain=-140:140] plot ({2+(0.5)*(cos(\x))}, {(0.5)*(sin(\x))});
\draw[->-=.5] ({2+(0.5)*(cos(-140))},{(0.5)*(sin(-140))}) -- ({(0.5)*(cos(-40))},{(0.5)*(sin(-40))});
\draw[->-=.5,domain=-40:-200] plot ({(0.5)*cos(\x)},{(0.5)*sin(\x)});
\draw[->-=.5] ({(0.5)*cos(-200)},{(0.5)*sin(-200)})--({-2+(0.5)*cos(20)},{(0.5)*sin(20)});
\draw[->-=.5,domain=20:340] plot ({-2+(0.5)*cos(\x)},{(0.5)*sin(\x)});
\draw[->-=.5] ({-2+(0.5)*cos(340)},{(0.5)*sin(340)})--({(0.7)*cos(-165)},{(0.7)*sin(-165)});
\draw[->-=.5,domain=-165:25] plot ({(0.7)*cos(\x)},{(0.7)*sin(\x)}); 
\draw[->-=.5] ({(0.7)*cos(25)},{(0.7)*sin(25)}) -- ({2+(0.5)*cos(140)},{(0.5)*sin(140)});
\node at ({(0.7)*cos(25)},{(0.7)*sin(25)}) {$\bullet$};
\draw ({(0.7)*cos(25)},{(0.7)*sin(25)}) node[above]{$\arg x=0$};
\node at (2,0) {$\cdot$};
\draw (2,0) node[below]{$\zeta_+$};
\node at (-2,0) {$\cdot$};
\draw (-2,0) node[below]{$\zeta_-$};
\node at (0.3,0) {$\cdot$};
\draw (0.3,0) node[right]{$\zeta$};
\draw[->] (0.2,0) -- (0.1,0);
\end{tikzpicture}
\caption{The cycle $\Gamma_{235,0}$}
\label{Cycle2350G1}
\end{center}
\end{minipage}
\begin{minipage}{0.5\hsize}
\begin{center}
\begin{tikzpicture}
\node at (0,0){$\cdot$};
\node at (2,0){$\cdot$};
\draw (0,0) node[below]{O};
\draw (2,0) node[below]{$\zeta_+$};
\draw[-<-=.5,domain=-140:140] plot ({-2-(0.5)*(cos(\x))},{(-0.5)*(sin(\x))});
\draw[->-=.5] ({-2-(0.5)*(cos(-140))},{(-0.5)*(sin(-140))}) -- ({(-0.5)*(cos(-40))},{(-0.5)*(sin(-40))});
\draw[->-=.5,domain=-40:-200] plot ({(-0.5)*cos(\x)},{(-0.5)*sin(\x)});
\draw[->-=.5] ({(-0.5)*cos(-200)},{(-0.5)*sin(-200)})--({2+(-0.5)*cos(20)},{(-0.5)*sin(20)});
\draw[->-=.5,domain=20:340] plot ({2+(-0.5)*cos(\x)},{(-0.5)*sin(\x)});
\draw[->-=.5] ({2-(0.5)*cos(340)},{(-0.5)*sin(340)})--({(-0.7)*cos(-165)},{(-0.7)*sin(-165)});
\draw[->-=.5,domain=-165:25] plot ({(-0.7)*cos(\x)},{(-0.7)*sin(\x)}); 
\draw[->-=.5] ({(-0.7)*cos(25)},{(-0.7)*sin(25)}) -- ({-2-(0.5)*cos(140)},{(-0.5)*sin(140)});
\node at ({(-0.7)*cos(25)},{(-0.7)*sin(25)}) {$\bullet$};
\draw ({(-0.7)*cos(25)},{(-0.7)*sin(25)}) node[below]{$\arg x=\pi$};
\node at (-2,0) {$\cdot$};
\draw (-2,0) node[below]{$\zeta_-$};
\node at (0.3,0) {$\cdot$};
\draw (0.3,0) node[below]{$\zeta$};
\draw[->] (0.2,0) -- (0.1,0);
\end{tikzpicture}
\caption{The cycle $\Gamma_{235,1}.$}
\label{Cycle2351G1}
\end{center}
\end{minipage}
\end{figure}

On the other hand, the simplex $235$ induces a different degeneration. This choice of simplex corresponds to the limit $z_1,z_4\rightarrow 0$. Therefore, the corresponding degeneration of branch points of the integrand is $\zeta\rightarrow 0$ and $\zeta_\pm\rightarrow \pm\sqrt{-\frac{z_3}{z_2}}$. Since $\Z^{\{235\}\times 1}/\Z{}^t A_{235}\simeq\Z/2\Z$, we have two independent cycles as in Figure \ref{Cycle2350G1} and \ref{Cycle2351G1}.

\end{exa}

\begin{exa}
We consider a $2\times 4$ matrix 
$
A=
\left(
\begin{array}{cc|cc}
0&0&1&1\\
\hline
1&-1&0&1
\end{array}
\right)
$
and a $4\times 2$ matrix 
$
B
=
\begin{pmatrix}
1&1\\
1&0\\
0&1\\
0&-1
\end{pmatrix}
$ so that $L_A=\Z B$ holds. For a parameter vector $\delta=
\begin{pmatrix}
\gamma\\
c
\end{pmatrix}
$, the GKZ system $M_A(\delta)$ is related to Horn's $\Gamma_2$ function (\cite{DworkLoeser}). The Euler-Laplace integral representation is of the form $f_\Gamma(z)=\frac{1}{(2\pi\ii)^2}\int_\Gamma e^{z_1x+z_2x^{-1}}(z_3+z_4x)^{-\gamma}x^c\frac{dx}{x}$. $T_2$ is the unique convergent regular triangulation. All the simplexes have normalized  volume 1. Let us consider $\s=14$. We set $\zeta=-\frac{z_3}{z_4}$. Then, the simplex $\s=14$ corresponds to the limit $z_2,z_3\rightarrow 0$ which induces a degeneration of the integrand $ e^{z_1x+z_2x^{-1}}(z_3+z_4x)^{-\gamma}x^c\rightarrow e^{z_1x}x^{c-\gamma}.$ Therefore, the resulting integration contour $\Gamma_{14,0}$ is as in the upper right one in Figure \ref{TheFirstFigure}. We can construct the contour $\Gamma_{23,0}$ in the same way as in the lower right picture of Figure \ref{TheFirstFigure}. Finally, the cycle $\Gamma_{34,0}$ is nothing but the Pochhammer cycle connecting $0$ and $\zeta$, hence bounded.

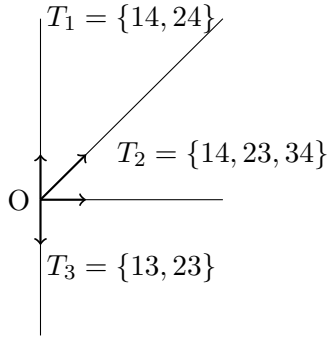
\begin{figure}[h]
\begin{center}
\begin{tikzpicture}[scale=0.6]
\draw (0,0) node[left]{O}; 
\draw[thick, ->] (0,0)--(1,0);
\draw[thick, ->] (0,0)--(0,1);
\draw[thick, ->] (0,0)--(1,1);
\draw[thick, ->] (0,0)--(0,-1);
\draw[-] (0,0)--(4,0);
\draw[-] (0,0)--(0,4);
\draw[-] (0,0)--(4,4);
\draw[-] (0,0)--(0,-3);
\node at (2,4){$T_1=\{ 14,24\}$};
\node at (4,1){$T_2=\{ 14,23,34\}$};
\node at (2,-1.5){$T_3=\{ 13,23\}$};
\end{tikzpicture}
\caption{The secondary fan of Horn's $\Gamma_2$ in $\R^{1\times 2}$}
\end{center}
\end{figure}

\end{exa}

\begin{exa}

We consider a $3\times 5$ matrix 
$
A=
\left(
\begin{array}{cc|ccc}
0&0&1&1&1\\
\hline
1&0&0&1&1\\
0&1&0&0&1
\end{array}
\right)
$
and a $5\times 2$ matrix 
$
B
=
\begin{pmatrix}
1&0\\
0&1\\
1&0\\
-1&1\\
0&-1
\end{pmatrix}
$ so that $L_A=\Z B$ holds. For a parameter vector $\delta=
{}^t(\gamma,c_1,c_2)$, the GKZ system $M_A(\delta)$ is related to Horn's ${\bf H}_4$ function (\cite[Vol.1, \S5.7.1]{ErdelyiEtAl}, \cite{DworkLoeser}). The Euler-Laplace integral representation is of the form $f_\Gamma(z)=\frac{1}{(2\pi\ii)^2}\int_\Gamma e^{z_1x+z_2y}(z_3+z_4x+z_5xy)^{-\gamma}x^{c_1}y^{c_2}\frac{dx\wedge dy}{xy}$. $T_1$ is the unique  convergent regular triangulation. All the simplexes have volume 1. Let us consider $\s=125$. The simplex $\s=125$ corresponds to the limit $z_3,z_4\rightarrow 0$ which induces a degeneration of the integrand $ e^{z_1x+z_2y}(z_3+z_4x+z_5xy)^{-\gamma}x^{c_1}y^{c_2}\rightarrow z_5^{-\gamma}e^{z_1x+z_2y}x^{c_1-\gamma}y^{c_2-\gamma}.$ Therefore, the resulting integration contour $\Gamma_{125,0}$ is as in Figure \ref{Cycle1250H4}. The construction is as follows: we consider a change of coordinate $(z_1x,z_2y)=(\rho u,\rho v)$ with $u+v=1$. Then the cycle $\Gamma_{125,0}$ is the product of a Hankel contour in $\rho$ direction and a Pochhammer cycle in $(u,v)$ direction. Note that the divisor $\{ z_3+z_4x+z_5xy=0\}\subset(\C^\times)^2$ is encircled by $\Gamma_{125,0}$. The constructions of $\Gamma_{145,0}$ and $\Gamma_{235,0}$ are similar. On the other hand, if we consider a simplex $\s=345$, the corresponding degeneration of the integrand is $ e^{z_1x+z_2y}(z_3+z_4x+z_5xy)^{-\gamma}x^{c_1}y^{c_2}\rightarrow (z_3+z_4x+z_5xy)^{-\gamma}x^{c_1}y^{c_2}.$ The change of coordinate $p(x,y)=(\xi,\eta)$ of the torus $(\C^\times)^2$ that we discussed in general fashion in this section, is explicitly given by $\xi=-\frac{z_4}{z_3}x, \eta=-\frac{z_5}{z_3}xy$. This change of coordinate can be seen as a part of blow-up coordinate of $Bl_{(0,0)}(\C^2)$.


\begin{figure}[h]
\begin{minipage}{0.5\hsize}
\begin{center}
\begin{tikzpicture}[scale=0.85]
\draw (0,0) node[left]{O}; 
\draw[thick, ->] (0,0)--(1,0);
\draw[thick, ->] (0,0)--(0,1);
\draw[thick, ->] (0,0)--(-1,1);
\draw[thick, ->] (0,0)--(0,-1);
\draw[-] (0,0)--(5,0);
\draw[-] (0,0)--(0,4);
\draw[-] (0,0)--(-4,4);
\draw[-] (0,0)--(0,-3);
\node at (2.5,2){$T_1=\{ 125,145,235,345\}$};
\node at (-2,4){$T_2=\{ 125,135,235\}$};
\node at (-2,-1){$T_3=\{ 123\}$};
\node at (2,-2){$T_4=\{124,234\}$};
\end{tikzpicture}
\caption{Projected image of the secondary fan of Horn's ${\bf H}_4$ in $\R^{1\times 2}$}
\end{center}
\end{minipage}
\begin{minipage}{0.5\hsize}
\begin{center}
\begin{tikzpicture}[scale=0.85,domain=0:3, samples=100] 
\draw (0,0) node[anchor= north west]{O}; 
\draw[->] (-4,0)--(4,0);
\draw[->] (0,-4)--(0,4);
\draw[-] (-1.8,-0.2)--(-0.2,-1.8);
\node at (4.4,0){$z_1x$};
\node at (0,4.2){$z_2y$};
\draw[-] (-0.2,-1.8) to [out=20,in=100] (0.2,-2.2);
\draw[dashed] (-0.2,-1.8) to [out=-90,in=-120] (0.2,-2.2);
\draw[-] (-1.8,-0.2) to [out=90,in=80] (-2.2,0.2);
\draw[dashed] (-1.8,-0.2) to [out=180,in=270] (-2.2,0.2);
\draw[->-=.5,dashed] (-3,-3.2) to (0.4,0.2);
\draw[-<-=.5] (-3,-2.8) to (0.2,0.4);
\draw[->-=.5,dashed] (0.4,0.2) to [out=45, in=45] (0.2,0.4);
\draw[-,domain=0.02:3] plot ( \x,0.06/\x );
\draw[-,domain=-3:-0.02] plot ( \x,0.06/\x );
\end{tikzpicture}
\caption{cycle $\Gamma_{125,0}$}
\label{Cycle1250H4}
\end{center}
\end{minipage}
\end{figure}

\end{exa}

\section{A formula for intersection numbers}\label{IntersectionNumbers}
In the following, we fix an $(n+k)$-simplex $\s$ such that the corresponding series $\varphi_{\s,{\bf k}}(z;\delta)$ is convergent. We assume that the parameter $\delta$ is very generic with respect to $\s$ and $\gamma_l\notin\Z$ for $l=1,\dots, k$. In the previous section, for any given convergent regular triangulation $T$, we constructed a basis of $\Homo_{n,z}^{r.d.}$ at each $z\in U_T$. In this section, we show that they behave well with respect to homology intersection pairing. Under the notation of \S\ref{subsec:3.2}, we set $\check{\Homo}_{n,z}^{mod}=\Homo_n^{mod}\left( \pi^{-1}(z);\nabla_z\right)$. Recall that there is a canonical morphism ${\rm can}:\check{\Homo}_{n,z}^{r.d.}\rightarrow\check{\Homo}_{n,z}^{mod}$ which appeared in (\ref{RDCommutativity}). We are interested in the intersection number $\langle \Gamma_{\s_1,\tilde{\bf k}_1},{\rm can} (\check{\Gamma}_{\s_2,\tilde{\bf k}_2})\rangle_h$.

Firstly, we observe that the open set $U_T$ is invariant by $z_j\mapsto e^{\pi\ii\theta_j}z_j$ for any $j$ and $\theta_j\in\R$. Let us consider a path $\gamma_j(\theta)$ $(0\leq\theta\leq 1)$ given by $\gamma_j(\theta)=(z_1,\dots,e^{2\pi\ii\theta}z_j,\dots,z_N)$ where $z=(z_1,\dots,z_N)$ is any point of $U_T$. From the explicit expression of $\Gamma$-series, we see that the analytic continuation $\gamma_{j*}\varphi_{\s,{\bf k}}(z;\delta)$ of $\varphi_{\s,{\bf k}}(z;\delta)$ along $\gamma_j$ satisfies $\gamma_{j*}\varphi_{\s,{\bf k}}(z;\delta)=e^{-2\pi\ii p_{\s j}(\delta+A_{\bs}{\bf k})}\varphi_{\s,{\bf k}}(z;\delta)$ if $j\in\s$ and $\gamma_{j*}\varphi_{\s,{\bf k}}(z;\delta)=\varphi_{\s,{\bf k}}(z;\delta)$ if $j\in\bs$. Since the morphism (\ref{Integration}) preserves monodromy, we see from \cref{thm:fundamentalthm3} that $\Gamma_{\s,\tilde{\bf k}}$ is a sum of eigenvectors with eigenvalues $e^{-2\pi\ii p_{\s j}(\delta+A_{\bs}{\bf k})}$ if $j\in\s$ or is itself an eigenvector with eigenvalue $1$. Therefore, we have the following proposition in view of the fact that homology intersection pairing (\ref{eqn:4.7}) is monodromy invariant.

\begin{prop}
If $\s_1\neq \s_2$, then $\langle \Gamma_{\s_1,\tilde{\bf k}_1},{\rm can}(\check{\Gamma}_{\s_2,\tilde{\bf k}_2})\rangle_h=0$.
\end{prop}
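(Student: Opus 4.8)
The plan is to derive the vanishing from the monodromy invariance of $\langle\bullet,\bullet\rangle_h$ together with the eigenvalue computation recorded just above, by exhibiting a single monodromy that fixes $\check\Gamma_{\s_2,\tilde{\bf k}_2}$ yet has no eigenvalue equal to $1$ on the span of the cycles $\Gamma_{\s_1,\tilde{\bf k}(i)}$. Since $|\s_1|=|\s_2|=n+k$ and $\s_1\neq\s_2$, both $\s_1\setminus\s_2$ and $\s_2\setminus\s_1$ are non-empty; fix $j_0\in\s_1\setminus\s_2$. Let $\gamma_{j_0}$ be the loop $\theta\mapsto(z_1,\dots,e^{2\pi\ii\theta}z_{j_0},\dots,z_N)$, $0\le\theta\le1$, inside $U_T$ (recall $U_T\subset(\C^*)^N$ is invariant under this rotation), based at a nonsingular point $z\in U_T$; shrinking $U_T$ if necessary we may assume $U_T$ lies in the nonsingular locus, so that the rapid decay and rapid growth homology sheaves, the canonical morphism ${\rm can}$ of (\ref{RDCommutativity}), and the pairing $\langle\bullet,\bullet\rangle_h$ are all equivariant for the $\pi_1(U_T)$-action, and so that the identification $\int$ of \cref{thm:EulerLaplaceRepresentationTheorem} intertwines the monodromy on cycles with the monodromy on solutions of $M_A(\delta)$ (see the remark following that theorem).

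Next I would compute the two monodromies. On the $\s_1$-side, set $V_{\s_1}=\operatorname{span}\{\Gamma_{\s_1,\tilde{\bf k}(i)}\}_i$. By \cref{thm:fundamentalthm3} the period functions $f_{\s_1,\tilde{\bf k}(i)}$ are the images of the $\varphi_{\s_1,{\bf k}(j)}$ under the invertible matrix $T_{\s_1}$, so under $\int$ the space $V_{\s_1}$ goes onto $\operatorname{span}\{\varphi_{\s_1,{\bf k}(j)}\}_j$; since $j_0\in\s_1$ we have $\gamma_{j_0*}\varphi_{\s_1,{\bf k}(j)}=e^{-2\pi\ii\,{}^t{\bf e}_{j_0}A_{\s_1}^{-1}(\delta+A_{\bar\s_1}{\bf k}(j))}\varphi_{\s_1,{\bf k}(j)}$, hence $\gamma_{j_0*}$ preserves $V_{\s_1}$ and is diagonalizable on it; very-genericity of $\delta$ with respect to $\s_1$ forces every exponent ${}^t{\bf e}_{j_0}A_{\s_1}^{-1}(\delta+A_{\bar\s_1}{\bf k}(j))$ to be a non-integer, so every eigenvalue is $\neq1$ and $\gamma_{j_0*}-\id$ is invertible on $V_{\s_1}$. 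On the $\s_2$-side, $j_0\in\bar\s_2$, and the dual of the same computation (cf. \cref{thm:dualfundamentalthm3} and (\ref{DualSeriesPhiVee})) gives $\gamma_{j_0*}\check\Gamma_{\s_2,\tilde{\bf k}_2}=\check\Gamma_{\s_2,\tilde{\bf k}_2}$; since ${\rm can}$ commutes with $\gamma_{j_0*}$, also $\gamma_{j_0*}\,{\rm can}(\check\Gamma_{\s_2,\tilde{\bf k}_2})={\rm can}(\check\Gamma_{\s_2,\tilde{\bf k}_2})$.

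The conclusion is then formal. For every $\alpha\in V_{\s_1}$, monodromy invariance of $\langle\bullet,\bullet\rangle_h$ gives
\[
\langle\alpha,{\rm can}(\check\Gamma_{\s_2,\tilde{\bf k}_2})\rangle_h=\langle\gamma_{j_0*}\alpha,\gamma_{j_0*}{\rm can}(\check\Gamma_{\s_2,\tilde{\bf k}_2})\rangle_h=\langle\gamma_{j_0*}\alpha,{\rm can}(\check\Gamma_{\s_2,\tilde{\bf k}_2})\rangle_h,
\]
hence $\langle(\gamma_{j_0*}-\id)\alpha,{\rm can}(\check\Gamma_{\s_2,\tilde{\bf k}_2})\rangle_h=0$; since $\gamma_{j_0*}-\id$ maps $V_{\s_1}$ onto itself and $\Gamma_{\s_1,\tilde{\bf k}_1}\in V_{\s_1}$, we obtain $\langle\Gamma_{\s_1,\tilde{\bf k}_1},{\rm can}(\check\Gamma_{\s_2,\tilde{\bf k}_2})\rangle_h=0$.

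The step needing the most care is the equivariance bookkeeping of the first paragraph: checking that $\gamma_{j_0}$ genuinely lies in the locus where the sheaves ${}^\vee\mathcal{S}^{<D}$, $\mathcal{S}^{{\rm mod}D}$ and their restrictions are local systems, and that $\int$, ${\rm can}$ and $\langle\bullet,\bullet\rangle_h$ are compatible with parallel transport along it — this is precisely what makes rigorous the slogan that the homology intersection pairing is monodromy invariant, and it is the only non-formal point, everything else reducing to the linear algebra above together with the monodromy formulas already established in \S\ref{SectionEuler}.
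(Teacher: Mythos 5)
Your argument is correct and follows the same strategy the paper sketches: pick $j_0\in\s_1\setminus\s_2$, exploit monodromy invariance of $\langle\bullet,\bullet\rangle_h$ under $\gamma_{j_0}$, and note that $\gamma_{j_0*}$ is trivial on the $\s_2$-side while having no eigenvalue $1$ on the $\s_1$-side. You make explicit a step the paper leaves implicit, namely that $\Gamma_{\s_1,\tilde{\bf k}_1}$ is merely a \emph{sum} of eigenvectors rather than an eigenvector, which is handled cleanly by your observation that $\gamma_{j_0*}-\id$ is surjective on $V_{\s_1}$.
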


\begin{rem}
When there is no risk of confusion, the intersection number $\langle \Gamma_{\s_1,\tilde{\bf k}_1},{\rm can}(\check{\Gamma}_{\s_2,\tilde{\bf k}_2})\rangle_h$ is simply denoted by $\langle \Gamma_{\s_1,\tilde{\bf k}_1},\check{\Gamma}_{\s_2,\tilde{\bf k}_2}\rangle_h$.
\end{rem}

Thus, it remains to compute $\langle \Gamma_{\s,\tilde{\bf k}_1},\check{\Gamma}_{\s,\tilde{\bf k}_2}\rangle_h$. We compute this quantity when the regular triangulation $T$ is unimodular, i.e., when $\det A_\s=\pm 1$ for any simplex $\s\in T$. The  computation is based on the basic formula of the intersection numbers of Pochhammer cycles and that of Hankel contours. For complex numbers $\alpha_1,\dots,\alpha_{n+1}$, let us put $X=\C^n_x\setminus\{ x_1\cdots x_n(1-x_1-\dots-x_n)=0\}$, $\mathcal{L}=\C x_1^{\alpha_1}\cdots x_n^{\alpha_n}(1-x_1-\dots-x_n)^{\alpha_{n+1}}$, $x_i=e^{-\pi\ii}\frac{\tau_i}{\tau_0}$ $(i=1,\dots,n)$, and $\alpha_0=-\alpha_1-\dots-\alpha_{n+1}$. Under this notation, we have $X=\mathbb{P}^n_\tau\setminus\{ \tau_0\cdots\tau_n(\tau_0+\dots+\tau_n)=0\}$. The local system $\mathcal{L}$ is symbolically denoted by $\mathcal{L}=\underline{\C}\tau_0^{\alpha_0}\cdots\tau_n^{\alpha_n}(\tau_0+\dots+\tau_n)^{\alpha_{n+1}}$.

\begin{prop}\label{prop:PochhammerIntersection}
If $P_\tau\in\Homo_n(X,\mathcal{L})$ and $\check{P}_\tau\in\Homo_n(X,\mathcal{L}^\vee)$ denote the $n$-dimensional Pochhammer cycles with coefficients in $\mathcal{L}$ and $\mathcal{L}^\vee$ respectively, one has a formula 
\begin{equation}
\langle P_\tau,\check{P}_\tau\rangle_h=\prod_{i=0}^{n+1}(1-e^{-2\pi\ii\alpha_i})=(2\ii)^{n+2}\prod_{i=0}^{n+1}\sin\pi\alpha_i.
\end{equation}
\end{prop}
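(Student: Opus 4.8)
The plan is to argue by induction on $n$, reducing the $n$-dimensional arrangement to the product of an $(n-1)$-dimensional one of the same type and a one-dimensional one by a change of coordinates, and then applying the product formula \cref{prop:TheProductFormula}; the base case $n=1$ is the classical self-intersection number of the one-dimensional Pochhammer contour. Throughout one uses the standing genericity assumption on $\alpha_0,\dots,\alpha_{n+1}$, under which every twisted homology group that occurs is one-dimensional and concentrated in the middle degree.

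\textbf{A product decomposition.} Put $\beta=\alpha_n+\alpha_{n+1}$ and introduce the new fibre coordinate $y=x_n/(1-x_1-\cdots-x_{n-1})$. In the variables $(x_1,\dots,x_{n-1},y)$ one has $x_n=y(1-x_1-\cdots-x_{n-1})$ and $1-x_1-\cdots-x_n=(1-x_1-\cdots-x_{n-1})(1-y)$, so that $X=X'\times(\PP^1_y\setminus\{0,1,\infty\})$ with $X'=\PP^{n-1}\setminus\{x_1\cdots x_{n-1}(1-x_1-\cdots-x_{n-1})=0\}$, and correspondingly
\[
\mathcal{L}=\Bigl(\underline{\C}\,x_1^{\alpha_1}\cdots x_{n-1}^{\alpha_{n-1}}(1-x_1-\cdots-x_{n-1})^{\beta}\Bigr)\boxtimes\Bigl(\underline{\C}\,y^{\alpha_n}(1-y)^{\alpha_{n+1}}\Bigr)=:\mathcal{L}'\boxtimes\mathcal{L}_1 .
\]
Here $(X',\mathcal{L}')$ is exactly the datum of the proposition in dimension $n-1$ with exponents $(\alpha_1,\dots,\alpha_{n-1},\beta)$, whose exponent at infinity is $-(\alpha_1+\cdots+\alpha_{n-1}+\beta)=\alpha_0$, while $(\PP^1_y,\mathcal{L}_1)$ is the datum in dimension $1$ with exponents $(\alpha_n,\alpha_{n+1})$.

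\textbf{Identification of the Pochhammer cycle up to a scalar.} Since $\Homo_n(X,\mathcal{L})=\Homo_{n-1}(X',\mathcal{L}')\otimes\Homo_1(\PP^1_y,\mathcal{L}_1)$ is one-dimensional, $P_\tau$ is a nonzero multiple of the cross product of the two factor-Pochhammer cycles: $P_\tau=c\,(P'_\tau\times P^{(1)}_y)$ and likewise $\check{P}_\tau=c^{\vee}(\check{P}'_\tau\times\check{P}^{(1)}_y)$. The scalars are pinned down by comparing periods: under the coordinate change above the integrand of \cref{lemma:ProjBeukers} factorizes, and by Fubini the period of a cross product is the product of the periods; applying \cref{lemma:ProjBeukers} in dimensions $n$, $n-1$ and $1$ and simplifying with $\Gamma(z)\Gamma(1-z)=\pi/\sin\pi z$ one finds, after the $\Gamma$-factors and the half-integer phases cancel,
\[
c=\frac{1}{1-e^{-2\pi\ii\beta}},\qquad c^{\vee}=\frac{1}{1-e^{2\pi\ii\beta}}.
\]

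\textbf{Induction, base case, and the obstacle.} By \cref{prop:TheProductFormula}, applied with both cycles in the middle degree of their factor so that no sign intervenes, together with the compatibility of the canonical map ${\rm can}$ with cross products,
\[
\langle P_\tau,\check{P}_\tau\rangle_h=c\,c^{\vee}\,\langle P'_\tau,\check{P}'_\tau\rangle_h\,\langle P^{(1)}_y,\check{P}^{(1)}_y\rangle_h .
\]
The inductive hypothesis gives $\langle P'_\tau,\check{P}'_\tau\rangle_h=(1-e^{-2\pi\ii\alpha_0})(1-e^{-2\pi\ii\beta})\prod_{i=1}^{n-1}(1-e^{-2\pi\ii\alpha_i})$, and the base case gives $\langle P^{(1)}_y,\check{P}^{(1)}_y\rangle_h=(1-e^{-2\pi\ii\alpha_n})(1-e^{-2\pi\ii\alpha_{n+1}})(1-e^{2\pi\ii\beta})$; since $c\,c^{\vee}=[(1-e^{-2\pi\ii\beta})(1-e^{2\pi\ii\beta})]^{-1}$, the two factors containing $\beta$ cancel and one obtains $\prod_{i=0}^{n+1}(1-e^{-2\pi\ii\alpha_i})$, which equals $(2\ii)^{n+2}\prod_{i=0}^{n+1}\sin\pi\alpha_i$ because $\sum_{i=0}^{n+1}\alpha_i=0$. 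The base case $n=1$ is the direct computation of the twisted self-intersection of the Pochhammer contour around $0$ and $1$; equivalently, writing $P^{(1)}_y=(1-e^{-2\pi\ii\alpha_n})(1-e^{-2\pi\ii\alpha_{n+1}})\,{\rm reg}((0,1)^+)$ by period comparison, it reduces to the elementary identity $\langle{\rm reg}((0,1)^+),(0,1)^-\rangle_h=(1-e^{2\pi\ii\beta})/[(1-e^{2\pi\ii\alpha_n})(1-e^{2\pi\ii\alpha_{n+1}})]$. The step that genuinely requires care is the determination of $c$: it records the fact that the multidimensional Pochhammer cycle of \cite{Beukers} is \emph{not} the naive cross product of lower-dimensional ones, the discrepancy $(1-e^{-2\pi\ii\beta})^{-1}$ coming from the merging of the hyperplanes $\{x_n=0\}$ and $\{1-x_1-\cdots-x_n=0\}$ into the single base hyperplane $\{1-x_1-\cdots-x_{n-1}=0\}$, and its proof is the period bookkeeping indicated above (with the correct choice of branches in \cref{lemma:ProjBeukers}). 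As an alternative to the entire argument one may instead invoke the intersection formula for regularized bounded chambers of generic hyperplane arrangements from \cite{KitaYoshida}, \cite{KitaYoshida2}.
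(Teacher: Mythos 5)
Your proof is correct, but it takes a genuinely different route from the paper's. The paper's argument (Appendix 2) is a one-step application of the twisted period relation (\ref{GeneralQuadraticRelation}) in the rank-one situation: it quotes the value of the cohomology self-intersection number $\langle\frac{dx}{x},\frac{dx}{x}\rangle_{ch}=(2\pi\ii)^n\frac{\alpha_0+\cdots+\alpha_n}{\alpha_0\cdots\alpha_n}$ from \cite{MatsumotoIntersection}, evaluates both period integrals over $P_\tau$ and $\check{P}_\tau$ via \cref{lemma:ProjBeukers}, and divides. Your proof instead runs an induction on $n$: the affine change of variables $y=x_n/(1-x_1-\cdots-x_{n-1})$ splits $(X,\mathcal{L})$ as a product of the $(n-1)$-dimensional datum of the same shape (with amalgamated exponent $\beta=\alpha_n+\alpha_{n+1}$) and a one-dimensional factor; under genericity the Beukers cycle must be a scalar multiple of the cross product of the factor cycles, the scalars $c,c^\vee$ are pinned down by comparing the three Beukers periods (dimensions $n$, $n-1$, $1$) after the $\Gamma$-factors cancel, and then \cref{prop:TheProductFormula} together with the induction and the classical $n=1$ intersection number yields the claim, with the $\beta$-dependent factors cancelling against $c\,c^\vee$. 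Both arguments are valid; the paper's is shorter and needs no base case or product formula, while yours is more structural and explains the interesting fact that the $n$-dimensional Pochhammer cycle of Beukers is \emph{not} the naive cross product, the discrepancy being exactly the regularization factor $(1-e^{-2\pi\ii\beta})^{-1}$ coming from the hyperplane $\{1-x_1-\cdots-x_{n-1}=0\}$ acting as a common base. Two small points you should tidy: the factors of the product decomposition should be the affine complements $\C^{n-1}\setminus\{x_1\cdots x_{n-1}(1-x_1-\cdots-x_{n-1})=0\}$ and $\C\setminus\{0,1\}$ rather than projective spaces (projectivity enters only through the choice of good compactification used inside \cref{prop:TheProductFormula}), and the asserted compatibility of the regularization/canonical map with cross products is true but deserves a sentence of justification, e.g.\ via the K\"unneth decomposition of the moderate and rapid-decay de Rham complexes used in the paper's construction of cross products.
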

\noindent
The proof of this proposition will be given in the appendix. In $\rho$ direction, we also have a formula of the intersection number of the Hankel contour $C_0$ and the dual Hankel contour $C_0^\vee$. We set $\nabla_\alpha=d\rho+\alpha\frac{d\rho}{\rho}\wedge+d\rho\wedge$. The following proposition is an immediate consequence of \cite[THEOREM4.3]{MajimaMatsumotoTakayama}.

\begin{prop}\label{prop:HankelIntersection}
If $C_0\in\Homo_1^{r.d.}\left((\Gm)_\rho,\nabla_\alpha^\vee\right)$ and $C_0^\vee\in\Homo_1^{r.d.}\left((\Gm)_\rho,\nabla_\alpha\right)$ denote the Hankel contour and the dual Hankel contour respectively, one has a formula
\begin{equation}
\langle C_0,C_0^\vee\rangle_h=1-e^{-2\pi\ii\alpha}.
\end{equation}
\end{prop}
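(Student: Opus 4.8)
The statement is the one-dimensional instance of \cite[Theorem 4.3]{MajimaMatsumotoTakayama}, so the plan is simply to match the two settings and then read off the value. First I would note that $C_0$ is indeed a rapid decay cycle for $\nabla_\alpha^\vee$: both of its rays run out to $\Re\rho\to-\infty$, and $e^{\rho}$, a flat multisection of $\nabla_\alpha^\vee$, decays rapidly in that direction; symmetrically $C_0^\vee=\check C_0$ runs out to $\Re\rho\to+\infty$, where $e^{-\rho}$ decays, so it is a rapid decay cycle for $\nabla_\alpha$ and, via the canonical morphism $\mathrm{can}$ of (\ref{RDCommutativity}), also represents a class in $\Homo_1^{r.g.}\left((\Gm)_\rho,\nabla_\alpha\right)$. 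As in \S\ref{IntersectionNumbers}, the symbol $\langle C_0,C_0^\vee\rangle_h$ abbreviates $\langle C_0,\mathrm{can}(C_0^\vee)\rangle_h$, which is the pairing to be computed.

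Next I would localise the computation near the puncture $\rho=0$. In the real oriented blow-up of $\PP^1_\rho$ along $\{0,\infty\}$ the rays of $C_0$ accumulate on $S^1\infty$ in the direction $e^{\pi\ii}\infty$ and those of $C_0^\vee$ in the direction $e^{0}\infty$; these are distinct points, so after an isotopy supported away from $0$ the two cycles meet only in an arbitrarily small neighbourhood $\widetilde V$ of $S^1 0$. On $\widetilde V$ the function $e^{\rho}$ is holomorphic and nowhere vanishing, so the rapid decay and moderate growth complexes of $\nabla_\alpha$ there agree with the ordinary twisted de Rham complexes of the regular singular connection $d+\alpha\frac{d\rho}{\rho}\wedge$, and the local pairing reduces to the classical twisted intersection pairing of a punctured disc with local system $\underline{\C}\rho^{\alpha}$; one may phrase this reduction using \cref{prop:TheLocalizationFormula}, or simply count transversal crossings, all of which lie near $0$. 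There $C_0$ and $C_0^\vee$ are precisely the regularised cycles attached to a half-line terminating at $\rho=0$ — $C_0$ entering along $\arg\rho=-\pi$, encircling $0$ once counter-clockwise and leaving along $\arg\rho=\pi$, and $C_0^\vee$ doing the same with $\arg\rho=0$ and $\arg\rho=2\pi$ — which is exactly the configuration computed in \cite[Theorem 4.3]{MajimaMatsumotoTakayama}. The resulting contribution, coming from the single effective crossing between an outgoing ray and the encircling loop weighted by the monodromy factor $e^{-2\pi\ii\alpha}$ that the loop produces on $\underline{\C}\rho^{\alpha}$, is $1-e^{-2\pi\ii\alpha}$.

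The one delicate point is the bookkeeping of branches and orientations — the $e^{\pm\pi\ii}$ prescriptions on the two sheets of $C_0$, the $e^{2\pi\ii}$ prescription for $\check C_0$, and the sign conventions in the Poincar\'e duality morphism $\Phi$ — which is what fixes the overall sign and produces the exponent $-2\pi\ii\alpha$ rather than $+2\pi\ii\alpha$; this is routine once the normalisation of \cite{MajimaMatsumotoTakayama} is matched. As a sanity check, the value is the specialisation to a single puncture of \cref{prop:PochhammerIntersection}, whose product of $n+2$ sine factors collapses here to the lone factor $1-e^{-2\pi\ii\alpha}$.
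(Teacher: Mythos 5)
Your proposal is correct and takes essentially the same route as the paper: the paper's own ``proof'' is simply the one-line observation that the statement is an immediate consequence of \cite[Theorem 4.3]{MajimaMatsumotoTakayama}, and you likewise reduce to that theorem (while adding a helpful expansion — the rapid decay check, the localization near $\rho=0$, and the sanity check against \cref{prop:PochhammerIntersection} — that the paper leaves implicit).
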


Now we apply \cref{prop:PochhammerIntersection} and \cref{prop:HankelIntersection} to integration cycles constructed in the previous section. In the following computations, we may assume that $z\approx z_\infty^\s$ since $\langle\bullet,\bullet\rangle_h$ is invariant under parallel transport. We factorize the integrand as

\begin{align}
&\prod_{l:l\geq  1, |\s^{(l)}|>1}\left\{\left(\sum_{i\in\s^{(l)}}\tau_i+\sum_{j\in\bar{\s}^{(l)}}z_\s^{-A_\s^{-1}{\bf a}(j)}z_j\rho^{S_0({\bf a}(j))}(u_{\s^{(0)}},\tau_\s)^{A_\s^{-1}{\bf a}(j)}\right)^{-\gamma_l}\prod_{i\in\s^{(l)}}\tau_i^{p_{\s i}(\delta)}\right\}\times\nonumber\\
&\prod_{l:l\geq  1, |\s^{(l)}|=1}\left(1+\tau_{i_l}^{-1}\sum_{j\in\bar{\s}^{(l)}}z_\s^{-A_\s^{-1}{\bf a}(j)}z_j\rho^{S_0({\bf a}(j))}(u_{\s^{(0)}},\tau_\s)^{A_\s^{-1}{\bf a}(j)}\right)^{-\gamma_l}\prod_{i\in\s^{(0)}}u_i^{p_{\s i}(\delta)}\times\nonumber\\
&\exp\left\{ 
\rho+\sum_{j\in\bs^{(0)}}z_\s^{-A_\s^{-1}{\bf a}(j)}z_j
\rho^{S_0({\bf a}(j))}
(u_{\s^{(0)}},\tau_\s)^{A_\s^{-1}{\bf a}(j)}
\right\}
\rho^{S_0(\delta)}.
\end{align}
Thus, on a neighborhood of the cycle $\Gamma_{\s,0}$, the factor 
\begin{equation}\label{Factor1}
\prod_{l:l\geq  1, |\s^{(l)}|=1}\left(1+\tau_{i_l}^{-1}\sum_{j\in\bar{\s}^{(l)}}z_\s^{-A_\s^{-1}{\bf a}(j)}z_j\rho^{S_0({\bf a}(j))}(u_{\s^{(0)}},\tau_\s)^{A_\s^{-1}{\bf a}(j)}\right)^{-\gamma_l}
\end{equation}
is holomorphic since $z_\s^{-A_\s^{-1}{\bf a}(j)}z_j$ are small complex numbers and $S_0({\bf a}(j))\leq 0$. By the formula $S_l(\delta)=\gamma_l$, the assumption of the \cref{prop:PochhammerIntersection} is satisfied.
On the other hand, if we take an open neighbourhood $\widetilde{V}\subset\widetilde{X_z}$ so that its slice in $\rho$-space is a small neighbourhood of both the Hankel contour and the dual Hankel contour, (cf. Figure \ref{TheSlice}) and its slice in $(u_{\s^{(0)}},\tau_\s)$-space is a small neighbourhood of $\Gamma_0=P_{u_\s^{(0)}}\times\displaystyle\prod_{l=1}^kP_{\tau_{\s^{(l)}}}$, we see that the factor 
\begin{equation}\label{Factor2}
\exp\left\{ 
\sum_{j\in\bs^{(0)},\ S_0({\bf a}(j))\leq 0}z_\s^{-A_\s^{-1}{\bf a}(j)}z_j
\rho^{S_0({\bf a}(j))}
(u_{\s^{(0)}},\tau_\s)^{A_\s^{-1}{\bf a}(j)}
\right\}
\end{equation}
is bounded on $\widetilde{V}$. The remaining exponential factor is 
\begin{equation}
\exp\left\{ \left(1+
\sum_{j\in\bs^{(0)},\ S_0({\bf a}(j))=1}z_\s^{-A_\s^{-1}{\bf a}(j)}z_j
(u_{\s^{(0)}},\tau_\s)^{A_\s^{-1}{\bf a}(j)}\right)\rho
\right\}.
\end{equation}
We introduce a new coordinate $\tilde{\rho}$ by setting $\tilde{\rho}=\left(1+
\displaystyle\sum_{j\in\bs^{(0)},\ S_0({\bf a}(j))=1}z_\s^{-A_\s^{-1}{\bf a}(j)}z_j
(u_{\s^{(0)}},\tau_\s)^{A_\s^{-1}{\bf a}(j)}\right)\rho
$. Since $\displaystyle\sum_{j\in\bs^{(0)},\ S_0({\bf a}(j))=1}z_\s^{-A_\s^{-1}{\bf a}(j)}z_j
(u_{\s^{(0)}},\tau_\s)^{A_\s^{-1}{\bf a}(j)}$ remains very small when $(u_{\s^{(0)}},\tau_{\s})$ runs over our contour, this change of coordinate still gives the Hankel contour in $\tilde{\rho}$ coordinate. 

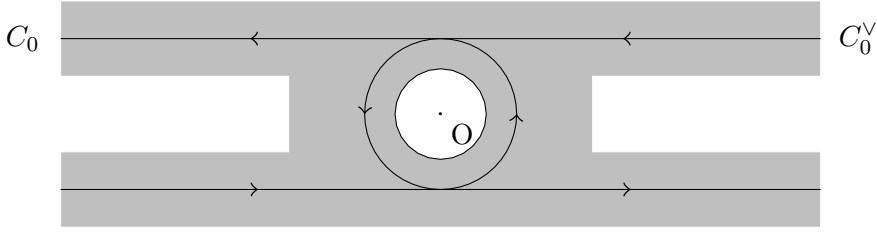
\begin{figure}[H]
\begin{center}
\begin{tikzpicture}
\filldraw[fill=lightgray,draw=black!0] (-5,1.5) -- (5,1.5) -- (5,-1.5) -- (-5,-1.5) -- cycle;
\filldraw[domain=0:360, fill=white] plot ({(0.6)*cos(\x)}, {(0.6)*sin(\x)});
\node at (0,0){$\cdot$};
\draw (0,0) node[below right]{O};
\draw[->-=.5,domain=-90:90] plot ({cos(\x)}, {sin(\x)});
\draw[-<-=.5] ({cos(-90)},{sin(-90)}) -- (-5, {sin(-90)});
\draw[->-=.5] ({cos(90)},{sin(90)}) -- (-5, {sin(90)});

\draw[->-=.5,domain=90:270] plot ({cos(\x)}, {sin(\x)});
\draw[-<-=.5] ({cos(90)},{sin(90)}) -- (5, {sin(90)});
\draw[->-=.5] ({cos(270)},{sin(270)}) -- (5, {sin(270)});

\filldraw[fill=white, draw=black!0] (-2,0.5) -- (-5,0.5) -- (-5, -0.5) -- (-2,-0.5) -- cycle;
\filldraw[fill=white, draw=black!0] (2,0.5) -- (5,0.5) -- (5, -0.5) -- (2,-0.5) -- cycle;
\node at (-5.5,1){$C_0$};
\node at (5.5,1){$C_0^\vee$};
\end{tikzpicture}
\caption{The slice of $\widetilde{V}$ in $\rho$-space (the gray zone)}
\label{TheSlice}
\end{center}
\end{figure}

Below, we apply \cref{prop:TheLocalizationFormula} and \cref{prop:TheProductFormula} to $\langle \Gamma_{\s,{\bf 0}},\check{\Gamma}_{\s,{\bf 0}}\rangle_h$. We put
$
\Psi_{0}=e^{\tilde{\rho}}\tilde{\rho}^{S_0(\delta)}
$, 
$
\Psi_{1}=\prod_{i\in\s^{(0)}}u_i^{p_{\s i}(\delta)}
$, 
$
\Psi_{2}=\prod_{l:l\geq  1, |\s^{(l)}|>1}\left\{\left(\sum_{i\in\s^{(l)}}\tau_i\right)^{-\gamma_l}\prod_{i\in\s^{(l)}}\tau_i^{p_{\s i}(\delta)}\right\}
$.
Note that these functions $\Psi_0,\Psi_1,\Psi_2$ are all multivalued functions on $\C^*_{\tilde{\rho}}$, $W_1=\left\{ u_{\s^{(0)}}\in(\C^*)^{|\s^{(0)}|}\mid \sum_{i\in\s^{(0)}}u_i=1\right\}$, and on \newline
$W_2=\prod_{l=1}^k\left(\PP^{|\s^{(l)}|-1}_{\tau_{\s^{(l)}}}\setminus\bigcup_{i\in\s^{(l)}}\{ \tau_i=0\}\cup\left\{ \sum_{i\in\s^{(l)}}\tau_i=0\right\}\right)$ respectively. Since $\widetilde{V}$ is a neighbourhood of both $\Gamma_{\s,{\bf 0}}$ and $\check{\Gamma}_{\s,{\bf 0}}$, we see that there exist cycles $\gamma\in\Homo^{\rm r.d.}_n\left( V,\nabla_z^\vee\right)$ and $\gamma^\vee\in\Homo^{\rm r.d.}_n\left( V,\nabla_z\right)$ such that $\Gamma_{\s,{\bf 0}}=\iota_{\widetilde{X}\widetilde{V}!}\gamma$ and $\check{\Gamma}_{\s,{\bf 0}}=\iota_{\widetilde{X}\widetilde{V}!}\gamma^\vee$. By \cref{prop:TheLocalizationFormula}, we obtain $\langle \Gamma_{\s,{\bf 0}},\check{\Gamma}_{\s,{\bf 0}}\rangle_h=\langle \gamma,\gamma^\vee\rangle_h$. Since (\ref{Factor1}) and (\ref{Factor2}) are both holomorphic and bounded on $\widetilde{V}$, we see that the connection $\nabla_z$ is equivalent to $\nabla_{\rm red}=\nabla_0+\nabla_1+\nabla_2$ on $\tilde{V}$ where $\nabla_0=\Psi^{-1}_0\circ d_{\tilde{\rho}}\circ\Psi_0$, $\nabla_1=\Psi^{-1}_1\circ d_{u_{\s^{(0)}}}\circ\Psi_1$, and $\nabla_2=\Psi^{-1}_2\circ d_{\tau_\s}\circ\Psi_2$. Now, we consider a Cartesian diagram 
\begin{equation}
\xymatrix{
 V \ar@{^{(}->}[r] \ar@{^{(}->}[d]&(\C^*)_{\tilde{\rho}}\times W_1\times W_2\ar@{^{(}->}[d]\\
 \widetilde{V^\prime} \ar@{^{(}->}[r]^{\iota_{\widetilde{Y}\widetilde{V^\prime}!}}                             &\widetilde{Y},
}
\end{equation}
where $\widetilde{Y}$ is a real oriented blow-up of a good compactification of $(\C^*)_{\tilde{\rho}}\times W_1\times W_2$ with respect to the connection $(\mathcal{O}_{(\C^*)_{\tilde{\rho}}\times W_1\times W_2},\nabla_{\rm red})$ and $\widetilde{V^\prime}$ is an open neighbourhood of the cycle $\gamma$ and $\gamma^\vee$ in $\widetilde{Y}$. In our setting, $Y$ is nothing but a product $\PP_{\tilde{\rho}}\times Y_{12}$ where $Y_{12}$ is a product of projective spaces in $u_{\s^{(0)}}$ and $\tau_{\s^{(l)}}$ coordinates. Note that our cycles $\Gamma_{\s,{\bf 0}}$ and $\check{\Gamma}_{\s,{\bf 0}}$ (hence $\gamma$ and $\gamma^\vee$) are defined by taking closures of semi-algebraic cycles (see the discussion right before \cref{thm:fundamentalthm3}) and therefore, do not depend on the choice of the compactification. Applying \cref{prop:TheLocalizationFormula} to the morphism $\iota_{\widetilde{Y}\widetilde{V^\prime}!}:\Homo_n^{\rm r.d.}\left( V,\nabla_{\rm red}\right)\rightarrow\Homo_n^{\rm r.d.}\left( (\C^*)_{\tilde{\rho}}\times W_1\times W_2,\nabla_{\rm red}\right)$ once again, we obtain $\langle\gamma,\gamma^\vee\rangle_h=\langle \iota_{\widetilde{Y}\widetilde{V^\prime}!}\gamma,\iota_{\widetilde{Y}\widetilde{V^\prime}!}\gamma^\vee\rangle_h$. By our construction of cycles $\Gamma_{\s,{\bf 0}}$ and $\check{\Gamma}_{\s,{\bf 0}}$ in \S\ref{SectionEuler}, we see that $\iota_{\widetilde{Y}\widetilde{V^\prime}!}\gamma$ and $\iota_{\widetilde{Y}\widetilde{V^\prime}!}\gamma^\vee$ are cross products of the forms $\iota_{\widetilde{Y}\widetilde{V^\prime}!}\gamma=C_0\times P_{u_{\s^{(0)}}}\times P_{\tau_\s}$ and $\iota_{\widetilde{Y}\widetilde{V^\prime}!}\gamma^\vee=C_0^\vee\times P_{u_{\s^{(0)}}}^\vee\times P_{\tau_\s}^\vee$. Applying \cref{prop:TheProductFormula}, we obtain $\langle \iota_{\widetilde{Y}\widetilde{V^\prime}!}\gamma,\iota_{\widetilde{Y}\widetilde{V^\prime}!}\gamma^\vee\rangle_h=\langle C_0,C_0^\vee\rangle_h \langle P_{u_{\s^{(0)}}},P_{u_{\s^{(0)}}}^\vee\rangle_h\langle P_{\tau_\s},P_{\tau_\s}^\vee\rangle_h$.

If $\s^{(0)}\neq\varnothing$, \cref{prop:HankelIntersection} implies that 
\begin{equation}
\langle C_0,C_0^\vee\rangle_h=\left( 1-e^{-2\pi\ii S_0(\delta)}\right).
\end{equation} 
If $|\s^{(0)}|\geq 2$, \cref{prop:PochhammerIntersection} implies that 
\begin{equation}
\langle P_{u_{\s^{(0)}}},P_{u_{\s^{(0)}}}^\vee\rangle_h=\left( 1-e^{2\pi\ii S_0(\delta)}\right)\prod_{i\in\s^{(0)}}\left( 1-e^{-2\pi\ii p_{\s i}(\delta)}\right).
\end{equation}
Finally, \cref{prop:PochhammerIntersection} also implies that
\begin{equation}
\langle P_{\tau_\s},P_{\tau_\s}^\vee\rangle_h=\prod_{l:|\s^{(l)}|>1}(1-e^{2\pi\ii\gamma_l})\prod_{i\in\s^{(l)}}\left(1-e^{-2\pi\ii p_{\s i}(\delta)}\right).
\end{equation}
Summing up all the arguments above, we obtain a

\begin{thm}\label{thm:IntersectionNumber}
We decompose $\s$ as $\s=\s^{(0)}\cup\dots\cup\s^{(k)}$ and set $\gamma_0=S_0(\delta)$. If $\det A_\s=\pm 1$, then,

\begin{equation}
\langle \Gamma_{\s,0},\check{\Gamma}_{\s,0}\rangle_h =
\begin{cases}
\displaystyle\prod_{l:|\s^{(l)}|>1}\left\{(1-e^{2\pi\ii\gamma_l})\prod_{i\in\s^{(l)}}\left(1-e^{-2\pi\ii p_{\s i}(\delta)}\right)\right\}&(\s^{(0)}=\varnothing)\\
\displaystyle\left( 1-e^{-2\pi\ii\gamma_0}\right)\prod_{l:|\s^{(l)}|>1}\left\{(1-e^{2\pi\ii\gamma_l})\prod_{i\in\s^{(l)}}\left(1-e^{-2\pi\ii p_{\s i}(\delta)}\right)\right\}&(\s^{(0)}\neq\varnothing).
\end{cases}
\end{equation}

\end{thm}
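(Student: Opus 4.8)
The strategy is essentially laid out in the computation preceding the statement; what remains is to organise it into a proof. Since $\langle\bullet,\bullet\rangle_h$ is invariant under parallel transport, I would first move $z$ into the region $z\approx z_\infty^\s$ in which $\Gamma_{\s,{\bf 0}}$ and $\check\Gamma_{\s,{\bf 0}}$ were constructed. The core step is to produce an open set $\widetilde V\subset\widetilde{X_z}$ containing both cycles, whose slice in $\rho$-space is a small neighbourhood of the Hankel and dual Hankel contours (Figure~\ref{TheSlice}) and whose slice in $(u_{\s^{(0)}},\tau_\s)$-space is a small neighbourhood of $\Gamma_0=P_{u_{\s^{(0)}}}\times\prod_{l=1}^kP_{\tau_{\s^{(l)}}}$, on which the integrand factorises as in the displayed identity above: the factor (\ref{Factor1}) is holomorphic and the factor (\ref{Factor2}) is bounded, since for the relevant $j$ the quantity $z_\s^{-A_\s^{-1}{\bf a}(j)}z_j$ is very small and $\sum_{i\in\s^{(0)}}{}^t{\bf e}_iA_\s^{-1}{\bf a}(j)\le 0$ by (\ref{inequality1})--(\ref{inequality2}); and after the change of coordinate $\tilde\rho=\bigl(1+\sum_j z_\s^{-A_\s^{-1}{\bf a}(j)}z_j(u_{\s^{(0)}},\tau_\s)^{A_\s^{-1}{\bf a}(j)}\bigr)\rho$ (sum over $j\in\bs^{(0)}$ with exponent sum equal to $1$) the leftover exponential becomes $e^{\tilde\rho}$ and the image of the Hankel contour is again a Hankel contour. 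Hence $\nabla_z|_{\widetilde V}$ is gauge-equivalent, through a bounded holomorphic factor, to the product connection $\nabla_{\rm red}=\nabla_0+\nabla_1+\nabla_2$ with $\nabla_0=\Psi_0^{-1}\circ d_{\tilde\rho}\circ\Psi_0$, $\nabla_1=\Psi_1^{-1}\circ d_{u_{\s^{(0)}}}\circ\Psi_1$, $\nabla_2=\Psi_2^{-1}\circ d_{\tau_\s}\circ\Psi_2$ on $(\C^*)_{\tilde\rho}\times W_1\times W_2$.

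With this in hand I would write $\Gamma_{\s,{\bf 0}}=\iota_{\widetilde X\widetilde V!}\gamma$ and $\check\Gamma_{\s,{\bf 0}}=\iota_{\widetilde X\widetilde V!}\gamma^\vee$ for suitable $\gamma\in\Homo^{\rm r.d.}_n(V,\nabla_z^\vee)$ and $\gamma^\vee\in\Homo^{\rm r.d.}_n(V,\nabla_z)$, and apply \cref{prop:TheLocalizationFormula} to get $\langle\Gamma_{\s,{\bf 0}},\check\Gamma_{\s,{\bf 0}}\rangle_h=\langle\gamma,\gamma^\vee\rangle_h$. Replacing $\nabla_z$ by $\nabla_{\rm red}$ and choosing a product good compactification $\PP_{\tilde\rho}\times Y_{12}$ — legitimate because $\Gamma_{\s,{\bf 0}},\check\Gamma_{\s,{\bf 0}}$ are closures of semi-algebraic cycles, hence independent of the compactification — a second application of \cref{prop:TheLocalizationFormula} gives $\langle\gamma,\gamma^\vee\rangle_h=\langle\iota_{\widetilde Y\widetilde{V'}!}\gamma,\iota_{\widetilde Y\widetilde{V'}!}\gamma^\vee\rangle_h$, and by the construction of \S\ref{SectionEuler} these push-forwards are the cross products $C_0\times P_{u_{\s^{(0)}}}\times P_{\tau_\s}$ and $C_0^\vee\times P_{u_{\s^{(0)}}}^\vee\times P_{\tau_\s}^\vee$. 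Since all factors are middle-dimensional in their ambient spaces, iterating \cref{prop:TheProductFormula} yields $\langle\Gamma_{\s,{\bf 0}},\check\Gamma_{\s,{\bf 0}}\rangle_h=\langle C_0,C_0^\vee\rangle_h\,\langle P_{u_{\s^{(0)}}},P_{u_{\s^{(0)}}}^\vee\rangle_h\,\langle P_{\tau_\s},P_{\tau_\s}^\vee\rangle_h$ with no extra sign.

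Finally I would evaluate the three factors. Because $\det A_\s=\pm1$, the exponents ${}^t{\bf e}_iA_\s^{-1}\delta$ satisfy $\sum_{i\in\s^{(l)}}{}^t{\bf e}_iA_\s^{-1}\delta=\gamma_l$ for $l\ge1$ and $\sum_{i\in\s^{(0)}}{}^t{\bf e}_iA_\s^{-1}\delta=\gamma_0$, so the normalisation hypotheses of \cref{prop:PochhammerIntersection} and \cref{prop:HankelIntersection} hold. Applying \cref{prop:PochhammerIntersection} in the $\tau_\s$-variables gives $\langle P_{\tau_\s},P_{\tau_\s}^\vee\rangle_h=\prod_{l:|\s^{(l)}|>1}(1-e^{2\pi\ii\gamma_l})\prod_{i\in\s^{(l)}}(1-e^{-2\pi\ii{}^t{\bf e}_iA_\s^{-1}\delta})$; when $\s^{(0)}=\varnothing$ there is no $\rho$-direction and no $u_{\s^{(0)}}$-direction, so this is the whole answer, whereas when $\s^{(0)}\neq\varnothing$ \cref{prop:HankelIntersection} contributes $1-e^{-2\pi\ii\gamma_0}$ (and, when $|\s^{(0)}|\ge2$, \cref{prop:PochhammerIntersection} contributes the $u_{\s^{(0)}}$-factor as well), and collecting the factors gives the stated formula. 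The main obstacle is the first step: one must verify carefully that the error factors (\ref{Factor1})--(\ref{Factor2}) are holomorphic and bounded on a genuine open neighbourhood of \emph{both} $\Gamma_{\s,{\bf 0}}$ and $\check\Gamma_{\s,{\bf 0}}$ simultaneously, and that the coordinate change $\tilde\rho$ respects this neighbourhood, so that the gauge reduction to $\nabla_{\rm red}$ is valid throughout $\widetilde V$; once this is secured, \cref{prop:TheLocalizationFormula} and \cref{prop:TheProductFormula} make the rest routine.
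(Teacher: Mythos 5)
Your proposal follows the paper's own argument essentially step for step: moving $z$ to $z_\infty^\s$ by monodromy invariance, factorizing the integrand and controlling the error factors (\ref{Factor1})--(\ref{Factor2}) via the inequalities (\ref{inequality1})--(\ref{inequality2}), passing to the reduced product connection $\nabla_{\rm red}$, two applications of \cref{prop:TheLocalizationFormula}, one application of \cref{prop:TheProductFormula}, and then evaluating the three factors by \cref{prop:HankelIntersection} and \cref{prop:PochhammerIntersection}. This is the same route as the paper, including the final case split on $\s^{(0)}=\varnothing$ versus $\s^{(0)}\neq\varnothing$, so the proposal is correct.
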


\section{Twisted period relations for $\Gamma$-series}\label{QuadraticRelationsForGammaSeries}
In this section, we derive a quadratic relation for $\Gamma$-series associated to a unimodular regular triangulation. For any complex numbers $\alpha,\beta$ such that $\alpha+\beta\notin\Z_{\leq 0}$, we put $(\alpha)_\beta=\frac{\Gamma(\alpha+\beta)}{\Gamma(\alpha)}$. In general, for any vectors ${\bf \alpha}=(\alpha_1,\dots,\alpha_s),{\bf \beta}=(\beta_1,\dots,\beta_s)\in\C^s$, we put $({\bf \alpha})_{\bf \beta}=\prod_{i=1}^s(\alpha_i)_{\beta_i}$. Combining the results of \S 4 and \S 5, we obtain the main result of this paper.

\begin{thm}\label{thm:QuadraticRelation}
Suppose that four vectors ${\bf a},{\bf a}^\prime\in\Z^{n\times 1},{\bf b},{\bf b}^\prime\in\Z^{k\times 1}$ and a convergent unimodular regular triangulation T are given. If the parameter $\delta$ is generic so that $\gamma_l\notin\Z$ for any $l=1,\dots,k$ and $\delta$, $
\begin{pmatrix}
\gamma-{\bf b}\\
c+{\bf a}
\end{pmatrix}
$  and 
$
\begin{pmatrix}
\gamma+{\bf b}^\prime\\
c-{\bf a}^\prime
\end{pmatrix}
$ are very generic with respect to $T$, then, for any $z\in U_T,$ one has an identity
\begin{align}
&(-1)^{|{\bf b}|+|{\bf b}^\prime|}\gamma_1\cdots\gamma_k(\gamma-{\bf b})_{\bf b}(-\gamma-{\bf b}^\prime)_{{\bf b}^\prime}
\sum_{\s\in T}\frac{\pi^{n+k}}{\sin\pi A_\s^{-1}\delta}\varphi_{\s,0}\left(z;
\begin{pmatrix}
\gamma-{\bf b}\\
c+{\bf a}
\end{pmatrix}
\right)\varphi_{\s,0}^\vee\left(z;
\begin{pmatrix}
\gamma+{\bf b}^\prime\\
c-{\bf a}^\prime
\end{pmatrix}
\right)\nonumber\\
=&\frac{\langle x^{\bf a}h^{\bf b}\frac{dx}{x},x^{{\bf a}^\prime}h^{{\bf b}^\prime}\frac{dx}{x}\rangle_{ch}}{(2\pi\ii)^n}.
\end{align}
\end{thm}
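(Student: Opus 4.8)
The plan is to apply the general twisted period relation \eqref{GeneralQuadraticRelation} to a carefully chosen quadruple of bases and then identify each of the four matrices $I_{ch}, I_h, P, P^\vee$ explicitly. Concretely, I would take $U=\pi^{-1}(z)$, $f=-h_{0,z^{(0)}}$ in the notation of \S\ref{RDIntersection}, and work with the connection $\nabla_z$ and its dual $\nabla_z^\vee$. The four bases will be: $\{\omega_\s\}_{\s\in T}$ a de Rham basis dual (in a sense to be arranged) to the series solutions, the rapid decay basis $\Gamma_T=\{\Gamma_{\s,0}\}_{\s\in T}$ from \cref{thm:fundamentalthm3} (unimodularity forces $r_\s=1$, so each simplex contributes a single cycle indexed by $\tilde{\bf k}={\bf 0}$), the dual rapid decay basis $\check\Gamma_T=\{\check\Gamma_{\s,0}\}_{\s\in T}$ from \cref{thm:dualfundamentalthm3}, and a de Rham basis $\{\eta_\s\}$ on the dual side. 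The period matrices $P$ and $P^\vee$ are then governed by \cref{thm:fundamentalthm3} and \cref{thm:dualfundamentalthm3}: for the special choice where the de Rham classes are $\frac{dx}{x}$ (and its dual), $P$ is essentially the diagonal-times-character-matrix expression $T_\s$, and likewise $P^\vee$ involves $T_\s^\vee$; in the unimodular case these collapse to scalars because $r_\s=1$.

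The key computational steps, in order, are as follows. First, substitute the explicit formulas for $T_\s$ and $T_\s^\vee$ from \cref{thm:fundamentalthm3} and \cref{thm:dualfundamentalthm3} into $P$ and $P^\vee$, noting that in the unimodular case $\det A_\s=\pm1$, $\varepsilon_\s(\delta,{\bf k})$ simplifies, and the character matrices are $1\times 1$. Second, plug in the homology intersection matrix: by the Proposition just before \cref{thm:dualfundamentalthm3}'s analogue (the block-diagonality statement) together with \cref{thm:IntersectionNumber}, the matrix $I_h$ is diagonal with entries $\langle\Gamma_{\s,0},\check\Gamma_{\s,0}\rangle_h$ given in closed form by products of $(1-e^{\pm 2\pi\ii(\cdots)})$ factors. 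Third, assemble $I_{ch}=P\,{}^tI_h^{-1}\,{}^tP^\vee$: since everything is diagonal, this is a sum over $\s\in T$ of a product of scalar factors times $\varphi_{\s,0}\varphi_{\s,0}^\vee$. Fourth, perform the trigonometric bookkeeping: the $\Gamma(\gamma_l)$, $\Gamma(-\gamma_l)$, the $e^{-\pi\ii(\cdots)}$ phases, and the $(1-e^{\pm 2\pi\ii(\cdots)})$ factors from $I_h^{-1}$ must combine via $\Gamma(s)\Gamma(1-s)=\pi/\sin\pi s$ and $1-e^{-2\pi\ii s}=-2\ii e^{-\pi\ii s}\sin\pi s$ into the single clean factor $\prod_{l}\gamma_l\cdot\pi^{n+k}/\sin\pi A_\s^{-1}\delta$. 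Fifth, handle the shift by ${\bf a},{\bf b},{\bf a}^\prime,{\bf b}^\prime$: the class $x^{\bf a}h^{\bf b}\frac{dx}{x}$ corresponds to shifting the parameter $\delta$ to $\begin{pmatrix}\gamma-{\bf b}\\ c+{\bf a}\end{pmatrix}$ (and dually), which is why the series on the left are evaluated at the shifted parameters; the multiplicative constants $\gamma_1\cdots\gamma_k(\gamma-{\bf b})_{\bf b}(-\gamma-{\bf b}^\prime)_{{\bf b}^\prime}$ and the sign $(-1)^{|{\bf b}|+|{\bf b}^\prime|}$ emerge from comparing $h^{\bf b}\frac{dx}{x}$ with the cyclic generator $\frac{dx}{x}$ of the shifted Gauss-Manin system, using the relation $\partial_z^u\bullet[\frac{dx}{x}]=\prod_l(-1)^{|u^{(l)}|}(\gamma_l)_{|u^{(l)}|}h^{-u}x^{Au}[\frac{dx}{x}]$ recorded (in the commented-out block) after \cref{cor:Corollary216}. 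Sixth, divide by $(2\pi\ii)^n$ to match the normalization on the right-hand side, keeping track of the $(2\pi\ii)^{n+k}$ factors built into the definitions of $\varphi_{\s,0}$, $f_{\s,0}$, and the period pairing.

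The main obstacle I anticipate is the phase and sign bookkeeping in steps four and five: the formulas for $T_\s$, $T_\s^\vee$, \cref{thm:IntersectionNumber}, and the parameter-shift constants each carry exponential phase factors $e^{\pm\pi\ii(\cdots)}$, signs $\mathrm{sgn}(A,\s)$, and $(2\pi\ii)$-powers, and verifying that they all cancel to leave exactly $(-1)^{|{\bf b}|+|{\bf b}^\prime|}\gamma_1\cdots\gamma_k(\gamma-{\bf b})_{\bf b}(-\gamma-{\bf b}^\prime)_{{\bf b}^\prime}\,\pi^{n+k}/\sin\pi A_\s^{-1}\delta$ — independent of $\s$, and in particular with $\mathrm{sgn}(A,\s)$ disappearing — requires care. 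A secondary subtlety is justifying that the de Rham classes $[x^{\bf a}h^{\bf b}\frac{dx}{x}]$ and $[x^{{\bf a}^\prime}h^{{\bf b}^\prime}\frac{dx}{x}]$ pair with $\Gamma_{\s,0}$ resp. $\check\Gamma_{\s,0}$ to give precisely $f_{\s,0}$ resp. $f_{\s,0}^\vee$ at the shifted parameters (so that $P, P^\vee$ really are the matrices from \cref{thm:fundamentalthm3} and \cref{thm:dualfundamentalthm3}); this uses \cref{thm:CyclicGenerator}, \cref{cor:Corollary216}, and the fact that the period pairing is compatible with the parameter shift, which should be routine but needs to be stated. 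Finally, one must invoke the vanishing $\Homo^m_{\rm dR}=0$ for $m\neq n$ (the Corollary after \cref{thm:mainDresult}) together with the perfectness statements in \S\ref{RDIntersection} to know that the $r\times r$ matrices in \eqref{GeneralQuadraticRelation} are indeed square of size $r=|T|=\vol_\Z(\Delta_A)$ and invertible, so that the period relation applies.
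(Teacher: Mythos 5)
Your proposal is correct and follows essentially the same route as the paper's proof: apply the general twisted period relation \eqref{GeneralQuadraticRelation} with the cycle bases from \cref{thm:fundamentalthm3} and \cref{thm:dualfundamentalthm3} (unimodularity making each block $1\times 1$), plug in the diagonal intersection matrix from \cref{thm:IntersectionNumber}, and carry out the phase and sine-function bookkeeping. The one step the paper makes explicit that your plan glosses over is verifying that the canonical morphism ${\rm can}:\Homo^n_{r.d.}\rightarrow\Homo^n_{\rm dR}$ on the $\nabla_z^\vee$-side is an isomorphism — needed so that the class $x^{\bf a}h^{\bf b}\frac{dx}{x}\in\Homo^n_{\rm dR}$ can be lifted to $\Homo^n_{r.d.}$ and $\langle\,\cdot\,,\,\cdot\,\rangle_{ch}$ on the right-hand side makes sense — which is established precisely because the intersection matrix of \cref{thm:IntersectionNumber} is invertible; also, your detour through the $\partial_z^u\bullet[\frac{dx}{x}]$ relation for the parameter shift is unnecessary, since $\Phi\cdot x^{{\bf a}^\prime}h^{{\bf b}^\prime}$ is literally the integrand at shifted parameters and \cref{thm:fundamentalthm3} applies directly.
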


\begin{proof}
We put $\varphi=x^{{\bf a}^\prime}h^{{\bf b}^\prime}\frac{dx}{x}\in\Homo^n_{dR}\left( \pi^{-1}(z),\nabla_z\right)$, $\psi=x^{\bf a}h^{\bf b}\frac{dx}{x}\in\Homo^n_{dR}\left( \pi^{-1}(z),\nabla_z^\vee\right)$. First of all, let us confirm that $\langle\varphi,\psi\rangle_{ch}$ is well-defined. Observe that the  canonical morphism 
\begin{equation}\label{HomologyAndLFHomology}
{\rm can}:\Homo_n^{r.d.}\left( \pi^{-1}(z)^{an},\nabla_z\right)\rightarrow\Homo_n^{mod}\left( \pi^{-1}(z)^{an},\nabla_z\right)
\end{equation}
is an isomorphism. Indeed, by Poincare duality, \cref{thm:mainDresult}, and the fact that $z\notin{\rm Sing} M_A(\delta)$, both sides of (\ref{HomologyAndLFHomology}) have the same dimension. Since the canonical morphism (\ref{HomologyAndLFHomology}) is compatible with intersection pairing $\langle\bullet,\bullet\rangle_h$ and the intersection matrix $\left( \langle\Gamma_{\s,0},\check{\Gamma}_{\s,0}\rangle_h\right)_{\s\in T}$ is invertible by \cref{thm:IntersectionNumber}, we can verify that (\ref{HomologyAndLFHomology}) is an isomorphism. By taking the dual of (\ref{HomologyAndLFHomology}), the canonical morphism 
\begin{equation}
{\rm can}:\Homo^n_{r.d.}\left( \pi^{-1}(z)^{an},\nabla_z^\vee\right)\rightarrow\Homo^n_{dR}\left( \pi^{-1}(z),\nabla_z^\vee\right)
\end{equation}
is also an isomorphism. Thus, the cohomology intersection number $\langle\varphi,\psi\rangle_{ch}$ is well-defined as $\langle\varphi,{\rm can}^{-1}(\psi)\rangle_{ch}$. Then, by \cref{thm:fundamentalthm3} we have 
\begin{align}
\int_{\Gamma_{\s,0}}\Phi\varphi=&(2\pi\ii)^{n+k} f_{\s,0}\left(z;
\begin{pmatrix}
\gamma-{\bf b}\\
c+{\bf a}
\end{pmatrix}
\right)\nonumber\\
=&(2\pi\ii)^{n+k}
C_\s(\gamma-{\bf b})
\varepsilon_\s\left(\begin{pmatrix}
\gamma-{\bf b}\\
c+{\bf a}
\end{pmatrix},{\bf 0}\right)
\varphi_{\s,{\bf 0}}\left(z;
\begin{pmatrix}
\gamma-{\bf b}\\
c+{\bf a}
\end{pmatrix}
\right).
\end{align}

\noindent
and

\begin{align}
\int_{\check{\Gamma}_{\s,0}}\Phi^{-1}\psi=&(2\pi\ii)^{n+k}f_{\s,0}^\vee\left( z;
\begin{pmatrix}
\gamma+{\bf b}^\prime\\
c-{\bf a}^\prime
\end{pmatrix}
\right)\nonumber\\
=&(2\pi\ii)^{n+k}
\exp\left\{ -\pi\ii S_0
\begin{pmatrix}
\gamma+{\bf b}^\prime\\
c-{\bf a}^\prime
\end{pmatrix}
\right\}\times\nonumber\\
&C_\s(-\gamma-{\bf b}^\prime)\varepsilon_\s\left(
\begin{pmatrix}
-\gamma-{\bf b}^\prime\\
-c+{\bf a}^\prime
\end{pmatrix},
{\bf 0}
\right)
\varphi_{\s,{\bf 0}}^\vee\left(z;
\begin{pmatrix}
\gamma+{\bf b}^\prime\\
c-{\bf a}^\prime
\end{pmatrix}
\right).
\end{align}

\noindent
In view of these formulae, we can conclude that $\varphi$ and $\psi$ are non-zero as cohomology classes. We can take a basis $\{ [\varphi_j]\}_{j=1}^L$ (resp. $\{[\psi_j]\}_{j=1}^L$) of the cohomology group $\Homo^{n}_{dR}\left(\pi^{-1}(z),\nabla_z\right)$ (resp. $\Homo^{n}_{dR}\left(\pi^{-1}(z),\nabla_z^\vee\right)$) so that $\varphi_1=\varphi$ and $\psi_1=\psi$. We also take a basis $\{ [\Gamma_{\s,0}]\}_{\s\in T}$ (resp. $\{ [\check{\Gamma}_{\s,0}]\}_{\s\in T}$) of the homology group $\Homo_n^{r.d.}\left(\pi^{-1}(z)^{an},\nabla_z^\vee\right)$ (resp. $\Homo_n^{r.d.}\left(\pi^{-1}(z)^{an},\nabla_z\right)$). Then, $(1,1)$ entry of the quadratic relation (\ref{GeneralQuadraticRelation}) is
\begin{equation}\label{Formula73}
\sum_{\s\in T}\langle \Gamma_{\s,0},\check{\Gamma}_{\s,0}\rangle_h^{-1}\left( \int_{\Gamma_{\s,0}}\Phi\varphi\right)\left( \int_{\check{\Gamma}_{\s,0}}\Phi^{-1}\psi\right)=\langle\varphi,\psi\rangle_{ch}.
\end{equation}
Formula (\ref{Formula73}) combined with \cref{thm:IntersectionNumber} will lead to the desired formula. Note that we have $\varepsilon_\s\left(\begin{pmatrix}
\gamma-{\bf b}\\
c+{\bf a}
\end{pmatrix},{\bf 0}\right)$ $=\varepsilon_\s(\delta,{\bf 0})$ and $\varepsilon_\s\left(
\begin{pmatrix}
-\gamma-{\bf b}^\prime\\
-c+{\bf a}^\prime
\end{pmatrix},
{\bf 0}
\right)=\varepsilon_\s(-\delta,{\bf 0})$ by our assumption $\det A_\s=\pm 1$.
\end{proof}

\begin{rem}
It is expected that the cohomology intersection number $
\langle x^{\bf a}h^{\bf b}\frac{dx}{x},x^{{\bf a}^\prime}h^{{\bf b}^\prime}\frac{dx}{x}\rangle_{ch}
$ is a rational function in $\delta$ and $z$ with coefficients in $\Q$. This is proved only when the GKZ system is regular holonomic. See \cite[Theorem 2.9]{GotoMatsubara}.
\end{rem}

\begin{exa}{\bf (Appell's $F_1$-series)}

We consider a one dimensional integral $f_\Gamma(z)=\int_\Gamma(z_1+z_4x)^{-c_1}(z_2+z_5x)^{-c_2}(z_3+z_6x)^{-c_3}x^{c_4}\frac{dx}{x}$. In this case, the $A$ matrix is given by $
A=
\begin{pmatrix}
1&0&0&1&0&0\\
0&1&0&0&1&0\\
0&0&1&0&0&1\\
0&0&0&1&1&1
\end{pmatrix}
$ and the parameter vector is 
$
c={}^t(c_1,c_2,c_3,c_4)$. The associated GKZ system $M_A(c)$ is related to the differential equations satisfied by Appell's $F_1$ functions (\cite[Vol.1, \S5.7.1]{ErdelyiEtAl}, \cite{DworkLoeser}). As a regular triangulation, we can take $T=\{ 1234,2346,2456\}$. The local system in question is associated to the multivalued function $\Phi=(z_1+z_4x)^{-c_1}(z_2+z_5x)^{-c_2}(z_3+z_6x)^{-c_3}x^{c_4}$. By \cite{MatsumotoIntersection}, if we take $\varphi=\frac{dx}{x}\in\Homo^1_{dR}(\Gm\setminus\{ -\frac{z_1}{z_4},-\frac{z_2}{z_5},-\frac{z_3}{z_6}\};\nabla_z)$ and $\psi=\frac{dx}{x}\in\Homo^1_{dR}(\Gm\setminus\{ -\frac{z_1}{z_4},-\frac{z_2}{z_5},-\frac{z_3}{z_6}\};\nabla_z^\vee)$, we have a formula $\langle\varphi,\psi\rangle_{ch}=2\pi\ii\frac{c_1+c_2+c_3}{c_4(c_1+c_2+c_3-c_4)}$. Applying \cref{thm:QuadraticRelation} and taking a restriction to $z_2=z_3=z_4=z_6=1$, we obtain a new identity for Appell's $F_1$-series:
\begin{align}
 &\frac{c_1}{c_4(c_1-c_4)}F_1\left(\substack{c_4,c_2,c_3\\ 1+c_4-c_1};z_1z_5,z_1\right)F_1\left(\substack{-c_4,-c_2,-c_3 \\1-c_4+c_1};z_1z_5,z_1\right) \nonumber\\
 &+\frac{c_3}{(c_1+c_3-c_4)(c_4-c_1)}G_2(c_1,c_2,c_4-c_1,c_1+c_3-c_4;-z_1,-z_5)G_2(-c_1,-c_2,c_1-c_4,c_4-c_1-c_3;-z_1,-z_5) \nonumber\\
 &+\frac{c_2}{(c_1+c_2+c_3-c_4)(c_4-c_1-c_3)}F_1\left(\substack{c_1+c_2+c_3-c_4,c_1,c_3 \\1+c_1+c_3-c_4};z_1z_5,z_5\right)F_1\left(\substack{c_4-c_1-c_2-c_3,-c_1,-c_3 \\1+c_4-c_1-c_3};z_1z_5,z_5\right) \nonumber\\
=&\frac{c_1+c_2+c_3}{c_4(c_1+c_2+c_3-c_4)}
\end{align}

\noindent
Here, we have put
\begin{equation}
F_1\left(\substack{a,b,b^\prime\\ c};x,y\right)=\sum_{m,n\geq 0}\frac{(a)_{m+n}(b)_m(b^\prime)_n}{(c)_{m+n}m!n!}x^my^n
\end{equation}
and
\begin{equation}
G_2(a,a^\prime,b,b^\prime;x,y)=\sum_{m,n\geq 0}\frac{(a)_m(a^\prime)_n(b)_{n-m}(b^\prime)_{m-n}}{m!n!}x^my^n.
\end{equation}

\end{exa}

\begin{exa}{\bf (Horn's $\Phi_1$-series)}

We consider a one dimensional integral $f_\Gamma(z)=\int_\Gamma e^{z_1x}(z_2+z_3x)^{-\gamma_1}(z_4+z_5x)^{-\gamma_2}x^c\frac{dx}{x}$. In this case, the $A$ matrix is given by 
$
A=
\begin{pmatrix}
0&1&1&0&0\\
0&0&0&1&1\\
1&0&1&0&1
\end{pmatrix}
$ and the associated GKZ system $M_A(\delta)$ is related to the differential equations satisfied by Horn's $\Phi_1$-series (\cite[Vol.1, \S5.7.1]{ErdelyiEtAl}, \cite{DworkLoeser}). As a convergent regular triangulation, we take $T=\{ 135,234,345\}$. By \cite{MajimaMatsumotoTakayama}, if we take $\varphi=\frac{dx}{x}\in\Homo^1_{dR}(\Gm\setminus\{ -\frac{z_2}{z_3},-\frac{z_4}{z_5}\};\nabla_z)$ and $\psi=\frac{dx}{x}\in\Homo^1_{dR}(\Gm\setminus\{ -\frac{z_2}{z_3},-\frac{z_4}{z_5}\};\nabla_z^\vee)$, we have a formula $\langle\varphi,\psi\rangle_{ch}=\frac{2\pi\ii}{c}$. Applying \cref{thm:QuadraticRelation}, we obtain a new  identity for Horn's $\Phi_1$-series:

\begin{align}
&(c-\gamma_1-\gamma_2)(\gamma_1-c)\nonumber\\
=&c(\gamma_1-c)\Phi_2\left(\substack{\gamma_1,\gamma_2\\ 1+\gamma_1+\gamma_2-c};-zw,-w\right)\Phi_2\left(\substack{-\gamma_1,-\gamma_2\\ 1-\gamma_1-\gamma_2+c};zw,w\right)\nonumber\\
&+\gamma_1(c-\gamma_1-\gamma_2)\Phi_1\left(\substack{c,\gamma_2\\ 1+c-\gamma_1};z,-zw\right)\Phi_1\left(\substack{-c,-\gamma_2\\ 1-c+\gamma_1};z,zw\right)\nonumber\\
&+c\gamma_2\Gamma_1\left(\gamma_1,c-\gamma_1,\gamma_1+\gamma_2-c;-z,w\right)\Gamma_1\left(-\gamma_1,-c+\gamma_1,-\gamma_1-\gamma_2+c;-z,-w\right).
\end{align}

\noindent
Here, the series $\Phi_1\left(\substack{\alpha,\beta\\ \gamma};x,y\right),$ $\Phi_2\left(\substack{\beta_1,\beta_2\\ \gamma};x,y\right)$, and $\Gamma_1\left(\alpha,\beta_1,\beta_2;x,y\right)$ are given by

\begin{equation}
\Phi_1\left(\substack{\alpha,\beta\\ \gamma};x,y\right)
=\sum_{m,n=0}^\infty\frac{(\alpha)_{m+n}(\beta)_m}{(\gamma)_{m+n}m!n!}x^my^n,
\end{equation}

\begin{equation}
\Phi_2\left(\substack{\beta_1,\beta_2\\ \gamma};x,y\right)
=\sum_{m,n=0}^\infty\frac{(\beta_1)_{m}(\beta_2)_n}{(\gamma)_{m+n}m!n!}x^my^n,
\end{equation}
and
\begin{equation}
\Gamma_1\left(\alpha,\beta_1,\beta_2;x,y\right)
=\sum_{m,n=0}^\infty\frac{(\alpha)_{m}(\beta_1)_{n-m}(\beta_2)_{m-n}}{m!n!}x^my^n.
\end{equation}

\end{exa}

\section{Quadratic relations for Aomoto-Gelfand system}\label{QuadraticRelationsForGrassman}
In this section, we apply \cref{thm:QuadraticRelation} to the so-called Aomoto-Gelfand hypergeometric functions (\cite{AomotoKita}, \cite{GelfandGraevRetakh}). This class enjoys a special combinatorial structure. To begin with, we revise the general result on this class of hypergeometric functions based on \cite{GelfandGraevRetakh}. Let $k\leq n$ be two natural numbers. We consider the following integral
\begin{equation}
f_\Gamma(z)=\int_\Gamma\prod_{j=0}^nl_j(x;z)^{\alpha_j}\omega(x)=\int_\Gamma\prod_{j=0}^n(z_{0j}x_0+\dots+z_{kj}x_k)^{\alpha_j}\omega(x)
\end{equation}
where $\omega(x)=\displaystyle\sum_{i=0}^k(-1)^ix_idx_{\hat{i}}\in\Gamma(\mathbb{P}^k,\Omega^k_{\mathbb{P}^k}(k+1))$ and $z=(z_{ij})_{\substack{i=0,\dots,k\\ j=0,\dots,n}}\in Z_{k+1,n+1}.$ Here, $Z_{k+1,n+1}$ denotes the space of all $(k+1)\times (n+1)$ matrices with entries in $\C$. The Aomoto-Gelfand system $E(k+1,n+1)$ is defined, with the aid of parameters $\alpha_0,\dots,\alpha_n\in\C$ such that $\alpha_0+\dots+\alpha_n=-(k+1)$ by the formula

\begin{equation}
E(k+1,n+1):
\begin{cases}
\displaystyle\sum_{i=0}^kz_{ij}\frac{\partial f}{\partial z_{ij}}=\alpha_jf&(j=0,\dots,n)\\
\displaystyle\sum_{j=0}^nz_{ij}\frac{\partial f}{\partial z_{pj}}=-\delta_{ip}f&(i,p=0,1,\dots,k)\\
\frac{\partial^2 f}{\partial z_{ij}\partial z_{pq}}=\frac{\partial^2 f}{\partial z_{pj}\partial z_{iq}}& (i,p=0,1,\dots,k,\quad j,q=0,\dots, n).
\end{cases}
\end{equation}

\noindent
If we take a restriction to $z=
\begin{pmatrix}
1&         & &z_{0k+1}&\cdots&z_{0n}\\
 &\ddots& & \vdots &\ddots&\vdots\\
  &         &1&z_{kk+1}&\cdots&z_{kn}
\end{pmatrix}
$ and $x_0=1$, our integral $f_\Gamma(z)$ becomes 
\begin{equation}
f_\Gamma(z)=\int_\Gamma\prod_{j=k+1}^nl_j(x;z)^{\alpha_j}x_1^{\alpha_1}\dots x_k^{\alpha_k}dx.
\end{equation}
If we put $c={}^t(
\alpha_0+1,\dots,\alpha_k+1,-\alpha_{k+1},\dots,-\alpha_n)$, and put ${\bf a}(i,j)={\bf e}(i)+{\bf e}(j)$ $(i=0,1,\dots,k,j=k+1,\dots,n)$, where ${\bf e}(s)$ is the standard basis of $\Z^{(n+1)\times 1}$, $f_\Gamma(z)$ is a solution of $M_A(c)$ with $A=({\bf a}(i,j))_{\substack{i=0,\dots,k\\ j=k+1,\dots,n}}$. The system $M_A(c)$ is explicitly given by
\begin{equation}
M_A(c):
\begin{cases}
\displaystyle\sum_{i=0}^kz_{ij}\frac{\partial f}{\partial z_{ij}}=-c_jf&(j=k+1,\dots,n)\\
\displaystyle\sum_{j=k+1}^nz_{ij}\frac{\partial f}{\partial z_{ij}}=-c_if&(i=0,1,\dots,k)\\
\frac{\partial^2 f}{\partial z_{ij}\partial z_{pq}}=\frac{\partial^2 f}{\partial z_{pj}\partial z_{iq}}& (i,p=0,1,\dots,k,\quad j,q=k+1,\dots, n).
\end{cases}
\end{equation}

\noindent
We also put $\tilde{\bf a}(i,j)=-{\bf e}(i)+{\bf e}(j)$ $(i=0,1,\dots,k,j=k+1,\dots,n)$ and $\tilde{A}=(\tilde{\bf a}(i,j))_{\substack{i=0,\dots,k\\ j=k+1,\dots,n}}$. Note that this configuration is equivalent to ${\bf a}(i,j)$ via the isomorphism of the lattice $\Z^{(n+1)\times 1}$ given by ${}^t(m_0,\dots,m_n)\mapsto {}^t(-m_0,\dots,-m_k,m_{k+1},\dots,m_n)$. We should also be aware that $\tilde{A}$ does not generate the ambient lattice $\Z^{(n+1)\times 1}$ hence neither does $A$. However, since the quotient $\Z^{(n+1)\times 1}/\Z A$ is torsion free, we can apply the results of previous sections by, for example, considering a projection $p:\Z^{(n+1)\times 1}\rightarrow\Z^{n\times 1}$ which sends ${\bf e}(0)$ to $0$ and keeps other standard basis ${\bf e}(s)$ ($s=1,\dots,n$). Thus, if we set $A^\prime=pA$ and $c^\prime=p(c)$, it can readily be seen that the GKZ system $M_A(c)$ is equivalent to the reduced GKZ system $M_{A^\prime}(c^\prime)$.  

We consider the special regular triangulation called ``staircase triangulation'' (\cite[\S 6.2.]{DeLoeraRambauSantos},\cite[\S 8.2.]{GelfandGraevRetakh}). A subset $I\subset\{ 1,\dots,k\}\times\{ k+1,\dots,n\}$ is called a ladder if $|I|=n$ and if we write $I=\{ (i_1,j_1),\dots,(i_n,,j_n)\}$, we have $(i_1,j_1)=(k,k+1)$ and $(i_n,,j_n)=(0,n)$ and $(i_{p+1},j_{p+1})=(i_{p}+1,,j_p)$ or $(i_p,j_p+1)$ for any $p=1,\dots,n-1$. It can readily be seen that any ladder $I$ is a simplex. Moreover, the collection of all ladders $T=\{ I\mid I:\text{ladder}\}$ forms a regular triangulation. This regular triangulation $T$ is called the staircase triangulation. It is also known that staircase triangulation $T$ is unimodular. For any ladder $I\in T$, we consider the equation $Av^I=-c$ such that $v^I_{ij}=0\quad ((i,j)\notin I)$. Defining $\tilde{c}_l=
\begin{cases}
c_l&(l=0,\dots, k)\\
-c_l&(l=k+1,\dots,n),
\end{cases}
$ it is equivalent to the system $\tilde{A}v^I=\tilde{c}$. This equation can be solved in a unique way. We can even obtain an explicit formula for $v^I$ by means of graph theory. For each ladder $I$, we can associate a tree $G_I$ of a complete bipatite graph $K_{k+1,n-k}$. Recall that the complete bipartite graph $K_{k+1,n-k}$ consists of the set of vertices $V(K_{k+1,n-k})=\{ 0,\dots,n\}$ and the set of edges $E(K_{k+1,n-k})=\left\{ (i,j)\mid \substack{i=0,\dots,k\\ j=k+1,\dots,n}\right\}$. For a given ladder $I=\{ (i_1,j_1),\dots,(i_n,,j_n)\}$, we associate a tree $G_I$ so that edges are $E(G_I)=\{ (i_s,j_s)\}_{s=1}^n$ and vertices are $V(G_I)=\{ 0,\dots,n\}$. Let us introduce the dual basis $\phi(l)$ $(l=0,\dots,n)$ to ${\bf e}(l)$. For any edge $(i,j)\in G_I$, we can easily confirm that $G_I\setminus (i,j)$ has exactly two connected components. The connected component which contains $i$ (resp. $j$) is denoted by $C_i(i,j)$ (resp. $C_j(i,j)$). For each $(i,j)\in G_I$, we put
\begin{equation}
\varphi(ij)=\sum_{l\in V(C_j(i,j))}\phi(l).
\end{equation}

\begin{figure}[H]
\begin{minipage}{0.5\hsize}
\begin{center}
\begin{tikzpicture}
\draw[-] (0,0) -- (0,4);
\draw[-] (0,0) -- (5,0);
\draw[-] (5,0) -- (5,4);
\draw[-] (0,4) -- (5,4);
\draw[-] (1,0) -- (1,4);
\draw[-] (2,0) -- (2,4);
\draw[-] (3,0) -- (3,4);
\draw[-] (4,0) -- (4,4);
\draw[-] (0,1) -- (5,1);
\draw[-] (0,2) -- (5,2);
\draw[-] (0,3) -- (5,3);
\node at (0.5,3.5) {$\bullet$};
\node at (1.5,3.5) {$\bullet$};
\node at (1.5,2.5) {$\bullet$};
\node at (2.5,2.5) {$\bullet$};
\node at (2.5,1.5) {$\bullet$};
\node at (3.5,1.5) {$\bullet$};
\node at (4.5,1.5) {$\bullet$};
\node at (4.5,0.5) {$\bullet$};
\node at (-0.5,3.5) {$3$};
\node at (-0.5,2.5) {$2$};
\node at (-0.5,1.5) {$1$};
\node at (-0.5,0.5) {$0$};
\node at (0.5,-0.5) {$4$};
\node at (1.5,-0.5) {$5$};
\node at (2.5,-0.5) {$6$};
\node at (3.5,-0.5) {$7$};
\node at (4.5,-0.5) {$8$};
\end{tikzpicture}
\caption{ladder}
\end{center}
\end{minipage}
\begin{minipage}{0.5\hsize}
\begin{center}
\begin{tikzpicture}
\node at (-1.5,3.5) {$3$};
\node at (-1.5,2.5) {$2$};
\node at (-1.5,1.5) {$1$};
\node at (-1.5,0.5) {$0$};
\node at (1.5,4) {$4$};
\node at (1.5,3) {$5$};
\node at (1.5,2) {$6$};
\node at (1.5,1) {$7$};
\node at (1.5,0) {$8$};
\draw[-] (-1,3.5) -- (1,4);
\draw[-] (-1,3.5) -- (1,3);
\draw[-] (-1,2.5) -- (1,3);
\draw[-] (-1,2.5) -- (1,2);
\draw[-] (-1,1.5) -- (1,2);
\draw[-] (-1,1.5) -- (1,1);
\draw[-] (-1,1.5) -- (1,0);
\draw[-] (-1,0.5) -- (1,0);
\end{tikzpicture}
\caption{spanning tree corresponding to the ladder}
\end{center}
\end{minipage}
\end{figure}

\begin{figure}[H]
\begin{minipage}{0.5\hsize}
\begin{center}
\begin{tikzpicture}
\node at (-1.5,3.5) {$3$};
\node at (1.5,4) {$4$};
\node at (1.5,3) {$5$};
\draw[-] (-1,3.5) -- (1,4);
\draw[-] (-1,3.5) -- (1,3);
\end{tikzpicture}
\caption{connected component $C_5(2,5)$}
\end{center}
\end{minipage}
\begin{minipage}{0.5\hsize}
\begin{center}
\begin{tikzpicture}
\node at (-1.5,2.5) {$2$};
\node at (-1.5,1.5) {$1$};
\node at (-1.5,0.5) {$0$};
\node at (1.5,2) {$6$};
\node at (1.5,1) {$7$};
\node at (1.5,0) {$8$};
\draw[-] (-1,2.5) -- (1,2);
\draw[-] (-1,1.5) -- (1,2);
\draw[-] (-1,1.5) -- (1,1);
\draw[-] (-1,1.5) -- (1,0);
\draw[-] (-1,0.5) -- (1,0);
\end{tikzpicture}
\caption{connected component $C_2(2,5)$}
\end{center}
\end{minipage}
\end{figure}

\begin{prop}\label{prop:GraphicalInverse}
For $(i,j),(i^\prime,j^\prime)\in I$, we have 
\begin{equation}
\langle \varphi(ij),\tilde{\bf a}(i^\prime,j^\prime)\rangle=
\begin{cases}
1&((i,j)=(i^\prime,j^\prime))\\
0&(otherwise),
\end{cases}
\end{equation}
where $\langle\bullet,\bullet\rangle$ is the duality pairing.
\end{prop}

\begin{proof}
Suppose $(i^\prime,j^\prime)\in C_i(i,j)$. Then we have $\langle \varphi(ij),\tilde{\bf a}(i^\prime,j^\prime)\rangle=0.$ On the other hand, if $(i^\prime,j^\prime)\in C_j(i,j)$, we see $\langle \varphi(ij),\tilde{\bf a}(i^\prime,j^\prime)\rangle=\langle \phi(i^\prime)+\phi(j^\prime),\tilde{\bf a}(i^\prime,j^\prime)\rangle=0.$ Finally, since $i\notin V(C_j(i,j))$ and $j\in V(C_j(i,j))$, we have $\langle \varphi(ij),\tilde{\bf a}(i,j)\rangle=1$.
\end{proof}

\noindent
Therefore, we obtain a
\begin{cor}
Under the notation above, one has
\begin{equation}
v^I_{ij}=\sum_{l\in V(C_j(i,j))}\tilde{c}_l.
\end{equation}
\end{cor}

\noindent
Substitution of this formula to $\Gamma$-series yields the formula
\begin{equation}\label{eqn:9.8}
\varphi_{v^I}(z)=z_I^{v^I}\sum_{u_{\bar{I}}\in\Z^{\bar{I}}_{\geq 0}}\frac{\left(z_I^{-\langle \varphi(I),\tilde{A}_{\bar{I}}\rangle}z_{\bar{I}}\right)^{u_{\bar{I}}}}{\displaystyle\prod_{(i,j)\in I}\Gamma(1+v_{ij}^I-\langle\varphi(ij),\tilde{A}_{\bar{I}}u_{\bar{I}}\rangle)u_{\bar{I}}!}.
\end{equation}
Since the series (\ref{eqn:9.8}) is defined by means of a ladder $I$ and a parameter $\alpha$, it is also denoted by $f_I(z;\alpha)$. We write $f_I^\vee(z;\alpha)$ for the series $\varphi_{-v^I}(z)$.

Next, we consider the de Rham cohomology group $\Homo_{\rm dR}^k\left( \PP_x^k\setminus\bigcup_{j=0}^n\{ l_{j}(x;z)=0\},\nabla_z\right)$ with $\nabla_z=d_x+\sum_{j=0}^n\tilde{c}_jd_x\log l_{j}(x;z)\wedge$. Note that we identify the set of rational differential forms on $\PP^k_x$ of homogeneous degree $0$ with that on $\{ l_0(x;z)\neq 0\}\simeq\A^k$. As a convenient basis of the twisted cohomology group, we take the one of \cite{GotoMatsumotoContiguity}. We consider matrix variables $z=
\begin{pmatrix}
1&         & &z_{0k+1}&\cdots&z_{0n}\\
 &\ddots& & \vdots &\ddots&\vdots\\
  &         &1&z_{kk+1}&\cdots&z_{kn}
\end{pmatrix}.
$ For any subset $J=\{ j_0,\dots,j_k\}\subset\{0,\dots,n\}$ with cardinality $k+1$, we write $z_J$ for the submatrix of $z$ consisting of column vectors indexed by $J$. We always assume $j_0<\dots<j_k$. We put
\begin{equation}
\omega_J(z;x)=d_x\log\left(\frac{l_{j_1}(x;z)}{l_{j_0}(x;z)}\right)\wedge\dots\wedge d_x\log\left(\frac{l_{j_k}(x;z)}{l_{j_0}(x;z)}\right).
\end{equation}
By a simple computation, we see that $\omega_J(x;z)=\displaystyle\sum_{p=0}^k(-1)^pl_{j_p}(x;z)\frac{d_xl_{j_0}\wedge\dots\wedge \widehat{d_xl_{j_p}}\wedge\dots\wedge d_xl_{j_k}}{l_{j_0}(x;z)\cdots l_{j_k}(x;z)}.
$ 
As in \cite[Fact 2.5]{GotoMatsumotoContiguity}, we also see that
$
\displaystyle\sum_{p=0}^k(-1)^pl_{j_p}(x;z)d_xl_{j_0}\wedge\dots\wedge \widehat{d_xl_{j_p}}\wedge\dots\wedge d_xl_{j_k}=\det(z_J)\omega(x).
$ 
Therefore, we have $\omega_J(x;z)=\det(z_J)\frac{\omega(x)}{l_{j_0}(x;z)\cdots l_{j_k}(x;z)}.$ We set $\mathcal{J}\overset{def}{=}\llbracket 0,n\rrbracket\overset{def}{=}\{ 0,\dots,n\}$. Then, for any distinct elements $p,q\in\mathcal{J}$, we set ${}_q\mathcal{J}_p=\{ J\subset\mathcal{J}\mid |J|=k, q\notin J, p\in J\}$. \cite[Proposition 3.3]{GotoMatsumotoContiguity} tells us that the set $\{ \omega_J\}_{J\in{}_q\mathcal{J}_p}$ is a basis of $\Homo_{\rm dR}^k\left( \PP_x^k\setminus\bigcup_{j=0}^n\{ l_{j}(x;z)=0\},\nabla_z\right)$.

Now we are going to derive a quadratic relation for $f_I(z;\alpha)$. We take any pair of  subsets $J,J^\prime\subset\{0,\dots,n\}$ with cardinality $k+1$. Let us put $J_a=J\cap\{ 1,\dots,k\}$, $J_a^\prime=J^\prime\cap\{ 1,\dots,k\}$, $J_b=J\cap\{ k+1,\dots,n\}$, and $J_b^\prime=J^\prime\cap\{ k+1,\dots,n\}$. We write ${\bf 1}_{J_a}$ (resp. ${\bf 1}_{J_b}$) for the vector $\sum_{j\in J_a}{\bf e}(j)$ (resp. $\sum_{j\in J_b}{\bf e}(j)$). If we write $\alpha$ as $\displaystyle\sum_{j=0}^n\alpha_j{\bf e}(j)$, we also put $\alpha_a=\displaystyle\sum_{j=1}^k\alpha_j{\bf e}(j)$ and $\alpha_{b}=\displaystyle\sum_{j=k+1}^n\alpha_j{\bf e}(j)$. We can readily confirm the identities
\begin{equation}
\frac{\omega_J(x;z)}{\det(z_J)}=\frac{\omega(x)}{l_{j_0}(x;z)\cdots l_{j_k}(x;z)}=\frac{x_0\dots x_k}{l_{j_0}(x;z)\cdots l_{j_k}(x;z)}\frac{\omega(x)}{x_0\dots x_k}=x^{{\bf 1}-{\bf 1}_{J_a}}l^{-{\bf 1}_{J_b}}\frac{dx}{x}.
\end{equation}

\noindent
On the other hand, by \cite{MatsumotoIntersection}, we know
\begin{equation}\label{MatsumotoFormula}
\frac{\langle \omega_{J}(x;z),\omega_{J^\prime}(x;z)\rangle_{ch}}{(2\pi\ii)^k}=
\begin{cases}
\frac{\sum_{j\in J}\tilde{c}_j}{\prod_{j\in J}\tilde{c}_j}& (J=J^\prime)\\
\frac{\text{sgn} (J^\prime,J)}{\prod_{j\in J\cap J^\prime}\tilde{c}_j}& (\sharp(J\cap J^\prime)=k)\\
0&(otherwise)
\end{cases}.
\end{equation}
Here, sgn$(J,J^\prime)$ is defined to be $(-1)^{p+q}$ where $p$ and $q$ are chosen so that $J^\prime\setminus\{ j^\prime_p\}=J\setminus\{ j_q\}$. Setting ${\bf 1}_{\llbracket 0,k\rrbracket}=\sum_{j=0}^k{\bf e}(j)$, the quadratic relation (\ref{GeneralQuadraticRelation}) leads to the following

\begin{thm}\label{thm:QuadraticRelationsForAomotoGelfand}
Under the notation as above, for any $z\in U_T$, we have an identity
\begin{align}
&(-1)^{|J_b|+|J^\prime_b|+k}\alpha_{k+1}\dots\alpha_n(-\alpha_b+{\bf 1}_{J_b})_{-{\bf 1}_{J_b}}(\alpha_b+{\bf 1}_{J^\prime_b})_{-{\bf 1}_{J_b^\prime}}\times\nonumber\\
&\sum_{I:\text{ladder}}\frac{\pi^n}{\displaystyle\prod_{(i,j)\in I}\sin\pi v_{ij}^I}f_I(z;\alpha+{\bf 1}_{\llbracket 0,k\rrbracket}-{\bf 1}_J)f_I^\vee(z;\alpha-{\bf 1}_{\llbracket 0,k\rrbracket}+{\bf 1}_{J^\prime})\nonumber\\
=&
\det(z_J)^{-1}\det(z_{J^\prime})^{-1}\frac{\langle \omega_{J}(x;z),\omega_{J^\prime}(x;z)\rangle_{ch}}{(2\pi\ii)^k}.\label{EknQuadraticRelation}
\end{align}
Here, the right hand side is explicitly determined by (\ref{MatsumotoFormula}).
\end{thm}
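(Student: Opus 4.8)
The plan is to specialize the general twisted period relation of \cref{thm:QuadraticRelation} to the configuration matrix $A=({\bf a}(i,j))$ (equivalently $\tilde A$) arising from Aomoto-Gelfand system $E(k+1,n+1)$, with the staircase triangulation $T$ in the role of the convergent unimodular regular triangulation. First I would record that $T$ is indeed a convergent, unimodular, regular triangulation (the staircase triangulation has all simplices of normalized volume $1$ and regularity is classical, e.g.\ \cite{DeLoeraRambauSantos},\cite{GelfandGraevRetakh}); convergence follows because each ladder $I$ satisfies $|A_I^{-1}{\bf a}(j)|\le 1$, which can be read off from \cref{prop:GraphicalInverse} since the coordinates of $A_I^{-1}{\bf a}(j)$ are values of the functionals $\varphi(i',j')$ on $\tilde{\bf a}(i,j)$, each in $\{0,\pm1\}$ with sum $\le 1$. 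Then I would match the four integer vectors of \cref{thm:QuadraticRelation}: here $k$ of the theorem is $0$ (there are no $h_l$ factors, only the monomial/linear data is packaged into $A$), so ${\bf b},{\bf b}'$ drop out; the shift vectors ${\bf a},{\bf a}'$ correspond to ${\bf 1}_{\llbracket 0,k\rrbracket}-{\bf 1}_J$ and $-{\bf 1}_{\llbracket 0,k\rrbracket}+{\bf 1}_{J'}$ respectively, using the identity $\frac{\omega_J(x;z)}{\det(z_J)}=x^{{\bf 1}-{\bf 1}_{J_a}}l^{-{\bf 1}_{J_b}}\frac{dx}{x}$ already established in the text. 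The sign $(-1)^{|J_b|+|J_b'|}$ and the Pochhammer prefactors $(-\alpha_b+{\bf 1}_{J_b})_{-{\bf 1}_{J_b}}$, $(\alpha_b+{\bf 1}_{J_b'})_{-{\bf 1}_{J_b'}}$ come from expressing the shifted powers of the linear forms $l_j$ in terms of the normalized integrand $\Phi$, exactly as the factor $\gamma_1\cdots\gamma_k(\gamma-{\bf b})_{\bf b}(-\gamma-{\bf b}')_{{\bf b}'}$ does in the general theorem (the $\alpha_{k+1}\cdots\alpha_n$ plays the role of $\gamma_1\cdots\gamma_k$, with a sign $(-1)^k$ bookkeeping the relation $c_j=-\alpha_j$ for $j>k$ versus the $\gamma_l$ normalization).

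Next I would identify the series: $\varphi_{\s,0}(z;\delta)$ for $\s=I$ a ladder with the appropriately shifted parameter is precisely $f_I(z;\alpha+{\bf 1}_{\llbracket 0,k\rrbracket}-{\bf 1}_J)$, and $\varphi_{\s,0}^\vee$ with the dual shift is $f_I^\vee(z;\alpha-{\bf 1}_{\llbracket 0,k\rrbracket}+{\bf 1}_{J'})$; this is just the graphical formula for $\varphi_{v^I}$ derived right after \cref{prop:GraphicalInverse}, together with \cref{defn} of $\varphi^\vee_{\s,{\bf k}}$ in \S\ref{SectionEuler} (here $\s^{(0)}=\varnothing$ in the Euler case $h_0\equiv 0$, so the extra $e^{\pi\ii(\cdots)}$ twists and the $\varepsilon_\s$ factors simplify). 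The factor $\frac{\pi^{n+k}}{\sin\pi A_\s^{-1}\delta}$ of the general theorem becomes $\frac{\pi^n}{\prod_{(i,j)\in I}\sin\pi(-v^I_{ij})}$: indeed $A_I^{-1}\delta$ has entries $v^I_{ij}$ up to sign (from $A_Iv^I=-c$ and the lattice isomorphism relating $c$ and $\tilde c$), and $\sin\pi(-v^I_{ij})=-\sin\pi v^I_{ij}$ absorbs into an overall sign which I would track carefully. Finally, plugging \cref{thm:IntersectionNumber} (with $\s^{(0)}=\varnothing$, $\det A_\s=\pm1$) is what converts the $I_h^{-1}$ in the period relation into the explicit product over edges; here one needs that each $\s^{(l)}$ for $l\ge1$ is a singleton in this setup (each $h_l$ is linear, $N_l=2$ — wait, more precisely the Cayley structure here degenerates so that the only surviving factors are the linear forms, and the intersection number reduces to a product of $(1-e^{-2\pi\ii v^I_{ij}})$-type terms), giving the $\prod\sin$ denominators after converting $1-e^{-2\pi\ii\theta}=-2\ii e^{-\pi\ii\theta}\sin\pi\theta$ and collecting the unimodular exponential prefactors, which cancel against those in $T_\s,T_\s^\vee$.

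The last ingredient is to substitute the closed formula \cref{MatsumotoFormula} for $\langle\omega_J,\omega_{J'}\rangle_{ch}$ from \cite{MatsumotoIntersection} into the right-hand side, noting that $\{\omega_J\}_{J\in{}_q\mathcal J_p}$ is a basis of the de Rham cohomology (\cite{GotoMatsumotoContiguity}) so that the cohomology intersection number is well-defined and the hypotheses of \cref{thm:QuadraticRelation} (non-resonance, $\gamma_l\notin\Z$, very-genericity of the two shifted parameter vectors) translate into generic conditions on $\alpha$ that I would state explicitly. I expect the main obstacle to be the bookkeeping of signs and exponential phase factors: the general theorem carries $\mathrm{sgn}(A,\s)$, $e^{-\pi\ii(1-\gamma_l)}$, $e^{-\pi\ii\gamma_l}$ and $\varepsilon_\s$ factors in $T_\s$ and $T_\s^\vee$ that must be shown to cancel against the $(2\ii)$-powers and phases produced by \cref{thm:IntersectionNumber} when forming $\langle\Gamma_{\s,0},\check\Gamma_{\s,0}\rangle_h^{-1}(\int_{\Gamma_{\s,0}}\Phi\varphi)(\int_{\check\Gamma_{\s,0}}\Phi^{-1}\psi)$, leaving exactly the stated $\pi^n/\prod\sin$ and the combinatorial sign $(-1)^{|J_b|+|J_b'|+k}$; since $k$ of \cref{thm:QuadraticRelation} is $0$ here, this cancellation is cleaner than in the general statement, but the translation between the $c$-normalization and the $\tilde c$/$\alpha$-normalization via the lattice automorphism ${\bf e}(i)\mapsto-{\bf e}(i)$ for $i\le k$ is where errors are most likely, so I would verify it on the $k=0$ (Gauss) and $(k,n)=(1,3)$ (Appell $F_1$) cases against the explicit identities already displayed in the examples.
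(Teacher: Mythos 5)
Your overall route — specialize \cref{thm:QuadraticRelation} to the staircase triangulation of the Cayley configuration for $E(k+1,n+1)$, match the shift vectors to the graded degree of $\omega_J/\det(z_J)$ as a multiple of $x^{{\bf 1}-{\bf 1}_{J_a}}l^{-{\bf 1}_{J_b}}\frac{dx}{x}$, and close with Matsumoto's formula (\ref{MatsumotoFormula}) — is exactly what the paper does. But there is a concrete error at the very step you yourself flag as the delicate one (the notational translation from \cref{thm:QuadraticRelation} to the $E(k+1,n+1)$ setting), and it is not a harmless slip because your later discussion quietly contradicts it.

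You write that ``$k$ of the theorem is $0$ (there are no $h_l$ factors, only the monomial/linear data is packaged into $A$), so ${\bf b},{\bf b}'$ drop out.'' This is false. What is trivial in the Aomoto-Gelfand specialization is the \emph{exponential} factor: $h_0\equiv 0$, so $\sigma^{(0)}=\varnothing$ for every simplex and the Hankel-contour factor disappears. The Laurent polynomials $h_l$ do \emph{not} disappear: they are the $n-k$ linear forms $l_{k+1},\dots,l_n$, so the role of the ``$k$'' of \cref{thm:QuadraticRelation} is played by $n-k$ (and the ``$n$'' there is played by $k$, so that $n+k$ becomes $n$ and $\pi^{n+k}\mapsto\pi^n$, $(2\pi\ii)^n\mapsto(2\pi\ii)^k$ as in the stated identity). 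Consequently ${\bf b}=-{\bf 1}_{J_b}$ and ${\bf b}'={\bf 1}_{J'_b}$ most certainly do not drop out — they are precisely the source of the prefactors $(-\alpha_b+{\bf 1}_{J_b})_{-{\bf 1}_{J_b}}$ and $(\alpha_b+{\bf 1}_{J'_b})_{-{\bf 1}_{J'_b}}$, and the product $\gamma_1\cdots\gamma_k$ of \cref{thm:QuadraticRelation} becomes $(-\alpha_{k+1})\cdots(-\alpha_n)$, which is where the sign $(-1)^{n-k}$ (you wrote ``$(-1)^k$'') enters. You do eventually write that ``$\alpha_{k+1}\cdots\alpha_n$ plays the role of $\gamma_1\cdots\gamma_k$,'' which only makes sense if the number of $h_l$'s is $n-k$, contradicting the ``$k=0$'' claim a few lines earlier. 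Since you have already identified phase and sign bookkeeping as the locus of error, starting the computation with this mismatched dictionary is precisely the failure mode to guard against; fixing the identification (Theorem 7.1's $k\leftrightarrow n-k$, $n\leftrightarrow k$, $\gamma_l\leftrightarrow -\alpha_{k+l}$, ${\bf b}\leftrightarrow -{\bf 1}_{J_b}$, ${\bf b}'\leftrightarrow {\bf 1}_{J'_b}$) is necessary before any of the sign-tracking you propose can be trusted.
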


\begin{rem}
Since the right-hand side (\ref{EknQuadraticRelation}) is a rational function in the parameters $\alpha_j$, (\ref{EknQuadraticRelation}) holds without any restriction on the parameters $\alpha_j$.
\end{rem}

\begin{exa}{\bf (Gau\ss' hypergeometric series)}

The simplest case is $E(2,4)$. This amounts to the classical Gau\ss' hypergeometric functions. By computing the cohomology intersection number $\langle\frac{dx}{x},\frac{dx}{x}\rangle_{ch}$, we obtain a quadratic relation (\ref{QRGauss}) in the introduction. Note in particular that this identity implies a series of combinatorial identities 
\begin{align}
&(1-\gamma+\alpha)(1-\gamma+\beta)\sum_{l+m=n}\frac{(\alpha)_l(\beta)_l}{(\gamma)_l(1)_l}\frac{(-\alpha)_m(-\beta)_m}{(2-\gamma)_m(1)_m}\nonumber\\
=&\alpha\beta\sum_{l+m=n}\frac{(\gamma-\alpha-1)_l(\gamma-\beta-1)_l}{(\gamma)_l(1)_l}\frac{(1-\gamma+\alpha)_m(1-\gamma+\beta)_m}{(2-\gamma)_m(1)_m}
\end{align}
where $n$ is a positive integer.
\end{exa}

\begin{exa}{\bf (Hypergeometric function of type $E(3,6)$)}\label{exa:AomotoGelfand}

This type of hypergeometric series was discussed by several authors (cf. \cite{MSY},\cite{MSTY}). The integral we consider is $f_\Gamma(z)=\int_\Gamma\prod_{j=3}^5(z_{0j}+z_{1j}x_1+z_{2j}x_2)^{-c_j}x_1^{c_1}x_2^{c_2}\frac{dx_1\wedge dx_2}{x_1x_2}$. The (reduced) $A$ matrix is given by 
$
A^\prime=
\bordermatrix{
    &z_{03}&z_{04}&z_{05}&z_{13}&z_{14}&z_{15}&z_{23}&z_{24}&z_{25}\cr
c_3&1&0&0&1&0&0&1&0&0\cr
c_4&0&1&0&0&1&0&0&1&0\cr
c_5&0&0&1&0&0&1&0&0&1\cr
c_1&0&0&0&1&1&1&0&0&0\cr
c_2&0&0&0&0&0&0&1&1&1
}.
$ The associated arrangement of hyperplanes is described as in Figure \ref{fig:AomotoGelfandConfiguration}. Let us put $H_j=\{ x\in\C^2\mid l_j(x;z)=0\}$ for ($j=1,\dots,5$). We write $H_0$ for the hyperplane at infinity $H_0=\mathbb{P}^2\setminus\C^2$. As was clarified in \S\ref{SectionEuler}, each ladder ($=$simplex) induces a degeneration of arrangements. The rule is simple: for each ladder $I$, we let variables $z_{\bar{I}}$ corresponding to the complement of $I$ go to $0$ while we keep variables $z_I$ corresponding to $I$ fixed. For example, if we take a ladder $\{ 23,24,25,15,05\}$, the induced degeneration is $z_{13},z_{14},z_{03},z_{04}\rightarrow 0$. By taking this limit, the hyperplanes $H_3$ and $H_4$ both tend to the hyperplane $H_2$ ($x_1$ axis) which is simply denoted by $\substack{ H_3\rightarrow H_2\\ H_4\rightarrow H_2}$. Therefore, there only remain 3 hyperplanes after this limit: $H_1,H_2$ and $H_5$. Restricted to the real domain they form a chamber when variables $z_{ij}$ are all real and generic. We consider the Pochhammer cycle associated to this bounded chamber. The important point of this construction is that, unlike the usual Pohhammer cycle, we have to go around several divisors at once. In this case, $H_3$ and $H_4$ should be regarded as a perturbation of $H_2$. Therefore, they are linked as in Figure \ref{fig:AomotoGelfandConfiguration}. We call such a cycle ``linked cycle'' (or ``Erd\'elyi cycle'' after the work of Erd\'elyi \cite{ErdelyiActa} where this type of cycle is called ``double circit'' in the cases of Appell's $F_1$ and its relatives). We summerize the correspondence between ladders and degenerations in the following table.

\begin{equation*}
\begin{array}{c|c|c|c|c|c|c}
\text{ladder}        
&\begin{matrix}
\bullet&\bullet&\bullet\\
 & &\bullet\\
 & &\bullet
\end{matrix}
&\begin{matrix}
\bullet&\bullet& \\
          &\bullet&\bullet\\
          &         &\bullet
\end{matrix}
&\begin{matrix}
\bullet&\bullet& \\
         &\bullet& \\
         &\bullet&\bullet
\end{matrix}
&\begin{matrix}
\bullet& & \\
\bullet&\bullet&\bullet\\
 & &\bullet
\end{matrix}
&\begin{matrix}
\bullet& & \\
\bullet&\bullet& \\
         &\bullet&\bullet
\end{matrix}
&\begin{matrix}
\bullet& & \\
\bullet& & \\
\bullet&\bullet&\bullet
\end{matrix}
\\ \hline
\text{degeneration} &\substack{H_3\rightarrow H_2\\ H_4\rightarrow H_2}& H_3\rightarrow H_2& \substack{H_3\rightarrow H_2\\ H_5\rightarrow H_0}&H_4\rightarrow H_1& H_5\rightarrow H_0&\substack{ H_4\rightarrow H_0\\ H_5\rightarrow H_0}
\end{array}
\end{equation*}

\begin{figure}[h]
\begin{center}
\begin{tikzpicture}
\draw (0,0) node[below right]{O}; 
\draw[->] (-6,0)--(6,0);
\draw (6,0) node[right]{$x_1$};
\draw[thick, ->] (0,-3)--(0,3);
\draw (0,3) node[above]{$x_2$};
\draw[-] (-5,2.5)--(6,-1);
\draw[-] (-4,3)--(2,-3);
\draw[-] (-3,-3.5)--(6,1);
\draw (6,-1) node[right]{$H_3=\{l_3=0\}$};
\draw (2,-3) node[right]{$H_4=\{l_4=0\}$};
\draw (6,1) node[above]{$H_5=\{l_5=0\}$};
\draw[-] (0.2,-1.4) to [out=70,in=100] (2,1);
\draw[dashed] (0.2,-1.4) to [out=20,in=-80] (2,1);
\draw[-] (0.1,-1.5) to [out=130,in=70] (-0.4,-1.5);
\draw[dashed] (0.1,-1.5) to [out=-110,in=-30] (-0.4,-1.5);
\draw[-] (0.25,-1.675) to [out=-50,in=20] (0.45,-2.075);
\draw[dashed] (0.25,-1.675) to [out=-100,in=-100] (0.45,-2.075);
\draw (0.1,-1.3)--(0.4,-1.6);
\draw (0.4,-1.6)--(0.1,-1.75);
\draw (0.1,-1.75)--(0.1,-1.3);
\end{tikzpicture}
\caption{Arrangement of hyperplanes and the cycle corresponding to the ladder $\{ 23,24,25,15,05\}$}
\label{fig:AomotoGelfandConfiguration}
\end{center}
\end{figure}
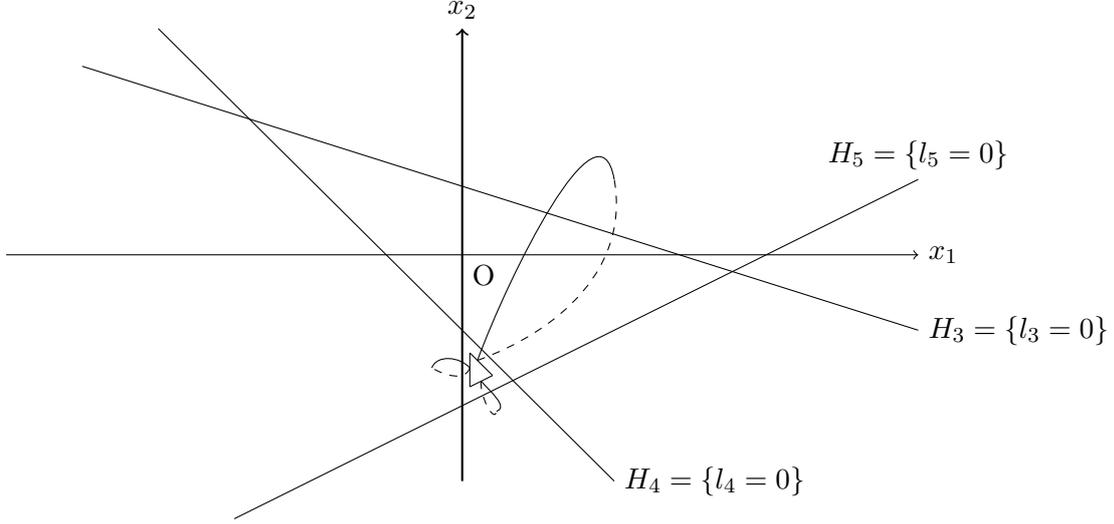

The quadratic relation with respect to the cohomology intersection number $\langle\frac{dx_1\wedge dx_2}{x_1x_2},\frac{dx_1\wedge dx_2}{x_1x_2}\rangle_{ch}$ is explicitly given by 
\begin{equation}
c_0c_1c_2c_3c_4c_5\sum_{i=1}^6\frac{\pi^5}{\sin\pi(-v_i)}\varphi_i(z;c)\varphi_i(z;-c)=c_3+c_4+c_5
\end{equation}
where parameters $c_0,\dots,c_5$ satisfy a linear relation
\begin{equation}
c_0+c_1+c_2-c_3-c_4-c_5=0
\end{equation}
and vectors $v_i$ are given by
\begin{align}
v_1&={}^t(-c_3,-c_4,c_0+c_1-c_5,-c_1,-c_0)\\
v_2&={}^t(-c_3,-c_2+c_3,-c_0-c_1+c_5,c_0-c_5,-c_0)\\
v_3&={}^t(-c_3,-c_2+c_3,-c_1,c_5-c_0,-c_5)\\
v_4&={}^t(-c_2,c_2-c_3,-c_4,c_0-c_5,-c_0)\\
v_5&={}^t(-c_2,c_2-c_3,c_0-c_4-c_5,c_5-c_0,-c_5)\\
v_6&={}^t(-c_2,-c_1,-c_0+c_4+c_5,-c_4,-c_5).
\end{align}

Below, we list the explicit formulae of $\Gamma$-series $\varphi_i(z;c)$:
\begin{align}
\varphi_1(z;c)&=z_{23}^{-c_3}z_{24}^{-c_4}z_{25}^{c_0+c_1-c_5}z_{15}^{-c_1}z_{05}^{-c_0}\nonumber\\
 &\sum_{u_{13},u_{14},u_{03},u_{04}\geq 0}\frac{1}{\Gamma(1-c_3-u_{13}-u_{03})\Gamma(1-c_4-u_{14}-u_{04})\Gamma(1+c_0+c_1-c_5+u_{13}+u_{14}+u_{03}+u_{04})}\nonumber\\
 &\frac{(z_{23}^{-1}z_{25}z_{15}^{-1}z_{13})^{u_{13}}(z_{24}^{-1}z_{25}z_{15}^{-1}z_{14})^{u_{14}}(z_{23}^{-1}z_{25}z_{05}^{-1}z_{03})^{u_{03}}(z_{24}^{-1}z_{25}z_{05}^{-1}z_{04})^{u_{04}}}{\Gamma(1-c_1-u_{13}-u_{14})\Gamma(1-c_0-u_{03}-u_{04})u_{13}!u_{14}!u_{03}!u_{04}!}
\end{align}

\begin{align}
&\varphi_2(z;c)\nonumber\\
&=z_{23}^{-c_3}z_{24}^{-c_2+c_3}z_{14}^{-c_0-c_1+c_5}z_{15}^{c_0-c_5}z_{05}^{-c_0}\nonumber\\
&\sum_{\substack{u_{25},u_{13},\\ u_{03},u_{04}\geq 0}}\frac{1}{\Gamma(1-c_3-u_{13}-u_{03})\Gamma(1-c_2+c_3-u_{25}+u_{13}+u_{03})\Gamma(1-c_0-c_1+c_5+u_{25}-u_{13}-u_{03}-u_{04})}\nonumber\\
&\frac{(z_{24}^{-1}z_{14}z_{15}^{-1}z_{25})^{u_{25}}(z_{23}^{-1}z_{24}z_{14}^{-1}z_{13})^{u_{13}}(z_{23}^{-1}z_{24}z_{14}^{-1}z_{15}z_{05}^{-1}z_{03})^{u_{03}}(z_{14}^{-1}z_{15}z_{05}^{-1}z_{04})^{u_{04}}}{\Gamma(1+c_0-c_5-u_{25}+u_{03}+u_{04})\Gamma(1-c_0-u_{03}-u_{04})u_{25}!u_{13}!u_{03}!u_{04}!}
\end{align}

\begin{align}
\varphi_3(z;c)&=z_{23}^{-c_3}z_{24}^{-c_2+c_3}z_{14}^{-c_1}z_{04}^{c_5-c_0}z_{05}^{-c_5}\nonumber\\
&\sum_{u_{25},u_{15},u_{13},u_{03}\geq 0}\frac{1}{\Gamma(1-c_3-u_{13}-u_{03})\Gamma(1-c_2+c_3-u_{25}+u_{13}+u_{03})\Gamma(1-c_1-u_{15}-u_{13})}\nonumber\\
&\frac{(z_{24}^{-1}z_{04}z_{05}^{-1}z_{25})^{u_{25}}(z_{14}^{-1}z_{04}z_{05}^{-1}z_{15})^{u_{15}}(z_{23}^{-1}z_{24}z_{14}^{-1}z_{13})^{u_{13}}(z_{23}^{-1}z_{24}z_{04}^{-1}z_{03})^{u_{03}}}{\Gamma(1+c_5-c_0+u_{25}+u_{15}-u_{03})\Gamma(1-c_5-u_{25}-u_{15})u_{25}!u_{15}!u_{13}!u_{03}!}
\end{align}

\begin{align}
\varphi_4(z;c)&=z_{23}^{-c_2}z_{13}^{c_2-c_3}z_{14}^{-c_4}z_{15}^{c_0-c_5}z_{05}^{-c_0}\nonumber\\
&\sum_{u_{24},u_{25},u_{03},u_{04}\geq 0}\frac{1}{\Gamma(1-c_2-u_{24}-u_{25})\Gamma(1+c_2-c_3+u_{24}+u_{25}-u_{03})\Gamma(1-c_4-u_{24}-u_{04})}\nonumber\\
&\frac{(z_{23}^{-1}z_{13}z_{14}^{-1}z_{24})^{u_{24}}(z_{23}^{-1}z_{13}z_{15}^{-1}z_{25})^{u_{25}}(z_{13}^{-1}z_{15}z_{05}^{-1}z_{03})^{u_{03}}(z_{14}^{-1}z_{15}z_{05}^{-1}z_{04})^{u_{04}}}{\Gamma(1+c_0-c_5-u_{25}+u_{03}+u_{04})\Gamma(1-c_0-u_{03}-u_{04})u_{24}!u_{25}!u_{03}!u_{04}!}
\end{align}

\begin{align}
\varphi_5(z;c)&=z_{23}^{-c_2}z_{13}^{c_2-c_3}z_{14}^{c_0-c_4-c_5}z_{04}^{c_5-c_0}z_{05}^{-c_5}\nonumber\\
&\sum_{u_{24},u_{25},u_{15},u_{03}\geq 0}\frac{1}{\Gamma(1-c_2-u_{24}-u_{25})\Gamma(1+c_2-c_3+u_{24}+u_{25}-u_{03})}\nonumber\\
&\frac{1}{\Gamma(1+c_0-c_4-c_5-u_{24}-u_{25}-u_{15}+u_{03})\Gamma(1+c_5-c_0+u_{25}+u_{15}-u_{03})\Gamma(1-c_5-u_{25}-u_{15})}\nonumber\\
&\frac{(z_{23}^{-1}z_{13}z_{14}^{-1}z_{24})^{u_{24}}(z_{23}^{-1}z_{13}z_{14}^{-1}z_{04}z_{05}^{-1}z_{25})^{u_{25}}(z_{14}^{-1}z_{04}z_{05}^{-1}z_{15})^{u_{15}}(z_{13}^{-1}z_{14}z_{04}^{-1}z_{03})^{u_{03}}}{u_{24}!u_{25}!u_{15}!u_{03}!}
\end{align}

\begin{align}
\varphi_6(z;c)&=z_{23}^{-c_2}z_{13}^{-c_1}z_{03}^{-c_0+c_4+c_5}z_{04}^{-c_4}z_{05}^{-c_5}\nonumber\\
&\sum_{u_{24},u_{25},u_{14},u_{15}\geq 0}\frac{1}{\Gamma(1-c_2-u_{24}-u_{25})\Gamma(1-c_1-u_{14}-u_{15})\Gamma(1-c_0+c_4+c_5+u_{24}+u_{25}+u_{14}+u_{15})}\nonumber\\
&\frac{(z_{23}^{-1}z_{03}z_{04}^{-1}z_{24})^{u_{24}}(z_{23}^{-1}z_{03}z_{05}^{-1}z_{25})^{u_{25}}(z_{13}^{-1}z_{03}z_{04}^{-1}z_{14})^{u_{14}}(z_{13}^{-1}z_{03}z_{05}^{-1}z_{15})^{u_{15}}}{\Gamma(1-c_4-u_{24}-u_{14})\Gamma(1-c_5-u_{25}-u_{15})u_{24}!u_{25}!u_{14}!u_{15}!}.
\end{align}

\noindent
Note that if we substitute 
\begin{equation}
\begin{pmatrix}
z_{03}&z_{04}&z_{05}\\
z_{13}&z_{14}&z_{15}\\
z_{23}&z_{24}&z_{25}
\end{pmatrix}
=
\begin{pmatrix}
1&1&1\\
1&\zeta_1&\zeta_1\zeta_2\\
1&\zeta_1\zeta_3&\zeta_1\zeta_2\zeta_3\zeta_{4}
\end{pmatrix},
\end{equation}

\noindent
all the Laurent series $\varphi_{i}(z;c)$ above become power series, i. e., they do not contain any negative power in $\zeta_1,\dots,\zeta_4$.

\end{exa}

\section{Quadratic relations for a confluence of Aomoto-Gelfand system}\label{E(21...1)}

In this section, we consider the generalized confluent hypergeometric system $M_{(2,1^{n-1})}$ in the sense of \cite{KimuraHaraokaTakano}. The system $M_{(2,1^{n-1})}$ can be obtained by taking a ``confluence'' of the system $E(k+1,n+1)$. We use the same notation as \S\ref{QuadraticRelationsForGrassman}. The general solution of the system $M_{(2,1^{n-1})}$ has an integral representation of the form

\begin{equation}
f_\Gamma(z)=\int_{\Gamma}\prod_{j=0}^{n-1}l_j(x;z)^{\alpha_j}e^{\frac{l_n(x;z)}{l_0(x;z)}}\omega(x)
\end{equation}
with $z=(z_{ij})_{\substack{i=0,\dots,k\\ j=0,\dots,n}}\in Z_{k+1,n+1}.$ The  parameters $\alpha_0,\dots,\alpha_{n-1}\in\C$ are subject to the constraint  $\alpha_0+\dots+\alpha_{n-1}=-(k+1)$. The system $M_{(2,1^{n-1})}$ in our setting is given by

\begin{equation}
M_{(2,1^{n-1})}:
\begin{cases}
\displaystyle\sum_{i=0}^k\left( z_{i0}\frac{\partial}{\partial z_{i0}}+z_{in}\frac{\partial}{\partial z_{in}}\right)f=\alpha_0 f\\
\displaystyle\sum_{i=0}^kz_{ij}\frac{\partial f}{\partial z_{ij}}=\alpha_jf&(j=1,\dots,n-1)\\
\displaystyle\sum_{i=0}^kz_{i0}\frac{\partial f}{\partial z_{in}}=f\\
\displaystyle\sum_{j=0}^nz_{ij}\frac{\partial f}{\partial z_{pj}}=-\delta_{ip}f&(i,p=0,1,\dots,k)\\
\frac{\partial^2 f}{\partial z_{ij}\partial z_{pq}}=\frac{\partial^2 f}{\partial z_{pj}\partial z_{iq}}& (i,p=0,1,\dots,k,\quad j,q=0,\dots, n).
\end{cases}
\end{equation}

\noindent
If we take a restriction to $z=
\begin{pmatrix}
1&         & &z_{0k+1}&\cdots&z_{0n}\\
 &\ddots& & \vdots &\ddots&\vdots\\
  &         &1&z_{kk+1}&\cdots&z_{kn}
\end{pmatrix}
$ and $x_0=1$, our integral $f_\Gamma(z)$ becomes 
\begin{equation}
f_\Gamma(z)=\int_\Gamma e^{z_{0n}+z_{1n}x_1+\cdots+z_{kn}x_k}\prod_{j=k+1}^{n-1}l_j(x;z)^{\alpha_j}x_1^{\alpha_1}\dots x_k^{\alpha_k}dx.
\end{equation}
If we put $c={}^t(
\alpha_0+1,\dots,\alpha_k+1,-\alpha_{k+1},\dots,-\alpha_{n-1})$, and put ${\bf a}(i,j)={\bf e}(i)+{\bf e}(j)$ $(i=0,1,\dots,k,j=k+1,\dots,n-1)$ and ${\bf a}(i,n)=-{\bf e}(0)+{\bf e}(i)$ $(i=1,\dots,k)$, $g_{\Gamma}(z)=e^{-z_{0n}}f_\Gamma(z)$ is a solution of $M_A(c)$ with $A=({\bf a}(i,j))_{(i,j)\in\llbracket 0,k\rrbracket\times\llbracket k+1,n\rrbracket\setminus\{ (0,n)\}}$. As in \S\ref{QuadraticRelationsForGrassman}, we also put $\tilde{\bf a}(i,j)=-{\bf e}(i)+{\bf e}(j)$ $(i=0,1,\dots,k,j=k+1,\dots,n-1)$, $\tilde{\bf a}(i,j)={\bf e}(0)-{\bf e}(i)$ and $\tilde{A}=(\tilde{\bf a}(i,j))_{(i,j)\in\llbracket 0,k\rrbracket\times\llbracket k+1,n\rrbracket\setminus\{ (0,n)\}}$. Since the variable $z_{0n}$ does not appear in $g_{\Gamma}(z)$, we write $z=
\begin{pmatrix}
1&&         & &z_{0k+1}&\cdots&z_{0n-1}&*\\
 &1&        & &z_{1k+1}&\cdots&z_{1n-1}&z_{1n}\\
 &&\ddots& & \vdots &\ddots&\vdots&\vdots\\
  &&         &1&z_{kk+1}&\cdots&z_{kn-1}&z_{kn}
\end{pmatrix}
$ by abuse of notation.

We consider a ``confluence'' of the staircase triangulation. Namely, for a ladder $I_0\subset\llbracket 1,\dots,k\rrbracket\times\llbracket k+1,\dots,n\rrbracket$, we associate a simplex $I=I_0\setminus\{ (0,n)\}$ of $A$. The collection of all such simplices $T=\{ I=I_0\setminus\{ (0,n)\}\mid I_0:\text{ladder}\}$ forms a convergent regular triangulation. Since $\vol_{\Z A}(\Delta_A)=\begin{pmatrix} n-1\\ k\end{pmatrix}$ and $|T|=\begin{pmatrix} n-1\\ k\end{pmatrix}$, $T$ is unimodular. By abuse of terminology, we also call $I=I_0\setminus\{ (0,n)\}$ a ladder. For any simplex $I\in T$, we consider the equation $Av^I=-c$ such that $v^I_{ij}=0\quad ((i,j)\notin I)$. Defining $\tilde{c}_l=
\begin{cases}
c_l&(l=0,\dots, k)\\
-c_l&(l=k+1,\dots,n-1),
\end{cases}
$ it is equivalent to the system $\tilde{A}v^I=\tilde{c}$. Let us introduce the dual basis $\phi(l)$ $(l=0,\dots,n-1)$ to ${\bf e}(l)$. For each $(i,j)\in G_{I_0}$, we put
\begin{equation}
\varphi(ij)=\sum_{l\in V(C_j(i,j))\setminus\{ n\}}\phi(l).
\end{equation}

\begin{prop}
For $(i,j),(i^\prime,j^\prime)\in I$, we have 
\begin{equation}
\langle \varphi(ij),\tilde{\bf a}(i^\prime,j^\prime)\rangle=
\begin{cases}
1&((i,j)=(i^\prime,j^\prime))\\
0&(otherwise).
\end{cases}
\end{equation}
\end{prop}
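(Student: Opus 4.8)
The plan is to mimic the proof of \cref{prop:GraphicalInverse} in the confluent setting, keeping careful track of the two modifications: the vertex $n$ is excluded from the definition of $\varphi(ij)$, and the edges incident to $n$ correspond to $\tilde{\bf a}(in) = \phi(0)-\phi(i)$ paired against, rather than to $\phi(i)+\phi(j)$. First I would fix $(i,j), (i',j') \in I$ and split into cases according to the position of the edge $(i',j')$ relative to the edge $(i,j)$ in the spanning tree $G_I$: either $(i',j')$ lies in the component $C_i(i,j)$, or in the component $C_j(i,j)$, or $(i',j') = (i,j)$.

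In the first case, where $(i',j') \in C_i(i,j)$, both endpoints $i'$ and $j'$ lie in $V(C_i(i,j))$, hence neither lies in $V(C_j(i,j))$, and a fortiori neither lies in $V(C_j(i,j))\setminus\{n\}$; so $\langle \varphi(ij), \tilde{\bf a}(i'j')\rangle = 0$ regardless of whether $j' = n$ (i.e.\ regardless of whether the edge is a ``confluent'' edge of the form $\phi(0)-\phi(i')$ or an ``ordinary'' edge $-\phi(i')+\phi(j')$), since in the confluent case $0 \in V(C_i(i,j))$ as well when $(i',j')$ with $j'=n$ lies in that component; I would check this last point by noting that $G_I\setminus(i,j)$ disconnects the tree and $0$ sits on the $i$-side exactly when the path from $0$ to $i$ in $G_I$ avoids the edge $(i,j)$. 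In the second case, $(i',j') \in C_j(i,j)$: if $(i',j')$ is an ordinary edge then $\langle \varphi(ij), -\phi(i')+\phi(j')\rangle = -[i'\in V(C_j)\setminus\{n\}] + [j'\in V(C_j)\setminus\{n\}] = -1+1 = 0$ because both $i', j' \ne n$ lie in $C_j(i,j)$; if $(i',j')$ is the confluent edge (so $j' = n$) then $\langle \varphi(ij), \phi(0)-\phi(i')\rangle = [0 \in V(C_j)\setminus\{n\}] - [i' \in V(C_j)\setminus\{n\}]$, and since the whole component $C_j(i,j)$ is contained in $C_j$, both $0$ (if present) and $i'$ are counted, so this is $0$ only if $0$ and $i'$ are on the same side — which holds because removing $(i,j)$ leaves $i'$'s component intact and $0$ is joined to $i'$ through the path in $G_I\setminus(i,j)$; again I would verify this by a path-connectedness argument in the tree. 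The third case $(i,j) = (i',j')$ splits once more: for an ordinary edge, $i \notin V(C_j(i,j))$ and $j \in V(C_j(i,j))\setminus\{n\}$ (here I use $j \ne n$, which holds precisely because $(i,j) \ne (0,n)$ and the confluent edges are $(i,n)$ with $i \ge 1$; wait — the confluent edges \emph{are} of the form $(i,n)$, so for the confluent edge $j = n$), giving $\langle\varphi(ij), -\phi(i)+\phi(j)\rangle = 1$; for the confluent edge $(i,n)$ itself, $\langle \varphi(in), \phi(0)-\phi(i)\rangle = [0 \in V(C_n(i,n))\setminus\{n\}] - [i \in V(C_n(i,n))\setminus\{n\}]$, and since $n \in C_n(i,n)$ while $i \in C_i(i,n)$, and $0$ lies on the component of $n$ (because in the ladder the path from $0$ to $n$ uses the edge $(0,n)$, but that edge is $(i,n)$ only when $i=0$, which is excluded; so $0$ is reached from $n$ without using $(i,n)$), we get $0 - 0 = \dots$ — this is the delicate point.

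\emph{The main obstacle} is exactly this last bookkeeping: determining, for each edge $(i,j)$ whose removal splits $G_I$, on which side the special vertices $0$ and $n$ fall, and confirming that the exclusion ``$\setminus\{n\}$'' in the definition of $\varphi(ij)$ together with the sign pattern of the confluent row vectors $\tilde{\bf a}(in) = \phi(0) - \phi(i)$ conspires to reproduce the clean orthonormality relation. I expect this to reduce to the structural fact that in the confluent ladder the vertex $n$ plays the role that the ``root'' $0$ played in \cref{prop:GraphicalInverse}, so that the correct normalization is obtained by deleting $n$ from all the subtree-sums; I would make this precise by observing that contracting the edge $(0,n)$ in a staircase ladder $J$ recovers the combinatorial tree underlying $I$, and that under this contraction $\varphi(ij)$ of the present proposition matches $\varphi(ij)$ of \cref{prop:GraphicalInverse} for the contracted graph. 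Once that dictionary is set up, all three cases above follow from \cref{prop:GraphicalInverse} applied to the contracted configuration, and the only thing left to check by hand is the behaviour of the single confluent edge, which is a one-line sign computation.
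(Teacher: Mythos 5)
Your case-by-case approach is sound in outline, but it founders precisely at the step you flag as delicate, and the computation you write there is wrong. For the confluent diagonal term $\langle\varphi(i,n),\tilde{\bf a}(i,n)\rangle$ with $i\geq 1$, where $\tilde{\bf a}(i,n)={\bf e}(0)-{\bf e}(i)$, you correctly observe that $0$ lies in $C_n(i,n)$: the edge $(0,n)$ is still present once $(i,n)$ is deleted, since only edges of $I$ are ever removed and $(0,n)\notin I$. Yet you then write ``$0-0$''. Since $0\in C_n(i,n)$ and $0\neq n$, the first indicator is $[0\in V(C_n(i,n))\setminus\{n\}]=1$; and $i\in C_i(i,n)$, so $[i\in V(C_n(i,n))\setminus\{n\}]=0$. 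The pairing is therefore $1-0=1$, exactly as the proposition requires. The single structural fact that closes all three of your cases cleanly is the observation you use repeatedly without isolating it: since $(0,n)$ is an edge of the spanning tree and every edge you remove is some $(i,j)\in I$, hence distinct from $(0,n)$, the vertices $0$ and $n$ always remain in the same connected component after deletion, so $[0\in V(C_j(i,j))]=[n\in V(C_j(i,j))]$ for every $(i,j)\in I$. This is precisely what reconciles deleting $n$ from the subtree sums defining $\varphi(ij)$ with the appearance of ${\bf e}(0)$ in place of ${\bf e}(n)$ in the confluent columns of $\tilde{A}$.

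Your closing suggestion, to contract $(0,n)$ and reduce to \cref{prop:GraphicalInverse}, is essentially the paper's proof. The paper realizes the contraction as the linear map $\pi_2:\Z^{\llbracket 0,n\rrbracket\times 1}\to\Z^{\llbracket 0,n-1\rrbracket\times 1}$, $(x_0,\dots,x_n)\mapsto(x_0+x_n,x_1,\dots,x_{n-1})$, together with the coordinate projection $\pi_1$ forgetting $X_{0n}$; these intertwine $A_J$ with $A_I$, and after restricting to the hyperplanes on which the maps are isomorphisms one obtains $A_I^{-1}=\pi_1\circ A_J^{-1}\circ\pi_2^{-1}$. Pushing the row formula from \cref{prop:GraphicalInverse} through this factorization (using once more that $0$ and $n$ lie on the same side) yields $\varphi(ij)=\sum_{l\in V(C_j(i,j))\setminus\{n\}}\phi(l)$ directly and without any case analysis. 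Either route works, but you would first need to correct the diagonal confluent term.
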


\begin{proof}
We set $S=\llbracket 0,k\rrbracket\times\llbracket k+1,n\rrbracket$, $S^\prime=S\setminus\{ (0,n)\}$. Let $A_S:\Z^{S\times 1}\rightarrow \Z^{\llbracket 0,n\rrbracket\times 1}$ be the $\Z$-linear map defined by $X=(X_{ij})\mapsto \sum_{(i,j)\in S}X_{ij}(-{\bf e}(i)+{\bf e}(j))$ and let $A_{S^\prime}:\Z^{S^\prime\times 1}\rightarrow \Z^{\llbracket 0,n-1\rrbracket\times 1}$ be the $\Z$-linear map defined by $X\mapsto \sum_{(i,j)\in S^\prime}X_{ij}\tilde{\bf a}(i,j)$. Let $\pi_1:\Z^{S\times 1}\rightarrow\Z^{S^\prime \times 1}$ be the canonical projection and $\pi_2:\Z^{\llbracket 0,n\rrbracket}\rightarrow \Z^{\llbracket 0,n-1\rrbracket}$ be the $\Z$-linear map given by $(x_0,\dots,x_n)\mapsto (x_0+x_n,x_1,\dots,x_{n-1})$. It is easy to check the identity $A_{S^\prime}\circ \pi_1=\pi_2\circ A_S.$ Let $A_{I_0}$ (resp. $A_I$) denote the restriction of the map $A_S$ (resp. $A_{S^\prime}$) to the submodule $M_{I_0}=\{ X\in\Z^{S\times 1}\mid X_{ij}=0\text{ if }(i,j)\notin I_0\}$ (resp. $M_I=\{ X\in\Z^{S^\prime\times 1}\mid X_{ij}=0\text{ if }(i,j)\notin I\}$). Then, we have the commutative diagram 
\begin{equation}
\xymatrix{
 \{ X\in M_{I_0}\mid X_{0n}=0\} \ar[r]^-{A_{I_0}} \ar[d]^{\pi_1}&\{ x\in\Z^{\llbracket 0,n\rrbracket\times 1}\mid x_0=x_0+\dots+x_n=0\}\ar[d]^{\pi_2}\\
 M_I \ar[r]^-{A_I}                             &\{ x\in\Z^{\llbracket 0,n-1\rrbracket\times 1}\mid x_0+\dots+x_{n-1}=0\}.
}
\end{equation}
Since $A_{I_0}$ and the two vertical morphisms are isomorphisms, we see that $A_I^{-1}$ is given by $\pi_1\circ A_{I_0}^{-1}\circ\pi_2^{-1}$. In view of \cref{prop:GraphicalInverse}, we obtain the proposition.
\end{proof}

\noindent
Therefore, we obtain a
\begin{cor}
Under the notation above, one has
\begin{equation}
v^I_{ij}=\sum_{l\in V(C_j(i,j))}\tilde{c}_l.
\end{equation}
\end{cor}

\noindent
Substitution of this formula to $\Gamma$-series yields the formula
\begin{equation}\label{eqn:10.7}
\varphi_{v^I}(z)=z_I^{v^I}\sum_{u_{\bar{I}}\in\Z^{\bar{I}}_{\geq 0}}\frac{\left(z_I^{-\langle \varphi(I),\tilde{A}_{\bar{I}}\rangle}z_{\bar{I}}\right)^{u_{\bar{I}}}}{\displaystyle\prod_{(i,j)\in I}\Gamma(1+v_{ij}^I-\langle\varphi(ij),\tilde{A}_{\bar{I}}u_{\bar{I}}\rangle)u_{\bar{I}}!}.
\end{equation}
Since the series (\ref{eqn:10.7}) is defined by means of a ladder $I$ and a parameter $\alpha$, it is denoted by $g_I(z;\alpha)$. Similarly, setting $\bar{I}_{irr}=\bar{I}\cap\{ (1,n),\dots,(k,n)\}$, we obtain the formula for the dual series

\begin{equation}
\varphi_{v^I}^\vee(z)=z_I^{v^I}\sum_{u_{\bar{I}}\in\Z^{\bar{I}}_{\geq 0}}\frac{(-1)^{|u_{\bar{I}_{irr}}|+\sum_{(i,n)\in I}\langle \varphi(in),\tilde{A}u_{\bar{I}}\rangle}\left(z_I^{-\langle \varphi(I),\tilde{A}_{\bar{I}}\rangle}z_{\bar{I}}\right)^{u_{\bar{I}}}}{\displaystyle\prod_{(i,j)\in I}\Gamma(1+v_{ij}^I-\langle\varphi(ij),\tilde{A}_{\bar{I}}u_{\bar{I}}\rangle)u_{\bar{I}}!}.
\end{equation}
This series is denoted by $g_I^\vee(z;\alpha)$.

We consider the de Rham cohomology group $\Homo^k_{dR}\left( \PP^k_x\setminus\bigcup_{j=0}^{n-1}\{ l_j(x;z)\},\nabla_z\right)$ with $\nabla_z=d_x+\sum_{j=0}^{n-1}d_x\log l_j(x;z)\wedge+d_x\left( \frac{l_n(x;z)}{l_0(x;z)}\right)\wedge$. For any subset $J=\{ j_1,\dots,j_k\}\subset\{1,\dots,n-1\}$ with cardinality $k$, we write $z_J$ for the submatrix of $z$ consisting of column vectors indexed by $\{ 0\}\cup J$. We always assume $j_1<\dots<j_k$. We put
\begin{equation}
\eta_J(z;x)=d_x\log\left(\frac{l_{j_1}(x;z)}{l_{0}(x;z)}\right)\wedge\dots\wedge d_x\log\left(\frac{l_{j_k}(x;z)}{l_{0}(x;z)}\right).
\end{equation}
The set $\{ \eta_J(x;z)\}_{J}$ forms a basis of the algebraic de Rham cohomology group $\Homo_{\rm dR}^k\left( \mathbb{P}^k_x\setminus\bigcup_{j=0}^{n-1}\{ l_j(x;z)=0\};\nabla_z\right)$ (\cite{AomotoKitaOrlikTerao}, \cite{KimuraBasisOfCohomology}). Now we are going to derive a quadratic relation for $g_I(z;\alpha)$. We take any pair of  subsets $J,J^\prime\subset\{1,\dots,n-1\}$ with cardinality $k$. Let us put $J_a=J\cap\{ 1,\dots,k\}$, $J_a^\prime=J^\prime\cap\{ 1,\dots,k\}$, $J_b=J\cap\{ k+1,\dots,n-1\}$, and $J_b^\prime=J^\prime\cap\{ k+1,\dots,n-1\}$. We write ${\bf 1}_{J_a}$ (resp. ${\bf 1}_{J_b}$) for the vector $\sum_{j\in J_a}{\bf e}(j)$ (resp. $\sum_{j\in J_b}{\bf e}(j)$). If we write $\alpha$ as $\displaystyle\sum_{j=0}^{n-1}\alpha_j{\bf e}(j)$, we also put $\alpha_a=\displaystyle\sum_{j=1}^k\alpha_j{\bf e}(j)$ and $\alpha_{b}=\displaystyle\sum_{j=k+1}^{n-1}\alpha_j{\bf e}(j)$. By the same argument as \cite{MatsumotoIntersection}, we obtain a formula of the cohomology intersection numbers $\langle \eta_{J}(x;z),\eta_{J^\prime}(x;z)\rangle_{ch}$.
\begin{prop}\label{prop:E(21...1)c.i.n.}
Under the assumption $\tilde{c_j}\notin\Z$, one has 
\begin{equation}\label{MatsumotoFormula2}
\frac{\langle \eta_{J}(x;z),\eta_{J^\prime}(x;z)\rangle_{ch}}{(2\pi\ii)^k}=
\begin{cases}
\frac{1}{\prod_{j\in J}\tilde{c}_j}& (J=J^\prime)\\
0&(J\neq J^\prime)
\end{cases}.
\end{equation}
\end{prop}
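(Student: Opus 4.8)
}
The plan is to reduce the computation of $\langle\omega_J,\omega_{J'}\rangle_{ch}$ to the known Aomoto--Gelfand case (\cref{thm:QuadraticRelationsForAomotoGelfand}, specifically formula~(\ref{MatsumotoFormula})) by a confluence/limit argument, exactly as in \cite{MatsumotoIntersection}. First I would set up the non-confluent system $E(k+1,n+1)$ on the torus with an extra column $z_{\cdot n}$ and parameter $-\alpha_n$ attached to the last linear form $l_n(x;z)$, so that the integrand is $\prod_{j=0}^{n}l_j(x;z)^{\alpha_j}$ with $\alpha_0+\dots+\alpha_n=-(k+1)$. The cohomology intersection numbers $\langle\omega_J,\omega_{J'}\rangle_{ch}$ there are given by~(\ref{MatsumotoFormula}) with $\tilde c_n=-\alpha_n$ entering the denominators whenever $n\in J\cap J'$. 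The confluence $M_{(2,1^{n-1})}$ is obtained by the substitution $z_{\cdot n}\mapsto \epsilon^{-1}z_{\cdot n}$, $\alpha_n\mapsto -\epsilon^{-1}$, and letting $\epsilon\to 0$, under which $l_n(x;z)^{\alpha_n}=\big(1+\epsilon\,\tfrac{l_n(x;z)}{l_0(x;z)}\cdot(\text{stuff})\big)^{-\epsilon^{-1}}\to e^{-l_n(x;z)/l_0(x;z)}$ after absorbing the $l_0$-power into the remaining exponents (this is precisely the Cayley/Laplace degeneration already used in \S\ref{SectionEuler}).

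The key steps, in order, would be: (i) identify the basis $\{\omega_J\}_{J\subset\{1,\dots,n-1\},|J|=k}$ of $\Homo^k_{dR}\big(\PP^k_x\setminus\bigcup_{j=0}^{n-1}\{l_j=0\};\nabla_z\big)$ with the limit of the corresponding Aomoto--Gelfand basis elements $\omega_{\{0\}\cup J}$, checking that these de Rham classes have finite, well-defined limits as $\epsilon\to 0$ (the forms $\omega_{\{0\}\cup J}$ do not involve the column $n$, so this is immediate at the level of representatives); (ii) invoke continuity of the cohomology intersection pairing in the parameters --- this follows because $\langle\bullet,\bullet\rangle_{ch}$ is, by \cite{MatsumotoTakayamaCohomologyIntersection} or by the rational-dependence folklore mentioned after \cref{thm:QuadraticRelation}, a rational function of the $\alpha_j$ and $z_{ij}$ with no pole along the confluence locus for generic $\tilde c_j$; (iii) take the limit $\alpha_n\to\infty$ (equivalently $\tilde c_n=-\alpha_n\to\infty$) in~(\ref{MatsumotoFormula}): the diagonal term $\frac{\sum_{j\in\{0\}\cup J}\tilde c_j}{\prod_{j\in\{0\}\cup J}\tilde c_j}$ becomes $\frac{1}{\prod_{j\in J}\tilde c_j}$ when $0\in J\cap J'$ always (note $j_0=0$ is forced in the confluent normalization, so $\tilde c_0$ is the ``special'' parameter and cancels the way $\tilde c_n$ did not), while for $J\neq J'$ the off-diagonal term $\frac{\mathrm{sgn}(J',J)}{\prod_{j\in J\cap J'}\tilde c_j}$ must be shown to vanish in the limit. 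The last point is the crux: one has to verify that in the confluent setting the relevant pairs $J,J'$ differing in one index never produce a surviving contribution, i.e. the would-be linking index is always the confluent index $n$ (absorbed) and the combinatorics forces $\mathrm{sgn}$-cancellation or an extra vanishing factor.

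Alternatively --- and this is the cleaner route I would actually write up --- one avoids the limit bookkeeping by repeating the direct computation of \cite{MatsumotoIntersection} intrinsically for $M_{(2,1^{n-1})}$: express $\omega_J$ in the form $x^{{\bf 1}-{\bf 1}_{J_a}}l^{-{\bf 1}_{J_b}}\frac{dx}{x}\cdot\det(z_J)$ as in the displayed identity preceding the proposition, use that $\{l_j=0\}_{j=0}^{n-1}$ together with the irregular divisor $\{l_0=0\}$ (where $e^{l_n/l_0}$ has an essential singularity) form a normal crossing configuration after the blow-up of \S\ref{SectionEuler}, and compute $\langle\omega_J,\omega_{J'}\rangle_{ch}$ by the standard residue/Leray recursion on this arrangement. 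The diagonal entry reduces to a product of one-dimensional computations, each contributing $\tilde c_j^{-1}$ for $j\in J$ and a factor $1$ (rather than $\tilde c_0^{-1}+\tilde c_0^{-1}\cdot(\dots)$) at the irregular point $l_0=0$ because the exponential factor $e^{l_n/l_0}$ kills the boundary contribution that would otherwise yield the $\frac{\sum\tilde c_j}{\prod\tilde c_j}$ shape. For $J\neq J'$ the recursion terminates with an empty intersection of the underlying divisor configurations, giving $0$.

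\textbf{Main obstacle.} The hard part will be controlling the behaviour at the irregular divisor $\{l_0(x;z)=0\}$: one must check that after the real oriented blow-up and the stratified trivialization of \S\ref{SectionRapidDecay}, the moderate-growth de~Rham representative $\omega_J$ pairs with rapid-decay cycles in such a way that the essential singularity contributes exactly the factor $1$ to the diagonal and nothing to the off-diagonal, rather than the $\frac{\sum_{j}\tilde c_j}{\prod_j \tilde c_j}$ of the regular (Aomoto--Gelfand) case. Equivalently, in the limit approach, the obstacle is showing the off-diagonal terms of~(\ref{MatsumotoFormula}) genuinely vanish under confluence and that the diagonal term degenerates to~(\ref{MatsumotoFormula2}) and not to something with a leftover $\tilde c_0$ in the numerator; this requires a careful identification of which index plays the role of the ``at infinity'' hyperplane $j_0$ in the confluent normalization. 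Once that bookkeeping is pinned down, the rest is the routine residue calculus already carried out in \cite{MatsumotoIntersection}.
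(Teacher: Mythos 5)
Your second, ``intrinsic'' route is exactly the paper's route; the confluence/limit argument is never invoked. Concretely, the paper blows up $\mathbb{P}^k$ only along the codimension-two locus $\{l_0=l_n=0\}$ (a much smaller blow-up than the general construction of \S\ref{SectionEuler}), so that $l_n/l_0$ extends to a regular map $f\colon X\to\PP^1$, and then runs the Matsumoto-style construction of a compactly supported representative $\psi_{J^\prime}$ of $\iota_z\omega_{J^\prime}$ and evaluates $\int\omega_J\wedge\psi_{J^\prime}$ by an iterated residue over the codimension-$k$ strata.

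Where your sketch is slightly imprecise is in the phrase ``the essential singularity contributes a factor~$1$''. The actual mechanism is a vanishing, not a normalization: in the local coordinate $w_1=l_0$ the term $-d(l_n/l_0)\wedge$ in $\nabla^\vee_z$ produces $-dw_1/w_1^2\wedge$, and this order-two pole forces the locally solved coefficient $f_P(w_P)$ to satisfy $f_P(0,w_2,\dots,w_k)\equiv 0$ whenever the multi-index $P$ contains the index $0$. Hence every stratum meeting $L_0$ contributes residue zero, leaving only the single term $P=J$ (which by Property 5 gives $(-1)^k\delta(P,J^\prime)/\prod_{j\in J}\tilde c_j$). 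This simultaneously collapses the would-be diagonal $\sum_{j\in\{0\}\cup J}\tilde c_j/\prod_{j\in\{0\}\cup J}\tilde c_j$ to $1/\prod_{j\in J}\tilde c_j$ \emph{and} kills the off-diagonal terms, because for $J\neq J^\prime$ there is no admissible $P$ with $0\notin P$, $P\subset\{0\}\cup J$, and $P=J^\prime$. Your first route via degeneration of~(\ref{MatsumotoFormula}) is not what the paper does, and the bookkeeping issues you flag there --- showing the off-diagonals of~(\ref{MatsumotoFormula}) actually vanish under confluence and renormalizing the role of $\tilde c_0$ --- are genuine; the direct residue argument avoids them entirely.
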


\begin{proof}
The result immediately follows from the computation of \cite{MatsumotoIntersection} after a minor modification in view of \cite{MajimaMatsumotoTakayama}. For readers' convenience, we give necessary modifications. Let $X$ be the projective variety obtained by blowing-up $\mathbb{P}^k$ along the 2-codimensional linear subvariety $\{ l_0(x;z)=l_n(x;z)=0\}$. It is readily seen that there is a well defined regular map $f:X\rightarrow\mathbb{P}^1$ whose restriction to the subspace $\{ l_0(x;z)\neq 0\}$ is identical with $\frac{l_n(x;z)}{l_0(x;z)}$. We write $L_j$ for the proper transformation of the divisor $\{ l_j(x;z)=0\}\subset\mathbb{P}^k$ ($j=0,\dots,n-1$) and write $L_n$ for the exceptional divisor of the blowing-up. For any pair of $k$-tuples $P=(p_1,\dots,p_k)$ and $Q=(q_1,\dots,q_k)$ $(0\leq p_1<\dots< p_k\leq n-1, \ \ 0\leq q_1<\dots<q_k\leq n-1)$, we set 
\begin{equation}
\delta(P,Q)=
\begin{cases}
1&(\text{if }P=Q)\\
0&(\text{if }P\neq Q)
\end{cases}.
\end{equation}
For each $j=0,\dots,n$, we write $V_j$ for a tubular neighbourhood of $L_j$ and write $h_j$ for a smooth function on $X$ which is equal to $1$ near $L_j$ and $0$ outside $V_j$. For each multi-index $P=(p_1,\dots,p_k)$, we denote by $w_P=(w_1,\dots,w_k)$ the local coordinate around $\displaystyle\bigcap_{i=1}^kL_{p_i}$ so that $L_{p_i}=\{ w_i=0\}$ and we set $D_P=\displaystyle\bigcap_{i=1}^kV_{p_i}$. By solving the equation $\nabla^\vee_z \xi=\eta_{J^\prime}(x;z)$ locally as in \cite[Lemma 5.1, 5.2 and Lemma 6.1]{MatsumotoIntersection}, we can find a smooth $k$-form $\psi_{J^\prime}(x)$ on $X$ such that the following properties 1-5 hold.
\begin{enumerate}
\item $\psi_{J^\prime}$ has a compact support in $X\setminus \bigcup_{j=0}^{n}L_j$ and cohomologous to $\eta_{J^\prime}$ in $\Homo_{\rm dR}^k\left( \mathbb{P}^k_x\setminus\bigcup_{j=0}^{n-1}\{ l_j(x;z)=0\};\nabla_z^\vee\right)$.
\item The $k$-form $\psi_{J^\prime}-\eta_{J^\prime}$ vanishes outside a small tubular neighbourhood of the divisor $\bigcup_{j=0}^{n}L_j$.
\item On each $V_j\setminus D_P$, $\psi_{J^\prime}$ is of the form $\psi_{J^\prime}=\sum_{i}\xi_i\wedge\eta_i$ where $\xi_i$ are smooth forms and $\eta_i$ are rational differential forms of degree greater or equal to one. 
\item For each multi-index $P=(p_1,\dots,p_k)$ such that $p_1=0$, we have $\psi_{J^\prime}=f_P(w_P)dh_{p_1}\wedge\dots\wedge dh_{p_k}$ where $f_P(w_P)$ is a holomorphic function such that $f_P(0,w_2,\dots,w_k)\equiv 0$.
\item For each multi-index $P=(p_1,\dots,p_k)$ such that $0\notin P$, we have $\psi_{J^\prime}=f_P(w_P)dh_{p_1}\wedge\dots\wedge dh_{p_k}$ where $f_P(w_P)$ is a holomorphic function with $f_P({\bf O})=(-1)^{k}\frac{\delta(P,J^\prime)}{\prod_{j=1}^k\tilde{c}_{p_j}}$.
\end{enumerate}
The property 4 is due to the fact that the term $d\left( \frac{l_n(x;z)}{l_0(x;z)}\right)\wedge$ in $\nabla^\vee_z=d-\sum_{j=1}^{n-1}\tilde{c}_jd\log l_j(x;z)\wedge-d\left( \frac{l_n(x;z)}{l_0(x;z)}\right)\wedge$ gives rise to a non-logarithmic term $-\frac{dw_1}{w_1^2}\wedge$. If we write $\iota_z$ for the inverse of the canonical isomorphism 
\begin{equation}
{\rm can}:\Homo_{r.d.}^k\left( \mathbb{P}^k_x\setminus\displaystyle\bigcup_{j=0}^{n-1}\{ l_j(x;z)=0\};\nabla_z^\vee\right)\tilde{\rightarrow}\Homo_{\rm dR}^k\left( \mathbb{P}^k_x\setminus\displaystyle\bigcup_{j=0}^{n-1}\{ l_j(x;z)=0\};\nabla_z^\vee\right),
\end{equation}
it is clear that $\psi_{J^\prime}$ is a representative of $\iota_z\eta_{J^\prime}$ in $\Homo_{r.d.}^k\left( \mathbb{P}^k_x\setminus\bigcup_{j=0}^{n-1}\{ l_j(x;z)=0\};\nabla_z^\vee\right)$. Therefore, by properties 3,4 and 5 in view of the proof of \cite[Theorem 4.1]{MajimaMatsumotoTakayama}, we obtain
\begin{align}
\langle \eta_{J}(x;z),\eta_{J^\prime}(x;z)\rangle_{ch}&=\int \eta_{J}(x;z)\wedge\psi_{J^\prime}\\
&=\sum_{P}\int_{D_P}\eta_{J}(x;z)\wedge\psi_{J^\prime}\\
&=(2\pi\ii)^k\sum_{P}\underset{w_k=0}{\rm Res}\left( \underset{w_{k-1}=0}{\rm Res}\left( \cdots \underset{w_1=0}{\rm Res}\left((-1)^kf_{P}\eta_{J}(x;z)\right)\cdots\right)\right)\\
&=(2\pi\ii)^k\frac{\delta(J,J^\prime)}{\prod_{j\in J}\tilde{c}_{j}}.
\end{align}

\end{proof}

In sum, we obtain the general quadratic relation of a confluence of Aomoto-Gelfand hypergeometric functions:

\begin{thm}\label{thm:QuadraticRelationsForAomotoGelfand2}
Under the notation as above, for any $z\in U_T$, we have an identity
\begin{align}
&(-1)^{|J_b|+|J^\prime_b|+k}\alpha_{k+1}\dots\alpha_n(-\alpha_b+{\bf 1}_{J_b})_{-{\bf 1}_{J_b}}(\alpha_b+{\bf 1}_{J^\prime_b})_{-{\bf 1}_{J_b^\prime}}\times\nonumber\\
&\sum_{I:\text{ladder}}\frac{\pi^{n-1}}{\displaystyle\prod_{(i,j)\in I}\sin\pi v_{ij}^I}g_I(z;\alpha+{\bf 1}_{\llbracket 0,k\rrbracket}-{\bf 1}_J)g_I^\vee(z;\alpha-{\bf 1}_{\llbracket 0,k\rrbracket}+{\bf 1}_{J^\prime})\nonumber\\
=&
\det(z_J)^{-1}\det(z_{J^\prime})^{-1}\frac{\langle \eta_{J}(x;z),\eta_{J^\prime}(x;z)\rangle_{ch}}{(2\pi\ii)^k}.
\end{align}
Here, the right-hand side is explicitly determined by (\ref{MatsumotoFormula2}).
\end{thm}

\begin{exa}{\bf (Kummer's hypergeometric series)}

The simplest case is $k=1$ and $n=3$. This case is known as Kummer's hypergeometric equation. By computing the cohomology intersection number $\langle \frac{dx}{x},\frac{dx}{x}\rangle_{ch}$, we obtain a quadratic relation
We have a relation 
\begin{equation}
(\gamma-\alpha-1){}_1F_1\left(\substack{\alpha\\ \gamma};z\right){}_1F_1\left(\substack{-\alpha\\ 2-\gamma};-z\right)+\alpha{}_1F_1\left(\substack{1+\alpha-\gamma\\ 2-\gamma};z\right){}_1F_1\left(\substack{\gamma-\alpha-1\\ \gamma};-z\right)=\gamma-1,
\end{equation}
where the series ${}_1F_1\left(\substack{\alpha\\ \gamma};z\right)$ is Kummer's hypergeometric series
\begin{equation}
{}_1F_1\left(\substack{\alpha\\ \gamma};z\right)=\sum_{n=0}^\infty\frac{(\alpha)_n}{(\gamma)_nn!}z^n.
\end{equation}
This identity implies a series of combinatorial identities
\begin{equation}
(\gamma-\alpha-1)\sum_{l+m=n}(-1)^m\frac{(\alpha)_l(-\alpha)_{m}}{(\gamma)_l(1)_l(2-\gamma)_m(1)_m}
+\alpha\sum_{l+m=n}(-1)^m\frac{(1+\alpha-\gamma)_l(\gamma-\alpha-1)_m}{(2-\gamma)_l(1)_l(\gamma)_m(1)_m}=0,
\end{equation}
where $n$ is a positive integer.
\end{exa}

\begin{exa}{\bf (A confluence of $E(3,6)$)}

This is a confluence of \cref{exa:AomotoGelfand}. The integral in question takes the form $f_\Gamma(z)=\int_\Gamma\prod_{j=3}^4(z_{0j}+z_{1j}x_1+z_{2j}x_2)^{-c_j}e^{z_{1j}x_1+z_{2j}x_2}x_1^{c_1}x_2^{c_2}\frac{dx_1\wedge dx_2}{x_1x_2}$. The quadratic relation with respect to the cohomology intersection number $\langle\frac{dx_1\wedge dx_2}{x_1x_2},\frac{dx_1\wedge dx_2}{x_1x_2}\rangle_{ch}$ is given by
\begin{equation}
c_1c_2c_3c_4\sum_{i=1}^6\frac{\pi^4}{\sin\pi(v_i)}\varphi_i(z;c)\varphi_i^\vee(z;c)=1,
\end{equation}
where $v_i$ are given by
\begin{align}
v_1&={}^t(-c_3,-c_4,c_0+c_1,-c_1)\\
v_2&={}^t(-c_3,-c_2+c_3,-c_0-c_1,c_0)\\
v_3&={}^t(-c_3,-c_2+c_3,-c_1,-c_0)\\
v_4&={}^t(-c_2,c_2-c_3,-c_4,c_0)\\
v_5&={}^t(-c_2,c_2-c_3,c_0-c_4,-c_0)\\
v_6&={}^t(-c_2,-c_1,-c_0+c_4,-c_4).
\end{align}
Note that we have a relation $c_0+c_1+c_2-c_3-c_4=0$. The series $\varphi_i(z;c)$ are given by the following series.

\begin{align}
\varphi_1(z;c)&=z_{23}^{-c_3}z_{24}^{-c_4}z_{25}^{c_0+c_1}z_{15}^{-c_1}\nonumber\\
 &\sum_{u_{13},u_{14},u_{03},u_{04}\geq 0}\frac{1}{\Gamma(1-c_3-u_{13}-u_{03})\Gamma(1-c_4-u_{14}-u_{04})\Gamma(1+c_0+c_1+u_{13}+u_{14}+u_{03}+u_{04})}\nonumber\\
 &\frac{(z_{23}^{-1}z_{25}z_{15}^{-1}z_{13})^{u_{13}}(z_{24}^{-1}z_{25}z_{15}^{-1}z_{14})^{u_{14}}(z_{23}^{-1}z_{25}z_{03})^{u_{03}}(z_{24}^{-1}z_{25}z_{04})^{u_{04}}}{\Gamma(1-c_1-u_{13}-u_{14})u_{13}!u_{14}!u_{03}!u_{04}!}
\end{align}

\begin{align}
\varphi_2(z;c)&=z_{23}^{-c_3}z_{24}^{-c_2+c_3}z_{14}^{-c_0-c_1}z_{15}^{c_0}\nonumber\\
&\sum_{u_{25},u_{13},u_{03},u_{04}\geq 0}\frac{1}{\Gamma(1-c_3-u_{13}-u_{03})\Gamma(1-c_2+c_3-u_{25}+u_{13}+u_{03})}\nonumber\\
&\frac{1}{\Gamma(1-c_0-c_1+u_{25}-u_{13}-u_{03}-u_{04})\Gamma(1+c_0-u_{25}+u_{03}+u_{04})}\nonumber\\
&\frac{(z_{24}^{-1}z_{14}z_{15}^{-1}z_{25})^{u_{25}}(z_{23}^{-1}z_{24}z_{14}^{-1}z_{13})^{u_{13}}(z_{23}^{-1}z_{24}z_{14}^{-1}z_{15}z_{03})^{u_{03}}(z_{14}^{-1}z_{15}z_{04})^{u_{04}}}{u_{25}!u_{13}!u_{03}!u_{04}!}
\end{align}

\begin{align}
\varphi_3(z;c)&=z_{23}^{-c_3}z_{24}^{-c_2+c_3}z_{14}^{-c_1}z_{04}^{-c_0}\nonumber\\
&\sum_{u_{25},u_{15},u_{13},u_{03}\geq 0}\frac{1}{\Gamma(1-c_3-u_{13}-u_{03})\Gamma(1-c_2+c_3-u_{25}+u_{13}+u_{03})\Gamma(1-c_1-u_{15}-u_{13})}\nonumber\\
&\frac{(z_{24}^{-1}z_{04}z_{25})^{u_{25}}(z_{14}^{-1}z_{04}z_{15})^{u_{15}}(z_{23}^{-1}z_{24}z_{14}^{-1}z_{13})^{u_{13}}(z_{23}^{-1}z_{24}z_{04}^{-1}z_{03})^{u_{03}}}{\Gamma(1-c_0+u_{25}+u_{15}-u_{03})u_{25}!u_{15}!u_{13}!u_{03}!}
\end{align}

\begin{align}
\varphi_4(z;c)&=z_{23}^{-c_2}z_{13}^{c_2-c_3}z_{14}^{-c_4}z_{15}^{c_0}\nonumber\\
&\sum_{u_{24},u_{25},u_{03},u_{04}\geq 0}\frac{1}{\Gamma(1-c_2-u_{24}-u_{25})\Gamma(1+c_2-c_3+u_{24}+u_{25}-u_{03})\Gamma(1-c_4-u_{24}-u_{04})}\nonumber\\
&\frac{(z_{23}^{-1}z_{13}z_{14}^{-1}z_{24})^{u_{24}}(z_{23}^{-1}z_{13}z_{15}^{-1}z_{25})^{u_{25}}(z_{13}^{-1}z_{15}z_{03})^{u_{03}}(z_{14}^{-1}z_{15}z_{04})^{u_{04}}}{\Gamma(1+c_0-u_{25}+u_{03}+u_{04})u_{24}!u_{25}!u_{03}!u_{04}!}
\end{align}

\begin{align}
\varphi_5(z;c)&=z_{23}^{-c_2}z_{13}^{c_2-c_3}z_{14}^{c_0-c_4}z_{04}^{-c_0}\nonumber\\
&\sum_{u_{24},u_{23},u_{15},u_{03}\geq 0}\frac{1}{\Gamma(1-c_2-u_{24}-u_{25})\Gamma(1+c_2-c_3+u_{24}+u_{25}-u_{03})}\nonumber\\
&\frac{1}{\Gamma(1+c_0-c_4-u_{24}-u_{25}-u_{15}+u_{03})\Gamma(1-c_0+u_{25}+u_{15}-u_{03})}\nonumber\\
&\frac{(z_{23}^{-1}z_{13}z_{14}^{-1}z_{24})^{u_{24}}(z_{23}^{-1}z_{13}z_{14}^{-1}z_{04}z_{25})^{u_{25}}(z_{14}^{-1}z_{04}z_{15})^{u_{15}}(z_{13}^{-1}z_{14}z_{04}^{-1}z_{03})^{u_{03}}}{u_{24}!u_{25}!u_{15}!u_{03}!}
\end{align}

\begin{align}
\varphi_6(z;c)&=z_{23}^{-c_2}z_{13}^{-c_1}z_{03}^{-c_0+c_4}z_{04}^{-c_4}\nonumber\\
&\sum_{u_{24},u_{25},u_{14},u_{15}\geq 0}\frac{1}{\Gamma(1-c_2-u_{24}-u_{25})\Gamma(1-c_1-u_{14}-u_{15})\Gamma(1-c_0+c_4+u_{24}+u_{25}+u_{14}+u_{15})}\nonumber\\
&\frac{(z_{23}^{-1}z_{03}z_{04}^{-1}z_{24})^{u_{24}}(z_{23}^{-1}z_{03}z_{25})^{u_{25}}(z_{13}^{-1}z_{03}z_{04}^{-1}z_{14})^{u_{14}}(z_{13}^{-1}z_{03}z_{15})^{u_{15}}}{\Gamma(1-c_4-u_{24}-u_{14})u_{24}!u_{25}!u_{14}!u_{15}!}
\end{align}

The series $\varphi_i^\vee(z;c)$ is obtained from $\varphi_i(z;c)$ by replacing $c_i$ by $-c_i$ and $z_{15},z_{25}$ by $-z_{15},-z_{25}$ in the summand. Note that if we substitute 
\begin{equation}
\begin{pmatrix}
z_{03}&z_{04}&*\\
z_{13}&z_{14}&z_{15}\\
z_{23}&z_{24}&z_{25}
\end{pmatrix}
=
\begin{pmatrix}
1&1&*\\
1&\zeta_1&\zeta_1\zeta_2\\
1&\zeta_1\zeta_3&\zeta_1\zeta_2\zeta_3\zeta_{4}
\end{pmatrix},
\end{equation}

\noindent
all the Laurent series $\varphi_{i}(z;c)$ and $\varphi_{i}^\vee(z;c)$ above become power series, i. e., they do not contain any negative power.

\end{exa}

\section*{Appendix 1: A lemma on holonomic dual}
In this appendix, we prove \cref{lem:Duality}. Let $\Delta_X:X\hookrightarrow X\times X$ be the diagonal embedding. We also denote its image by $\Delta_X$. Since ${\rm Ch}(M\boxtimes N)={\rm Ch}(M)\times {\rm Ch}(N)$ and ${\rm Ch}(M)\cap {\rm Ch}(N)\subset T^*_XX$ by the assumption of \cref{lem:Duality}, we obtain the inclusion 
\begin{equation}
T_{\Delta_X}(X\times X)\cap {\rm Ch}(M\boxtimes N)=\{ (x,\xi;x,\xi)\in T^*X\times T^*X\mid (x,\xi)\in {\rm Ch}(M)\cap {\rm Ch}(N)\}\subset T^*_{X\times X}X\times X.
\end{equation}
Therefore, $M\boxtimes N$ is non-characteristic with respect to the morphism $\Delta_X$. By \cite[Theorem 2.7.1.]{HTT}, we have a commutativity $\D_X (\mathbb{L}\Delta^*_X(M\boxtimes N))\simeq\mathbb{L}\Delta^*_X\D_{X\times X}(M\boxtimes N)$. Therefore, we have quasi-isomorphisms
\begin{align}
\D_X (M\overset{\D}{\otimes}N)&=\D_X (\mathbb{L}\Delta^*_X(M\boxtimes N))\\
                                              &\simeq\mathbb{L}\Delta^*_X(\D_XM\boxtimes \D_XN)\\
                                              &\simeq \D_XM\overset{\D}{\otimes} \D_XN.
\end{align}

\section*{Appendix 2: Proof of \cref{prop:PochhammerIntersection}}

We apply the twisted period relation (\ref{GeneralQuadraticRelation}) to $\Homo_n\left( X, \mathcal{L}\right)$, where $X=\C^n_x\setminus\{ x_1\cdots x_n(1-x_1-\dots-x_n)=0\}$ $\mathcal{L}=\C x_1^{\alpha_1}\cdots x_n^{\alpha_n}(1-x_1-\dots-x_n)^{\alpha_{n+1}}$. We take a basis $\frac{dx}{x}=\frac{dx_1\wedge \dots\wedge dx_n}{x_1\dots x_n}$ of twisted cohomology group $\Homo^n\left( X, \mathcal{L}\right)$ and of $\Homo^n\left( X, \mathcal{L}^\vee\right).$ By \cite{MatsumotoIntersection}, we have $\langle\frac{dx}{x},\frac{dx}{x}\rangle_{ch}=(2\pi\ii)^n\frac{\alpha_0+\dots+\alpha_n}{\alpha_0\dots\alpha_n}.$ On the other hand, we have
\begin{equation}
\int_{P_\tau}x_1^{\alpha_1}\cdots x_n^{\alpha_n}(1-x_1-\dots-x_n)^{\alpha_{n+1}}\frac{dx}{x}=\prod_{i=1}^{n+1}(1-e^{-2\pi\ii\alpha_i})\frac{\Gamma(\alpha_1)\dots\Gamma(\alpha_n)\Gamma(\alpha_{n+1}+1)}{\Gamma(1-\alpha_0)}
\end{equation}
and
\begin{equation}
\int_{\check{P}_\tau}x_1^{-\alpha_1}\cdots x_n^{-\alpha_n}(1-x_1-\dots-x_n)^{-\alpha_{n+1}}\frac{dx}{x}=\prod_{i=1}^{n+1}(1-e^{2\pi\ii\alpha_i})\frac{\Gamma(-\alpha_1)\dots\Gamma(-\alpha_n)\Gamma(1-\alpha_{n+1})}{\Gamma(1+\alpha_0)}.
\end{equation}

\noindent
Therefore, we obtain \cref{prop:PochhammerIntersection} in view of the twisted period relation (\ref{GeneralQuadraticRelation}).
\section*{Appendix 3: Construction of a lift of a Pochhammer cycle}

In this appendix, we summarize the construction of Pochhammer cycles following \cite[\S 6]{Beukers} and construct its lift by a covering map. 

We consider a hyperplane $H$ in $\C^{n+1}$ defined by $H=\{ t_0+\cdots+t_n=1\}$. Let $\varepsilon$ be a small real positive number. We consider a polytope $F$ in $\R^{n+1}$ defined by 
\begin{equation}
|x_{i_1}|+\cdots+|x_{i_k}|\leq 1-(n+1-k)\varepsilon
\end{equation}
for all $k=1,\dots, n+1$ and all $0\leq i_1 <i_2<\dots <i_k\leq n$. The faces of this polytope can be labeled by vectors $\mu\in\{0,\pm 1\}^n\setminus\{ 0\}^n$. We set $|\mu|=\displaystyle\sum_{i=0}^n|\mu_i|$. The face $F_\mu$ corresponding to $\mu$ is defined by
\begin{equation}
\mu_0 x_0+\mu_1 x_1+\dots+\mu_nx_n=1-(n+1-|\mu|)\varepsilon,\; \mu_jx_j\geq\varepsilon\text{ whenever }\mu_j\neq 0,\; |x_j|\leq\varepsilon\text{ whenever }\mu_j=0.
\end{equation}

\noindent
The number of faces of $F$ is $3^n-1$ and each $F_\mu$ is isomorphic to $\Delta_{|\mu|-1}\times {\rm I}^{n+1-|\mu|}$ where ${\rm I}$ is a closed interval. The vertices of $F$ are points with one coordinate $\pm(1-n\varepsilon)$ and all other coordinates $\pm \varepsilon$. Therefore, the number of vertices is $(n+1)2^{n+1}$. Define a continuous piecewise smooth map $P:\cup_\mu F_\mu\rightarrow H$ by
\begin{equation}
P(x_0,\dots ,x_n)=\frac{1}{\tilde{y}_0+\dots+\tilde{y}_n}(y_0,\dots,y_n)
\end{equation}
where

\begin{equation}\label{argument}
y_j=
\begin{cases}
x_j & (x_j\geq\varepsilon)\\
e^{-2\pi\ii}|x_j| & (x_j\leq-\varepsilon)\\
\varepsilon e^{-\pi\ii(1-\frac{x_j}{\varepsilon})} & (|x_j|\leq\varepsilon).
\end{cases} 
\end{equation}

\begin{equation}\label{argument2}
\tilde{y}_j=
\begin{cases}
|x_j| & (|x_j|\geq\varepsilon)\\
\varepsilon e^{-\pi\ii(1-\frac{x_j}{\varepsilon})} & (|x_j|\leq\varepsilon).
\end{cases} 
\end{equation}

\noindent
Let us denote by $\pi:H\rightarrow \C^n$ be the projection $\pi(t_0,\dots,t_n)=(t_1,\dots,t_n).$ By definition, the image of the map $\pi\circ P$ is contained in the complement of a divisor $\{ 1=t_1+\dots+t_n\}$ in the torus $(\C^\times)^n\subset\C^n$.  On each face $F_\mu$, the branch of a multivalued function $t_1^{\beta_1-1}\dots t_n^{\beta_n-1}(1-t_1-\dots -t_n)^{\beta_{0}-1}$ on $\pi\circ P (F_\mu)$ is defined by

\begin{equation}\label{TheBranch}
t_1^{\beta_1-1}\dots t_n^{\beta_n-1}(1-t_1-\cdots -t_n)^{\beta_{0}-1}=\prod_{\mu_j\neq 0}|x_j|^{\beta_j-1}e^{\pi\ii(\mu_j-1)\beta_j}\prod_{\mu_k=0}\varepsilon^{\beta_k-1}e^{\pi\ii(\frac{x_k}{\varepsilon}-1)(\beta_k-1)}.
\end{equation}
Thus, we can define a multi-dimensional Pochhammer cycle $P_n$ as a cycle with local system coefficients.

Now we consider a covering map $p:(\C^\times)^n_\tau\rightarrow (\C^\times)^n_t$ defined by $p(\tau)=\tau^A$ where $A=({\bf a}(1)|\dots|{\bf a}(n))$ is an invertible $n$ by $n$ matrix with integer entries. We put $\beta^\prime={}^t(\beta_1,\dots,\beta_n).$

\begin{prop}
There exists an element $[P_n^\prime]$ of $\Homo_n\left((\C^\times)^n_\tau\setminus\left\{ 1=\sum_{i=1}^n\tau^{{\bf a}(i)}\right\};\underline{\C}\left(1-\sum_{i=1}^n\tau^{{\bf a}(i)}\right)^{\beta_0}\tau^{A\beta^\prime}\right)$ such that the identity $p_*(P^\prime_n)=P_n$ holds. 
\end{prop}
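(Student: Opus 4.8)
The plan is to construct $P_n'$ explicitly by lifting the piecewise-smooth map $P$ defining the Pochhammer cycle through the covering $p$, keeping careful track of branches. First I would note that since $A$ is an invertible integer matrix, $p:(\C^\times)^n_\tau \to (\C^\times)^n_t$ is a finite covering map, so away from the ramification the lift of any contractible piece is a disjoint union of $|\det A|$ copies. The real content is that the Pochhammer cycle $P_n$ stays inside the torus $(\C^\times)^n_t \setminus \{1 = \sum_i t_i\}$, so in particular it never touches $\{t_1\cdots t_n = 0\}$, hence each face $\pi\circ P(F_\mu)$ lies in a simply connected (indeed contractible, after a small deformation) subset of $(\C^\times)^n_t \setminus \{1 = \sum t_i\}$; this is where I would invoke the product structure $F_\mu \cong \Delta_{|\mu|-1}\times I^{n+1-|\mu|}$ and the explicit formulas (\ref{argument}), (\ref{argument2}) to see that on each $F_\mu$ the arguments of the coordinates $t_j$ vary within a bounded range, so there is a well-defined continuous choice of the vector $\frac{1}{2\pi}(\arg t_1,\dots,\arg t_n)$ on $\pi\circ P(F_\mu)$.

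Next I would define, on each face $F_\mu$, a lift $P'_\mu: F_\mu \to (\C^\times)^n_\tau$ of $\pi\circ P|_{F_\mu}$ by setting $\tau(x) = $ the unique point with $\tau^A = t$ whose argument vector is prescribed by $\frac{1}{2\pi}\arg \tau = \frac{1}{2\pi} A^{-1}\arg t$, using the continuous branch of $\arg t$ chosen in the previous step. The modulus is determined since $\log|\tau| = A^{-1}\log|t|$, so this gives a well-defined continuous (piecewise smooth) lift on each face. The key compatibility check is that these lifts agree on the intersections $F_\mu \cap F_{\mu'}$: this follows because the branch of $\arg t$ on a common subface, inherited from either face, is the same by the explicit description (\ref{argument}), and $A^{-1}$ applied to equal arguments gives equal arguments. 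Assembling the $P'_\mu$ then yields a continuous piecewise-smooth singular chain $P'_n$ on $(\C^\times)^n_\tau \setminus \{1 = \sum_i \tau^{{\bf a}(i)}\}$ with $p\circ P'_n = \pi\circ P = P_n$ (as chains), hence $p_*(P'_n) = P_n$.

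Then I would verify that $P'_n$ is a \emph{cycle} with coefficients in the specified local system. Being a cycle is immediate: $\partial P'_n$ lifts $\partial P_n = 0$, and since the lift of the empty chain is empty (the lifts on adjacent faces cancel as for $P_n$ itself, because $p$ is a local homeomorphism and cancellation is a local condition), we get $\partial P'_n = 0$. For the local system coefficient, I would pull back the branch formula (\ref{TheBranch}): since $t^{A\beta'} = \prod_j t_j^{(A\beta')_j}$ and $p^* t_j = \tau^{{\bf a}(j)}$, one has $p^*(t^{\beta'}) = \tau^{A\beta'}$ at the level of the chosen branches precisely because the argument of $\tau$ was defined as $A^{-1}$ times the argument of $t$; and $p^*\bigl(1 - \sum_j t_j\bigr)^{\beta_0} = \bigl(1 - \sum_i \tau^{{\bf a}(i)}\bigr)^{\beta_0}$ with the compatible branch since $p$ is the identity on that factor up to the coordinate change. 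So the coefficient data transport correctly, giving $P'_n \in \Homo_n\bigl((\C^\times)^n_\tau \setminus \{1 = \sum_i \tau^{{\bf a}(i)}\};\, \underline{\C}\,(1-\sum_i\tau^{{\bf a}(i)})^{\beta_0}\tau^{A\beta'}\bigr)$ as required.

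\textbf{Main obstacle.} The one delicate point I anticipate is the branch-matching on the codimension-one subfaces $F_\mu \cap F_{\mu'}$, especially near the ``transition'' faces where some coordinate $x_j$ passes through $|x_j| = \varepsilon$ and the argument rotates by $\pi$ according to (\ref{argument}): one must check that the two a priori different continuations of $\arg t$ from the two sides land on the same value before applying $A^{-1}$, so that the glued lift is genuinely continuous rather than merely continuous sheet-by-sheet. This is a finite, combinatorial verification over the $3^n - 1$ faces using the explicit piecewise formulas, and it is the heart of why the construction works; everything else (cycle condition, coefficient compatibility, the equality $p_* P'_n = P_n$) is then formal.
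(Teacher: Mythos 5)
Your proof is correct and follows essentially the same approach as the paper's: the paper likewise defines the lift by $P'(x) = (q_1(x),\dots,q_n(x))^{A^{-1}}$, interpreting this via the explicit argument choices in (\ref{argument}) and (\ref{argument2}), and pulls back the branch formula (\ref{TheBranch}) face by face. You simply spell out the branch-matching across faces and the twisted cycle condition that the paper leaves implicit.
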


\begin{proof}
Let us put $\pi\circ P(x)=(q_1(x),\dots, q_n(x)).$ Define a map $P^\prime:\cup_\mu F_\mu\rightarrow (\C^\times)^n_\tau\setminus\left\{ 1=\displaystyle\sum_{i=1}^n\tau^{{\bf a}(i)}\right\}$ by 

\begin{equation}
P^\prime(x)=(q_1(x),\dots, q_n(x))^{A^{-1}}.
\end{equation}
Note that this is a well-defined continuous map in view of (\ref{argument}) and (\ref{argument2}). The branch of a multivalued function $\left(1-\displaystyle\sum_{i=1}^n\tau^{{\bf a}(i)}\right)^{\beta_0}\tau^{A\beta^\prime}$ on the face $F_\mu$ is therefore defined by the formula 
\begin{equation}
\left(1-\sum_{i=1}^n\tau^{{\bf a}(i)}\right)^{\beta_0}\tau^{A\beta^\prime}=\prod_{\mu_j\neq 0}|x_j|^{\beta_j-1}e^{\pi\ii(\mu_j-1)\beta_j}\prod_{\mu_k=0}\varepsilon^{\beta_k-1}e^{\pi\ii(\frac{x_k}{\varepsilon}-1)(\beta_k-1)}.
\end{equation}
Thus, we can define a twisted cycle $P^\prime_n$. It is obvious from the construction that the identity $p_*(P^\prime_n)=P_n$ holds.

\end{proof}

Write $A=(A_1|\cdots|A_k)$, $A_l=({\bf a}^{(l)}(1)|\cdots|{\bf a}^{(l)}(n_l))$
One can easily generalize the result above to the following

\begin{prop}
Set $t=(t^{(1)},\dots,t^{(k)})$ and $\beta_i^{(l)}\in\C$ $(l=1,\dots,k, \; i=1,\dots,n_l)$. We put $\mathcal{L}=\displaystyle\prod_{l=1}^k\underline{\C}(1-\sum_{i=1}^{n_l}t^{(l)}_i)^{\beta^{(l)}_0}(t^{(l)}_1)^{\beta_{1}^{(l)}}\cdots (t^{(l)}_{n_l})^{\beta_{n_l}^{(l)}}$. Then, there exists an element $[P_n^\prime]$ of 
\newline
$\Homo_n\left(\prod_{l=1}^k\left((\C^\times)^{n_l}_{\tau^{(l)}}\setminus\{ 1=\sum_{i=1}^{n_l}\tau^{{\bf a}^{(l)}(i)}\}\right); p^{-1}\mathcal{L}\right)$ such that the identity $p_*(P^\prime_n)=\prod_{l=1}^kP^{(l)}_{n_l}$ holds.
\end{prop}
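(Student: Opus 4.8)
The statement to prove is the last displayed Proposition in Appendix 3: the existence of a twisted cycle $P'_n$ on a product of complements of hyperplane arrangements in tori, which covers a product of Pochhammer cycles under the monomial map $p$.

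\textbf{Overall approach.} The plan is to reduce the multi-block case to $k$ independent copies of the single-block case, which was already established in the preceding Proposition. The key point is that the map $p$ and the local system $\mathcal{L}$ are both ``block-diagonal'': writing $A = (A_1 \mid \cdots \mid A_k)$ with each $A_l$ an invertible $n_l \times n_l$ integer matrix (so $n = n_1 + \cdots + n_k$), the map $p$ splits as a product $p = p_1 \times \cdots \times p_k$ where $p_l(\tau^{(l)}) = (\tau^{(l)})^{A_l}$ acts on the $l$-th torus $(\C^\times)^{n_l}_{\tau^{(l)}}$ only, and $\mathcal{L}$ is an external tensor product $\mathcal{L} = \mathcal{L}_1 \boxtimes \cdots \boxtimes \mathcal{L}_k$ of the single-block local systems $\mathcal{L}_l = \underline{\C}\,(1-\sum_{i=1}^{n_l} t^{(l)}_i)^{\beta^{(l)}_0}(t^{(l)}_1)^{\beta^{(l)}_1}\cdots(t^{(l)}_{n_l})^{\beta^{(l)}_{n_l}}$ pulled to the $l$-th torus.

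\textbf{Key steps.} First I would invoke the single-block Proposition for each $l = 1, \dots, k$: since $A_l$ is an invertible integer matrix, there is a twisted cycle $P'^{(l)}_{n_l} \in \Homo_{n_l}\!\left((\C^\times)^{n_l}_{\tau^{(l)}} \setminus \{1 = \sum_{i=1}^{n_l}(\tau^{(l)})^{{\bf a}^{(l)}(i)}\};\ p_l^{-1}\mathcal{L}_l\right)$ with $(p_l)_*(P'^{(l)}_{n_l}) = P^{(l)}_{n_l}$, where $P^{(l)}_{n_l}$ is the $n_l$-dimensional Pochhammer cycle. Second, I would form the external (cross) product $P'_n := P'^{(1)}_{n_1} \times \cdots \times P'^{(k)}_{n_k}$, using the Künneth isomorphism for homology with local coefficients: the product space is exactly $\prod_{l=1}^k\left((\C^\times)^{n_l}_{\tau^{(l)}} \setminus \{1 = \sum_{i}(\tau^{(l)})^{{\bf a}^{(l)}(i)}\}\right)$, and since $p = p_1 \times \cdots \times p_k$ we have $p^{-1}\mathcal{L} = \boxtimes_l\, p_l^{-1}\mathcal{L}_l$, so $P'_n$ is a well-defined class in $\Homo_n(\,\cdot\,; p^{-1}\mathcal{L})$ of the correct total degree $n = \sum n_l$. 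Third, I would check the pushforward identity: since $p_* = (p_1)_* \times \cdots \times (p_k)_*$ on cross products (functoriality of the homological cross product under products of maps), we get $p_*(P'_n) = (p_1)_*(P'^{(1)}_{n_1}) \times \cdots \times (p_k)_*(P'^{(k)}_{n_k}) = P^{(1)}_{n_1} \times \cdots \times P^{(k)}_{n_k}$, which is precisely the claimed product of Pochhammer cycles $\prod_{l=1}^k P^{(l)}_{n_l}$.

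\textbf{Main obstacle.} The only genuinely delicate point is bookkeeping of the local-system coefficients and branch choices: one must verify that the branch of $(1-\sum_i (\tau^{(l)})^{{\bf a}^{(l)}(i)})^{\beta^{(l)}_0}(\tau^{(l)})^{A_l\beta'^{(l)}}$ prescribed on each face $F_{\mu^{(l)}}$ in the single-block construction multiplies consistently, face by face, to the branch of the full multivalued function on the product polytope $\prod_l F_{\mu^{(l)}}$; this follows because the defining formula (the analogue of (\ref{TheBranch})) is itself a product over the blocks, with no cross terms, so the face-by-face branch data on the product is literally the product of the face-by-face data on the factors. The rest is routine: the explicit piecewise-smooth map $P': \bigcup_{\mu^{(1)}} F_{\mu^{(1)}} \times \cdots \times \bigcup_{\mu^{(k)}} F_{\mu^{(k)}} \to \prod_l\left((\C^\times)^{n_l} \setminus \{\cdots\}\right)$ is just the product $P' = P'^{(1)} \times \cdots \times P'^{(k)}$ of the maps from the single-block case, and continuity is inherited from the factors. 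Thus the statement reduces cleanly to the previous Proposition together with the Künneth formula.
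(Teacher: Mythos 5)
Your reduction hinges on the claim that the covering map $p$ splits as a product $p=p_1\times\cdots\times p_k$, with each factor $p_l$ acting only on the $l$-th torus $(\C^\times)^{n_l}_{\tau^{(l)}}$. This is only true when $A$ is block\textbf{-}diagonal, but the Proposition is stated for a general invertible integer matrix partitioned by \emph{columns} as $A=(A_1\mid\cdots\mid A_k)$: each $A_l$ is $n\times n_l$ (not $n_l\times n_l$), so every monomial $\tau^{{\bf a}^{(l)}(i)}$ involves all $n$ components of $\tau$, not merely $\tau^{(l)}$. Consequently $p^{-1}\mathcal{L}$ is not an external tensor product, and the divisor $\{1=\sum_i\tau^{{\bf a}^{(l)}(i)}\}$ cuts across all coordinate groups, so the source is not a genuine Cartesian product of $k$ punctured tori (the paper's product notation is best read as shorthand for $(\C^\times)^n_\tau\setminus\bigcup_l\{\cdots\}$). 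This matters in the application (\S\ref{SectionEuler}): the matrix $A_\sigma$ there is certainly not block-diagonal with respect to the partition $\sigma=\sigma^{(0)}\cup\cdots\cup\sigma^{(k)}$, so the very case the appendix is meant to serve is excluded by your hypothesis. As a result, both the cross-product step (K\"unneth) and the pushforward identity $p_*=\prod_l (p_l)_*$ are not available.

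The route the paper takes, and what your argument is missing, is the \emph{direct} lift: parametrize the product Pochhammer cycle $\prod_l P^{(l)}_{n_l}$ by the product $\prod_l\bigl(\cup_{\mu^{(l)}}F_{\mu^{(l)}}\bigr)$ of polytope boundaries, assemble the resulting map $q=(q^{(1)},\dots,q^{(k)})$ into $(\C^\times)^n_t$, and then define $P'_n$ by $P'(x)=q(x)^{A^{-1}}$, using the single, global inverse matrix $A^{-1}$ — exactly as in the preceding single-block Proposition. The explicit argument-tracking in (\ref{argument})--(\ref{argument2}) and the branch prescription (\ref{TheBranch}) on each face make $q(x)^{A^{-1}}$ a well-defined continuous map even though $A^{-1}$ mixes the blocks; the fact that the branch formula is a product over blocks is what survives, not the factorization of $p$. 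Your ``consistency of branches on product faces'' observation is the correct ingredient, but it must be combined with the global $A^{-1}$ lift rather than a per-block cross product.
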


\section*{Acknowledgement}
The author would like to thank Yoshiaki Goto, Katsuhisa Mimachi, Kanami Park, Genki Shibukawa, Nobuki Takayama, and Yumiko Takei for valuable comments. The use of triangulations of a semi-analytic set was suggested by Takuro Mochizuki. The suthor would like to thank him. The author is grateful to Francisco-Jesus Castro-Jim\'enez, Maria-Cruz Fern\'andez-Fern\'andez, Michael Granger, and Susumu Tanab\'e for their interest. Finally, the author would like to thank Toshio Oshima and Hidetaka Sakai for their constant encouragement during the preparation of this paper.

This work is supported by JST CREST Grant Number JP19209317 and JSPS KAKENHI Grant Number 19K14554.

\bibliographystyle{alpha}
\bibliography{myreferences}

\end{document}